\newtheorem{thm}{Theorem}[section]
\newtheorem{lem}[thm]{Lemma}
\newtheorem{prop}[thm]{Proposition}
\newtheorem{cor}[thm]{Corollary}
\theoremstyle{definition}
\newtheorem{NN}[thm]{}
\theoremstyle{definition}
\newtheorem{df}[thm]{Definition}
\theoremstyle{definition}
\newtheorem{rem}[thm]{Remark}
\newtheorem{nota}[thm]{Notation}
\theoremstyle{definition}
\newtheorem{exm}[thm]{Example}
\renewcommand{\phi}{\varphi}
\definecolor{purple}{RGB}{150,10,200} 
\newcommand{\N}{\mathbb{N}}
\newcommand{\Z}{\mathbb{Z}}
\newcommand{\Q}{\mathbb{Q}}
\newcommand{\R}{\mathbb{R}}
\newcommand{\C}{\mathbb{C}}
\numberwithin{equation}{section}
\newcommand{\Aff}{\operatorname{Aff}}
\newcommand{\Tw}{\overline{T(A)}^w}
\newcommand{\hm}{homomorphism}
\newcommand{\dt}{\delta}
\newcommand{\ep}{\varepsilon}
\newcommand{\td}{\tilde}
\newcommand{\andeqn}{\,\,\,{\rm and}\,\,\,}
\newcommand{\rforal}{\,\,\,{\rm for\,\,\,all}\,\,\,}
\newcommand{\CA}{$C^*$-algebra}
\newcommand{\SCA}{$C^*$-subalgebra}
\newcommand{\af}{{\alpha}}
\newcommand{\bt}{{\beta}}
\newcommand{\diag}{{\rm diag}}
\newcommand{\wtd}{\widetilde}
\newcommand{\wh}{\widehat}
\newcommand{\wilog}{without loss of generality}
\newcommand{\Wlog}{Without loss of generality}
\newcommand{\beq}{\begin{eqnarray}}
\newcommand{\eneq}{\end{eqnarray}}
\newcommand{\tforal}{\,\,\,\text{for\,\,\,all}\,\,\,}
\newcommand{\tand}{\,\,\,\text{and}\,\,\,}
\newcommand{\Her}{\mathrm{Her}}
\newcommand{\Cu}{\mathrm{Cu}}
\newcommand{\cto}{\stackrel{c.}{\to}}
\newcommand{\cdto}{\stackrel{c.}{\searrow}}
\newcommand{\Qw}{\overline{QT(A)}^w}
\newcommand{\Qwy}{\overline{QT(A_1)}^w}
\title{Tracial  oscillation zero and stable rank one}
\author{Xuanlong Fu and Huaxin Lin }
\date{ }
\begin{document}

\maketitle

\begin{abstract}
Let $A$ be a  separable (not necessarily unital) simple \CA\, with strict comparison.
We show that if $A$ has tracial approximate oscillation zero then $A$ has stable rank one
and the canonical map $\Gamma$ from the Cuntz semigroup of $A$ to the corresponding 
lower-semicontinuous affine function space is surjective. 
The converse also holds. 
As a by-product,   we find that a separable simple \CA\, which has almost stable rank one
must have stable rank one, provided  it has strict comparison and the canonical map 
$\Gamma$ is surjective.

\end{abstract}

\section{Introduction}

Let $X$ be a compact metric space and $T$ be a set  of probability Borel measures on $X.$
For each open subset $O$ of $X$, we consider its measure $\mu(O).$
This gives a function $\widehat{O}(\mu)=\mu(O)$ ($\mu\in T$) on $T.$
This function {{is}}  lower-semicontinuous on $T$ if we endow $T$ with  the weak*-topology.
Let  $\af: X\to X$ be  a  
homeomorphism on $X$ and $T$ be the set of $\af$-invariant probability  Borel measures.
One considers the case that there are sufficiently many open sets $O$ 
{{for which $\widehat{O}$ is continuous on $T.$}}
This is certainly the case when the action is uniquely  ergodic. 
The small boundary condition, or the condition of mean dimension zero,  requires that
in any neighborhood $N(x)$ of each point $x\in X,$  there is {a} neighborhood $O(x)\subset N(x)$ 
such that $\widehat{O(x)}$ is continuous.
Let $\omega(\widehat{O})$ be the oscillation of the function $\widehat{O}.$ 
If $\widehat{O}$ is continuous, then $\omega(\widehat{O})=0.$ 

Let $A$ be a \CA\, with tracial state space $T(A).$ 
For each $a\in A_+,$ one defines the rank  function of $a$ 
by $\widehat{[a]}(\tau)=\lim_{n\to\infty}\tau(a^{1/n})$ for $\tau\in T(A).$ 
When $A=M_n,$ i.e., $A$ is the $n\times n$ matrix algebra, 
$\widehat{[a]}$ is just the normalized rank of $a.$  We study the oscillation of the function 
$\widehat{[a]}.$  It is called tracial oscillation of  the element $a.$
This notion of tracial oscillation has been studied in \cite{eglnkk0} and \cite{LincuA}
in connection with the augmented Cuntz semigroups. 
  We introduce the notion of tracial approximate oscillation  zero
for \CA s.  Roughly speaking  a  unital \CA\, $A$ has tracial approximate oscillation zero, 
if each positive element $a$ is approximated (tracially) by elements 
in the hereditary \SCA\, generated by $a$ with 
small tracial oscillation (see Definition \ref{DTos-2}).   If $\af$ is a minimal 
homeomorphism on $X$ and $(X, \af)$  has mean dimension  zero,  it is shown in  \cite{EN}
that the crossed product \CA\, $C(X)\rtimes_\af \Z$ is ${\cal Z}$-stable. As a consequence, 
$C(X)\rtimes_\af \Z$ has tracial approximate  oscillation zero (see Theorem \ref{Teqiv} below).

 The notion of  stable rank was introduced to \CA\, theory by Marc  Rieffel  in \cite{Rff}.
 A unital \CA\, has stable rank one if 
 its invertible elements are dense in $A.$ 
 The notion plays an important role in the study of simple \CA s
 (see some earlier work, for example, \cite{Pstr} and \cite{DNNP}). 
 It is proved by M. R\o rdam (\cite{Ror04JS}) that if $A$ is a unital {{finite}} separable simple 
 ${\cal Z}$-stable \CA, then $A$ has stable rank one. 
 L. Robert in \cite{Rlz} introduced the notion of almost stable rank one,  which is also a very useful notion, 
 and showed that  every stably projectionless ${\cal Z}$-stable simple \CA\, has almost stable rank one.
   A question remains open, however, whether a separable simple \CA\, with almost stable rank one
 actually has stable rank one. 
 
 There is a canonical map $\Gamma$ from the Cuntz semigroup of $A,$ denoted by
 $\Cu(A),$ to ${\rm LAff}_+(\wtd{QT}(A)),$ the set of strictly positive 
 lower semi-continuous affine functions (vanishing at zero) on the cone of 2-quasitraces on $A,$ defined 
 by $\Gamma([a])(\tau)=d_\tau(a)$ (for $\tau\in {\wtd{QT}}(A)$). 
 A question {{posed}} by N. Brown (see the remark after Question 1.1 of \cite{Th})  asked whether 
 this map is surjective, i.e., whether every strictly positive lower semi-continuous affine function 
 on ${\wtd{QT}}(A)$ is a rank function for some positive element in 
 $A\otimes {\cal K}.$   It is of course an important question.  In fact,
 the strict comparison and surjectivity of $\Gamma$ are perhaps 
 equally important when one studies Cuntz semigroups. 
 If we denote by $\Cu(A)_+$ the set of {{purely}} non-compact elements in the Cuntz semigroup 
 of   a separable stably finite simple \CA\, $A,$ then strict comparison is the condition 
 that $\Gamma$  restricted on $\Cu(A)_+$ is injective. If $\Gamma$ is also surjective, then 
 the map $\Gamma$ gives an isomorphism from $\Cu(A)_+$ 
 onto ${\rm LAff}_+(\wtd{QT}(A)).$
 In \cite{ERS}, it is shown that if $A$ is ${\cal Z}$-stable, 
 then the map $\Gamma$ is indeed surjective, which extends  an earlier result
 of \cite{BPT}.  
 More recently, it is proved in \cite{Th}  and \cite{APRT} that   when $A$ has stable rank one,
 $\Gamma$ is  surjective. 
 %
 We show that if $A$ is  a $\sigma$-unital simple \CA\, {{which}} 
 has strict comparison and 
 tracial approximate oscillation zero,
  then the map $\Gamma$ is surjective. 
  {{On the other hand,}} if $A$ is a {{$\sigma$-unital}}  stably finite simple \CA\, with strict comparison which has almost 
 stable rank one and $\Gamma$ is surjective,  then $A$ has tracial approximate oscillation zero. 
 
 {{Let $A$ be a $\sigma$-unital simple \CA.}}
 We also found 
 that if $A$  has tracial approximate oscillation zero,  then 
 $A$ has a nice matricial structure, a property that we call (TM) (see Definition \ref{DTM}). 
 We prove that if $A$ has strict comparison  and has property (TM), 
 then $A$ has stable rank one. 
 As a by-product, we show that, if $A$ has strict comparison and $\Gamma$ is surjective,
 then the condition that  $A$ has almost stable rank one  implies that $A$ actually has stable rank one.

Our main  result may be stated as follows:

\begin{thm}\label{Teqiv}
Let $A$ be a separable simple \CA\, 
which admits at least one 
densely defined
non-trivial  2-quasitrace  and 
has strict comparison.

Then the following are equivalent:

(1) $A$ has tracial approximate oscillation zero;

(2) $\Gamma$ is surjective (see \ref{DGamma}) and $A$ has stable rank one;

(3)  $A$ has stable rank one;

(4) $\Gamma$ is surjective and $A$ has almost stable rank one; 

(5) 
$A$ has property (TM).

\end{thm}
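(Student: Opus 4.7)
The plan is to close the cyclic chain
\[
(1) \Rightarrow (5) \Rightarrow (3) \Rightarrow (2) \Rightarrow (4) \Rightarrow (1),
\]
assembling the structural building blocks announced in the introduction together with two facts already in the literature.

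The first two arrows are the preparatory theorems of the paper. For $(1) \Rightarrow (5)$ I would invoke the proposition asserting that tracial approximate oscillation zero delivers property (TM): from a small-oscillation approximant of a positive element inside its hereditary $C^*$-subalgebra one extracts a tracially dominant matrix subalgebra and checks the requirements of the (TM) definition. For $(5) \Rightarrow (3)$ I would invoke the separate theorem of the paper that property (TM) together with strict comparison forces the invertibles of $\wtd A$ to be dense, proved by perturbing within those matrix blocks and using strict comparison to absorb a tracially negligible remainder. Both are established as stand-alone results, and the present theorem only needs to quote them.

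The two bridging implications are essentially cost-free. Stable rank one implies surjectivity of $\Gamma$ by the Thiel and Antoine--Perera--Robert--Thiel theorems cited as \cite{Th} and \cite{APRT}, which gives $(3) \Rightarrow (2)$. The implication $(2) \Rightarrow (4)$ is immediate since stable rank one is a priori stronger than Robert's almost stable rank one from \cite{Rlz}, and $\Gamma$-surjectivity is already part of hypothesis (2).

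The remaining implication $(4) \Rightarrow (1)$ is the technical heart of the argument and is where I expect the main obstacle. Given $a \in A_+$ and a tolerance $\varepsilon > 0$, the plan is to use surjectivity of $\Gamma$ to realise continuous lower approximants of the rank function $\widehat{[a]}$ as rank functions of positive elements of $A \otimes \mathcal{K}$, and then to transport those realisations into $\overline{a A a}$ by means of almost stable rank one; strict comparison then converts the resulting Cuntz subequivalences into tracial estimates controlling the oscillation. The delicate point is precisely this transport: one must move the chosen positive element into the hereditary $C^*$-subalgebra generated by $a$ without destroying the continuity of its rank function on $\wtd{QT}(A)$, and it is this relocation, rather than mere density of invertibles, for which Robert's almost stable rank one condition is designed. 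Once it is in place, the desired oscillation bound drops out of the rank-function/strict-comparison dictionary and the cycle closes.
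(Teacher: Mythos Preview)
Your cycle is valid and uses the same ingredients as the paper, but the organisation differs and your description of $(5)\Rightarrow(3)$ glosses over a step the paper makes explicit.

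The paper does not prove $(5)\Rightarrow(3)$ as a standalone theorem. The stable-rank-one result (Theorem~\ref{T92}) is stated for algebras with \emph{continuous scale} and \emph{tracial approximate oscillation zero}, not (TM). To run your $(5)\Rightarrow(3)$ you must first return to $(1)$ via Theorem~\ref{TTM}, then use $\Gamma$-surjectivity (Theorem~\ref{PTMtosurj} or \ref{TOstosurj-1}) to find $e\in{\rm Ped}(A)_+$ with $\widehat{[e]}$ continuous, so that $\Her(e)$ has continuous scale, and only then invoke Theorem~\ref{T92} on $\Her(e)$ and push back to $A$ via Brown's stable isomorphism theorem. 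This is precisely the paper's route, but the paper packages it as the single implication $(1)\Rightarrow(2)$ and treats $(1)\Leftrightarrow(5)$ as a separate two-way equivalence (Theorems~\ref{TVD-1} and \ref{TTM}). So your cycle works, but the claim that $(5)\Rightarrow(3)$ is a ``separate theorem of the paper'' to be quoted is not accurate---it is itself a composite requiring the continuous-scale reduction.

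Your sketch of $(4)\Rightarrow(1)$ matches Theorem~\ref{Talsr1} well, including the transport step via almost stable rank one (the paper cites Lemma~3.2 of \cite{eglnp} for this). One comment: you call this the ``technical heart'', but in the paper it is a short variant of an argument already in \cite{eglnp}; the substantial new work is in the $(1)\Rightarrow$ stable-rank-one direction (the real-rank-zero and stable-rank-one structure of $l^\infty(A)/I_{\overline{QT(A)}^w}$, and the nilpotent-approximation argument of Section~9). Your $(3)\Rightarrow(2)$ via \cite{Th,APRT} and the trivial $(2)\Rightarrow(4)$ are exactly what the paper does.
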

The technical terms in the statement above  will be discussed in detail in the process and 
some examples of simple \CA s which satisfy (1) will be  given (e.g. 
Proposition \ref{Prr0} and  Theorem \ref{Tcounableos}). The condition that $A$ has a non-trivial densely defined 2-quasitrace could be replaced by
that $A$ is stably finite (see 
 Remark \ref{Rlast}).
Note that Theorem \ref{Teqiv} is stated without assuming that $A$ is nuclear or exact. 
Related to the Toms-Winter conjecture, 
H. Thiel in \cite{Th} shows that under the same assumption as that of Theorem \ref{Teqiv}, 
if $A$  is unital and has stable rank one, then $\Gamma$ is surjective, and, if, 
in addition, $A$ has local finite nuclear dimension, then $A$ is ${\cal Z}$-stable.
With the same spirit, Corollary \ref{Clocalnuc}  below states that, under the same assumption 
as in Theorem \ref{Teqiv}, if (1)  in the theorem above also holds and $A$ has local finite nuclear dimension, 
then $A$ is ${\cal Z}$-stable (see also Remark \ref{Rweakloc} for an even weaker hypothesis).  In fact, the idea of tracial oscillation zero can also be directly used in 
the study of Toms-Winter conjecture (see \cite{Linzstable}).

 The paper is organized as follows.
 Section 2 is a  preliminary that lists  a number of notations and definitions that are used 
 in the paper. It also includes some known facts which may not be stated explicitly 
 in the literature.  Section 3 discusses some  preliminary cancellation properties that will be used later. 
  In section 4, we recall the notion of tracial oscillation  and  introduce 
 the notion of  
 tracial approximate
 oscillation for positive elements.   In section 5, we introduce the {{notion}} of 
tracial approximate oscillation  zero for \CA s and give some examples of separable \CA s 
which have positive tracial approximate oscillation and examples which have tracial approximate oscillation zero.  In particular, we show that, 
if  the cone of 2-quasitraces of $A$ has a basis $S$ which has 
countably many  
extremal points,
then $A$ 
has tracial approximate  oscillation  zero.
In section 6, we study sequence algebras and its quotients for compact \CA s $A$.
We find that $l^\infty(A)/I_{_{\Qw}},$ where $I_{_{\Qw}}$ is the quasitrace kernel ideal,
is a SAW*-algebra and has real rank zero and stable rank one, provided  $A$ has tracial approximate oscillation zero.
Section 7 contains one of the main results: if $A$ has
strict comparison and 
tracial approximate oscillation zero, 
then $\Gamma$ is surjective.  In section 8, we introduce the property (TM), a property 
of tracial matricial structure.  We show 
that, under the assumption  {{of strict comparison,
the  property (TM) is 
 equivalent  to  the property of tracial approximate oscillation zero.}}
The last  section  {{is devoted}}  to the proof of Theorem \ref{Teqiv} mentioned above,
in particular, (1) $\Rightarrow$ (2) without assuming that $A$ is separable (but $\sigma$-unital).

 \vspace{0.2in}
 
 {\bf Acknowledgments}
 
The first named author was 
partially supported by 
Natural Sciences and Engineering
Research Council of Canada Discovery Grant.
The second named author was partially supported by a NSF grant (DMS-1954600). Both authors would like to acknowledge the support during their visits
to the Research Center of Operator Algebras at East China Normal University
which is partially supported by Shanghai Key Laboratory of PMMP, Science and Technology Commission of Shanghai Municipality (STCSM), grant 
\#22DZ2229014
and a NNSF grant (11531003).

\section{Preliminary}

In this section we will give a list of basic notations and a number of definitions 
which will be used  throughout  this paper. Most of them are familiar to experts.
It also includes some basic facts about Cuntz semigroups and 2-quasitraces, as well as 
some ad hoc but more or less known facts. Readers are encouraged to skip them until 
they are needed.

\bigskip
 
{\bf Some basic notations and definitions}

\begin{nota}
In this paper,
the set of all positive integers is denoted by $\N.$ 
The set of all compact operators on a separable 
infinite-dimensional Hilbert 
space is denoted by ${\cal K}.$ 

Let  $A$ 
be a normed space and ${\cal F}\subset A$ a subset. Let  $\epsilon>0$.
For any pair  $a,b\in A,$
we  write $a\approx_{\epsilon}b$ if
$\|a-b\|< \epsilon$.
We write $a\in_\ep{\cal F}$ if there is $x\in{\cal F}$ such that
$a\approx_\ep x.$

Let $A$ be a \CA\ and $x\in A.$ Let  $|x|:=(x^*x)^{1/2}.$  
{{If $a, b\in A$ and $ab=ba=a^*b=ba^*=0,$ we often write $a\perp b.$}}

\end{nota}

\begin{nota}
Let $A$ be a $C^*$-algebra and
 $S\subset A$ a subset of $A.$
Denote by
${\rm Her}_A(S)$ (or just $\Her(S),$ when $A$ is clear)
the hereditary $C^*$-subalgebra of $A$ generated by $S.$
Denote by $A^{\bf 1}$ the unit ball of $A,$ and 
by $A_+$ the set of all positive elements in $A.$
Put $A_+^{\bf 1}:=A_+\cap A^{\bf 1}.$
Denote by $\wtd A$ the minimal unitization of $A.$
When $A$ is unital, denote by $GL(A)$ the group of invertible elements of $A,$
 and by   $U(A)$ the unitary group of $A.$
 Let  ${\rm Ped}(A)$ denote
the Pedersen ideal of $A,$ ${\rm Ped}(A)_+= {\rm Ped}(A)\cap A_+,$
${\rm Ped}(A)^{\bf 1}=A^{\bf 1}\cap {\rm Ped}(A)$ and ${\rm Ped}(A)_+^{\bf 1}={\rm Ped}(A)_+\cap {\rm Ped}(A)^{\bf 1}.$
  Denote by $T(A)$ the tracial state space of $A.$
 Except the Pedersen ideal, all other ideals mentioned in this paper are {\bf closed two-sided} ideals.
\end{nota}
\begin{df}
Let $A$ and $B$ be \CA s and 
$\phi: A\rightarrow B$ a  linear map.
The map $\phi$ is said to be positive if $\phi(A_+)\subset B_+.$  
The map $\phi$ is said to be completely positive contractive, abbreviated to  c.p.c.,
if $\|\phi\|\leq 1$ and  
$\phi\otimes \mathrm{id}: A\otimes M_n\rightarrow B\otimes M_n$
are positive for all $n\in\mathbb{N}.$ 
A c.p.c.~map $\phi: A\to B$ is called order zero, if for any $x,y\in A_+,$
$xy=0$ implies $\phi(x)\phi(y)=0$ (see  Definition 2.3  of \cite{WZ09OZ}).

In what follows, $\{e_{i,j}\}_{i,j=1}^n$ (or just $\{e_{i,j}\},$ if there is no confusion) stands for  a system of matrix {{units}} for $M_n,$
$1_n$ for  the identity of $M_n,$   $\iota\in C_0((0,1])$ 
for  the identity function on $(0,1],$  i.e., $\iota(t)=t$ for all $t\in (0,1].$
 {{We also write  $\{e_{i,j}\}$ for 
a system of matrix units for ${\cal K}.$}}

\end{df}
\begin{df}
A   \CA\ $A$ is said to have stable rank one
(\cite{Rff}) 
if  $\widetilde A=\overline{GL(\widetilde A)},$
i.e.,  $GL(\widetilde A)$ is dense in $\widetilde A.$
A  \CA\ $A$ is said to have 
almost stable rank one (\cite{Rlz})
if, for any hereditary $C^*$-subalgebra $B\subset A,$
$B\subset  \overline{GL(\widetilde B)}.$
\end{df}

\begin{nota}\label{Nfg}
Let $\epsilon, \dt >0.$ Define  continuous functions
$f_{\epsilon}, g_\dt
: [0,+\infty) 
\rightarrow [0,1]$ by
{\small{\beq\nonumber
f_{\epsilon}(t)=
\begin{cases}
0  &t\in [0,\epsilon/2],\\
1 &t\in [\epsilon,\infty),\\
\mathrm{linear } &{t\in[\epsilon/2, \epsilon],}
\end{cases}\andeqn
g_\dt(t)=\begin{cases}
0  &t\in \{0\}\cup [\dt, 
\infty),\\
1 &{t\in[\dt/8, \dt/2]},\\
\mathrm{linear}  &t\in [0,\dt/8]\cup [\dt/2, \dt].
\end{cases}
\eneq}}
(Note that $(t-\dt/2)_+$ and $f_{\dt}$ have the same support.)
\end{nota}


{\bf Cuntz semigroup and quasitraces}

\begin{df}\label{Dcuntz}
Let $A$ be a \CA\
and let  $a,\, b\in (A\otimes {\cal K})_+.$ 
{{We}} write $a \lesssim b$ if there are
$x_k\in A\otimes {\cal K}$
such that
$\lim_{k\rightarrow\infty}\|a-x_k^*bx_k\|=0$.
We write $a \sim b$ if $a \lesssim b$ and $b \lesssim a$  both hold (\cite{Cuntzdim}).
The Cuntz relation $\sim$ is an equivalence relation.
Set $\Cu(A)=(A\otimes {\cal K})_+/\sim.$  
Denote by  $V(A)$  the subset of those elements in $
\Cu(A)$  {{which are}} represented by projections.

\end{df}

\begin{df}\label{Dqtr}
{{Let $A$ be a \CA. 
A densely  defined {{2-quasitrace}}  is a 2-quasitrace defined on ${\rm Ped}(A\otimes {\cal K})$ (see  Definition II.1.1 of \cite{BH}). 
Denote by ${\widetilde{QT}}(A)$ the set of densely defined 2-quasitraces 
on 
$A\otimes {\cal K}.$  
 In what follows we will identify 
$A$ with $A\otimes e_{1,1}$ whenever it is convenient. 
Note that we require that a 2-quasitrace has finite value on ${\rm Ped}(A\otimes {\cal K}).$ 
In particular, we exclude the function on ${\rm Ped}(
{{A\otimes {\cal K}}})$ with only $\infty$ value
from the {{consideration.}}


We endow ${\widetilde{QT}}(A)$ 
{{with}} the topology  in which a net 
${{\{}}\tau_i{{\}}}$ 
 converges to $\tau$ if 
${{\{}}\tau_i(a){{\}}}$ 
 converges to $\tau(a)$ for all $a\in 
 {\rm Ped}(A\otimes {\cal K})$ 
 (see also (4.1) on page 985 of \cite{ERS}).

Note {{that,}} for each $a\in ({{A}}
\otimes {\cal K})_+$ and $\ep>0,$ $f_\ep(a)\in {\rm Ped}(A\otimes {\cal K})_+.$ 
Define, for each $\tau\in {\wtd{QT}}(A),$
\beq
{{\widehat{a}(\tau):=\tau(a):=\lim_{\ep \to 0}\tau(af_\ep(a))\andeqn}}
\widehat{[a]}(\tau):=d_\tau(a):=\lim_{\ep\to 0}\tau(f_\ep(a)).
\eneq
We will use properties of 2-quasitraces discussed in \cite{BH} and \cite{ERS} (see, in particular, section 4.1 and  Theorem 
4.4 of \cite{ERS}).  Denote by  ${\wtd T}(A)$ {{the}} subset of $\wtd{QT}(A)$ consisting of traces.}}

\end{df}

\begin{df}\label{DQw}
Recall (Theorem 4.7 of \cite{eglnp}) that a $\sigma$-unital \CA\, $A$ is compact if and only 
if $A={\rm Ped}(A).$
Every unital \CA\, is compact.
Let $A$ be a  compact \CA. {{Since $A={\rm Ped}(A),$ every (densely defined) 2-quasitrace 
is actually defined on $A.$ By II 2.3 of \cite{BH}, every 2-quasitrace on $A$ is bounded.}}
{{Put}} $QT_{[0,1]}(A)=\{\tau\in  {\wtd{QT}}(A): \|
\tau{|_A}\|\le 1\}.$ 
Then $QT_{[0,1]}(A)$ is a 
compact convex subset of ${\wtd{QT}}(A)$
(see \cite[Theorem 4.4]{ERS}).
Denote by $QT(A)$ the set of {{2-quasitraces}}  $\tau$ with $\|\tau{|_A}\|=1.$ 
It is {{a}} convex subset of ${\wtd{QT}}(A).$ 
%
Denote by $\overline{QT(A)}^w$
the (weak*) closure of $QT(A).$   Then, {{in the case that $A$ is compact and 
$\wtd{QT}(A)\setminus \{0\}\not=\emptyset,$}}   
$\R_+ \cdot \Qw={\wtd{QT}}(A)$ 
(if $QT(A)=\emptyset,$ {{then}} $\overline {QT(A)}^w=\emptyset$).

Let $I\subset A$ be an ideal and $\{e_\lambda\}$ be a quasi-central 
approximate identity for $I.$ Suppose that $\tau\in \wtd{QT}(I).$ Then 
$\tau(a)=\lim_\lambda \tau(ae_\lambda)$ (for $a\in A$) defines a (densely defined) 2-qausitrace of $A.$
Note that $\|\tau|_A\|=\|\tau|_I\|.$ If $\tau\in \wtd{QT}(A),$ then 
$\tau_I(a)=\lim_{\lambda} \tau(ae_\lambda)$
also {{densely defines}} a 2-quasitrace of $A$ with $\|\tau_I|_{A}\|\le \|\tau\|$
(see  Definition 2.5 of \cite{Lincrell}).
Let $a\in {\rm Ped}(A\otimes {\cal K})_+$ and $I_a$ be the ideal generated by $a.$ 
By \cite[II.4.2.]{BH}, 
every $\tau$ in 
$\wtd{QT}(\Her(a))$ 
can be uniquely extended to  a 2-quasitrace $\tau$ in 
$\wtd{QT}(I_a).$ 
In what follows we will identify $\wtd{QT}(\Her(a))$ with $\{\tau_{I_a}: \tau\in \wtd{QT}(\Her(a))\}.$
\end{df}


The following is a quasitrace version of Lemma 4.5 of \cite{eglnp}.

\begin{prop}[Lemma 4.5 of \cite{eglnp}]\label{Pcompact}
Let $A$ be a $\sigma$-unital compact \CA.  Then $0\not\in \Qw$ and $\Qw$ is compact.

\end{prop}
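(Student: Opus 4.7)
The plan is to prove the two assertions separately, following the tracial-state argument of Lemma 4.5 of \cite{eglnp} and adapting it to 2-quasitraces. The compactness of $\Qw$ follows quickly from what is already recorded in Definition \ref{DQw}: $QT_{[0,1]}(A)$ is compact, and the weak* topology on $\wtd{QT}(A)$ is Hausdorff (evaluations at elements of ${\rm Ped}(A\otimes {\cal K})$ separate points, since two distinct densely-defined 2-quasitraces that agreed on ${\rm Ped}(A\otimes {\cal K})$ would agree on all of $A\otimes {\cal K}$ by lower-semicontinuous extension), so $QT_{[0,1]}(A)$ is closed in $\wtd{QT}(A)$; since $QT(A)\subseteq QT_{[0,1]}(A)$, the closure $\Qw$ is a closed subset of a compact space and is therefore compact.

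For $0\notin \Qw$, the strategy is to produce a single element $b\in A_+$ and a constant $c>0$ such that $\tau(b)\ge c$ for every $\tau\in QT(A)$. Since $b\in A={\rm Ped}(A)$, the map $\tau\mapsto\tau(b)$ is weak*-continuous on $\wtd{QT}(A)$, so this uniform lower bound extends to $\Qw$ and excludes $\tau=0$. Let $e_A\in A_+^{\bf 1}$ be a strictly positive element. The hypothesis $A={\rm Ped}(A)$ forces $[e_A]$ to be a compact element of $\Cu(A)$ (in the sense of being way-below itself), and since $[e_A]=\sup_n [(e_A-1/n)_+]$ in $\Cu(A)$, there exists $\eta>0$ with $[e_A]=[(e_A-\eta)_+]$. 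Fix $\eta'\in(0,\eta)$ and set $b:=f_{\eta'}(e_A)$. For $\tau\in QT(A)$ the plan is to verify: (i) $\|\tau\|=d_\tau(e_A)=1$: every $a\in A_+^{\bf 1}$ satisfies $a\lesssim e_A$, so $\tau(a)\le d_\tau(a)\le d_\tau(e_A)$ (the estimate $\tau(a)\le d_\tau(a)$ coming from the decomposition $a=af_\epsilon(a)+a(1-f_\epsilon(a))$ into commuting positive summands, noting $\|a(1-f_\epsilon(a))\|\le \epsilon/2$ and $af_\epsilon(a)\le f_\epsilon(a)$, then sending $\epsilon\to 0$), while conversely $d_\tau(e_A)=\sup_n\tau(f_{1/n}(e_A))\le\|\tau\|$. (ii) From $[e_A]=[(e_A-\eta)_+]$ and $(e_A-\eta)_+\le(e_A-\eta')_+$, we obtain $d_\tau((e_A-\eta')_+)\ge d_\tau(e_A)=1$. (iii) A direct functional-calculus comparison yields $f_{\eta'}(e_A)\ge f_\epsilon((e_A-\eta')_+)$ for all sufficiently small $\epsilon>0$, so $\tau(b)\ge\tau(f_\epsilon((e_A-\eta')_+))\to d_\tau((e_A-\eta')_+)\ge 1$. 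Hence $c=1$ works (indeed $\tau(b)=1$, forced by $b\in A_+^{\bf 1}$).

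The main obstacle is that 2-quasitraces are not linear, so the basic inequality $\tau(a)\le d_\tau(a)$ must be obtained through the direct additive decomposition on commuting positive summands sketched above, relying on the monotonicity and norm-control properties of 2-quasitraces from \cite{BH} and \cite{ERS}. The other essential input is the Cuntz-semigroup fact that $A={\rm Ped}(A)$ implies $[e_A]$ is compact in $\Cu(A)$, furnishing the concrete $\eta$ used to define $b$.
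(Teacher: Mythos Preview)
Your proof is correct. Both your argument and the paper's hinge on producing a single positive element on which every $\tau\in QT(A)$ takes value at least $1$, then using continuity of evaluation at that element to exclude $0$ from $\Qw$; compactness then follows immediately since $\Qw$ sits inside the compact set $QT_{[0,1]}(A)$.

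The difference lies in how the witness element is obtained. The paper invokes Lemma~4.4 of \cite{eglnp} as a black box: it supplies $e_1\in M_n(A)_+^{\bf 1}$ and $x\in M_n(A)$ with $e_1x^*x=x^*x$ and $xx^*$ strictly positive in $A$, from which $\tau(e_1)\ge d_\tau(xx^*)=d_\tau(a_0)=1$ follows at once. You instead unpack the Cuntz-semigroup content directly: $A={\rm Ped}(A)$ gives $[e_A]\ll[e_A]$, hence $[e_A]=[(e_A-\eta)_+]$ for some $\eta>0$, and then $b=f_{\eta'}(e_A)\in A$ (for $\eta'<\eta$) already satisfies $\tau(b)\ge d_\tau((e_A-\eta')_+)=d_\tau(e_A)=1$. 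Your route is slightly more self-contained and has the minor advantage of staying inside $A$ rather than passing to a matrix amplification; the paper's route is shorter on the page because the work is hidden in the cited lemma. Your care in justifying $\tau(a)\le d_\tau(a)$ for 2-quasitraces via the commuting decomposition $a=af_\epsilon(a)+a(1-f_\epsilon(a))$ is appropriate and worth recording, since linearity is not available.
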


\begin{proof}
{{We may assume that $QT(A)\not=\emptyset.$}}
By  Lemma 4.4 of \cite{eglnp},  
there is $e_1\in M_n(A)$ with $0\le e_1\le 1$ and $x\in M_n(A)$ 
such that $e_1x^*x=x^*xe_1=x^*x$  and $a_0=xx^*$ is a strictly positive element 
of $A$ (for some $n\in \N$).  Note that $\tau(e_1)\ge d_\tau(a_0)=1$ {{for all $\tau\in QT(A).$}}
 Note   also that
\beq
QT(A)=\{\tau\in {\wtd{QT}}(A): d_\tau(a_0)=1\}.
\eneq
Put $S=\{\tau\in QT_{[0,1]}(A): \tau(e_1)\ge 1\}.$ Then 
$S$ is {{compact}} and $0\not\in S.$  Since 
$\tau(e_1)\ge d_\tau(a_0)=1,$
$QT(A)\subset S.$ So $\Qw\subset S$ and $0\not\in \Qw.$
This also implies that $\Qw$ is compact.
\end{proof}

\begin{prop}\label{Pboundedness}
Let $A$ be a $\sigma$-unital 
 \CA\, and 
$S_1, S_2\subset {\wtd{QT}}(A)$  nonempty compact subsets.
 Then  {{one has  the following (with $\|\tau\|=\|\tau|_A\|$):}}
 
 (1) If $\R_+\cdot S_1=
 \wtd{QT}(A)$ and $0\not\in S_1,$
then there exists  $L_1\in \R_+$ such that
$$
S_2\subset \{r \cdot s: s\in S_1\andeqn r\in [0,L_1]\}.
$$
%
 %
 
(2) {{If $a\in {\rm Ped}(A\otimes {\cal K})_+^{\bf 1},$}}
  then 
$
{{d=\sup\{\|\tau|_{\Her(a)}\|: \tau\in S_1\}}}<\infty.
$

(3) If $A$ is compact,  then $M_1=\sup\{\|\tau\|: \tau \in S_1\}<\infty.$

(4) If $a$ is as in (2),  and $S_1$ is as in (1), 
then $\overline{QT(\Her(a))}^w \subset \{r\cdot \tau: \tau\in S_1, r\in [0, L]\}$ for some $L\in \R_+$ 
{{(see the last paragraph of Definition \ref{DQw}).}}
\end{prop}

\begin{proof}
To see (1) holds, let us assume  otherwise.
Then there exist sequences $r_n\in \R_+,$  $s_n\in S_1$  and $t_n\in S_2$ 
such that  $r_ns_n=t_n,$ $n\in \N$ and $\lim_{n\to\infty} r_n=\infty.$
Since both $S_1, S_2$ are compact, \wilog, we may assume that 
$s_n\to s\in S_1$ and $t_n\to t\in S_2.$ 

Since $s\not=0,$ choose $
c\in {\rm Ped}(A)_+^{\bf 1}$ such that $s(c)>0.$ 
It follows that there exists $n_0\in \N$ such that 
{{$s_n(c)>s(c)/2>0$}} 
for all $n\ge n_0.$ 
Consequently,
\beq
t_n(c)=r_ns_n(c)\to\infty.
\eneq
Hence   {{$t(c)=\infty.$}}
 However 
 $c\in {\rm Ped}(A)_+.$ A contradiction. 

For (2),  since $a\in {\rm Ped}(A\otimes {\cal K})_+,$
there are $b_i\in (A\otimes {\cal K})_+$ and $f_i\in C_{c}((0, \infty))_+$
{($1\le i\le m$),}
the set of continuous functions  with compact supports, 
such that
$
a\le \sum_{i=1}^m f_i(b_i)
$
(see \cite[5.6.1]{Pedbk}).
It follows that 
$a\lesssim \diag(f_1(b_1), f_2(b_2),...,f_m(b_m)).$ 
One can choose $f\in C_{c}((0, \infty))_+$ with $0\le f\le 1$ such that $ff_i=f_i,$
$1\le i\le m.$  Put $b=\diag(f(b_1), f(b_2),...,f(b_m))).$ Then
\beq
\tau(b)\ge d_\tau(\diag(f_1(b_1), f_2(b_2),...,f_m(b_m)))\ge d_\tau(a)\rforal \tau\in S.
\eneq
But  $\widehat{b}$ 
is bounded on the compact subset $S_1.$ Put $M=\sup\{\tau(b): \tau\in {{S_1}}\}.$ 
Then $M<\infty$ and 
\beq
\sup\{\|\tau|_{\Her(a)}\|:\tau\in S_1\}=\sup\{d_\tau(a): \tau\in S_1\}\le M.
\eneq

To see (3), let $a\in A$ be a strictly positive element.  Since $A={\rm Ped}(A),$ 
$a\in {\rm Ped}(A)_+$ and $\Her(a)=A.$  Thus (3) follows from (2).

For (4), let $I_a$ be the (closed) ideal of $A\otimes {\cal K}$ generated by 
$a.$ 
Then $\{\tau_I: \tau\in {\overline{QT(\Her(a))}}^w\}$ is a compact subset of $\wtd{QT}_{[0, 1]}(A)$
(see the last paragraph of \ref{DQw}).
Hence part (4) of the lemma  then follows from (1).
\end{proof}

{\bf Comparison and canonical map $\Gamma$}

\begin{df}

A simple \CA\ 
$A$ 
is said to have (Blackadar's) strict comparison, if, for any $a, b\in (A\otimes {\cal K})_+,$ 
one has 
$a\lesssim b,$ {{provided}}
\beq
d_\tau(a)<d_\tau(b)\rforal \tau\in {{{\widetilde{QT}}(A)\setminus \{0\}.}}
\eneq

\end{df}

\begin{NN}\label{Ralgebraicsimpl}
Let $A$ be a $\sigma$-unital \CA\, and $e\in {\rm Ped}(A\otimes {\cal K})_+\setminus \{0\}.$ 
If $e$ is a full element, 
put $T_e=\{\tau\in \wtd{QT}(A): \tau(e)=1\}.$ Then $T_e$ is a compact convex subset and 
is a basis for the cone $\wtd{QT}(A)$ (see Proposition 3.4 of \cite{T-0-Z}). 
If, in addition, $A$ is simple, then $e$ is always full.
Put $A_1=\Her(e).$ By Brown's stable isomorphism theorem {{(see \cite{Br})}}, $A\otimes {\cal K}\cong A_1\otimes {\cal K}.$ So $e\in {\rm Ped}(A_1\otimes {\cal K})_+.$   
Then $A_1={\rm Ped}(A_1)$ (see, for example, (iii) of Theorem 2.1 of \cite{T-0-Z}),  in other words, $A_1$ is algebraically simple.
Therefore, instead of studying $A\otimes {\cal K},$ we will 
study $A_1\otimes {\cal K}.$     Throughout the paper, we 
{{often}} consider 
$\sigma$-unital simple \CA s\, $A$ with ${\rm Ped}(A)=A$ ({{in other words,}} algebraically simple \CA s).
 %
\end{NN}

\begin{df}\label{DGamma}
Let $A$ be a \CA\, with ${\wtd{QT}}(A)\setminus\{0\}\not=\emptyset.$
{{Denote by  $L(\wtd{QT}(A))$ the family of continuous real valued 
functions $f$  on $\wtd{QT}(A)$ 
such that $f(\af \tau)=\af f(\tau)$
for all $\af\in \R_+$ and $\tau\in \wtd{QT}(A)$ and 
$f(\tau+t)=f(\tau)+f(t)$ for all $\tau, t\in \wtd{QT}(A).$}}
Let $S\subset {\wtd{QT}}(A)$ be a convex subset. 
{{Set}}
\beq
\Aff_+(S)&=&\{f|_S: f\in L(\wtd{QT}(A)), f(\tau)>0\,\, {\rm if}\,\, \tau\in S\setminus \{0\}\}\cup \{0\},\\
{\rm LAff}_+(S)&=&
\{f:S\to [0,\infty]: \exists \{f_n\}, f_n\nearrow f,\,\,
 f_n\in \Aff_+(S)\}
 \eneq
{{Note that if $0\in S,$ then $f(0)=0$ for all $f\in {\rm LAff}_+(S).$}}
For  a {{simple}}
{{\CA\,}} $A$ and $a\in (A\otimes {\cal K})_+,$ the function $\hat{a}(\tau)=\tau(a)$ ($\tau\in S$) 
is in general in ${\rm LAff}_+(S).$   If $a\in {\rm Ped}(A\otimes {\cal K})_+,$
then $\wh{a}\in \Aff_+(S).$
{{Recall that}} $\widehat{[a]}(\tau)=d_\tau(a)$ for $\tau\in {\wtd{QT}}(A).$ {{So}}
$\widehat{[a]}\in {\rm LAff}_+(\wtd{QT}(A)).$
 {{Caution: $\wh{a}$ and $\wh{[a]}$ are not the same in general.}}

We will write $\Gamma: \Cu(A)\to {\rm LAff}_+({\wtd{QT}}(A))$ for 
the canonical map defined by $\Gamma([a])(\tau)=\widehat{[a]}(\tau)=d_\tau(a)$ 
for all $\tau\in {\wtd{QT}}(A).$
\end{df}

{{(1)}} In the case that $A$ is simple and $A={\rm Ped}(A),$ 
$\Gamma$ also induces a canonical map 
$\Gamma_1: \Cu(A)\to {\rm LAff}_+(\Qw).$ Since, in this case, 
$\R_+\Qw={\wtd{QT}}(A),$ the map $\Gamma$ is surjective if and only if $\Gamma_1$
is surjective.

{{(2)}} In the case that $A$ is stably finite and simple,
denote by $\Cu(A)_+$ the set of purely non-compact elements (see Proposition 6.4 of \cite{ERS}).
Suppose that $\Gamma$ is surjective. 
Let $p\in (A\otimes {\cal K})_+$ be a projection (so $p\in {\rm Ped}(A\otimes {\cal K})$).
There are $a_n\in (A\otimes {\cal K})_+$ with $0\le a_n\le 1$ such 
that $\widehat{[a_n]}=(1/2^n)\widehat{[p]},$ $n\in \N.$ 
Define $b=\diag(a_1/2, a_2/2^2,...,a_n/2^n,...)\in  A\otimes {\cal K}.$ 
Then $0$ is a limit point of ${\rm sp}(b).$ Therefore $[b]$ cannot be represented by 
a projection. In other words, $[b]\in \Cu(A)_+.$  
We compute that ${\widehat{[b]}}={{\widehat{[p]}}}.$ 
It {{then}} follows that  $\Gamma|_{\Cu(A)_+}$ is surjective.

Suppose that $A$ is simple and $a$ is a purely non-compact element and 
\beq
d_\tau(a)\le d_\tau(b)\rforal \tau\in {\wtd{QT}}(A).
\eneq
Then,  {{for}} any $\ep>0$ {{(recall that $f_\ep(a)\in {\rm Ped}(A)$)}},
\beq
d_\tau(f_\ep(a))<d_\tau(b)\rforal \tau\in \wtd{QT}(A).
\eneq
If $A$ has strict comparison, then $f_\ep(a)\lesssim b.$ Since this holds for all $\ep>0,$
we conclude that $a\lesssim b.$ 

The reader should be {{reminded}} that when $A$ is exact, every {{2-quasitrace}} is a trace (see \cite{Haagtrace}).
These facts will be used without further explanation.

\vspace{0.1in}

{\bf Cuntz null sequences
and the ideal generated by Cuntz null sequences}

\begin{df}\label{Dcnull-1}
Let $A$ be a separable non-elementary simple \CA. Then 
$A$ contains a sequence of nonzero elements $e_n\in {\rm Ped}(A)$ with 
$0\le e_n\le 1$ such that 
$e_{n+1}\lesssim e_n$ for all $n\in \N,$ and for any finite subset ${\cal F}\subset A_+\setminus \{0\},$
there exists $n_0\in \N$ such that, for all $n\ge n_0,$ 
\beq
n[e_n]\le [d]\rforal d\in {\cal F}
\eneq
(see Lemma 4.3 of \cite{FL21}).

{{For general \CA\, $A,$ a
}} sequence $\{ a_n\}\subset 
A_+$ is said to be {\it truly} Cuntz-null and {{written}} 
$a_n\cto 0$  if, for any finite subset ${\cal F}\subset A_+{{\backslash\{0\}}},$ 
there exists $n_0\in \N$ such that, for all $n\ge n_0,$
\beq
a_n\lesssim d\rforal d\in {\cal F}.
\eneq
This is equivalent to {{saying that,}} for any $d\in A_+\setminus \{0\},$ there exists $n_0\in \N$ such that, 
for all $n\ge n_0,$
$
a_n\lesssim d.
$
We also write  $a_n\cdto 0$ if $a_{n+1}\lesssim a_n$ for all $n\in \N$ and 
$a_n\cto 0.$

A sequence $\{x_n\}\subset A\otimes {\cal K}$ is said to be Cuntz-null if, 
for any $\ep>0,$ $f_\ep(x_n^*x_n)\cto 0.$

\end{df}

\begin{df}\label{DNcu}
 Let $X$ be a normed space and let 
$l^\infty(X)$ denote the space of bounded sequences of $X.$ 
When $A$ is a \CA, 
$l^\infty(A)$ is also a \CA, and 
$c_0(A)=\{\{a_n\}\in l^\infty(A): \lim_{n\to\infty}\|a_n\|=0\}$ 
is an
ideal of $l^\infty(A).$ 
Let $A_\infty=l^{\infty}(A)/c_0(A)$  {{and}}
$\pi_\infty: l^\infty(A)\to 
A_\infty$ be the quotient map. 
We view $A$ as a 
\SCA\, of $l^\infty(A)$ 
via the canonical map $\iota: a\mapsto\{a,a,...\}$ for all $a\in A.$
In what follows, we may identify $a$ with the constant sequence $\{a,a,...\}$ in 
$l^\infty(A)$ without further warning.
Let $\{C_n\}$ be a sequence of \SCA s of $A.$
We may also use notation  $l^\infty(\{C_n\})=\{\{c_n\}\in l^\infty(A): c_n\in  C_n\}$
for  the infinite product of  $\{C_n\}.$

Denote by     
$
N_{cu}(A)
$
(or just $N_{cu}$)
 the set of all Cuntz-null sequences  in $l^\infty(A).$
 
 It follows from Proposition 3.5 of \cite{FLL21}  that,  if $A$ has no one-dimensional hereditary \SCA s, then 
$N_{cu}(A)$ is an ideal  of $l^\infty(A).$
 Moreover, if $A$ is non-elementary and simple, $c_0(A)\subsetneqq N_{cu}(A).$
 Denote by $\Pi_{cu}: l^\infty(A)\to l^\infty(A)/N_{cu}(A)$ the quotient map {{and}}
 $\Pi_{cu}(A)^\perp=\{b\in l^\infty(A)/N_{cu}: b\Pi_{cu}(a)=\Pi_{cu}(a)b=0
 \mbox{ for all } a\in A\}.$
\end{df}

\begin{df}\label{D2norm}
Let $A$ be a 
\CA\,  
with ${{\wtd{QT}(A)\not=\emptyset.}}$
Fix a compact subset $T\subset {{\wtd{QT}(A).}}$
For each $x\in A,$   define
\beq
\|x\|_{_{2,T}}=\sup\{\tau(x^*x)^{1/2}: \tau\in T\}.
\eneq
Then $\|x^*\|_{_{2, T}}=\|x\|_{_{2, T}}.$

By Lemma 3.5 of \cite{Haagtrace} (one does not need to assume that $A$ is unital),
\beq
&&\tau(a+b)^{1/2}\le \tau(a)^{1/2}+\tau(b)^{1/2}\rforal a, b\in {{{\rm Ped}(A\otimes {\cal K})_+}}\andeqn \tau{{\in T,}}\\
&&\|x+y\|_{_{2,\tau}}^{2/3}\le \|x\|_{_{2,\tau}}^{2/3}+\|y\|_{_{2,\tau}}^{2/3}\,\,\rforal  {{x, y\in {\rm Ped}(A\otimes {\cal K})\andeqn}} \tau{{\in T.}}
\eneq
Then
\beq
\sup\{\|x+y\|_{_{2,\tau}}^{2/3}:\tau\in T\}\le \sup\{ \|x\|_{_{2,\tau}}^{2/3}:\tau\in T\}+
\sup\{\|y\|_{_{2,\tau}}^{2/3}:\tau\in T\}.
\eneq
In other words, 
\beq
\|x+y\|_{_{2,T}}^{2/3}\le \|x\|_{_{2,T}}^{2/3}+\|y\|_{_{2,T}}^{2/3}.
\label{F-0114-1}
\eneq
We also have
\beq
&&\|xy\|_{_{2,T}}\le \|x\| \|y\|_{_{2,T}}\andeqn \|xy\|_{_{2, T}}\le \|x\|_{_{2,T}}\|y\|.
\eneq
%
It follows that $\{a\in A: \|a\|_{_{2,T}}=0\}$ is a (closed two-sided) ideal of $A.$

{{We also have the following inequality for $a\in {\rm Ped}(A\otimes {\cal K})_+:$
\beq\label{Qnorm}
\|a\|_{_{2, T}}\le \|a\|(\sup\{d_\tau(a): \tau\in T\})^{1/2}.
\eneq
In fact, for all  $n\in \N,$ we have 
$\tau(a^2)=\tau(a^{1/2n}a^{2-(1/n)}a^{1/2n})\le \|a^{2-(1/n)}\|\tau(a^{1/n}).$
Let $n\to\infty.$ We obtain $\tau(a^2)\le \|a\|^2d_\tau(a).$ So \eqref{Qnorm} holds.}}
\end{df}

\begin{df}\label{DIQw}
Suppose that $A$ is a $\sigma$-unital  \CA\, with ${\wtd{QT}}(A)\setminus \{0\}\not=\emptyset,$ 
{{and}}
$T\subset {\wtd{QT}}(A)$ {{is}} a compact subset with $T\not=\{0\}.$
Define 
\beq
\hspace{-0.2in}I_{_{T}}=\{\{x_n\}\in l^\infty(A): \lim_{n\to\infty}\sup\{\tau(x_n^*x_n):\tau\in T\}=0\}.
\eneq
Then $I_{T}$ is an ideal of $l^\infty(A).$

Suppose that $A$ is a
simple non-elementary
\CA. Then 
it is clear that
\beq
N_{cu}(A)\subset I_{{_{\Qw}}}.
\eneq
It follows from  the proof of  Proposition 3.8 of \cite{FLL21} that $I_{_{\Qw}}=N_{cu}(A)$ 
if $A={\rm Ped}(A)$  and  {{$A$}} has strict comparison.
Denote by $\Pi: l^\infty(A)\to l^\infty(A)/I_{{{_{\Qw}}}}$ the quotient map.

\end{df}

\begin{prop}\label{Pideals=}
Let $A$ be a $\sigma$-unital algebraically simple \CA\, 
such 
that $QT(A)\not=\emptyset.$ Let $S
\subset {\wtd{QT}}(A)\setminus \{0\}$ be a compact subset
such that $QT(A)\subset \R_+ \cdot S.$
Then 
\beq
I_{_S}=I_{_{\Qw}}.
\eneq
Moreover, if $A$ has strict comparison, then 
$I_{_{\Qw}}=N_{cu}.$
\end{prop}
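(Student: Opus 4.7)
The plan is to apply Proposition \ref{Pboundedness}(1) twice, with the roles of $S$ and $\Qw$ interchanged, to wedge each of these compact sets inside a bounded positive scaling of the other; the equality $I_{_S}=I_{_{\Qw}}$ then drops out by elementary domination estimates, while the ``moreover'' clause is the direct citation of Proposition 3.8 of \cite{FLL21} already recorded in Definition \ref{DIQw}.

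First I would verify the hypotheses of Proposition \ref{Pboundedness}(1) for both $S$ and $\Qw$. Since $A={\rm Ped}(A)$, Proposition \ref{Pcompact} gives that $\Qw$ is compact with $0\notin\Qw$, and $\R_+\cdot\Qw=\wtd{QT}(A)$ by Definition \ref{DQw}. The set $S$ is compact and avoids $0$ by hypothesis; the remaining identity $\R_+\cdot S=\wtd{QT}(A)$ follows from $QT(A)\subset\R_+\cdot S$ via the normalization $\tau=\|\tau\|\cdot(\tau/\|\tau\|)$, since for any nonzero $\tau\in\wtd{QT}(A)$ the quantity $\|\tau|_A\|$ is finite (because $A={\rm Ped}(A)$) and positive (by simplicity of $A$, as in Remark \ref{Ralgebraicsimpl}), placing $\tau/\|\tau\|$ in $QT(A)$. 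Proposition \ref{Pboundedness}(1) applied to the pairs $(S_1,S_2)=(S,\Qw)$ and $(S_1,S_2)=(\Qw,S)$ then yields constants $L_1,L_2>0$ such that every $\tau\in\Qw$ equals $r\cdot s$ for some $s\in S$ and $r\in[0,L_1]$, and every $s\in S$ equals $r\cdot\tau$ for some $\tau\in\Qw$ and $r\in[0,L_2]$.

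With these bounds in hand, both inclusions are routine: if $\{x_n\}\in I_{_S}$ and $\tau\in\Qw$, writing $\tau=r\cdot s$ gives $\tau(x_n^*x_n)\le L_1\sup_{\sigma\in S}\sigma(x_n^*x_n)$, whose right-hand side is independent of $\tau$ and tends to $0$, so $\{x_n\}\in I_{_{\Qw}}$; the reverse inclusion is symmetric using $L_2$. The only genuine subtlety in the whole argument is the normalization step that puts $S$ on the same footing as $\Qw$, and even this is essentially bookkeeping once $A={\rm Ped}(A)$ is exploited. The final equality $I_{_{\Qw}}=N_{cu}$ under strict comparison then requires no additional work beyond invoking Proposition 3.8 of \cite{FLL21}, as already flagged in Definition \ref{DIQw}.
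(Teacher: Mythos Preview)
Your proof is correct and follows essentially the same strategy as the paper's: exploit the mutual bounded scaling between $S$ and $\Qw$ to transfer the vanishing condition. The paper carries out the normalization estimates by hand (bounding $\|t\|$ for $t\in S$ above and below, and bounding the scaling factor $r_\tau$ when writing $\tau\in\Qw$ as $r_\tau t_\tau$), whereas you first upgrade the hypothesis $QT(A)\subset\R_+\cdot S$ to $\R_+\cdot S=\wtd{QT}(A)$ and then invoke Proposition~\ref{Pboundedness}(1) as a black box in both directions; this is a clean repackaging of the same idea rather than a different argument. For the ``moreover'' clause, the paper is slightly more explicit---it fixes $b\in{\rm Ped}(A)_+^{\bf 1}\setminus\{0\}$, takes the level set $S=T_b$, and applies the Moreover part of Proposition~3.8 of \cite{FLL21} to that particular $S$---but your direct citation via Definition~\ref{DIQw} is adequate.
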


\begin{proof}
By Proposition \ref{Pboundedness},
$0<s_1=\sup\{\|\tau{{|_A}}\|: \tau\in S\}<\infty.$
Since $S\subset {\wtd{QT}}(A)\setminus \{0\}$ and is compact,
\beq
s_2:=
{{\inf}}
\{\|\tau{{|_A}}\|: \tau\in S\}>0.
\eneq
Suppose that $\{a_n\}
\in 
(I_{_{\Qw}})_+^{\bf 1}.$ 
Then, for any $\ep>0,$ there exists $n_0\in \N$ such that, if $n\ge n_0,$
\beq
\tau(a_n^2)< (\ep/(s_1+1))^2\rforal \tau\in \Qw.
\eneq
Thus, if $n\ge n_0,$  for any $t\in S,$ 
\beq 
t(a_n^2)={{\|t|_A\|}} (t/\|t|_A\|)(a_n^2)<{{\|t|_A\|}} (\ep/(s_1+1))^2\le \ep^2.
\eneq
This implies that $\{a_n\}\in I_{_S}.$ It follows that $I_{_{\Qw}} \subset I_{_S}.$ 

Conversely, 
if $\{a_n\}\in I_S,$ there exists $n_1\in\N$ such that, if $n\ge n_1,$
\beq
t(a_n^2)< s_2 \ep^2
\rforal t\in S.
\eneq
Note that, for all {{$\tau\in QT(A),$}}  there {{are}} $r_\tau\in \R_+$ and $t_\tau\in  S$ such that
$\tau=r_\tau t_\tau.$  {{Since $r_\tau\|t_\tau\|\le 1,$
we have}}
$
r_\tau\le 1/\|t_\tau\|\le 1/s_2.
$
{{Suppose $\tau\in \Qw.$ Then there are $t_n\in S$ and $r_n>0$ such that 
$r_n t_n\in QT(A)$ and  $r_nt_n\to \tau.$ As mentioned above, we have $r_n\le 1/s_2$
for all $n\in \N.$ Since $S$ is compact (and $\{r_n\}$ is bounded), by choosing a subsequence,
we may assume that 
$t_n\to t_\tau\in S$ and   $r_n\to r_\tau.$ In other words, $\tau=r_\tau t_\tau.$}}
Note that $\|\tau\|\le 1.$ So we also have $r_\tau\le 1/s_2.$  
%
Therefore, for any $n\ge n_1,$ if $\tau\in \Qw,$
\beq
\tau(a_n^2)=r_\tau 
{{t_\tau}}
(a_n^2)\le  (1/s_2)t_\tau(a_n^2)
<\ep^2.
\eneq
Thus $\{a_n\}\in I_{_{\Qw}}.$ 

To see the last part of the statement,  choose $b\in {\rm Ped}(A)_+^{\bf 1}\setminus \{0\}.$ 
{{Let}} $S=\{\tau\in \wtd{QT}(A): \tau(b)=1\}.$ Then $S\subset \wtd{QT}(A)\setminus \{0\}$ is a compact subset
and, $\wtd{QT}(A)=\R \cdot S.$ By  (the ``Moreover" part of) Proposition 3.8 of \cite{FLL21}, 
$N_{cu}=I_S=I_{_{\Qw}}.$
\end{proof}

{{Let $A$ be a $\sigma$-unital simple \CA\, and $\{e_n\}$ be an approximate identity 
with $e_{n+1}e_n=e_ne_{n+1}$ ($n\in \N$). 
Recall that $A$ is said to have continuous scale, if, for any $a\in A_+\setminus \{0\},$
there is $n_0\in \N$ such that
\beq
e_m-e_n\lesssim a\,\,\, {{\rforal}} m>n\ge n_0.
\eneq
This definition does not depend on the choice of $\{e_n\}$ (see, 2.1 of \cite{Lin04cs}  and 2.5 of \cite{Lin91cs}).
With terminology of \ref{Dcnull-1}, $A$ has continuous scale if and only if, for any $m(n)>n,$
$e_{m(n)}-e_n\cto 0$ for any $\{e_n\}$ for which $e_{n+1}e_n=e_ne_{n+1}=e_n$ ($n\in \N$).}}

{{The following is known. The proof of it is exactly the same as {{that of}}  the case 
 $T(A)=QT(A)$ (see 5.1, 5.2, 5.3 and 5.4 of \cite{eglnp} for details, and also see {{the}} remark after Definition 6.3 of \cite{FLL21}).}}

\begin{thm}[cf. Theorem 5.3 and Proposition 5.4 of \cite{eglnp}, also \cite{Lin91cs}]\label{Pcontscale-1}
Let $A$ be a $\sigma$-unital simple \CA\, with {{a}} strict positive element $e_A,$ 
continuous scale and  $QT(A)\not=\emptyset.$
Then $QT(A)$ is compact and $\widehat{[e_A]}$ is continuous on ${\wtd{QT}}(A).$ {{Assuming}} $A$ has strict comparison, 
then $A$ has continuous scale if only if $\widehat{[ e_A]}$ is continuous on ${\wtd{QT}}(A).$
\end{thm}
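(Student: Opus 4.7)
The plan is to adapt Theorem 5.3 and Proposition 5.4 of \cite{eglnp}, replacing the compact tracial simplex $T(A)$ by the weak*-compact base $\Qw$ of the cone ${\wtd{QT}}(A)$. The needed ingredients are already in place: for $b\in {\rm Ped}(A)_+$ the function $\widehat{b}$ is affine and continuous on ${\wtd{QT}}(A)$ by definition of the topology, Proposition \ref{Pcompact} gives that $\Qw$ is compact once $A={\rm Ped}(A)$, and the identity ${\wtd{QT}}(A)=\R_+\cdot\Qw$ lets continuity statements pass between $\Qw$ and ${\wtd{QT}}(A)$ by positive homogeneity of $\widehat{[e_A]}$.

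For the forward direction, standard considerations show that continuous scale forces $A={\rm Ped}(A)$, so $\Qw$ is compact.  Fix an approximate identity $\{e_n\}\subset A_+^{\bf 1}$ with $e_{n+1}e_n=e_n$.  Each $\widehat{e_n}$ is then continuous on ${\wtd{QT}}(A)$ and $\widehat{e_n}\nearrow \widehat{[e_A]}$ pointwise.  To upgrade this to uniform convergence on $\Qw$, invoke the truly Cuntz-null construction of \ref{Dcnull-1}: for any $\ep>0$ there is $b\in A_+\setminus\{0\}$ with $\sup_{\tau\in \Qw} d_\tau(b)<\ep$.  Continuous scale then yields $N$ such that $e_m-e_n\lesssim b$ for all $m>n\ge N$, so
\begin{equation*}
\sup_{\tau\in \Qw}(\widehat{e_m}(\tau)-\widehat{e_n}(\tau))\le \sup_{\tau\in \Qw} d_\tau(e_m-e_n)\le \sup_{\tau\in \Qw} d_\tau(b)<\ep.
\end{equation*}
Hence $\widehat{[e_A]}$ is a uniform limit of continuous functions on $\Qw$, therefore continuous on $\Qw$, and continuous on ${\wtd{QT}}(A)$ by homogeneity.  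Then $QT(A)=\widehat{[e_A]}^{-1}(\{1\})\cap \Qw$ is closed in the compact set $\Qw$, hence compact.

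For the converse under strict comparison, suppose $\widehat{[e_A]}$ is continuous on ${\wtd{QT}}(A)$.  Given $b\in A_+\setminus\{0\}$, strict comparison reduces producing $N$ with $e_m-e_n\lesssim b$ to showing $d_\tau(e_m-e_n)<d_\tau(b)$ on ${\wtd{QT}}(A)\setminus\{0\}$, equivalently on $\Qw$ by rescaling.  The function $d_\tau(b)$ is strictly positive and lower semicontinuous on the compact set $\Qw$, so attains a positive infimum $\eta>0$.  Since the increasing sequence of continuous functions $\widehat{e_n}$ converges pointwise to the continuous function $\widehat{[e_A]}$ on the compact set $\Qw$, a Dini-type argument delivers uniform convergence.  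Combined with the telescoping relations $e_{n-1}\perp(e_m-e_n)$ and $e_{m+1}(e_m-e_n)=e_m-e_n$, one derives $d_\tau(e_m-e_n)\le \tau(e_{m+1})-\tau(e_{n-1})<\eta\le d_\tau(b)$ for all $\tau\in \Qw$ once $n$ is sufficiently large.

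The main obstacle is the estimate $d_\tau(e_m-e_n)\le \tau(e_{m+1})-\tau(e_{n-1})$: for ordinary tracial states it is immediate from additivity, but 2-quasitraces are not additive, so one must replace linear manipulations by the monotonicity and support-projection inequalities for quasitraces recorded in \cite{BH}.  Granting those estimates, the scheme is exactly the one carried out in sections 5.1--5.4 of \cite{eglnp}, consistent with the authors' remark that the proof goes through verbatim.
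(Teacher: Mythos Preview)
The paper does not actually write out a proof of this theorem; it simply remarks that the argument is identical to that of Theorem~5.3 and Proposition~5.4 of \cite{eglnp} (the tracial case) and refers the reader there.  Your reconstruction is precisely that argument, so you are on the same track as the paper.

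Your one stated obstacle, the inequality $d_\tau(e_m-e_n)\le \tau(e_{m+1})-\tau(e_{n-1})$, is not a real obstacle.  By definition a 2-quasitrace is additive on commuting self-adjoint elements (\cite[II.1.1]{BH}), and the condition $e_{n+1}e_n=e_n$ for positive contractions forces all the $e_n$ to pairwise commute, so the entire computation takes place in an abelian \SCA.  In that algebra $e_{n-1}\perp f_{1/k}(e_m-e_n)$ and $e_{m+1}$ acts as a unit on both, hence $e_{n-1}+f_{1/k}(e_m-e_n)\le e_{m+1}$ pointwise on the spectrum, and additivity of $\tau$ on this abelian subalgebra gives $\tau(e_{n-1})+\tau(f_{1/k}(e_m-e_n))\le \tau(e_{m+1})$; letting $k\to\infty$ yields exactly the inequality you need.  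No appeal to support-projection tricks beyond this is required.

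One small technical remark on the converse: your Dini step uses compactness of $\Qw$, which by Proposition~\ref{Pcompact} requires $A={\rm Ped}(A)$.  The way the paper later applies the converse (to $\Her(e)$ with $e\in {\rm Ped}(A)_+$) this is automatic, but if you want the statement in full generality you can run Dini on the compact base $T_b=\{\tau:\tau(b)=1\}$ for any fixed $b\in {\rm Ped}(A)_+\setminus\{0\}$ instead of $\Qw$; the rest of your argument is unchanged since strict comparison only needs the strict inequality on a base of the cone.
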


\begin{prop}\label{Porthogoinal}
Let $A$ be a separable {{non-elementary}} simple \CA. Then $A$ has continuous scale 
if and only if  $\Pi_{cu}(A)^\perp=\{0\}.$
\end{prop}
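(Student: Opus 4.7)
The proof splits into two directions, treated separately.

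For the contrapositive of the implication ``$\Pi_{cu}(A)^\perp=\{0\}\Rightarrow A$ has continuous scale,'' suppose $A$ does not have continuous scale and fix an approximate identity $\{e_k\}\subset A_+$ with $e_{k+1}e_k=e_k$, available by $\sigma$-unitality. Failure of continuous scale provides $d\in A_+\setminus\{0\}$ and infinitely many indices $k$ with $e_{k+1}-e_k\not\lesssim d$. Extract a subsequence $k_n$ with $k_{n+1}>k_n+1$ and set $b_n:=e_{k_n+1}-e_{k_n}$. The relation $e_ie_j=e_{\min(i,j)}$ makes the $b_n$ pairwise orthogonal positive contractions, and $\{b_n\}\notin N_{cu}$ since $b_n\not\lesssim d$ for every $n$. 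However, for each constant $a\in A$, $\{e_ka\}$ is norm-Cauchy, so $\|b_na\|\to 0$ and $\|ab_n\|\to 0$; hence $\{b_na\},\{ab_n\}\in c_0(A)\subset N_{cu}$. Therefore $\Pi_{cu}(\{b_n\})$ is a nonzero element of $\Pi_{cu}(A)^\perp$.

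For the forward direction, assume $A$ has continuous scale and take $\{b_n\}\in l^\infty(A)$ representing an element of $\Pi_{cu}(A)^\perp$; one may assume $0\le b_n\le 1$. The target is $\{b_n\}\in N_{cu}$, i.e., $f_\epsilon(b_n)\lesssim d$ eventually for every $\epsilon>0$ and every nonzero $d\in A_+$. Fix an approximate identity $\{e_k\}$ with $e_{k+1}e_k=e_k$; by continuous scale $\{e_{k+1}-e_k\}\cto 0$. Observe that for each fixed $K$, $\{e_Kb_ne_K\}\in N_{cu}$, because $\Pi_{cu}(e_K)\in\Pi_{cu}(A)$ annihilates $\Pi_{cu}(\{b_n\})$ on both sides and $N_{cu}$ is an ideal (Proposition~3.5 of \cite{FLL21}). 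For each $n$, pick the minimal $k(n)$ with $\|b_n-e_{k(n)}b_ne_{k(n)}\|<\epsilon/8$, giving $f_\epsilon(b_n)\lesssim f_{\epsilon/2}(e_{k(n)}b_ne_{k(n)})$ by a standard R\o rdam-type estimate. If $\{k(n)\}$ stays bounded by some $K$, monotonicity of $\{e_k\}$ reduces the question to $\{e_Kb_ne_K\}\in N_{cu}$ and we are done. The substantive case is $k(n)\to\infty$: telescope $e_{k(n)}b_ne_{k(n)}=e_Kb_ne_K+\sum_{j=K}^{k(n)-1}\bigl(e_{j+1}b_ne_{j+1}-e_jb_ne_j\bigr)$ and bound each summand (after an operator-theoretic estimate) in terms of elements of $\Her(e_{j+1}-e_j)$, whose Cuntz class becomes arbitrarily small by continuous scale.

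The main obstacle is precisely this last step, where the number of summands grows with $n$ and the tail cannot be dominated by a single fixed small element. The resolution I have in mind is to combine continuous scale with the non-elementary simple structure of Definition~\ref{Dcnull-1}: for successive thresholds $K_m<K_{m+1}$ pick $c_m\in A_+\setminus\{0\}$ so small that $(K_{m+1}-K_m)[c_m]\le [d]/2^m$ in $\Cu(A)$, arrange $e_{j+1}-e_j\lesssim c_m$ for $K_m\le j<K_{m+1}$ using $\{e_{k+1}-e_k\}\cto 0$, and take an orthogonal sum over $m$ to produce a single dominator of the whole tail that still sits $\lesssim d$. This delicate matching of block length against the Cuntz size of the dominator is the technical heart of the argument.
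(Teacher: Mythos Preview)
Your contrapositive for the implication ``$\Pi_{cu}(A)^\perp=\{0\}\Rightarrow$ continuous scale'' has two slips. Failure of continuous scale produces $d\in A_+\setminus\{0\}$ and pairs $m_j>n_j\to\infty$ with $e_{m_j}-e_{n_j}\not\lesssim d$; it does \emph{not} force consecutive differences $e_{k+1}-e_k\not\lesssim d$ (each consecutive difference may well be $\lesssim d$ while a long block is not). This is repaired by taking $b_j=e_{m_j}-e_{n_j}$ with $m_j<n_{j+1}$. More subtly, ``$b_n\not\lesssim d$ for every $n$'' does not give $\{b_n\}\notin N_{cu}$: membership in $N_{cu}$ asks that $f_\ep(b_n)\lesssim d'$ eventually for each \emph{fixed} $\ep$, and the $\ep$ at which $f_\ep(b_n)\not\lesssim d$ may drift to $0$. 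The paper sidesteps both by arguing directly: it sets $d_n=e_{2m(n)}-e_n$ for arbitrary $m(n)>2n$, notes $ad_n\to 0$ in norm for every $a\in A$, concludes $\Pi_{cu}(\{d_n\})\in\Pi_{cu}(A)^\perp=\{0\}$, and then reads off continuous scale from $e_{m(n)}-e_{2n}\lesssim f_\dt(d_n)\cto 0$.

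The forward direction is where the real gap is. Your telescope produces, for each $n$, an unbounded number of summands $e_{j+1}b_ne_{j+1}-e_jb_ne_j$, each only Cuntz-dominated by two copies of $e_{j+1}-e_j$; your proposed matching of block lengths $K_{m+1}-K_m$ against sizes $[c_m]$ is circular (choosing $c_m$ requires knowing $K_{m+1}$, which depends on $c_{m+1}$), and even granting the domination, there is no clean way to pass from a Cuntz bound on a growing sum to a bound on $f_{\ep/2}$ of that sum. The paper's key idea is that no telescope is needed: with a strictly positive $e$ and indices $k(i)\to\infty$ (slowly) and $N(i)$ large for each $i$, one writes
\[
b_i \,=\, f_{1/2k(i)}(e)\,b_i \;+\; \bigl(f_{1/N(i)}(e)-f_{1/2k(i)}(e)\bigr)\,b_i \;+\; \bigl(1-f_{1/N(i)}(e)\bigr)\,b_i.
\]
The third piece lies in $c_0(A)$ by the approximate-identity property; the first is handled using the hypothesis $\Pi_{cu}(\iota(f_{1/2K}(e)))\cdot\Pi_{cu}(\{b_n\})=0$; and the middle piece $\bar b_i$ satisfies $\bar b_i^*\bar b_i\lesssim f_{1/2N(i)}(e)-f_{1/2k(i)}(e)$, a \emph{single} spectral shell of $e$, which is Cuntz-null by continuous scale as $k(i)\to\infty$. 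One shell per index; no summation over shells.
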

  
\begin{proof}

Suppose that $A$ has continuous scale. Let $\{b_n\}\in l^\infty(A)_+^{\bf 1}$  {{be}}
such that $b=\Pi_{cu}(\{b_n\})\in \Pi_{cu}(A)^\perp.$ 
Fix a {{truly}} Cuntz-null sequence of $\{a_n\}$ 
in the unit ball of $A_+$ such that $a_n\not=0$ for all $n\in \N$ {{(see \ref{Dcnull-1}).}}
Let $e\in A_+^{\bf 1}$ be a strictly positive element. 
Note that, for each $k\in \N,$ $\{f_{1/2k}(e)b_n\}_{n\in \N}$ is a Cuntz-null sequence.
For each $k\in \N,$ there {{are}} $l(k), n(k)\in \N$ such that
\beq\label{Porthogoinal-1}
\hspace{-0.1in}
&&[f_{1/2k}(b_n^*f_{1/2k}(e)^2b_n)]\le [a_i]\,\,(1\le i\le k)
\rforal n\ge l(k)\\\label{120222-1}\andeqn 
&&\|(1-f_{{1/n}}(e))b_k\|<1/k\rforal n\ge n(k).
\eneq
We may assume that $l(k+1)>l(k)\ge k$ and $n(k)>{{2}}k$ for all $k\in \N.$
Define $c_{0,i}=c_{1,i}=0$ if $1\le i<l(1),$   and, if $l(k)\le i<l(k+1),$ define
\beq
{{c_{0,i}=f_{1/2k}(e)b_i\andeqn c_{1, i}=(1-f_{1/n(i)}(e))b_i.}}
\eneq
Put ${\bar b}_i=b_i-c_{0,i}-c_{1,i},$ $i\in \N.$  
{{Note that, if $l(k)\le i {{<}} l(k+1),$ ${\bar b_i}=(f_{1/n(i)}(e)-f_{1/2k}(e))b_i$ ($k>1$).}}
By \eqref{Porthogoinal-1}, 
one verifies that $\{c_{0,n}\}\in N_{cu}(A).$ 
In fact, for a fixed  $1/2>\ep>0,$  choose $k_0$ such that $1/k_0<\ep.$ 
For any finite subset ${\cal F}\subset A_+\setminus \{0\},$ choose $J\in \N$ such that
$a_i\lesssim b$ for all $b\in {\cal F}$ and for all $i\ge J.$ 
It follows that, if $k_1\ge J,$ by 
{{\eqref{Porthogoinal-1},}}
for all $l(k)\le i<l(k+1)$ and $k\ge \max\{k_0, k_1\},$   
$$
f_{\ep}(c_{0,i}^*c_{0,i})\le f_{1/2k}(c_{0,i}^*c_{0,i})\lesssim {{a_J}}\lesssim b
$$
for all $b\in {\cal F}.$   Hence $\{c_{0,n}\}\in N_{cu}(A).$  Also, by \eqref{120222-1}, $\{c_{1,i}\}\in c_0(A).$ 
It follows that
\beq
\Pi_{cu}(\{b_n\})=\Pi_{cu}(\{{\bar b}_n\}).
\eneq
It suffices to  show that $\{{\bar b}_n\}\in  N_{cu}.$
In fact, for all $l(k)\le i <l(k+1),$ 
\beq
{{{\bar b}_i^*{\bar b}_i}}\lesssim f_{1/2n(l(k+1))}(e)-f_{1/2k}(e),\,\,k\in \N.
\eneq
Since $A$ has continuous scale, {{then}} $f_{1/2n(l(k+1))}(e)-f_{1/2k}(e)\cto 0$ (see, 2.1 of \cite{Lin04cs}  and 2.5 of \cite{Lin91cs}, for example).
It follows that $\{{\bar b}_n\}\in N_{cu}(A).$ This implies that $\{b_n\}\in N_{cu}(A).$
Consequently $\Pi_{cu}(A)^\perp=\{0\}.$

Conversely,  suppose that 
$\Pi_{cu}(A)^\perp=\{0\}.$
{{Let $e_n=f_{1/2n}(e),$ $n\in \N.$}} 
Choose any $m(n)>n.$
Define $d_n=e_{4m(n)}-f_{1/n}(e).$ Then, for any $a\in A,$ 
$\lim_{n\to\infty}ad_n=0.$ It follows that $\Pi_{cu}(\{d_n\})\in \Pi_{cu}(A)^\perp=\{0\}.$
In other words, $\{d_n\}\in N_{cu}(A).$ 
Therefore, for any $0<\dt<1/4,$
\beq
f_\dt(d_n)\cto 0.
\eneq
Note {{that,}} for all $n\in \N,$ 
{{$$
f_{1/4m(n)}-f_{1/2n}\le f_\dt(f_{1/8m(n)}-f_{1/n})\,\,\,{\rm in}\,\,\, C_0((0,1]).
$$}}
Thus
\beq
e_{2m(n)}-e_{n}\lesssim f_\dt(d_n),\,\, n\in \N.
\eneq
It follows that $(e_{2m(n)}-e_{n})\cto 0.$ 
Hence $A$ has continuous scale.
\end{proof}

\section{Comparison and cancellation of 
projections}

\begin{lem}
\label{P1025-2}
Let $A$ be a \CA\ and $\tau\in {{\wtd{QT}}}(A)\setminus \{0\}.$ 
Let $e\in A_+$ and $a\in A$ such that 
$ea=a=ae.$ Suppose that $\tau(e)<\infty.$ Then 
for any 
$f\in C_0({\R}),$ {{it holds that}}
$\tau(f(e-a^*a))=\tau(f(e-aa^*)).$
In particular, $\|e-a^*a\|_{2,\tau}=\|e-aa^*\|_{2,\tau}.$
Moreover, 
$d_\tau(g(e-a^*a))=d_\tau(g(e-aa^*))$
{{for any $g\in C_0(\R),$}}
assuming $g(e-a^*a)$ and $g(e-aa^*)$ are positive.

\begin{proof}
Note that since $C^*(e,a^*a)$ and $C^*(e,aa^*)$ 
are commutative,
the restrictions of $\tau$ on them are linear. 
Let $n\in\N.$ Then 
\beq\nonumber
\tau((e-a^*a)^n)
&=&\tau(e^n+\sum_{m=1}^n\frac{n!}{m!(n-m)!}({{-}}a^*a)^m)
=
\tau(e^n)+\sum_{m=1}^n\frac{n!}{m!(n-m)!}\tau(
({{-}}a^*a)^m)
\\\nonumber
&=&
\tau(e^n)+\sum_{m=1}^n\frac{n!}{m!(n-m)!}\tau(
({-}aa^*)^m)
\\
&=&\tau(e^n+\sum_{m=1}^n\frac{n!}{m!(n-m)!}
({{-}}aa^*)^m)
=\tau((e-aa^*)^n).
\label{e1025-1}
\eneq
Thus,  for any polynomial $P,$
$\tau(P(e-a^*a))=\tau(P(e-aa^*)).$
In particular, $\|e-a^*a\|_{2,\tau}=\|e-aa^*\|_{2,\tau}.$
Therefore, by the continuity of 2-quasitraces  (see 
\cite[Corollary II.2.5]{BH}), and {{the}} Stone-Weierstrass theorem,  
$\tau(f(e-a^*a))=\tau(f(e-aa^*))$ for all  $f\in C_0({\R}).$
Moreover {for any $g\in C_0(\R),$} assuming $g(e-a^*a)$ and ${{g}}(e-aa^*)$ are positive,
\beq
d_\tau(g(e-a^*a))=\sup_{\ep>0}
\tau(f_\ep(g(e-a^*a)))=
\sup_{\ep>0}
\tau(f_\ep(g(e-aa^*)))=d_\tau(g(e-aa^*)).
\eneq
\end{proof}

\end{lem}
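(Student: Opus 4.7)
The plan is to exploit that $e$ acts as an identity on $a$, and therefore commutes with both $a^*a$ and $aa^*$. Indeed, from $ae=a$ we get $ea^*=a^*$, and combined with $ea=a$ this gives $e(a^*a)=(a^*a)e=a^*a$ and similarly for $aa^*$. Thus $C^*(e,a^*a)$ and $C^*(e,aa^*)$ are both abelian $C^*$-subalgebras of $A$, and a fundamental property of 2-quasitraces (from \cite{BH}) is that their restriction to any commutative $C^*$-subalgebra is a genuine positive linear functional.

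With linearity available, I would first handle polynomials. For $n\in\N$, the binomial expansion inside the commutative algebra $C^*(e,a^*a)$ gives
\beq
\tau\bigl((e-a^*a)^n\bigr)=\tau(e^n)+\sum_{m=1}^n\binom{n}{m}(-1)^m\tau\bigl((a^*a)^m\bigr),
\eneq
and the analogous identity holds for $e-aa^*$. So the problem reduces to checking $\tau((a^*a)^m)=\tau((aa^*)^m)$ for all $m\geq 1$. This is the standard ``trace-like'' identity for 2-quasitraces: writing $(a^*a)^m=a^*(aa^*)^{m-1}a$ and using the quasitrace axiom $\tau(x^*x)=\tau(xx^*)$ applied to $x=(aa^*)^{(m-1)/2}a$ (which requires that $(aa^*)^{(m-1)/2}a$ lies in a commutative subalgebra containing both $(a^*a)^m$ and $(aa^*)^m$, which it does since $aa^*$ commutes with itself). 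Thus the polynomial case follows.

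Next I would pass from polynomials to arbitrary $f\in C_0((-\infty,\infty))$. The spectra of $e-a^*a$ and $e-aa^*$ lie in the compact interval $[-\|a\|^2,\|e\|]$, so by Stone--Weierstrass any $f$ restricted there is a uniform limit of polynomials $P_k$. Since $\tau$ is norm-continuous on the commutative $C^*$-algebra $C^*(e,a^*a)$ (this is Corollary~II.2.5 of \cite{BH}), and likewise on $C^*(e,aa^*)$, the identity $\tau(P_k(e-a^*a))=\tau(P_k(e-aa^*))$ passes to the limit, giving $\tau(f(e-a^*a))=\tau(f(e-aa^*))$. Taking $f(t)=t^2$ yields the $\|\cdot\|_{2,\tau}$ assertion.

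For the ``moreover'' part, I would simply apply what has just been established with $f=f_\ep\circ g$ for each $\ep>0$ (noting $f_\ep\circ g\in C_0((-\infty,\infty))$ when $g(e-a^*a)$ and $g(e-aa^*)$ are positive), then take the supremum over $\ep>0$ to obtain $d_\tau(g(e-a^*a))=d_\tau(g(e-aa^*))$. The one step I expect to require care is the reduction $\tau((a^*a)^m)=\tau((aa^*)^m)$: one must make sure the elements compared under the quasitrace axiom actually lie in a common commutative subalgebra, since 2-quasitraces are not additive in general; framing it via $x=(aa^*)^{(m-1)/2}a$ and checking that $x^*x=(a^*a)^m$ and $xx^*=(aa^*)^m$ is the cleanest route.
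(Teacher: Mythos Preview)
Your approach is essentially the same as the paper's: reduce to commutative subalgebras where $\tau$ is linear, expand $(e-a^*a)^n$ and $(e-aa^*)^n$ binomially, match term by term via $\tau((a^*a)^m)=\tau((aa^*)^m)$, then pass to general $f$ by Stone--Weierstrass and norm-continuity of $\tau$. The ``moreover'' part is handled identically.

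One point of confusion to clean up: your parenthetical ``which requires that $(aa^*)^{(m-1)/2}a$ lies in a commutative subalgebra containing both $(a^*a)^m$ and $(aa^*)^m$'' is both unnecessary and false. The quasitrace axiom $\tau(x^*x)=\tau(xx^*)$ holds for \emph{all} $x\in A$, with no commutativity hypothesis whatsoever; commutativity is needed only for additivity. Moreover, when $a$ is not normal, the element $x=(aa^*)^{(m-1)/2}a$ will typically not lie in any commutative subalgebra containing both $(a^*a)^m$ and $(aa^*)^m$. Your closing sentence already has the right formulation: just check $x^*x=(a^*a)^m$ and $xx^*=(aa^*)^m$ and invoke the axiom directly. (A minor remark: when $m$ is even, $(m-1)/2$ is not an integer, so $(aa^*)^{(m-1)/2}$ is to be read via continuous functional calculus; this causes no difficulty.)
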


\begin{lem}
\label{Lem1026-1}
Let $A,B$ be  \CA s  and 
 $\pi:A\to B$ be a surjective homomorphism. 
Assume $p,q\in B$ are projections, and $x\in B$
satisfies $px=x=xq.$
Then there are $\tilde p,\tilde q\in A_+^{\bf 1}$ 
and $\tilde x\in A,$
such that $\pi(\tilde  p)=p,$ $\pi(\tilde q)=q,$
$\pi(\tilde x)=x,$
and $\tilde p\tilde x=\tilde x=\tilde x\tilde q.$
Moreover, if $p=q,$ we can take $\tilde p=\tilde q.$
\end{lem}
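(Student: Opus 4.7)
\bigskip
\noindent\textbf{Proof proposal.}
The plan is to first lift $p, q$ to positive contractions in $A$ and lift $x$ to any element in $A$, then apply a continuous functional calculus trick on those lifts to enforce the relations $\tilde p\tilde x = \tilde x = \tilde x\tilde q$ on the nose, rather than merely modulo $\ker\pi$. The key observation is that since $p$ (respectively $q$) is a projection, any function $f\colon [0,1]\to[0,1]$ with $f(0)=0,\ f(1)=1$ satisfies $f(p)=p$ by functional calculus, so there is enormous freedom in choosing how to reshape the lift of $p$ without destroying the fact that it is a lift.

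First, I would invoke the standard fact that positive contractions lift along surjective $*$-homomorphisms to pick $p_0, q_0 \in A_+^{\bf 1}$ with $\pi(p_0)=p,\ \pi(q_0)=q$; in the case $p=q$ I would take $p_0=q_0$. Then I pick any $y_0\in A$ with $\pi(y_0)=x$. Next, I choose two continuous functions $f,g\colon[0,1]\to[0,1]$ satisfying $f(0)=g(0)=0$, $f(1)=g(1)=1$, with $g\equiv 0$ on $[0,1/2]$ and $f\equiv 1$ on $[1/2,1]$; in particular $fg=g$ pointwise. Then I set
\[
\tilde p := f(p_0), \qquad \tilde q := f(q_0), \qquad \tilde x := g(p_0)\,y_0\,g(q_0).
\]

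Since the spectra of $p$ and $q$ lie in $\{0,1\}$, functional calculus gives $f(p)=g(p)=p$ and $f(q)=g(q)=q$, so $\pi(\tilde p)=p$, $\pi(\tilde q)=q$, and $\pi(\tilde x)=pxq=x$ by the hypothesis $px=x=xq$. For the absorption relations, functional calculus in $A$ (on the single commuting element $p_0$) yields
\[
\tilde p\,\tilde x = f(p_0)g(p_0)\,y_0\,g(q_0) = (fg)(p_0)\,y_0\,g(q_0) = g(p_0)\,y_0\,g(q_0) = \tilde x,
\]
and symmetrically $\tilde x\tilde q=\tilde x$. If $p_0=q_0$, then automatically $\tilde p=\tilde q$, which handles the ``moreover'' clause.

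The only nontrivial input is that positive contractions lift through a surjective $*$-homomorphism, but this is routine: lift to a self-adjoint element in $A$ and then compose with $t\mapsto\min(\max(t,0),1)$ via functional calculus. Consequently I do not anticipate a real obstacle; the lemma is essentially a packaged application of the identity $fg=g$ in continuous functional calculus to absorb a given element into a prescribed pair of lifted projections.
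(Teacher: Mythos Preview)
Your proof is correct and follows the same underlying idea as the paper --- use functional calculus with a pair of functions $f,g$ on $[0,1]$ satisfying $fg=g$, $f(0)=g(0)=0$, $f(1)=g(1)=1$ to force the absorption relations. The only difference is in how the lift of $x$ is produced: the paper invokes Pedersen's \cite[Proposition~1.5.10]{Pedbk} to lift $x$ to some $y$ with $y^*y\le f_{1/2}(q_1)$, then multiplies on the left by $f_{1/2}(p_1)$, whereas you simply take an arbitrary lift $y_0$ and sandwich it as $g(p_0)\,y_0\,g(q_0)$. Your route is slightly more elementary and more symmetric, since it avoids the cited lifting result entirely; the paper's asymmetric version would matter if one wanted a norm bound on $\tilde x$, but that is not needed here.
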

\begin{proof}
We may assume that $\|x\|\leq 1.$
Let $p_1,q_1\in A_+^{\bf 1}$ such that 
$\pi(p_1)=p,$ $\pi(q_1)=q.$
Since $p,q$ are projections,  we also have 
$\pi(f_{1/2}(p_1))
=f_{1/2}(\pi(p_1))
=p,$ 
and $\pi({{f_{1/2}}}(q_1))
=f_{1/2}(\pi(q_1))=q.$
Note $x^*x\leq q.$
By \cite[Proposition 1.5.10]{Pedbk},
there exists $y\in A^{\bf 1}$ such that $\pi(y)=x$ 
and $y^*y\le f_{1/2}(q_1).$ 
Put $\td x=f_{1/2}(p_1)y.$ Then 
$$
\pi(\td x)=px=x, \td x \td x^*=f_{1/2}(p_1)yy^*f_{1/2}(p_1)\le f_{1/4}(p_1)
$$
and $\td x^*\td x\le y^*y\le f_{1/2}(q_1).$
Set $\tilde p=f_{1/8}(p_1)$ 
and $\tilde q=f_{1/8}(q_1).$ 
Then $\pi(\tilde  p)=p,$ $\pi(\tilde q)=q.$
The facts that 
$f_{1/8}(p_1)f_{1/4}(p_1)=f_{1/4}(p_1)$
and $f_{1/8}(q_1)f_{1/4}(q_1)=f_{1/4}(q_1)$
imply that 
$\tilde p\tilde x=\tilde x=\tilde x\tilde q.$
Moreover, if $p=q,$ we can take $p_1=q_1,$
hence 
$\tilde p=\tilde q.$
\end{proof}

\begin{prop}
\label{P1025-3}
Let $A$ be a  \CA\ with 
${\wtd{QT}}(A)\setminus \{0\}\not=\emptyset.$
Suppose that $T\subset {\wtd{QT}}(A)\setminus \{0\}$ is a compact subset.
Then every  projection in $l^\infty(A)/I_{T}(A)$
is  finite (see \ref{DIQw}).
\end{prop}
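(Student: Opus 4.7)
The statement to prove is that every projection $p \in l^\infty(A)/I_T$ is finite, so let $v \in l^\infty(A)/I_T$ satisfy $v^*v = p$ and $q := vv^* \le p$; it suffices to show $q = p$. Since $p = v^*v$ is a projection, $v$ is a partial isometry and $v = vv^*v$, so $vp = v$; and $q \le p$ gives $p(vv^*) = vv^*$, whence $pv = p(vv^*)v = vv^*v = v$ as well. Thus $pv = v = vp$ in the quotient.

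Lifting these commutation relations via the ``moreover'' part of Lemma \ref{Lem1026-1}, I obtain $\tilde p = \{\tilde p_n\} \in l^\infty(A)_+^{\bf 1}$ lifting $p$ and $\tilde v = \{\tilde v_n\} \in l^\infty(A)$ lifting $v$ with $\tilde p_n \tilde v_n = \tilde v_n = \tilde v_n \tilde p_n$ for every $n$. Since $\pi(\tilde p - \tilde v^*\tilde v) = p - v^*v = 0$, the sequence $\tilde p - \tilde v^*\tilde v$ belongs to $I_T$; equivalently, $\sup_{\tau \in T}\tau\bigl((\tilde p_n - \tilde v_n^*\tilde v_n)^2\bigr) \to 0$. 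The task is then to show $\tilde p - \tilde v \tilde v^* \in I_T$, for this will yield $p - q = 0$ and hence $p = q$.

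The bridge between the two is Lemma \ref{P1025-2}: applied entry-wise with $e = \tilde p_n$ and $a = \tilde v_n$, whose hypotheses $\tilde p_n \tilde v_n = \tilde v_n = \tilde v_n \tilde p_n$ we have arranged, the polynomial case $P(t) = t^2$ produced in its proof (see \eqref{e1025-1}) gives
\[
\tau\bigl((\tilde p_n - \tilde v_n^*\tilde v_n)^2\bigr) = \tau\bigl((\tilde p_n - \tilde v_n \tilde v_n^*)^2\bigr)
\]
for every $\tau \in T$ and every $n$. Taking the supremum over $T$ transports the null-sequence property from the $v^*v$ side to the $vv^*$ side, giving $\tilde p - \tilde v\tilde v^* \in I_T$ and hence $p = q$.

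The main thing to watch is the finiteness hypothesis $\tau(e) < \infty$ of Lemma \ref{P1025-2}: since $\tilde p_n$ need not lie in ${\rm Ped}(A)$, a priori $\tau(\tilde p_n)$ could be infinite. However, for $n$ large enough $\sup_{\tau \in T}\tau\bigl((\tilde p_n - \tilde v_n^*\tilde v_n)^2\bigr)$ is finite, and combined with the commutation relations above this forces $\tau$ to be finite on the commutative subalgebras $C^*(\tilde p_n, \tilde v_n^*\tilde v_n)$ and $C^*(\tilde p_n, \tilde v_n \tilde v_n^*)$, which is all that the polynomial computation underlying Lemma \ref{P1025-2} actually uses. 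So the identity applies in our situation, and the argument goes through.
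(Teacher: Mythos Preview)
Your proof is essentially identical to the paper's: lift $p$ and $v$ via Lemma~\ref{Lem1026-1} to $\tilde p,\tilde v$ with $\tilde p\tilde v=\tilde v=\tilde v\tilde p$, then use Lemma~\ref{P1025-2} coordinate-wise to transfer $\|\tilde p_n-\tilde v_n^*\tilde v_n\|_{2,T}\to 0$ to $\|\tilde p_n-\tilde v_n\tilde v_n^*\|_{2,T}\to 0$. The paper does exactly this (it simply invokes Lemma~\ref{P1025-2} without further comment).

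You are right to flag the hypothesis $\tau(e)<\infty$ in Lemma~\ref{P1025-2}; the paper does not address it explicitly. However, your proposed resolution is not correct: knowing that $\tau\bigl((\tilde p_n-\tilde v_n^*\tilde v_n)^2\bigr)<\infty$ does \emph{not} force $\tau$ to be finite on $C^*(\tilde p_n,\tilde v_n^*\tilde v_n)$ --- for instance, with $\tau=\mathrm{Tr}$ on $\mathcal K$ one can have $\tau(c^2)<\infty$ while $\tau(c)=\infty$. The actual reason the hypothesis is satisfied is simpler and built into Lemma~\ref{Lem1026-1}: the lift it produces has the form $\tilde p=f_{1/8}(p_1)$ coordinate-wise, and as noted in Definition~\ref{Dqtr}, $f_\epsilon(x)\in\mathrm{Ped}(A)$ for any $x\in A_+$ and $\epsilon>0$. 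Hence each $\tilde p_n\in\mathrm{Ped}(A)$, so $\tau(\tilde p_n)<\infty$ for every $\tau\in T\subset\widetilde{QT}(A)$, and Lemma~\ref{P1025-2} applies directly.
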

\begin{proof}
Let $B=l^\infty(A)/I_T(A)$
and  $\pi:l^\infty(A)\to B$ be the quotient map.  
Assume $p\in B$ is a projection, $u\in B$ satisfies 
$u^*u=p$ and $uu^*\leq p.$ 
we need to show 
$uu^*=p.$

By Lemma \ref{Lem1026-1},
there are $a=\{a_1,a_2,...\}\in l^\infty(A)_+^{\bf 1}$
and $v=\{v_1,v_2,...\}\in l^\infty(A)$
such that 
$\pi(a)=p,$ $\pi(v)=u,$
and $av=v=va.$
Since $\pi(a)
=
p=\pi(v^*v),$
we have 
$\lim_{n\to\infty}\|a_n-v_n^*v_n\|_{_{2,T}}=0.$
By Lemma \ref{P1025-2},
$\|a_n-v_nv_n^*\|_{_{2,T}}=\|a_n-v_n^*v_n\|_{_{2,T}}\to 0$
$(n\to\infty).$
Hence 
$$
{{p-uu^*=\pi(\{a_1-v_1v_1^*,
a_2-v_2v_2^*,...\})=0,}}
$$
which shows $p$ is a finite projection. 
\end{proof}

\begin{prop}
\label{P1025-1}
Let $A$ be 
a non-elementary
simple 
\CA\ with ${\wtd{QT}}(A)\setminus \{0\}\not=\emptyset.$
Let $T\subset {\wtd{QT}}_{[0,1]}(A)\setminus \{0\}.$
Then for any 
 $a\in {\rm Ped}(A)_+^{\bf 1}\backslash\{0\},$
any $\ep>0,$
there is $b\in \Her(a)_+$ 
such that  {{$b\le a,$}}
$\|a-b\|_{_{2, T}}<\ep,$
and $d_\tau(b)<d_\tau(a)$ for all $\tau\in T.$
\end{prop}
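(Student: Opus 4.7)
My plan has three stages: reduce the $\|\cdot\|_{2,T}$ estimate to an operator norm estimate; approximate $a$ by the functional-calculus cut $(a-\delta)_+$; then use the non-elementarity of $A$ to perform a small further perturbation that guarantees strict rank decrease uniformly in $\tau\in T.$

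For the first stage, every $\tau\in T\subset\wtd{QT}_{[0,1]}(A)$ has $\|\tau\|\le 1,$ so for any $c\in A$ one has $\|c\|_{_{2,\tau}}^2=\tau(c^*c)\le\|c\|^2,$ and hence $\|c\|_{_{2,T}}\le\|c\|.$ It therefore suffices to produce $b\in\Her(a)_+$ with $\|b\|\le\|a\|,$ $\|a-b\|<\ep,$ and $d_\tau(b)<d_\tau(a)$ for every $\tau\in T.$ For the second stage, I pick $0<\dt<\ep/2$ and put $c:=(a-\dt)_+\in\Her(a)_+,$ so that $\|c\|\le\|a\|$ and $\|a-c\|\le\dt.$ The element $c$ already satisfies $d_\tau(c)\le d_\tau(a)$ for every $\tau,$ but strict inequality may fail precisely for those $\tau$'s which do not detect the spectrum of $a$ in $(0,\dt],$ so $c$ alone is not yet the sought $b$ and must be further perturbed.

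For the third stage, I use that $\Her(c)$ is itself a non-elementary simple \CA, being a nonzero hereditary subalgebra of the non-elementary simple algebra $A.$ Applying Lemma~4.3 of \cite{FL21} inside $\Her(c),$ or equivalently constructing an embedding $M_2(C_0((0,1]))\hookrightarrow\Her(c)$ available in any non-type-I simple algebra, I extract nonzero orthogonal positive elements $s_1\perp s_2$ in $\Her(c)_+$ which are Cuntz-equivalent ($s_1\sim s_2$) and have norm less than any prescribed $\eta>0.$ I then form $b$ as a suitable combination of $c$ with cut-offs of $s_1,s_2$ (for instance a construction of the form $b:=s_1+c^{1/2}(1-f_{\eta}(s_2))c^{1/2},$ adjusted so positivity holds and $b\in\Her(c)_+$), arranged so that $\|b-c\|<\ep/2$ while the open support of $b$ in $A^{**}$ is strictly contained in that of $c$ by exactly the open support of $s_2.$ Because each $\tau\in T$ is nonzero on $\mathrm{Ped}(A)_+\setminus\{0\}$ and $A$ is simple, $d_\tau(s_2)>0$ for every such $\tau,$ yielding $d_\tau(b)<d_\tau(c)\le d_\tau(a)$ as required, and the triangle inequality gives $\|a-b\|<\ep.$

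The main obstacle will be the uniformity of the strict decrease $d_\tau(b)<d_\tau(a)$ over all $\tau\in T.$ Since $T$ is allowed to be arbitrary (possibly uncountable and non-compact), no enumeration or compactness argument can handle exceptional $\tau$'s one by one. Moreover, functional calculus on $a$ alone cannot suffice: any spectral-cut modification of $a$ leaves some quasitraces blind to the change, so strict inequality necessarily fails for those $\tau$'s. The non-elementarity of $A$ is the crucial input, as it guarantees the orthogonal Cuntz-equivalent pair $s_1,s_2$ inside every nonzero hereditary subalgebra of $A,$ and the ``swap'' between these two copies supplies the single modification whose rank loss is strictly positive against every nonzero $2$-quasitrace at once.
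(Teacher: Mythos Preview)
Your stage 1 reduction---passing from $\|\cdot\|_{2,T}$ to operator norm---is a genuine gap, not a simplification. Take $a=\lambda p$ for a nonzero projection $p\in A$ and $\lambda>0$ (such $p$ exist in many non-elementary simple $C^*$-algebras, e.g.\ UHF algebras). Then $\Her(a)=pAp$, and for $\ep<\lambda$ every $b\in(pAp)_+$ with $\|a-b\|<\ep$ satisfies $b\ge(\lambda-\ep)p$, hence is invertible in $pAp$, so $d_\tau(b)=d_\tau(p)=d_\tau(a)$ for all $\tau$: no strict rank decrease is possible under operator-norm approximation. Your stage 2 cut $c=(a-\dt)_+=(\lambda-\dt)p$ keeps the same support $p$, so stage 3---however executed---cannot repair this. (Independently, the stage 3 formula has its own problems: with $\|s_2\|<\eta/2$ one gets $f_\eta(s_2)=0$; the Cuntz equivalence $s_1\sim s_2$ is never actually used; and the asserted support inclusion is not justified.)

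The paper works directly in $\|\cdot\|_{2,T}$. Using the existence of tracially small elements in non-elementary simple algebras (see \ref{Dcnull-1}), it picks $c\in\Her(a)_+$ with $\|c\|=1$ and $d_\tau(c)<\ep^2$ for all $\tau\in T$, and sets $b:=a^{1/2}(1-f_{1/4}(c))a^{1/2}$. Then $\|a-b\|_{2,T}=\|a^{1/2}f_{1/4}(c)a^{1/2}\|_{2,T}\le\|f_{1/4}(c)\|_{2,T}\le(\sup_\tau d_\tau(c))^{1/2}<\ep$ (while $\|a-b\|$ itself may equal $\|a\|$), and strict rank decrease follows because $b\sim(1-f_{1/4}(c))^{1/2}a(1-f_{1/4}(c))^{1/2}\perp f_{1/2}(c)$, both in $\Her(a)$, yielding $d_\tau(b)+d_\tau(f_{1/2}(c))\le d_\tau(a)$ with $d_\tau(f_{1/2}(c))>0$ for every nonzero $\tau$ by simplicity.
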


\begin{proof}
It follows from the first paragraph of \ref{Dcnull-1} that
there exists  
$c\in \Her(a)_+$  with $\|c\|=1$ such that
$d_\tau(c)<\ep^2$ for all $\tau\in T.$ 
Define $b=a^{1/2}(1-f_{1/4}(c))a^{1/2}.$  
{{Then $0\le b\le a.$}}
It follows from \eqref{F-0114-1}  that 
$$
\|a-b\|_{_{2,T}}=\|a^{1/2}f_{1/4}(c)a^{1/2}\|_{_{2,T}}
\leq  \|f_{1/4}(c)\|_{_{2,T}}\leq 
(d_\tau(c))^{1/2}\leq \ep.
$$
For all $\tau\in T,$
\beq\nonumber
d_\tau(b)
&=&d_{\tau}(a^{1/2}(1-f_{1/4}(c))a^{1/2})
=
d_{\tau}((1-f_{1/4}(c))^{1/2} a (1-f_{1/4}(c))^{1/2})
\\\nonumber
&<&
d_{\tau}((1-f_{1/4}(c))^{1/2} a (1-f_{1/4}(c))^{1/2})+
d_\tau(f_{1/2}(c))
\\\nonumber
&\overset{{\rm (orthogonality)}}{=}&
d_{\tau}((1-f_{1/4}(c))^{1/2} a (1-f_{1/4}(c))^{1/2}
+f_{1/2}(c))
\le  d_\tau(a).
\eneq
\end{proof}

\begin{thm}
\label{P1025-0}
Let $A$ be a 
non-elementary
{{algebraically simple \CA\ with}}
$QT(A)\not=\emptyset.$
Assume that $A$ has strict comparison.
Then $l^\infty(A)/I_{\Qw}$
has cancellation of projections,
i.e., for any projections $p,q,r\in l^\infty(A)/I_{\Qw},$
if $p,q\leq r$ and $p\sim q,$ then  
$r-p\sim r-q.$
\end{thm}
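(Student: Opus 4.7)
My plan is to establish Cuntz sub-equivalence $r-p\lesssim r-q$ and, symmetrically, $r-q\lesssim r-p$ in $B:=l^\infty(A)/I_{\Qw}$, and then invoke a Schroeder--Bernstein argument using the finiteness of projections in $B$ furnished by Proposition \ref{P1025-3}.

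I would first lift the data. Since $p,q\le r$ one has $rv=v=vr$, so Lemma \ref{Lem1026-1} provides $\tilde r\in l^\infty(A)_+^{\bf 1}$ and $\tilde v\in l^\infty(A)^{\bf 1}$ with $\pi(\tilde r)=r$, $\pi(\tilde v)=v$ and $\tilde r_n\tilde v_n=\tilde v_n=\tilde v_n\tilde r_n$ at each coordinate. Setting $a_n:=(\tilde r_n-\tilde v_n^*\tilde v_n)_+$ and $b_n:=(\tilde r_n-\tilde v_n\tilde v_n^*)_+$, the fact that $\pi(\tilde r_n-\tilde v_n^*\tilde v_n)=r-p\ge 0$ in $B$ forces the negative parts into $I_{\Qw}$, giving $\pi(\{a_n\})=r-p$ and $\pi(\{b_n\})=r-q$. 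Lemma \ref{P1025-2} applied with $e=\tilde r_n$, $a=\tilde v_n$ and $f=f_\epsilon$ yields $\tau(f_\epsilon(a_n))=\tau(f_\epsilon(b_n))$ for every $\tau\in\wtd{QT}(A)$ and $\epsilon>0$, whence $d_\tau(a_n)=d_\tau(b_n)$ for all $n$ and $\tau$. Proposition \ref{P1025-1} next supplies $a_n'\in\Her(a_n)_+$ with $\|a_n-a_n'\|_{2,\Qw}<1/n$ and $d_\tau(a_n')<d_\tau(a_n)=d_\tau(b_n)$ on $\Qw$; via $\R_+\cdot\Qw=\wtd{QT}(A)$ (Proposition \ref{Pideals=}) this strict inequality extends to $\wtd{QT}(A)\setminus\{0\}$, so strict comparison in $A$ yields $a_n'\lesssim b_n$ in $A$ while $\pi(\{a_n'\})=r-p$ is preserved.

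To transfer this to $B$, I would fix $\epsilon\in(0,1)$ and use a Rordam-type factorization, after first replacing $b_n$ by $f_\delta(b_n)$ (whose image in $B$ remains $r-q$ because $r-q$ is a projection), to obtain $d_n\in A$ with $(a_n'-\epsilon)_+=d_n^*b_n d_n$ and $\|d_n\|$ uniformly bounded in $n$. Then $y_n:=b_n^{1/2}d_n$ is bounded, and $y:=\pi(\{y_n\})\in B$ satisfies
\[
y^*y=\pi(\{(a_n'-\epsilon)_+\})=(r-p-\epsilon)_+=(1-\epsilon)(r-p),
\]
while $y_ny_n^*\le\|d_n\|^2 b_n$ gives $yy^*\le C(r-q)$ for some constant $C=C(\epsilon)$. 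Normalizing $w:=(1-\epsilon)^{-1/2}y$ produces $w^*w=r-p$, so $w$ is a partial isometry in $B$ and $ww^*$ is a projection with $ww^*\le C'(r-q)$; since for projections $e\le\lambda f$ forces $\ker f\subset\ker e$ and hence $e\le f$, one obtains $ww^*\le r-q$, establishing $r-p\lesssim r-q$ in $B$. The symmetric argument gives $r-q\lesssim r-p$, and Proposition \ref{P1025-3} together with the Schroeder--Bernstein theorem for finite projections (if $e\lesssim f\lesssim e$ with $e$ finite then $e\sim f$) yields $r-p\sim r-q$.

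The principal obstacle is the uniform norm control on the Rordam intertwiner $d_n$: the naive factorization $(a-\epsilon)_+=d^*bd$ does not bound $\|d\|$ when $\mathrm{spec}(b_n)$ accumulates at $0$. The key fact that $\pi(f_\delta(b_n))=f_\delta(r-q)=r-q$ (as $r-q$ is a projection) permits a preliminary replacement of $b_n$ by $f_\delta(b_n)$ to impose a spectral gap, after which the classical estimate $\|d_n\|^2\le\|a_n'\|/\delta$ becomes uniform in $n$ for fixed $\epsilon$.
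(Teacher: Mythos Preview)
Your overall architecture matches the paper's: lift, use Lemma~\ref{P1025-2} to match dimension functions, perturb via Proposition~\ref{P1025-1}, apply strict comparison, and finish with finiteness from Proposition~\ref{P1025-3}. But the step you yourself flag as ``the principal obstacle'' is not repaired by the fix you propose.

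The gap is this. From $d_\tau(a_n')<d_\tau(a_n)=d_\tau(b_n)$ and strict comparison you obtain $a_n'\lesssim b_n$, and hence a R\o rdam element $x_n$ with $x_n^*x_n=(a_n'-\ep)_+$ and $x_nx_n^*\in\Her(b_n)$. To conclude $\Pi(xx^*)\le r-q$ you need a \emph{uniform} inequality $x_nx_n^*\le c_n$ for some sequence $\{c_n\}$ with $\Pi(\{c_n\})=r-q$; mere membership $x_nx_n^*\in\Her(b_n)$ for each $n$ does not descend, since $\{b_n\}$ need not be a permanent projection lifting of $r-q$. Your remedy is to replace $b_n$ by $f_\dt(b_n)$ for a \emph{fixed} $\dt$, but then the factorization requires $(a_n'-\ep)_+\lesssim f_\dt(b_n)$, and you no longer have this: you only know $d_\tau(a_n')<d_\tau(b_n)$, not $d_\tau(a_n')<d_\tau(f_\dt(b_n))$, and the latter can fail. (R\o rdam does give $(a_n'-\ep)_+\lesssim (b_n-\dt_n)_+$ for some $\dt_n>0$, but $\dt_n$ depends on $n$, which is exactly the non-uniformity you are trying to avoid.) The estimate $\|d_n\|^2\le\|a_n'\|/\dt$ is also not the point: $\|y_n\|^2=\|(a_n'-\ep)_+\|\le 1$ is automatic, so boundedness of $y$ is free; what fails is the inequality $yy^*\le C(r-q)$.

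The paper sidesteps this by applying $f_{1/4}$ to \emph{both} sides at the outset: set $a_n:=f_{1/4}(\tilde r_n-\tilde v_n^*\tilde v_n)$ and $b_n:=f_{1/4}(\tilde r_n-\tilde v_n\tilde v_n^*)$. The ``Moreover'' clause of Lemma~\ref{P1025-2} gives $d_\tau(a_n)=d_\tau(b_n)$ directly for these cut-down elements, so Proposition~\ref{P1025-1} and strict comparison still yield $a_n'\lesssim b_n$. Now the spectral gap is built in on the target side: $x_nx_n^*\in\Her(f_{1/4}(\tilde r_n-\tilde v_n\tilde v_n^*))$ together with $\|x_nx_n^*\|\le 1$ forces $x_nx_n^*\le f_{1/8}(\tilde r_n-\tilde v_n\tilde v_n^*)$ uniformly in $n$, and this passes to $\Pi(xx^*)\le f_{1/8}(r-q)=r-q$.

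Your endgame via Schr\"oder--Bernstein for finite projections is correct but slightly more circuitous than the paper's: having produced $\Pi(x)$ with $\Pi(x)^*\Pi(x)=r-p$ and $\Pi(x)\Pi(x)^*\le r-q$, the paper sets $y=v+\Pi(x)$, checks the cross terms vanish, obtains $y^*y=r$ and $yy^*=q+\Pi(x)\Pi(x)^*\le r$, and then invokes finiteness of $r$ once to force $\Pi(x)\Pi(x)^*=r-q$. This avoids the symmetric repetition.
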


\begin{proof}
Set $B:=l^\infty(A)/I_{_{\Qw}}$ and
{{let}}
$\Pi:l^\infty(A)\to B$ denote the quotient map.

Let $p,q,r\in B$
be projections with $p,q\leq r,$
and assume that
there is a partial isometry 
$v\in B$
such that $v^*v=p,$ $vv^*=q.$ 
By Lemma \ref{Lem1026-1},
there are $e=\{e_1,e_2,...\}\in l^\infty(A)_+^{\bf 1}$
and $w=\{w_1,w_2,...\}\in l^\infty(A)$
such that $\pi(e)=r,$ $\pi(w)=v,$ and 
$ew=w=we.$
Then, by Lemma \ref{P1025-2}, we have 
$
d_\tau(f_{1/4}(e_n-w^*_nw_n))=
d_\tau(f_{1/4}(e_n-w_nw_n^*))
$
for all $\tau\in \Qw$
and 
$n\in \N.$
By Proposition \ref{P1025-1}, 
for each $n\in \N,$
there is $b_n\in A_+^{\bf 1}$
such that 
\beq
&&\|f_{1/4}(e_n-w_n^*w_n)-b_n\|_{_{2,\Qw}}<1/n,
\text{ and }
\label{e1025-3}
\\
&&
d_\tau(b_n)<d_\tau(f_{1/4}(e_n-w_n^*w_n))=
d_\tau(f_{1/4}(e_n-w_nw_n^*))
\text{ for all } \tau\in \Qw.
\eneq 
Since $A$ has strict comparison, 
we have $b_n\lesssim f_{1/4}(e_n-w_nw_n^*).$ 
By \cite[Proposition 2.4 (iv)] {Ror92UHF2},
for each $n\in\N,$
there is {{$x_n'\in A$ 
such that 
\beq\label{313}
(x_n')^*(x_n')=f_{1/n}(b_n)\andeqn x_n'(x_n')^*\in \Her(f_{1/4}(e_n-w_nw_n^*))
\eneq
Note that $f_{1/n}(b)(b-1/n)_+=(b-1/n)_+.$ Choose $x_n=x_n'(b-1/n)_+^{1/2}.$ Then}} 
\beq
\label{e1025-2}
x_n^*x_n=(b_n-1/n)_+
\quad
\text{ and } 
\quad
x_nx_n^*\in \Her(f_{1/4}(e_n-w_nw_n^*)).
\eneq
Note $\|x_n\|^2=\|x_n^*x_n\|=\|(b_n-1/n)_+\|\leq 1.$
The second part of \eqref{e1025-2} implies 
\beq
\label{e1025-4}
x_nx_n^*\leq f_{1/8}(e_n-w_nw_n^*).
\eneq

Let $c_n=e_n-w^*_nw_n$ and $d_n=e_n-w_nw_n^*$ ($n\in\N$).
Let $x=\{x_1,x_2,...\},$ $b{{=}} 
\{b_1,b_2,...\}
,c=\{c_1,c_2,...\},$  and $d=\{d_1,d_2,...\}\in l^\infty(A).$ 
Then
\beq
\Pi(x)^*\Pi(x)\overset{\eqref{e1025-2}}{=}\Pi(b)
\overset{\eqref{e1025-3}}{=}
\Pi(f_{1/4}(c))
=f_{1/4}(\Pi(c))
=f_{1/4}(r-p)
=r-p,
\label{e1025-5}
\eneq
and
\beq
\Pi(x)\Pi(x)^*\overset{\eqref{e1025-4}}{\leq}
\Pi(f_{1/8}(d))
=f_{1/8}(\Pi(d))
=f_{1/8}(r-q)
=r-q.
\label{e1025-6}
\eneq
Let $y=v+\Pi(x).$
Note that $v\Pi(x)^*=vp(r-p)\Pi(x)^*=0$
and $\Pi(x)v^*=(v\Pi(x)^*)^*=0.$
{{Also note that $v^*\Pi(x)=v^*q(r-q)\Pi(x)=0$
and $\Pi(x)^*v=(v^*\Pi(x))^*=0.$}}
Then  we compute (using also 
\eqref{e1025-5} and \eqref{e1025-6})
that
$${{r=y^*y\sim yy^*= \Pi(x)\Pi(x)^*+q \leq r.}}$$
By Proposition \ref{P1025-3},
$r$ is a finite projection.
Hence $\Pi(x)\Pi(x)^*+q=r.$  Consequently,
$\Pi(x)\Pi(x)^*=r-q.$  Together with
\eqref{e1025-5}, we obtain  $r-p\sim r-q.$ 
The theorem  then follows.
\end{proof}

\section{Tracial oscillations}
%
%
%
{In this section we will introduce the notion of tracial approximate oscillation for 
positive elements in a \CA\,  and 
present some basics around the notion.}

\begin{df}\label{DefOS1}
Let $A$ be a \CA\, with ${\wtd{QT}}(A)\setminus \{0\}\not=\emptyset.$ 
Let $S\subset {\wtd{QT}}(A)$
be a compact subset. 
Define, for each $a\in  (A\otimes {\cal K})_+,$
\beq
\omega(a)|_S&=&\inf\{\sup\{d_\tau(a)-\tau(c): \tau\in S\}: c\in  {\rm Ped}(\overline{a(A\otimes {\cal K})a}), \,0\le c\le 1\}
\eneq
(see A1 of \cite{eglnkk0}).
The number $\omega(a)|_S$ is called the (tracial) oscillation of $a$ on $S.$

If $a\in {\rm Ped}(A\otimes {\cal K})_+,$ then 
$\omega(a)|_S<\infty$ (see (2) of Proposition \ref{Pboundedness}).
Since $\tau(f_{1/n}(a))\nearrow d_\tau(a)$ (point-wisely) and $\wh{c}$ is continuous on compact 
set $S$ for each $c\in {\rm Ped}(\overline{a(A\otimes {\cal K})a})_+,$
one has
\beq
\omega(a)|_S=\lim_{n\to\infty}\sup\{d_\tau(a)-\tau(f_{1/n}(a)): \tau\in S\}.
\eneq

Note that, exactly as in  A1 of \cite{eglnkk0}, if $a, b\in (A\otimes {\cal K})_+^{\bf 1}$ and 
$a\sim b,$ then $\omega(a)|_S=\omega(b)|_S$ (cf. Proposition 
\ref{Pomega-1} 
below). 
%
%
%
For each $h\in {\rm LAff}_+(\wtd{QT}(A)),$ define 
\beq
\omega(h)|_S=\inf\{\sup\{h(\tau)-f(\tau): \tau\in S\}:  0<f<h, \, f\in \Aff_+(\wtd{QT}(A))\}.
\eneq
{{Recall that, in general, for any real function $f$ defined on $S,$ 
the oscillation of $f$ at $s\in S$ is defined as 
\beq
\omega(f)(s)=\inf\{ \sup\{|f(s')-f(s'')|: s',s''\in O(s)\}: O(s)\},
\eneq
where $O(s)$ is an open neighborhood of $s$ and the infimum  above is taken among 
all such $O(s).$
Denote by $\omega(f)|_S=\sup\{\omega(f)(s): s\in S\}.$
}}
Then
(recall that $S$ is  compact)
\beq\label{Eosc-1226}
\omega(a)|_S={{\omega(\wh{[a]})|_S}}
\eneq

Let $a\in (A\otimes {\cal K})_+.$ 
For each $\tau\in S,$  and its neighborhood {{$O(\tau),$}}  define
\beq
\omega_{O(\tau)}(a)|_S&=&\lim_{n\to\infty}\sup\{d_t(a)-t(f_{1/n}(a)): t\in O(\tau)\cap S\}.
\eneq
{{One may note that, if $O_1(\tau)\subset O_2(\tau),$
then $\omega_{O_1(\tau)}(a)|_S\le \omega_{O_2(\tau)}(a)|_S.$}}
Define
\beq
\omega(a)(\tau)|_S&=&\inf\{\omega_{O(\tau)}(a): \tau\in O(\tau)\cap S\}
\eneq
(the infimum is taken among all neighborhood $O(\tau)$ of $\tau$
in $S$). 
In other words,  when $S$ is fixed, $\omega(a)(\tau)|_S$ is the oscillation 
of the lower-semicontinuous function $\widehat{[a]}$ at $\tau.$ {{In particular, 
$\widehat{[a]}$ is continuous on $S$ if and only if $\omega(a)(\tau)|_S=0$ for all $\tau\in S.$}}

\end{df}

(1) If $c_n\in {{{\rm Her}(a)_+^{\bf 1}}}$ 
and $\tau(c_n)\nearrow d_\tau(a)$ for all $\tau\in S,$ then
\beq
\omega_{O(\tau)}(a)|_S&=&\lim_{n\to\infty}\sup\{d_t(a)-t(c_n): t\in O(\tau)\cap S\}.
\eneq

In general, 
one checks that
\beq
\label{F109-2}
\sup\{\omega(a)(\tau)|_S: \tau\in S\}=\omega(a)|_S.
\eneq

(2) For most of the time, we will assume that $A$ is simple
and $S$ is a compact subset  {{of}} ${\wtd{QT}}(A)\setminus \{0\}$ such that
${{\R_+\cdot S}}={\wtd{QT}}(A),$ {{for example,}} $S=T_b$
for some $b\in {\rm Ped}(A)_+\setminus \{0\}.$
{{Or,}} in the case that $A={\rm Ped}(A),$ $S=\Qw.$
When $S$ is understood, we may omit $S$ in the notation.
{{In fact, when $A$ is compact, we may write $\omega(a)$ instead of $\omega(a)|_{\Qw}.$}}

 (3) Let  $S_1, S_2\subset \wtd{QT}(A)\setminus \{0\}$ be compact {{subsets}}
such that $\R_+\cdot S_i=\wtd{QT}(A),$ {$i=1,2.$}
If $\omega(a)|_{S_1}=0,$ then $\omega(a)|_{S_2}=0$ (see also \ref{Pboundedness}).
In what follows we write $\omega(a)=0$ if $\omega(a)|_{S}=0$ 
for one compact subset of $\wtd{QT}(A)$ such that $\R_+\cdot S=\wtd{QT}(A).$

\begin{prop}{{{\rm{[\cite[A1]{eglnkk0}}]}}}\label{Pomega-1}
Let $a,b\in (A\otimes {\cal K})_+.$
Let $S\subset \wtd{QT}(A)$ 
be a compact subset.
If $a\sim b,$ then $\omega(a)(\tau)|_S=\omega(b)(\tau)|_S$ 
for all $\tau\in S,$
{{and $\omega(a)|_S=\omega(b)|_S.$}}
\end{prop}

{{\begin{proof}
Let $\tau\in S.$
Let $O(\tau)$ be any open neighborhood 
of $\tau.$
For any $\ep>0,$ 
there is $\dt>0$ 
such that 
\beq
\sup\{d_t(a)-t(f_{\dt}(a)):t\in O(\tau)\cap S\}<\omega_{O(\tau)}(a){{|_S}} +\ep.
\eneq
Since $a\sim b,$  there exists a sequence 
$x_n\in A\otimes {\cal K}$  such that $x_nx_n^*\to a$ and 
$x_n^*x_n\in {\rm Her}(b)=\overline{b(A\otimes {\cal K})b}.$ 
{{Since $a^{1/m}aa^{1/m}\to a$ as $m\to\infty,$
replacing $x_n$ by $a^{1/m(n)}x_n$ for some subsequence $\{m(n)\},$
we may assume that $x_nx_n^*\in \Her(a).$}}
{{Note that, for any $\dt>0,$
$$ 
\lim_{n\to\infty}\|f_\dt(x_nx_n^*)-f_\dt(a)\|=0.
$$}} 
{{Since $S$ is compact,  by (2) of Proposition \ref{Pboundedness}, 
$\sup\{\|t|_{\Her(a)}\|:t\in S\}<\infty.$ It follows that}}
there is $m\in\N$ such that 
\beq
\sup\{|t(f_\dt(a))-t(f_\dt(x_mx_m^*))|:t\in S\}<\ep.
\eneq
Note that $d_t(a)=d_t(b)$ for all $t\in S$
because of $a\sim b.$ 
Also note that 
$t(f_\dt(x_mx_m^*))=t(f_\dt(x_m^*x_m))$
for all $t\in S.$ Then 
\beq
\omega_{O(\tau)}(b)|_S
&\leq & \sup\{d_t(b)-t(f_\dt(x_m^*x_m)):t\in O(\tau)\cap S\}\\
&=& \sup\{d_t(a)-t(f_\dt(x_mx_m^*)):t\in O(\tau)\cap S\}\\
&\leq &
\sup\{d_t(a)-t(f_\dt(a)):t\in O(\tau)\cap S\}
\\
&&\quad + \sup\{|t(f_\dt(a))-t(f_\dt(x_mx_m^*))|:
t\in O(\tau)\cap S\}\\
&\leq & \omega_{O(\tau)}(a){{|_S}}  +\ep +\ep. 
\eneq
Since $\ep$ is arbitrary, we have 
$\omega_{O(\tau)}(b){{|_S}}\leq \omega_{O(\tau)}(a){{|_S}}.$
Exactly the same argument {{shows}} that 
$\omega_{O(\tau)}(a)|_S\leq \omega_{O(\tau)}(b){{|_S}}.$
Hence $\omega_{O(\tau)}(a)|_S=\omega_{O(\tau)}(b){{|_S}}.$
Since $O(\tau)$ is an arbitrary open neighborhood of $\tau,$ we have 
$$
\omega(b)(\tau)|_S
=\inf \{\omega_{O(\tau)}(b)|_S:\tau\in O(\tau){{\cap S}}\}
=
\inf \{\omega_{O(\tau)}(a){{|_S}}:\tau\in O(\tau){{\cap S}}\}
=\omega(a)(\tau)|_S.
$$
{{For the last identity in the proposition, we note that,}} by  \eqref{F109-2},
$$\omega(a)|_S
=\sup\{\omega(a)(\tau)|_S: \tau\in S\}
=\sup\{\omega(b)(\tau)|_S: \tau\in S\}=\omega(b)|_S.$$
\end{proof}

\begin{df}\label{DefOs2}
In the case that $A$ does not have strict comparison, we may still want to consider 
elements with  zero tracial oscillation. 
We write  $\omega^c(a)=0$ if 
$g_{1/n}(a)\cto 0$ (recall  Definition \ref{Nfg} for $g_\dt,$ and also see \ref{Dcnull-1}).
Let $\{a_n\}\in l^\infty(A)_+.$ We write 
$\lim_{n\to\infty} \omega^c(a_n)=0,$ if there exists $\dt_n\in (0, 1/2)$
such that $g_{\dt_n}(a_n)\cto 0.$
\end{df}

Note that, by Proposition \ref{Pboundedness}, if $A$ is compact, then the number 
$s$ in part (1) of the next proposition  is always finite. 
{{Let $\tau\in S.$  In the next lemma, we write  $O(\tau)$  for an open neighborhood of $\tau$ in $S.$}} 
%
\begin{prop}\label{Pbfos}
Let $A$ be a $\sigma$-unital 
{{\CA. 
Let}} $S\subset {\wtd{QT}}(A)\setminus \{0\}\not=\emptyset$ be a 
compact subset.

(1)  Suppose that $s:=\{\|\tau|_A\|: \tau\in S\}<\infty.$
 If $a, b\in (A\otimes {\cal K})_+^{\bf 1},$ 
then 
\beq
\omega(a)(\tau)|_S-\overline{d_\tau}(b)|_S\le \omega(a+b)(\tau)|_S \le \omega(a)(\tau)|_S+\overline{d_\tau}(b)|_S\tforal  \tau\in  S,
\eneq
where 
$\overline{d_\tau}(b)|_S:=
\inf\{\sup\{d_t(b):t\in O(\tau)\}: 
O(\tau)\,\, \mbox{open neighborhoods of}\,\, \tau\,\, {\mbox{in}} \,\,S\}.$

(2) If $a\perp b,$ 
then 
\beq\label{Pbfos-orth}
\max\{\omega(a)|_S, \omega(b)|_S\}\le \omega(a+b)|_S\le \omega(a)|_S+\omega(b)|_S.
\eneq
{Moreover,}
\beq\label{Pbfos-2}
\max\{\omega(a)(\tau)|_S, \omega(b)(\tau)|_S\}\le\omega(a+b)(\tau)|_S\le \omega(a)(\tau)|_S+\omega(b)(\tau)|_S\tforal \tau\in S.
\eneq

(3) For $\sigma$-unital simple \CA\, $A,$ $\omega^c(a)=0$ if and only if ${\rm Her}(a)$ has continuous scale.

\end{prop}

\begin{proof}

(1) For the  inequality on the left, 
let $\ep>0.$ {{Fix $\tau\in S.$}}
{{Choose an open  neighborhood}}  $O(\tau)$ 
of $\tau$ {{in $S$}} such that 
\beq
\label{F107-3}
&&\omega_{O(\tau)}(a+b)|_S \le
\omega(a+b)(\tau)|_S+\ep, 
\mbox{ and }\\ 
&&\sup\{d_t(b):t\in O(\tau)\}
\le 
\overline{d_\tau}(b)|_S+\ep.
\label{F107-4}
\eneq
Note that  there is $\dt>0$ such that 
\beq\label{F107-1}
\sup\{d_t(a+b)
-t(f_\dt(a+b)):t\in O(\tau)\}
\le \omega_{O(\tau)}(a+b){{|_S}}+\ep.
\eneq
Note that $a+b\approx_{\dt/2} (a-\dt/2)_++b.$
By \cite[Proposition 2.2]{Ror92UHF2}, 
we have $f_{\dt}(a+b)\lesssim (a-\dt/2)_++b.$
Then, for any $t\in {{O(\tau)}},$ we have 
\beq
\hspace{-0.2in}
t(f_{\dt}(a+b))&\leq &
d_t(f_{\dt}(a+b)) \leq  d_t((a-\dt/2)_++b)
\leq 
d_t((a-\dt/2)_+)+d_t(b)
\\&\overset{\eqref{F107-4}}{\leq}  &
t(f_{\dt/2}(a))+\overline{d_\tau}(b)|_S+\ep.
\eneq
Then, {{for $t\in O(\tau),$}} 
$d_t(a)+t(f_{\dt}(a+b)) \leq
d_t(a+b)+t(f_{\dt/2}(a))+\overline{d_\tau}(b)|_S+\ep.$
It follows that 
\beq
d_t(a)-t(f_{\dt/2}(a))
&\leq& 
d_t(a+b)-t(f_{\dt}(a+b))
+\overline{d_\tau}(b)|_S+\ep
\\
&\overset{\eqref{F107-1}}{\leq} & 
\omega_{O(\tau)}(a+b)(\tau)|_S
+\overline{d_\tau}(b)|_S+2\ep
\\&\overset{\eqref{F107-3}}{\leq} &
\omega(a+b)(\tau)|_S
+\overline{d_\tau}(b)|_S+3\ep\,\,\,{{\rforal t\in O(\tau).}}
\eneq
Hence 
\beq
\omega(a)(\tau)|_S
&\leq&
\omega_{O(\tau)}(a)(\tau)
\leq 
\sup\{d_t(a)-t(f_{\dt/2}(a)):t\in O(\tau)\}
\\
&\leq&
\omega(a+b)(\tau)|_S
+\overline{d_\tau}(b)|_S+3\ep. 
\eneq
Let $\ep\to 0,$ then we have the desired inequality. 

Now we turn to the inequality on the right. 
By definition, for any $\ep>0,$
there are 
{{open neighborhood $O(\tau)$  of $\tau$ in $S,$}} and  $\dt>0$ 
such that 
\beq
\label{F107-5}
&&
\sup\{d_t(b):t\in O(\tau)\}
\le \overline{d_\tau}(b)+\ep, \mbox{ and }
\\
\label{F107-6}&&
\sup\{d_t(a)-t(f_\dt(a)):t\in O(\tau)\}{{\le}}
\omega_{O(\tau)}(a){{|_S}}+\ep/2
\le \omega(a)(\tau)|_S+\ep.
\eneq
Note that $a\in \Her(a+b),$
then  
there is $\eta>0$ such that 
$f_\dt(a)\approx_{\ep/(s+1)} f_\eta(a+b)f_\dt(a) f_\eta(a+b).$ 
Hence,  
for any $t\in O(\tau),$ by \cite[(iii) Corollary II.2.5]{BH},
\beq\nonumber
d_t(a+b)-t(f_\eta(a+b))
&\leq &
d_t(b)+d_t(a)-t(f_\eta(a+b)f_\dt(a) f_\eta(a+b))
\\
&\leq &
d_t(b)+d_t(a)-t(f_\dt(a))+\ep
\\&\overset{\eqref{F107-5},
\eqref{F107-6}}{\leq}&
\overline{d_\tau}(b)+\omega(a)(\tau)|_S
+3\ep.
\eneq
Hence 
\beq\nonumber
\omega(a+b)(\tau)|_S &\leq&
\sup\{d_t(a+b)-t(f_\eta(a+b)):t\in O(\tau)\}\\
&\leq& \overline{d_\tau}(b)+\omega(a)(\tau)|_S+3\ep.
\eneq
Let $\ep\to 0,$ (1) then follows.

For (2),  we have, for any $1/2>\ep>0,$ since $a\perp b,$ 
\beq
d_\tau(a+b)-\tau(f_\ep(a+b))=(d_\tau(a)-\tau(f_\ep(a))+(d_\tau(b)-\tau(f_\ep(b))
\eneq
 for all $\tau\in S.$  Thus  
 \beq
 &&\omega(a+b)|_S\le \omega(a)|_S+\omega(b)|_S\andeqn\\
 &&\omega(a+b)(\tau)|_S\le \omega(a)(\tau)|_S+\omega{{(b)}}(\tau)|_S\rforal \tau\in S.
 \eneq
 {{Hence 
  the inequality on the right {{in}} \eqref{Pbfos-orth} holds.}}

 {{ 
Now we turn to  the inequality on the left of \eqref{Pbfos-orth}. 
Since $a\bot b,$  for all $\tau\in S$ and all $\eta>0,$ 
 \beq
 \hspace{-0.2in}
 d_\tau(a)-\tau(f_\eta(a))\leq
 ( d_\tau(a)-\tau(f_\eta(a)))+
 ( d_\tau(b)-\tau(f_\eta(b)))
 =
 d_\tau(a+b)-\tau(f_\eta(a+b)).
 \eneq
Thus  $
 \omega(a)|_S
 \leq 
  \sup\{d_\tau(a)-f_{\eta}(a): \tau\in S\}
  \leq 
   \sup\{d_\tau(a+b)-f_{\eta}(a+b): \tau\in S\}.
$
Since $\eta$ can be arbitrary small, we have 
\beq
\omega(a)|_S\le 
 \inf_{\eta>0}\sup\{d_\tau(a+b)-f_{\eta}(a+b): \tau\in S\}=\omega(a+b)|_S.
 \eneq
Similarly, $\omega(b)|_S\le \omega(a+b)|_S.$ 
Thus the inequality on the left of \eqref{Pbfos-orth} holds. 
 The estimates \eqref{Pbfos-2} can be checked similarly. 
}}

 For (3), recall that $\Her(a)$ has continuous scale if 
 only if $e_{m(n)}-e_n\cto 0$ for any $m(n)>n,$  where $e_n=f_{1/2^n}(a),$ $n\in \N.$
 {{Suppose that $\omega^c(a)=0.$ Then,}} 
 for each $n\in \N$ and any  $m(n)\ge n,$
 \beq
 e_{m(n)}-e_n\lesssim g_{1/n}(a)\cto 0.
 \eneq
 It follows that $\Her(a)$ has continuous scale.
 
 Conversely, suppose that $\Her(a)$ has continuous scale. 
 For any $d\in A_+\setminus \{0\},$ 
 choose $n_0\in \N$ such that, for any $m(n)>n\ge n_0,$ 
 \beq
 e_{m(n)}-e_n\lesssim d.
 \eneq
 Suppose that $k_0>2^{n_0}.$ Fix $k\ge k_0.$  {{For}} any $\ep\in (0,1/4),$
 there is $m(n)>n_0$ such that
 \beq
 f_\ep(g_{1/k}(a))\lesssim e_{m(n)}-e_{n_0}.
 \eneq
In other words, for any $\ep\in (0, 1/4),$
$
f_\ep(g_{1/k}(a))\lesssim d.
$
 It follows that $g_{1/k}(a)\lesssim d$ (for any $k\ge k_0$). This proves (3).
 %
\end{proof}

\begin{lem}\label{Los-dt}
Let $A$ be a 
\CA\, with a nonempty compact 
subset $S\subset \wtd{QT}(A)$ 
and let
$a\in {{(A\otimes {\cal K})_+.}}$
Then, for  any $\ep>0,$ there exists $\dt_0>0$ such that
\beq
\omega(f_\dt(a))|_S<\omega(a)|_S+\ep {{\tforal}} 0<\dt<\dt_0.
\eneq

\end{lem}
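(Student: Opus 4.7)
The plan is to transfer a near-optimizer for $\omega(a)|_S$ to a near-optimizer for $\omega(f_\dt(a))|_S$, with the error controlled uniformly on $S$ by a Dini-type argument. Fix $\ep > 0$; since $\omega(f_\dt(a))|_S < \infty$ for every $\dt > 0$ and the conclusion is vacuous otherwise, I assume $\omega(a)|_S < \infty$. By the defining infimum, pick $c_0 \in \Her(a)$ with $0 \le c_0 \le 1$ satisfying $\sup_{\tau \in S}(d_\tau(a) - \tau(c_0)) < \omega(a)|_S + \ep/2$, and set $c_\dt := f_\dt(a)^{1/2} c_0 f_\dt(a)^{1/2} \in \Her(f_\dt(a))$ with $0 \le c_\dt \le 1$, a valid test element for $\omega(f_\dt(a))|_S$.

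The key maneuver is the tracial identity $\tau(xx^*) = \tau(x^*x)$ for 2-quasitraces (cf.\ Lemma \ref{P1025-2}): applied to $x = f_\dt(a)^{1/2} c_0^{1/2}$, it gives $\tau(c_\dt) = \tau(c_0^{1/2} f_\dt(a) c_0^{1/2})$. In this form the error
$$\phi_\dt(\tau) := \tau(c_0) - \tau(c_\dt) = \tau(c_0) - \tau\bigl(c_0^{1/2} f_\dt(a) c_0^{1/2}\bigr)$$
is manifestly monotone decreasing as $\dt \searrow 0$, since $f_\dt(a)$ is then monotone increasing (the function $f_\dt$ decreases pointwise as $\dt$ increases) and conjugation by $c_0^{1/2}$ preserves order. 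Because $\{f_\dt(a)\}_{\dt > 0}$ is a norm-approximate unit for $\Her(a)$ and $c_0^{1/2} \in \Her(a)$, the net $c_0^{1/2} f_\dt(a) c_0^{1/2}$ converges to $c_0$ in norm, and lower semicontinuity of 2-quasitraces yields $\phi_\dt(\tau) \to 0$ pointwise on $S$. Moreover each $\phi_\dt$ is continuous on $S$, because $c_0$ and $c_0^{1/2} f_\dt(a) c_0^{1/2}$ both belong to ${\rm Ped}(A \otimes {\cal K})$, so $\tau$-evaluation at these elements is continuous in the topology on $\wtd{QT}(A)$.

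Dini's theorem, applied to the monotone decreasing net of continuous functions $\phi_\dt$ converging pointwise to the continuous $0$ on the compact set $S$, delivers uniform convergence: there exists $\dt_0 > 0$ with $\sup_{\tau \in S} \phi_\dt(\tau) < \ep/2$ for all $0 < \dt < \dt_0$. For such $\dt$, combining estimates yields
$$\omega(f_\dt(a))|_S \le \sup_\tau\bigl[d_\tau(f_\dt(a)) - \tau(c_\dt)\bigr] \le \sup_\tau\bigl(d_\tau(a) - \tau(c_0)\bigr) + \sup_\tau \phi_\dt(\tau) < \omega(a)|_S + \ep,$$
as required. The main technical subtlety lies in arranging the monotonicity Dini needs: it is not visible on the raw difference $\tau(c_0) - \tau(c_\dt)$, and it is precisely the tracial-identity rewrite placing $f_\dt(a)$ in sandwich position between two copies of $c_0^{1/2}$ that makes the monotone family transparent and the argument go through.
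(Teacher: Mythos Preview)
Your proof is correct and takes a genuinely different route from the paper's.

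The paper's argument is shorter and more direct. It simply chooses the test element $c_0=f_{2\dt_0}(a)$ from the start, where $\dt_0$ is picked so that $\sup_{\tau\in S}\bigl(d_\tau(a)-\tau(f_{2\dt_0}(a))\bigr)<\omega(a)|_S+\ep/2$; this relies on the fact (asserted after Definition~\ref{DefOS1} but not proved in detail there) that the infimum defining $\omega(a)|_S$ is already realized along the family $\{f_\eta(a)\}_\eta$. With this choice everything commutes, and for $0<\dt<\dt_0$ one has $f_{\sigma_0}(f_\dt(a))\ge f_{2\dt_0}(a)$ for any $\sigma_0\in(0,1)$, together with $d_\tau(f_\dt(a))\le d_\tau(a)$, which gives the bound in one line. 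Your argument, by contrast, starts from an \emph{arbitrary} near-optimizer $c_0$, transfers it to $c_\dt=f_\dt(a)^{1/2}c_0f_\dt(a)^{1/2}$, and uses the tracial identity plus Dini to control the error uniformly. This is more work but more self-contained: it in effect \emph{proves} the fact the paper takes for granted (that $f_\eta(a)$ is an admissible near-optimizer).

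Two small remarks on your write-up. First, pointwise convergence $\phi_\dt(\tau)\to 0$ follows from norm-continuity of $2$-quasitraces (Blackadar--Handelman, cf.\ the proof of Lemma~\ref{P1025-2}), not ``lower semicontinuity''; the conclusion is unaffected. Second, your Dini step needs $\tau\mapsto\tau(c_0)$ to be continuous on $S$, which you justify via $c_0\in{\rm Ped}(A\otimes\mathcal{K})$. This is true---$\Her(a)\subset{\rm Ped}(A\otimes\mathcal{K})$ whenever $a\in{\rm Ped}(A\otimes\mathcal{K})_+$---but it is not quite as immediate as you suggest, and a word of explanation (or a reference) would be appropriate. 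Alternatively one can bypass Dini entirely: from $\omega(a)|_S<\infty$ one deduces $M:=\sup_{\tau\in S}d_\tau(a)<\infty$, and then the Haagerup inequality $\tau(x+y)^{1/2}\le\tau(x)^{1/2}+\tau(y)^{1/2}$ yields the direct estimate $\phi_\dt(\tau)\le 2M\,\|c_0-h_\dt\|^{1/2}$, giving uniform convergence without any continuity hypothesis.
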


\begin{proof}
{{We may assume that $\omega(a)|_S<\infty.$}}
There exists $\dt_0>0$ such that, for all $0<\eta\le 2\dt_0,$  
\beq
d_\tau(a)-\tau(f_{\eta}(a))<\omega(a)+\ep/2\rforal \tau\in S.
\eneq
Then, there exists $\sigma_0>0$ such that, if $0<\dt<\dt_0,$
\beq
d_\tau(f_\dt(a))-\tau(f_{\sigma_0}(f_\dt(a)))\le d_\tau(a)-\tau(f_{2\dt_0}(a))<\omega(a)+\ep/2
\eneq
for all $\tau\in S.$
Note that $d_\tau(f_\dt(a))\le \tau(f_{\dt/2}(a))$ for all $\tau\in \wtd{QT}(A).$
It follows that (see (2) of Proposition \ref{Pboundedness})
\beq\nonumber
\omega(f_\dt(a))|_S<\omega(a)|_S+\ep.
\eneq
\end{proof}

\begin{prop}\label{Ppartomega}
Let $A$ be a \CA\, with a nonempty compact  subset $S\subset \wtd{QT}(A)$ and 
$a\in {{(A\otimes {\cal K})_+^{\bf 1}}}$
with $\omega(a)<\infty,$ and $0<\dt<1/2.$
Then, for any $\ep>0,$ there is $0<\eta<\dt/2$ and $n_0\in\N$ such that
\beq
&&\sup\{\tau(f_\eta(a))-d_\tau(f_\dt(a)):\tau\in   S
\} \ge \omega(a)|_S-\ep\tand\\
&&\sup\{\tau(a^{1/n_0})-d_\tau(f_\dt(a)):\tau\in S\}\ge \omega(a)|_S-\ep.
\eneq
\end{prop}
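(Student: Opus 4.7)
The plan is to exhibit a single trace $t_0\in S$ that nearly witnesses the oscillation $\omega(a)|_S$, and then to tailor $\eta$ and $n_0$ to this particular $t_0$ so that both suprema in the statement are bounded below by their values at $\tau=t_0$. To find $t_0$, I first fix $n_1\in\N$ with $1/n_1<\dt/2$. Since $f_{1/n_1}(a)\in\Her(a)$ is a positive contraction, it is a valid test element $c$ in the infimum defining $\omega(a)|_S$, so
\[
\omega(a)|_S\;\le\;\sup_{\tau\in S}\bigl(d_\tau(a)-\tau(f_{1/n_1}(a))\bigr).
\]
Hence I can pick $t_0\in S$ with $d_{t_0}(a)-t_0(f_{1/n_1}(a))>\omega(a)|_S-\ep/3$.

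The crux of the argument is the pointwise inequality $f_{1/n_1}(s)\ge (f_\dt(s))^{1/k}$ on $[0,1]$, valid for every $k\in\N$: when $s>\dt/2$ we have $f_{1/n_1}(s)=1\ge (f_\dt(s))^{1/k}$, and when $s\le\dt/2$ the right side vanishes while the left side is nonnegative. By functional calculus $f_{1/n_1}(a)\ge (f_\dt(a))^{1/k}$ for every $k$, and taking the supremum over $k$ yields $t_0(f_{1/n_1}(a))\ge \sup_k t_0\bigl((f_\dt(a))^{1/k}\bigr)=d_{t_0}(f_\dt(a))$. Combining with the choice of $t_0$,
\[
d_{t_0}(a)-d_{t_0}(f_\dt(a))\;\ge\; d_{t_0}(a)-t_0(f_{1/n_1}(a))\;>\;\omega(a)|_S-\ep/3.
\]

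To finish, I invoke that $t_0(f_\eta(a))\nearrow d_{t_0}(a)$ as $\eta\searrow 0$ (the definition of $d_{t_0}(a)$) and that $t_0(a^{1/n})\nearrow d_{t_0}(a)$ as $n\to\infty$ (a standard consequence of $a^{1/n}\ge \lambda f_\xi(a)$ for any $\xi>0$, $\lambda<1$, and $n$ sufficiently large). I choose $\eta\in(0,\dt/2)$ and $n_0\in\N$, depending on $t_0$, so that simultaneously $t_0(f_\eta(a))>d_{t_0}(a)-\ep/3$ and $t_0(a^{1/n_0})>d_{t_0}(a)-\ep/3$. Then
\[
t_0(f_\eta(a))-d_{t_0}(f_\dt(a))\;>\;\bigl(d_{t_0}(a)-d_{t_0}(f_\dt(a))\bigr)-\ep/3\;>\;\omega(a)|_S-\tfrac{2\ep}{3}\;\ge\;\omega(a)|_S-\ep,
\]
and the analogous estimate holds with $t_0(a^{1/n_0})$ in place of $t_0(f_\eta(a))$. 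Passing to suprema over $\tau\in S$ in the two displayed inequalities completes the proof. The only mildly subtle point is the operator inequality in the second paragraph: recognizing that when $1/n_1<\dt/2$ a single element $f_{1/n_1}(a)$ uniformly dominates the whole approximating family $\{(f_\dt(a))^{1/k}\}_k$ is what lets one replace the $\tau(c)$-type quantity from the definition of $\omega(a)|_S$ by the rank quantity $d_\tau(f_\dt(a))$.
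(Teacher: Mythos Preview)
Your proof is correct and follows essentially the same strategy as the paper: pick a single $\tau_0\in S$ nearly witnessing $\omega(a)|_S$ via a test element $c\in\Her(a)_+^{\bf 1}$ satisfying $\tau(c)\ge d_\tau(f_\dt(a))$, then tailor $\eta$ and $n_0$ to this $\tau_0$. The only cosmetic differences are that the paper takes $c=f_{\dt/2}(a)$ rather than your $f_{1/n_1}(a)$ (same reasoning, since both are identically $1$ on the support of $f_\dt$), and deduces the second inequality from the first via the uniform norm estimate $\|a^{1/n_0}f_\eta(a)-f_\eta(a)\|<\ep/4$ rather than directly from $t_0(a^{1/n})\nearrow d_{t_0}(a)$ at the single point $t_0$.
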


\begin{proof}
 Fix $0<\dt<1/2.$ For any $\ep>0,$ 
  there exists  $\tau_0\in S$
  such 
  that
  \beq
  d_{\tau_0}(a)-\tau_0(f_{\dt/2}(a))>\omega(a)|_S-\ep/4.
  \eneq
{{For this $\tau_0,$}} choose $0<\eta<\dt/2$ such  that
 $
 d_{\tau_0}(a)-\tau_0(f_\eta(a))<\ep/4.
 $
Then
\beq
\tau_0(f_\eta(a))-d_{\tau_0}(f_\dt(a))
&>& d_{\tau_0}(a)-{\tau_0}(f_{\dt/2}(a))-( d_{\tau_0}(a)-\tau_0(f_\eta(a)))\\
&>&\omega(a)|_S-\ep/4-\ep/4.
\eneq
Hence
\beq\label{56-add-1}
\sup\{{{\tau(f_\eta(a))}}-d_\tau(f_\dt(a)):\tau\in   S
\} \ge \omega(a)|_S-\ep/2.
\eneq
To see the second inequality, choose 
 $n_0\in \N$ such that 
\beq
\|a^{1/n_0}f_\eta(a)-f_\eta(a)\|<\ep/4.
\eneq
It follows that, for all $\tau\in S,$
\beq
\tau(a^{1/n_0})-d_\tau(f_\dt(a))&\ge& \tau(a^{1/n_0}f_\eta(a))-d_\tau(f_\dt(a))\\
&\ge& \tau(f_\eta(a))-d_\tau(f_\dt(a))-\ep/4.
\eneq
Therefore, by \eqref{56-add-1},
$
\sup\{\tau(a^{1/n_0})-d_\tau(f_\dt(a)): \tau\in S
\}\ge \omega(a)|_S-\ep.
$
%
\end{proof}

\begin{df}\label{defOs2}
Let $A$ be a \CA, $S\subset {\wtd{QT}}(A)
$ be a compact 
subset  and 
let $a\in  {{(A\otimes {\cal K})_+.}}$
Put $B=\Her(a)$ and $I_{_{S,B}}=\{\{b_n\}\in l^\infty(B): \lim_{n\to\infty}\|b_n\|_{_{2, S}}
=0\}.$
Denote by $\Pi_S: l^\infty(B)\to l^\infty(B)/I_{_{S,B}}$ and 
$\Pi_{cu}: l^\infty(B)\to l^\infty(B)/N_{cu}(B)$
(in the case that $B$ has no one-dimensional hereditary \SCA)
the quotient maps, respectively.

{{Let}} $A$ {{be}} a $\sigma$-unital \CA\, and  {{$a\in (A\otimes {\cal K})_+$
with $\|a\|_{_{2, S}}<\infty.$}}
 Define 
(here we assume that $b_n\in {\rm Ped}(A\otimes {\cal K})_+$ {{and $\Her(a)=\overline{a(A\otimes {\cal K})a}$}}\,)
\beq\nonumber
\Omega^T(a)|_S&=& \inf\{\|\Pi_S(\iota(a)-\{b_n\})\|: \{b_n\}\in l^\infty(\Her(a))_+, \|b_n\|\le \|a\|, \,\, \lim_{n\to\infty}\omega(b_n)|_S=0\},\\\nonumber
\Omega^T_{T}(a)|_S&=& \inf\{\limsup_{n}\|a-b_n\|_{_{2, S}}: b_n\in {\rm Her}(a)_+, \|b_n\|\le \|a\|, \lim_{n\to\infty}\omega(b_n)|_S=0\},\\\nonumber
\Omega_C^T(a)|_S&=&\inf\{\|\Pi_{cu}(\iota(a)-\{b_n\}{{)}}\|: \{b_n\}\in l^\infty(\Her(a))_+, \|b_n\|\le \|a\|, \lim_{n\to\infty} \omega(b_n)|_S=0\},
\\\nonumber
\Omega^{C}(a)&=&\inf\{\|\Pi_{cu}(\iota(a)-\{b_n\}{{)}}\|: \{b_n\}\in l^\infty({\rm Her}(a))_+, \,\, \|b_n\|\le \|a\|,\,\,\lim_{n\to\infty} \omega^c(b_n)=0\}\andeqn\\
\Omega^N(a)|_S&=&\inf \{\|\pi_\infty(\iota(a)-\{b_n\})\|: b_n\in l^\infty({\rm Her}(a))_+, \lim_{n\to\infty} \omega(b_n)|_S=0\}.
\eneq

{{We will focus on $\Omega^T(a).$}}

{{ (1)  
Note, for the convenience,   in the definition above, 
we always assume that $\|a\|_{_{2, S}}<\infty.$}}

{{(2) Note also that $\lim_{n\to\infty}\|af_{1/n}(a)-a\|=0$ and 
$${{af_{1/n}(a)=a^{1/2}f_{1/n}(a)a^{1/2}\sim f_{1/n}(a),\,\,\, n\in\N.}}$$
Hence,
if $\omega(a)|_S=0,$  by Lemma \ref{Los-dt},
then $\Omega^N(a)|_S=\Omega^T(a)|_S={{\Omega^T_T(a)|_S}}=0.$}}

One may call $\Omega^T(a)|_S$ the tracial approximate oscillation of $a$ (on $S$).
If $\Omega^T(a)|_S=0,$ we say $a$ has  approximately tracial oscillation zero (on $S$).
Often, when $S$ is understood, we may omit $S$ in notation above.
In particular, when $A$ is algebraically simple, we write $\Omega^T(a):=\Omega^T(a)|_{_{\Qw}}.$

{{(3) It is, perhaps, convenient to use (1) and (2) of Proposition \ref{Omega-1} for the definition 
of $\Omega^T(a)|_S=0.$}} 
We would like to mention that, for the definition of  $\Omega^T_C(a)$ and $\Omega^C(a),$ 
we  also require that \CA\, $A$ has no one dimensional hereditary \SCA s (see \ref{DNcu}).

(4) Moreover, since $\omega(0){{|_S}}=0,$  we have 
{{\beq
\Omega^T(a)|_S\le \|\Pi_S(\iota(a))\|\le {{\|a\|,}} \andeqn
 \Omega_C^T(a)|_S,\,\Omega^C(a),\,\Omega^N(a)\le \|a\|.
 \eneq
 }}
When $A$ is unital, $\Omega^T(a){{|_S}}=0$ for any $a\in GL(A)\cap A_+,$ since 
$\omega(1_A)=0.$ 

{{(5) In the case that $A$ is a $\sigma$-unital algebraically  simple \CA\, with strict comparison, 
if  $S=\Qw$ and  
$\Omega^T_T(a)|_S=0,$ then $\Omega^T(a)|_S=\Omega^N(a)|_S=\Omega^T_C(a)|_S=\Omega^C(a)=0$
(see  (2) of Proposition \ref{Omega-1}, (2) after Definition \ref{DTos-2} and Proposition \ref{PN=T}).}}
\end{df}

\begin{prop}\label{Omega-1}
Let $A$  be a 
\CA, ${{a\in( A\otimes {\cal K})_+^{\bf 1}}}$
and $S\subset \wtd{QT}(A)$ a compact subset
{{such that $\|a\|_{_{2, S}}<\infty.$}}

(1) If $\Omega^T(a)|_S=0,$ then 
there exists 
a sequence
$\{b_n\}\subset\ 
{{{\rm Ped}(\Her(a))_+^{\bf 1}}}$ 
such that
\beq
\lim_{n\to\infty}\omega(b_n)|_S=0  {{\tand}}
\|\Pi_S(\iota(a)-\{b_n\})\|=0,
\eneq
 and, if $\Omega^T_T(a)|_S=0,$ there exists $b_n\in {\rm Ped}(\Her(a))_+^{\bf 1},$ 
 $n\in \N,$ such that 
 \beq\label{PT=TT}
\lim_{n\to\infty}\omega(b_n)|_S=0  {{\tand}}
\lim_{n\to\infty}\|a-b_n\|_{_{2, S}}=0.
\eneq

(2) $\Omega^T(a)|_S=0$ 
if and only if  $\Omega_T^T(a)|_S=0.$

(3) If there exists $M\ge 1$ such that
$$\inf\{ \|\Pi_S(\iota(a)-\{b_n\})\|: 
b_n\in {{{\rm Ped}({\rm Her}(a))_+}},\, \|b_n\|\le M, \,\, \lim_{n\to\infty}\omega(b_n)|_S=0\}=0,$$
then  
$\Omega^T(a)|_S=0.$

{{(4) If $\{a_n\}\in (I_{_{S}})_+^{\bf 1},$ then there is $\{b_n\}\in (I_{_{S}})^{\bf 1}_+, $
$n\in \N,$ such that\\
{{ \beq
 \lim_{n\to\infty}\sup\{d_\tau(b_n): \tau\in S\}=0\andeqn
\lim_{n\to\infty} \|a_n-b_n\|=0.
\eneq}}
}} 

\end{prop}

\begin{proof}
Recall that  $B={{\Her(a)}}$ and  $\Pi_S: l^\infty(B)\to l^\infty(B)/I_{_{S,B}}$ is the quotient map.

For (1), there is, for each $k\in \N,$ a sequence 
$\{c_n^{(k)}\}\in l^\infty({{{\rm Ped}(\Her(a))}})$ with $0\le c_n^{(k)}\le 1$ and $\lim_{n\to\infty}\omega(c_n^{(k)})|_S=0$
such that
\beq
\|\Pi_S(\iota(a)-\{c_n^{(k)}\})\|<1/k.
\eneq
Therefore, for each $k\in \N,$ there is $n(k)\in \N$ such that
\beq
\|a-c_{n(k)}^{(k)}\|_{_{2,S}}
<2/k\andeqn \omega(c_{n(k)}^{(k)})<1/k.
\eneq
Define $b_k=c_{n(k)}^{(k)},\,\, k\in \N.$ Then $0\le b_k\le 1,$ $b_k\in {{{\rm Ped}(\Her(a))}}$ and 
$\lim_{n\to\infty}\omega(b_n)|_S=0.$ Moreover
\beq
\|a-b_n\|_{_{2, S}}<2/n\rforal n\in \N.
\eneq
It follows that ${{\|}}\Pi_S(\iota(a)-\{b_n\})\|=0,$ and \eqref{PT=TT} holds.
A similar proof above shows that, if $\Omega^T_T(a)=0$ implies that there is $b_n\in 
{\rm Ped}(\Her(a))_+^{\bf 1}$ such that \eqref{PT=TT} holds.

For (2),  we note
that \eqref{PT=TT} implies that $\iota(a)-\{b_n\}\in I_{_{S}}.$
So, if $\Omega^T_T(a)|_S=0,$  {{then}}
$\|\Pi_S(\iota(a)-\{b_n\})\|=0.$ Hence 
$\Omega^T(a)|_S=0.$  The converse also holds.

To show (3) holds, 
suppose that  there are $c_n^{(k)}\in {{{\rm Ped}}}(\Her(a))$ such that  $0\le c_n^{(k)}\le M,$ 
$\lim_{n\to\infty}\omega(c_n^{(k)}){{|_S}}=0$ and 
\beq
\|\Pi_S(\iota(a)-\{c_n^{(k)}\}{{)}}\|<1/k\rforal k\in \N.
\eneq
By the proof of (1),  one obtains $b_n\in {{{\rm Ped}}}(\Her(a))_+$ with $0\le b_n\le M$ such that
\beq
\lim_{n\to\infty}\omega(b_n)|_S=0\andeqn
\Pi_S(\iota(a))=\Pi_S(\{b_n\}).
\eneq
Define $g\in C_0((0, \|a\|+M])$ by $g(t)=\|a\|$ if $t\in [\|a\|, \|a\|+M]$ and $g(t)=t$ if $t\in [0,\|a\|].$ 
Since $M$ is fixed and $g(a)=a,$ we have
\beq\label{56-n2}
\Pi_S(a)=\Pi_S(\{g(b_n)\}). 
\eneq
Put $c_n=g(b_n).$ Then $c_n\in {{{\rm Ped}}}(\Her(a))_+$  and $\|c_n\|\le \|a\|.$ Since $c_n=g(b_n)\sim b_n,$
{{then}} $\lim_{n\to\infty}\omega(c_n){|_{S}}=0.$ Therefore, by \eqref{56-n2}, $\Omega^T(a){{|_S}}=0.$

(4) Since $\{a_n\}\in (I_{_{S}})_+^{\bf 1},$ $\lim_{n\to\infty}\|a_n\|_{_{2, S}}=0.$
Choose $\dt_n:=\sqrt{\|a_n\|_{2,S}+1/n}$ and $b_n=(a_n-\dt_n)_+,$ $n\in \N.$
Then $\|b_n\|\le 1,$ $n\in \N,$ 
$\|b_n\|_{_{2, S}}\le \|a_n\|_{_{2, S}}\to 0$ and $\lim_{n\to\infty}\|a_n-b_n\|=0.$

Note that 
{{\beq
f_\eta((x-\dt_n)_+^2)\le \chi_{[\dt_n,+\infty)}(x)\le (1/\dt_n^2)x^2\rforal x \in\mathbb{R}_+,
\eneq}} 
where $\eta\in(0,1)$ and $\chi_{[\dt_n,+\infty)}$ is 
the characteristic function of $[\dt_n,+\infty).$
 Then 
for all $\tau\in S,$ 
\beq
d_\tau( b_n)=d_\tau(b_n^2)
=\sup_{\eta>0}\tau(f_\eta({{(a_n-\dt_n)_+^2}}))
\le 
 (1/\dt_n^2)\tau(a_n^2)
\le  (1/\dt_n^2) \|a_n\|^2_{_{2,S}}
=\|a_n\|_{_{2, S}}.
\eneq
It follows that $\lim_{n\to\infty}\sup\{d_\tau(b_n):\tau\in S\}=0.$
\end{proof}

{{The next proposition also justifies that we often write 
$\Omega^T(a)=0$ instead of $\Omega^T(a)|_S$ for some compact set $S\subset \wtd{QT}(A)\setminus \{0\}$
such that $\R_+\cdot S=\wtd{QT}(A).$}}

\begin{prop}\label{PMfinite}
Let $A$ be a \CA\, and $S_1, S_2\subset \wtd{QT}(A)\setminus \{0\}$ be 
nonempty compact subsets such that $\R_+\cdot S_1=\R_+ \cdot S_2=\wtd{QT}(A).$
Suppose that $a \in {{(A\otimes {\cal K})_+^{\bf 1}}}.$
Then 
$\Omega^T(a)|_{S_1}=0$ ($\Omega^T_C(a)|_{S_1}=0,$ or 
$\Omega^N(a)|_{S_1}=0$) if and only if $\Omega^T(a)|_{S_2}=0$
($\Omega^T_C(a)|_{S_2}=0,$ or 
$\Omega^N(a)|_{S_2}=0$).
Moreover, if ${{a\in {\rm Ped}(A\otimes {\cal K})_+}}$ and $\Omega^T(a)|_{S_a}=0,$ 
where $S_a=\overline{\{\tau\in \wtd{QT}(A): \|\tau|_{\Her(a)}\|=1\}}^w,$
then $\Omega^T(a)|_{S_1}=0.$
\end{prop}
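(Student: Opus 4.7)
The plan is to exploit the hypothesis $\R_+\cdot S_1=\R_+\cdot S_2=\wtd{QT}(A)$ via Proposition~\ref{Pboundedness}(1) to obtain mutual bi-Lipschitz comparisons of the seminorms $\|\cdot\|_{_{2,S_i}}$ and of the oscillation functionals $\omega(\cdot)|_{S_i}$. Specifically, applying Proposition~\ref{Pboundedness}(1) twice (with the roles of $S_1,S_2$ interchanged) yields constants $L_1,L_2\in\R_+$ such that
\[
S_2\subset\{rs:s\in S_1,\,r\in[0,L_1]\}\qquad\text{and}\qquad S_1\subset\{rs:s\in S_2,\,r\in[0,L_2]\}.
\]

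From these inclusions I would read off, for every $x\in A$, the bounds $\|x\|_{_{2,S_2}}\le\sqrt{L_1}\,\|x\|_{_{2,S_1}}$ and $\|x\|_{_{2,S_1}}\le\sqrt{L_2}\,\|x\|_{_{2,S_2}}$. The same scaling argument applied to the quantity $d_\tau(b)-\tau(c)$ (which for $0\le c\in\overline{b(A\otimes{\cal K})b}$ satisfies $d_{rs}(b)-rs(c)=r(d_s(b)-s(c))$) gives
\[
\omega(b)|_{S_2}\le L_1\,\omega(b)|_{S_1}\qquad\text{and}\qquad \omega(b)|_{S_1}\le L_2\,\omega(b)|_{S_2}.
\]
Consequently, for any bounded sequence $\{b_n\}$ one has $\lim_n\omega(b_n)|_{S_1}=0$ if and only if $\lim_n\omega(b_n)|_{S_2}=0$.

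With these equivalences in hand, the proposition follows by a direct unwinding of the definitions. Suppose $\Omega^T(a)|_{S_1}=0$; by part (2) of Proposition~\ref{Omega-1} there exists $\{b_n\}\subset\Her(a)_+^{\bf 1}$ with $\lim_n\omega(b_n)|_{S_1}=0$ and $\lim_n\|a-b_n\|_{_{2,S_1}}=0$. Both properties transfer to $S_2$ via the inequalities above, and invoking Proposition~\ref{Omega-1}(2) in the opposite direction gives $\Omega^T(a)|_{S_2}=0$. The argument for $\Omega^N$ is even simpler because the operator norm in the definition does not depend on the choice of $S$, so only the oscillation hypothesis needs to be transferred. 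For $\Omega^T_C$, note that membership in $N_{cu}$ depends only on the Cuntz-null property of $\{\iota(a)-b_n\}$ in $l^\infty(A)$ and is independent of $S$; again only the condition $\lim_n\omega(b_n)|_{S_i}=0$ requires transfer, which is precisely what we have established. There is no serious obstacle here: the only subtlety to watch is checking that the scaling bound $d_{rs}(b)-rs(c)=r(d_s(b)-s(c))$ applies uniformly in $r\in[0,L_i]$, which is immediate from the definition of $d_\tau$.
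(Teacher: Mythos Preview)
Your proof is correct and follows essentially the same approach as the paper: both invoke Proposition~\ref{Pboundedness}(1) to bound $S_2$ by scalar multiples of $S_1$ (and vice versa), then use Proposition~\ref{Omega-1} to transfer the vanishing of $\omega(b_n)$ and $\|a-b_n\|_{_{2,S_i}}$ from one index set to the other. Your version is slightly more explicit in writing out the two-sided comparison and in noting the correct scaling factor $\sqrt{L_i}$ for the $\|\cdot\|_{_{2,S}}$-seminorm (the paper simply writes $L$, which suffices for the limit argument), and in separating the $\Omega^N$ and $\Omega^T_C$ cases by observing which ingredients are $S$-independent.
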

\begin{proof}
It follows from  (1) of Proposition \ref{Pboundedness} that 
there is $L\in \R_+$ such {{that}}  
$$S_2\subset \{r s: s\in S_1 \andeqn r\in [0, L]\}.$$
If  $\Omega^T(a)|_{S_1}=0,$  then,  by Proposition \ref{Omega-1},
there exists a sequence $b_n\in {{{\rm Ped}(\Her(a))}}$ 
with $\|b_n\|\le \|a\|$
such that  
 $\lim_{n\to\infty}\omega(b_n)|_{S_1}=0$ {{and}} 
$\lim_{n\to\infty}\|a-b_n\|_{_{2, S_1}}=0.$
It follows that 
$$
\lim_{n\to\infty}\omega(b_n)|_{S_2}\le  \lim_{n\to\infty}L\cdot \omega(b_n)|_{S_1}=0\andeqn \lim_{n\to\infty}\|a-b_n\|_{_{2, S_2}}\le \lim_{n\to\infty}L\|a-b_n\|_{_{2, S_1}}=0.$$
 Then $\Omega^T(a)|_{S_2}=0.$ 
 Exactly the same argument shows that 
 if $\Omega_C^T(a)|_{S_1}=0$ (or $\Omega^N(a)|_{S_1}=0$), then
 $\Omega_C^T(a)|_{S_2}=0$  (or $\Omega^N(a)|_{S_2}=0$).

To see the last statement, we note that $(S_1)|_{\Her(a)}$ is bounded (see (2) of Proposition \ref{Pboundedness}). {{In other words, there is $L>0$ such that
$(S_1)|_{\Her(a)}\subset \{r\cdot \tau: \tau\in S_a,\,\, r\in [0, L]\}.$}}
\end{proof}

\begin{lem}\label{Leae}
Let $A$ be a \CA,  $a\in (A\otimes {\cal K})_+$
{{and}}
$S\subset {{{\wtd{QT}}(A)}}$
 a compact subset.
Suppose 
${{e\in {\rm Her}(a)_+}}.$ 
Then 
$e\sim a^{1/2}ea^{1/2}\sim e^{1/2}ae^{1/2},$
and
\beq\label{F109-1}
&&\omega(e)(\tau)|_S={{\omega(a^{1/2}ea^{1/2})(\tau)|_S=\ }}
\omega(e^{1/2}ae^{1/2})(\tau){{|_S}}\tforal \tau\in S,{\mbox{ and }}
\\\label{F109-3}
&&\omega(e)|_S= \omega(a^{1/2}ea^{1/2})|_S= 
\omega(e^{1/2}ae^{1/2})|_S.
\eneq
\end{lem}

\begin{proof}
Since $e\in {\rm Her}(a)_+^{\bf 1},$ we compute that 
\beq
\lim_{n\to\infty}\|(a+1/n)^{-1/2} a^{1/2} e a^{1/2}(a+1/n)^{-1/2}-e\|=0.
\eneq
It follows that $e\sim a^{1/2}ea^{1/2}\sim e^{1/2}ae^{1/2}\le e.$
Therefore, by 
{{Proposition}} \ref{Pomega-1}, \eqref{F109-1} and \eqref{F109-3} hold.
\end{proof}

Let us  end this section with the following  fact.
The proof could be simplified when $QT(A)=T(A).$
Recall that when $A={\rm Ped}(A),$ we write $\omega(a)=\omega(a)|_{_{\Qw}}$ and 
{{$\Omega^T_T(a)=\Omega^T_T(a)|_{_{\Qw}}.$}}

 \begin{prop}\label{Pfirstosc}
 Let $A$  be  an algebraically simple \CA\, 
 with $QT(A)\not=\emptyset.$
 Suppose that $A$ has strict comparison and $\Gamma$ is surjective (see Definition \ref{DGamma}).
 {{Then}}, for any $a\in {\rm Ped}(A\otimes {\cal K})_+,$  
 \beq\label{Pfirstosc-1}
 \Omega^T_T(a)\le \|a\|\sqrt{\omega(a)}. 
 \eneq
 \end{prop}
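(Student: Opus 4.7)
The plan is to build, for each $n$, an element $b_n\in\Her(a)_+$ with $\|b_n\|\le\|a\|$, $\omega(b_n)=0$, and $\|a-b_n\|_{_{2,\Qw}}^2\le\|a\|^2(\omega(a)+o(1))$; then
\[
\Omega^T(a)\le\limsup_n\|a-b_n\|_{_{2,\Qw}}\le\|a\|\sqrt{\omega(a)}.
\]
We may assume $a\ne 0$; put $M:=\|a\|$ and $\alpha:=\omega(a)$, which is finite by Proposition \ref{Pboundedness}.

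First I would use surjectivity of $\Gamma$ to produce auxiliary elements with continuous rank functions. For each large $n$, $f_{1/n}(a)\ne 0$, and $\rho_n(\tau):=(1-1/n)\tau(f_{1/n}(a))$ belongs to $\Aff_+(\wtd{QT}(A))$ because $f_{1/n}(a)\in{\rm Ped}(A\otimes\mathcal K)$. Surjectivity of $\Gamma$ yields $h_n\in(A\otimes\mathcal K)_+^{\bf 1}$ with $d_\tau(h_n)=\rho_n(\tau)$. Simplicity forces $\tau(f_{1/n}(a))>0$ on $\wtd{QT}(A)\setminus\{0\}$, so $d_\tau(h_n)<\tau(f_{1/n}(a))\le d_\tau(a)$ strictly; strict comparison then gives $h_n\lesssim a$, and standard Cuntz arguments let us replace $h_n$ by a Cuntz-equivalent element in $\Her(a)$. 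Continuity of $d_\tau(h_n)$ forces $\omega(h_n)=0$. The continuous functions $\tau\mapsto\tau(h_n^{1/k})$ increase to the continuous $d_\tau(h_n)$ on the compact set $\Qw$, so by Dini's theorem the convergence is uniform; choose $k(n)$ with $\sup_{\tau\in\Qw}(d_\tau(h_n)-\tau(h_n^{1/k(n)}))<1/n$ and set $e_n:=h_n^{1/k(n)}\in\Her(a)$. Then $0\le e_n\le 1$, $e_n\sim h_n$, so $\omega(e_n)=0$.

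Define $b_n:=a^{1/2}e_na^{1/2}\in\Her(a)_+$; then $0\le b_n\le a$, and since $b_n\sim e_n^{1/2}ae_n^{1/2}$ (via $x=e_n^{1/2}a^{1/2}$), Proposition \ref{Pomega-1} together with Lemma \ref{Leae} gives $\omega(b_n)=\omega(e_n)=0$. The crucial estimate is
\[
\tau(a-b_n)\le M\bigl(d_\tau(a)-\tau(e_n)\bigr)\quad(\tau\in\Qw).
\]
To prove it I would work internally in $\Her(a)$: with $p_m:=f_{1/m}(a)$, the elements $y_m:=(M+1/m-a)p_m\in\Her(a)_+$ are positive by functional calculus (the factor $p_m$ kills the region where $M+1/m-a$ could be negative). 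Since $0\le e_n\le 1$ one has $y_m^{1/2}e_ny_m^{1/2}\le y_m$, and the trace-like identity $\tau(x^*x)=\tau(xx^*)$ for 2-quasitraces gives
\[
\tau(e_n^{1/2}y_me_n^{1/2})=\tau(y_m^{1/2}e_ny_m^{1/2})\le\tau(y_m).
\]
Because $\{p_m\}$ is an approximate unit for $\Her(a)\ni e_n^{1/2}$, one has $p_me_n^{1/2}\to e_n^{1/2}$ and $p_ma\to a$ in norm, so letting $m\to\infty$ the left side tends to $M\tau(e_n)-\tau(e_n^{1/2}ae_n^{1/2})$ and the right side to $Md_\tau(a)-\tau(a)$. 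Using $\tau(e_n^{1/2}ae_n^{1/2})=\tau(a^{1/2}e_na^{1/2})=\tau(b_n)$ and rearranging yields the claimed inequality.

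To conclude, $0\le a-b_n\le a$ implies $\tau((a-b_n)^2)\le M\tau(a-b_n)\le M^2(d_\tau(a)-\tau(e_n))$. From $\tau(e_n)\ge d_\tau(h_n)-1/n=(1-1/n)\tau(f_{1/n}(a))-1/n$ one obtains
\[
d_\tau(a)-\tau(e_n)\le\bigl(d_\tau(a)-\tau(f_{1/n}(a))\bigr)+\tfrac{1}{n}\bigl(\tau(f_{1/n}(a))+1\bigr).
\]
The supremum over $\Qw$ of the first summand converges to $\alpha$ by definition of $\omega(a)$, while Proposition \ref{Pboundedness} uniformly bounds $\tau(f_{1/n}(a))\le\tau(a)$ on the compact set $\Qw$, making the second $O(1/n)$. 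Therefore $\|a-b_n\|_{_{2,\Qw}}^2\le M^2(\alpha+o(1))$, and $\Omega^T(a)\le M\sqrt\alpha=\|a\|\sqrt{\omega(a)}$. The main obstacle is the key estimate $\tau(a-b_n)\le M(d_\tau(a)-\tau(e_n))$; the subtle point is that $M\cdot 1-a$ does not lie in $A$, which forces the internal approximation by the positive elements $y_m\in\Her(a)$ and an essential use of the 2-quasitrace identity.
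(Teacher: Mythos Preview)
Your overall strategy matches the paper's: use surjectivity of $\Gamma$ to produce elements with continuous rank, use strict comparison to place them inside $\Her(a)$, form $b_n=a^{1/2}e_na^{1/2}$, and estimate $\|a-b_n\|_{_{2,\Qw}}$. The structure is fine and the minor sloppiness (``replace $h_n$ by a Cuntz-equivalent element in $\Her(a)$'' really only gives an element equivalent to $(h_n-\epsilon)_+$; one should land in $\Her(f_{1/2n}(a))$ via $d_\tau(h_n)<\tau(f_{1/n}(a))\le d_\tau(f_{1/2n}(a))$) is easily repaired.

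There is, however, a genuine gap in your key estimate. You pass from $\tau(e_n^{1/2}y_me_n^{1/2})$ to ``$M\tau(e_n)-\tau(e_n^{1/2}ae_n^{1/2})$'' as $m\to\infty$. Norm continuity of $\tau$ only gives $\tau(Me_n-e_n^{1/2}ae_n^{1/2})$, and splitting this as a difference requires $e_n$ and $e_n^{1/2}ae_n^{1/2}$ to commute, i.e.\ $e_na=ae_n$, which you have no reason to expect. The same problem recurs when you pass from ``$\tau(a)-\tau(b_n)$'' to $\tau(a-b_n)$. You correctly identified that $M\cdot 1-a\notin A$ is the obstacle, but the workaround via $y_m$ does not avoid the non-linearity of 2-quasitraces.

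The paper handles this by landing $x_nx_n^*$ not merely in $\Her(a)$ but in $\Her(f_{1/4n}(a))$, so that $f_{1/8n}(a)$ acts as a unit on $x_nx_n^*$; then $f_{1/8n}(a)$ and $x_nx_n^*$ \emph{commute}, and the crucial step $\tau(f_{1/8n}(a)-x_nx_n^*)=\tau(f_{1/8n}(a))-\tau(x_nx_n^*)$ is legitimate. An alternative fix, closer to your argument and used elsewhere in the paper (Lemma~\ref{LTVD-2}): extend each $\tau$ to $\widetilde{\Her(a)}$ with $\tau(p)=d_\tau(a)$ (II.2.5 of \cite{BH}); then directly
\[
\tau(a-b_n)=\tau\bigl((1-e_n)^{1/2}a(1-e_n)^{1/2}\bigr)\le \|a\|\,\tau(1-e_n)=\|a\|\bigl(d_\tau(a)-\tau(e_n)\bigr),
\]
since $1-e_n$ and $e_n$ commute. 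With either fix your argument goes through.
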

 
 \begin{proof}
Fix $a\in  {\rm Ped}(A\otimes {\cal K})_+\setminus \{0\}.$ 
{Let $\ep>0.$}
If there is a subsequence $\{n_k\}\subset \N$ such that
 $f_{1/4n_k}(a)-f_{{{1/}}n_k}(a)=0,$
then,  {{$f_{1/4n_k}(a)$ is a projection. Consequently,}} 
$\omega(f_{1/4n_k}(a))=0$ for all $k\in \N.$ Note that $af_{1/4n_k}(a)\sim f_{1/4n_k}(a).$
It follows that $\omega(af_{1/4n_k}(a))=0,$ $k\in \N.$ Since 
$\lim_{k\to\infty}\|a-af_{1/4n_k}(a)\|=0,$ $\Omega^T(a)=0.$ Hence \eqref{Pfirstosc-1} holds. 

Next we  assume that for some $n_0\in \N,$ 
$f_{1/4n}(a)-f_{{1/n}}(a)\not=0$ for all $n\ge n_0.$
Moreover,
\beq\label{Pfirstosc-n1}
\sup\{d_\tau(a)-\tau(f_{1/n}(a)): \tau\in \Qw\}<\omega(a)+\ep/2{{\rforal n\ge n_0.}}
\eneq

{{For the rest of this proof,}}
we will 
assume 
$n\geq n_0.$
Since the map $\Gamma$ is surjective,   
there is $c_n\in (A\otimes {\cal K})_+^{\bf 1}$ 
such that 
$d_\tau(c_n)=\tau(f_{1/n}(a))$ for all $\tau\in \Qw.$ 
Since $\widehat{f_{1/n}(a)}$ is continuous on  $\Qw,$
$\omega(c_n)=0.$
{{Choose}} $0<\dt_n<1$ such that {{(by also Lemma \ref{Los-dt}),}}
\beq\label{571-n1}
d_\tau(c_n)-\tau(f_{\dt_n}(c_n))<1/2^n\rforal {{\tau\in}}\Qw{{\andeqn \omega(f_{\dt_n}(c_n))<1/2^n.}}
\eneq
Note that we have 
\beq
{{d_\tau(c_n)=}}\tau(f_{1/n}(a){{)}} <d_\tau(f_{1/4n}(a{{))}}\rforal \tau\in \Qw.
\eneq
Since $A$ has strict comparison, 
{{$c_n\lesssim f_{1/4n}(a).$ By}}
 Proposition 2.4 of \cite{Ror92UHF2}, there is $x_n\in A\otimes {\cal K}$ such that
\beq
x_n^*x_n=f_{{{\dt_n}}
}(c_n)\andeqn x_nx_n^*\in {\rm Her}(f_{1/4n}(a)).
\eneq
Put $b_n=a^{1/2}x_nx_n^*a^{1/2}.$  
Then   $\|b_n\| \le \|a\|$ {{and}} $b_n\in  {\rm Her}(f_{1/4n}(a)).$ 
{{By Lemma \ref{Leae} and \eqref{571-n1},
$\omega(b_n)=\omega(x_nx_n^*)=\omega(x_n^*x_n)=\omega(f_{\dt_n}(c_n))\leq 1/2^n\to 0.$}}
Note that 
\beq\label{Pfirstosc-3}
{{f_{1/8n}}}(a)(x_nx_n^*)=x_nx_n^*=x_nx_n^*
{{f_{1/8n}}}(a).
\eneq
 {{Put $a_n=a^{1/2}{{f_{1/8n}(a)}}
 a^{1/2}$ and $d_n=
 {{f_{1/8n}(a)}}-x_nx_n^*.$}} 
 Then 
 $0\le d_n\le 1$ and 
 $0\le a_n-b_n,$ $n\in \N.$
 For all $\tau\in \Qw,$ we
 {{
 compute that 
 }}
 \beq
 \hspace{-0.3in}
{{\tau((a_n-b_n)^2)}}
&=&{{\tau(a^{1/2}d_nad_na^{1/2})\le \|a\|\tau(a^{1/2}d_n^2a^{1/2})}}\\ 
    &\le &\|a\|\tau(a^{1/2}d_na^{1/2})\le \|a\|^2 \tau(
 {{d_n}}  
    )\\
    &{\stackrel{\eqref{Pfirstosc-3}}{=}}& \|a\|^2(\tau(
  {{f_{1/8n}(a)}}  
    )-\tau(x_nx_n^*))\le \|a\|^2(d_\tau(a)-\tau(x_n^*x_n))\\
    &=&\|a\|^2{{(}}d_\tau(a)-\tau(f_{1/n}(a))+\tau(f_{1/n}(a))-\tau(f_{\dt_n}(c_n)){{)}}\\
    &=&\|a\|^2{{(}}d_\tau(a)-\tau(f_{1/n}(a))+d_\tau(c_n)-\tau(f_{\dt_n}(c_n)){{)}}\\\label{Pfirstosc-10}
    &\overset{{{\eqref{Pfirstosc-n1}, \eqref{571-n1}}}}{\le} &\|a\|^2( (\omega(a)+\ep/2)+1/2^n).
 \eneq
 {{
{{Note}} that $\|a-a_n\|<1/n.$ 
  It follows that, 
  by 
{{\eqref{F-0114-1},}}  
  \eqref{Qnorm}, and \eqref{Pfirstosc-10},
 \beq
\hspace{-0.4in}
 \|a-b_n\|_{_{2, \Qw}}^{2/3}
 &\le& \|a-a_n\|^{2/3}_{_{2,\Qw}}+\|a_n-b_n\|^{2/3}_{_{2, \Qw}}\\
 &<&(1/n)^{2/3}\sup\{d_\tau(a)^{1/3}:\tau\in \Qw\}+(\omega(a)+\ep/2+1/2^n)^{1/3}.
 \eneq
 Hence (by also (2) of Proposition \ref{Pboundedness}),
 \beq\label{F110-1}
 \limsup_{n\to\infty}\|a-b_n\|_{_{2, \Qw}}\le \|a\|\sqrt{\omega(a)+\ep/2}.
 \eneq
 Let $\ep\to 0.$ We then obtain \eqref{Pfirstosc-1} (recall that $\lim_{n\to\infty}\omega(b_n)=0$).}}

 \end{proof}

\section{\CA s with tracial approximate oscillation zero}

In this section, we will introduce the notion of T-tracial approximate oscillation zero 
for \CA s with non-trivial 2-quasitraces. We also present some examples of simple 
\CA s with tracial approximate oscillation zero (see  Proposition \ref{Prr0}, Theorem \ref{Tcounableos}
and Theorem \ref{Talsr1}).

\begin{df}\label{DTos-2}
{{Let $A$ be  a}} \CA\, with ${\wtd{QT}}(A)\setminus \{0\}\not=\emptyset$
and $S\subset \wtd{QT}(A)\setminus \{0\}$ a compact convex subset 
of $A$ such that $\R_+ \cdot S={\wtd{QT}}(A).$ 
\CA\, 
$A$
 is said to have  norm approximate oscillation zero
(relative to $S$)
if for any $a\in {\rm Ped}(A\otimes {\cal K})_+,$ $\Omega^N(a)|_S=0.$ It is said to have 
 tracial approximate oscillation zero
 (relative to $S$),
if for any $a\in  {\rm Ped}(A\otimes {\cal K})_+,$ $\Omega^T_C(a)|_S=0.$ 
We say that $A$ has T-tracial approximate oscillation zero 
(relative to $S$) if $\Omega^T(a)|_S=0$ 
for all $a\in {\rm Ped}(A\otimes {\cal K})_+.$
We say  that $A$ has C-tracial approximate oscillation zero
if $\Omega^C(a)=0$
for all $a\in {\rm Ped}(A\otimes {\cal K})_+.$ 

Note that, by  Proposition \ref{PMfinite},
these definitions do not depend on 
the choices of $S.$  {{Therefore, we often omit $S$ in the notation.}}

(1) If $A$ is a $\sigma$-unital simple \CA\, and $a\in {\rm Ped}(A\otimes {\cal K})_+^{\bf 1},$ 
then $QT(\Her(a))$ may be viewed as a convex subset of $\wtd{QT}(A).$  Put $S_1=\overline{QT(\Her(a))}^w.$
Therefore (see also Proposition \ref{Pcompact}), $\Omega^T(a)|_S=0$ if and only if $\Omega^T(a)|_{S_1}=0.$

{{(2)   By Proposition \ref{Omega-1}, $\Omega^T_T(a)|_S=0$ if and only if $\Omega^T(a)|_S=0$
for all $a\in (A\otimes {\cal K})_+$ with $\|a\|_{_{2, S}}<\infty.$
By Proposition \ref{PN=T} below,  {{that}} $A$ has norm approximate oscillation zero is the same as  {{that $A$ has}} $T$-tracial 
approximate oscillation zero.}}

If $A$  is $\sigma$-unital, {{non-elementary and}} simple and has strict comparison, then,  by Proposition \ref{Pideals=}, 
for any  nonzero $a\in {\rm Ped}(A\otimes {\cal K})_+,$  one has 
$I_{_{\overline{QT(A_a)}^w}}=N_{cu}(A_a),$ where $A_a=\Her(a).$ It follows that, {{in this case,}} 
{{the notion of}}
tracial approximate oscillation zero,
{{the notion of 
 T-tracial 
approximate oscillation zero, 
the notion of 
$C$-tracial approximate oscillation zero,  and  that of 
norm approximate oscillation zero all coincide (see also Proposition \ref{PN=T}).}}

(3)  Note also that, if $A$ has (T-) tracial approximate 
oscillation zero, then $M_n(A)$ also has (T-) tracial approximate 
oscillation zero. 

If we view $\|\cdot \|_{_{2, \Qw}}$ as an  $L^2$-norm, 
 then  that $A$ has T-tracial approximate oscillation zero has an analogue  to that ``almost'' continuous functions are 
 dense in the $L^2$-norm.
 It is worth mentioning that a  $\sigma$-unital simple \CA\, has (T-) tracial approximate oscillation zero,
 if, for some $e\in {\rm Ped}(A)_+\setminus \{0\},$  $\Her(e)$ has (T-) tracial approximate oscillation zero.
\end{df}

\begin{df}\label{DOO}
Let $A$ be a $\sigma$-unital 
\CA\, 
with  
$\wtd{QT}\setminus \{0\}\not=\emptyset.$
{{Define}}
\beq
\mathbb{O}(A)=\sup\{\Omega^T(a)|_{S_a}:
a\in {\rm Ped}(A\otimes {\cal K})_+^{{\bf 1}}\},
\eneq
{{where 
$S_a=\overline{QT(\Her(a))}^w$
(see the last paragraph of \ref{DQw}).}} 
Note that, since  $\Omega^T(a)|_{_{S_a}}\le \|a\|$
for any $a\in {\rm Ped}(A\otimes {\cal K})_+^{\bf 1}$   and
for any \CA\, $A,$ one has that $0\le \mathbb{O}(A)\le 1.$ 
The number $\mathbb{O}(A)$ is called T-tracial approximate oscillation of $A.$

{{Suppose that $S\subset \wtd{QT}(A)\setminus \{0\}$ is a compact convex set such that
$\R_+\cdot S=\wtd{QT}(A).$
By (the ``Moreover" part of) Proposition \ref{PMfinite}, if $\mathbb{O}(A)=0,$ 
then $A$ has T-tracial approximate oscillation zero.  
Conversely,  
if $A$ has $T$-tracial approximate oscillation zero,
then $\mathbb{O}(A)=0$ (see (4) of Proposition \ref{Pboundedness}).}}
\end{df}

The next example shows that there are  (commutative) \CA s $A$  of stable rank one such that $\mathbb{O}(A)>0.$
By Theorem \ref{Teqiv}, if $A$  is a separable simple \CA\, which has strict comparison but does not have stable rank one, 
then $\mathbb{O}(A)>0.$ 
However, we
do not have any such examples.

\begin{exm}\label{Exoa>0}
Let $A=C([0,1]).$  {{Then $T(A)$ is compact and ${\wtd{QT}}(A)=\wtd{T}(A).$}} Let 
${{a\in}} A_+\setminus \{0\}$ be  such that 
$0\le a\le 1$ that is not invertible.  Let $G=\{t\in [0,1]: a(t)>0\}.$  Then $G$ is an open subset.
Note $0\in {\rm sp}(a).$ 
Since $A$ has no non-trivial projection, there are $t_n\in  {\rm sp}(a)$ 
with $\lim_{n\to\infty}t_n=0.$ For any $0<\dt<1/2,$ let $b=f_\dt(a).$ 
Let $s_n\in G$ such that $a(s_n)=t_n,$ $n\in \N.$
Then, for some $0<\eta<\dt/2,$ there is $s_n$ in the support of $c=f_\eta(a)-f_{\dt/2}(a).$
There is a Borel probability measure $\mu_n$ on $[0,1]$    such 
that $\mu({{\{s_n\}}})=1.$ Let $\tau_{\mu_n}$ be the tracial state induced by $\mu_n,$ 
then $\tau_{\mu_n}(f_{\eta/2}(a)-b)=1.$ 
This implies that $\omega(a)|_{S_a}=1.$ 
This also holds for any {{non-zero}} $g\in \Her(a)_+\subset A_+.$
In other words, for any $g\in \Her(a)_+,$ $\omega(g)|_{S_a}=1.$ Therefore 
$\Omega^T(a)=1.$ It follows $\mathbb{O}(A)=1.$
Recall that $A$ has stable rank one.
\end{exm}

\begin{prop}\label{Pher}
If $A$ is a simple \CA\, {{which}} has  (T-) tracial approximate 
oscillation zero, then every hereditary \SCA\, of $A$  also has (T-) tracial approximate 
oscillation zero. 
\end{prop}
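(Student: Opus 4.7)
The plan is to transfer witness sequences from $A$ directly to $B$. Fix a hereditary \SCA\ $B\subset A$ and $a\in {\rm Ped}(B\otimes {\cal K})_+^{\bf 1}$; the goal is to exhibit a compact $S_B\subset \wtd{QT}(B)\setminus\{0\}$ with $\R_+\cdot S_B=\wtd{QT}(B)$ and a sequence $\{b_n\}$ verifying $\Omega^T(a)|_{S_B}=0$ (and likewise for $\Omega_C^T$ in the tracial case). Proposition \ref{PMfinite} then makes the particular choice of $S_B$ irrelevant.

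First I would record three standard facts about a hereditary \SCA\ of a simple \CA:
(i) since $B\otimes{\cal K}$ is hereditary in $A\otimes {\cal K}$, one has ${\rm Ped}(B\otimes {\cal K}) = (B\otimes {\cal K}) \cap {\rm Ped}(A\otimes {\cal K})$, so $a\in {\rm Ped}(A\otimes {\cal K})_+^{\bf 1}$;
(ii) $\Her_{A\otimes {\cal K}}(a)=\Her_{B\otimes {\cal K}}(a)$, since $a\in B\otimes{\cal K}$ and $B\otimes{\cal K}$ is hereditary;
(iii) the restriction map $\pi:\wtd{QT}(A)\to \wtd{QT}(B)$, $\tau\mapsto \tau|_{{\rm Ped}(B\otimes {\cal K})}$, is continuous, sends $\wtd{QT}(A)\setminus\{0\}$ into $\wtd{QT}(B)\setminus\{0\}$ (by faithfulness of nonzero quasitraces on simple \CA s), and is surjective. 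The surjectivity uses that $B$ is full in $A,$ via Brown's stable isomorphism $A\otimes {\cal K}\cong B\otimes{\cal K}$ and the naturality of the densely-defined quasitrace construction (together with Remark \ref{Ralgebraicsimpl}).

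Next I would apply the hypothesis on $A$ together with Proposition \ref{Omega-1}(2) to choose a compact convex $S_A\subset \wtd{QT}(A)\setminus\{0\}$ with $\R_+\cdot S_A=\wtd{QT}(A)$ and a sequence $b_n\in \Her_A(a)_+^{\bf 1}$ satisfying $\lim_{n\to\infty}\omega(b_n)|_{S_A}=0$ and $\lim_{n\to\infty}\|a-b_n\|_{_{2,S_A}}=0$. I would then set $S_B:=\pi(S_A)\subset \wtd{QT}(B)\setminus\{0\}$; by (iii) this is compact with $\R_+\cdot S_B=\wtd{QT}(B)$, and by (ii) each $b_n$ lies in $\Her_B(a)_+^{\bf 1}$. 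Since $a$, the $b_n$, $a-b_n$, and their continuous functional calculi $f_\epsilon(\cdot)$ all lie in ${\rm Ped}(B\otimes{\cal K})$, each $\tau\in S_A$ and its restriction $\pi(\tau)\in S_B$ agree on these elements, so $\|a-b_n\|_{_{2,S_B}}=\|a-b_n\|_{_{2,S_A}}$ and (using the formula through $d_\tau(b_n)=\lim_\epsilon \tau(f_\epsilon(b_n))$ and Definition \ref{DefOS1}) $\omega(b_n)|_{S_B}=\omega(b_n)|_{S_A}$. A second application of Proposition \ref{Omega-1}(2), now in $B$, then yields $\Omega^T(a)|_{S_B}=0$. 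For the (C-)tracial version one additionally observes that Cuntz-nullness in $A$ and in $B$ coincide for sequences in $B$, since $A$ is simple and $B$ is full in $A$; this transfers $\Pi_{cu}$-equivalence between the two sequence algebras.

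The main obstacle is item (iii), specifically the surjectivity of $\pi$: extending a densely-defined 2-quasitrace from a full hereditary \SCA\ to the whole algebra is the delicate step. Without it one cannot justify that the transferred $S_B$ satisfies $\R_+\cdot S_B=\wtd{QT}(B)$ and hence that Proposition \ref{PMfinite} applies to reduce (T-)tracial oscillation zero in $B$ to the check against this particular $S_B.$ Everything else is a matter of tracking that the relevant calculations take place inside ${\rm Ped}(B\otimes{\cal K})$, where restriction of a quasitrace is an isometry for all the seminorms we use.
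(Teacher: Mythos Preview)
Your argument is correct, and your identification of the delicate point (the surjectivity in (iii)) is accurate; Brown's stable isomorphism together with the naturality of the quasitrace construction does settle it for $\sigma$-unital $B$. However, you are doing more work than the paper intends.

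The paper's proof is a single sentence: ``This follows from the definition immediately.'' The reason is already built into Definition~\ref{DTos-2} and Definition~\ref{defOs2}. The quantities $\Omega^T(a)|_S$, $\Omega^T_C(a)|_S$, and $\Omega^C(a)$ are defined purely in terms of $\Her(a)$ and the ideal $I_{S,\Her(a)}$ (respectively $N_{cu}(\Her(a))$); nothing about the ambient algebra enters except through $S$. For a hereditary $B\subset A$ and $a\in {\rm Ped}(B\otimes{\cal K})_+$, one has $a\in {\rm Ped}(A\otimes{\cal K})_+$ and $\Her_{B\otimes{\cal K}}(a)=\Her_{A\otimes{\cal K}}(a)$, so the witnessing sequences are literally the same. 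The only remaining issue is the dependence on $S$, and Definition~\ref{DTos-2} already records (via Proposition~\ref{PMfinite}) that the condition is independent of $S$, and that one may test it against $S_1=\overline{QT(\Her(a))}^w$, which is intrinsic to $\Her(a)$.

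The upshot is that by localizing to $\Her(a)$ the paper never has to confront the global restriction map $\wtd{QT}(A)\to\wtd{QT}(B)$ or its surjectivity; you chose a global $S_A$ and pushed it forward, which forces you to prove that $\R_+\cdot\pi(S_A)=\wtd{QT}(B)$. Your route works, but the paper's route bypasses that obstacle entirely by choosing, for each $a$, a base $S_1$ that visibly lives in both $\wtd{QT}(A)$ and $\wtd{QT}(B)$ at once.
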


\begin{proof}
This follows from the definition immediately.
\end{proof}

\begin{df}\label{DappeT}
Let $S$ be a compact subset of ${{{\wtd{QT}}}}(A)\setminus \{0\}$ 
such that $\wtd{QT}(A)={{\R_+}}\cdot S$ and 
$B\subset A$ be a hereditary \SCA. 
A sequence of elements $\{e_n\}\subset  {\rm Ped}(B)_+^{\bf 1}$ 
  is said to be tracial approximate identity  for 
  {{$B,$}} 
if, for any $b\in B,$ 
\beq
\|\Pi_{cu}(\iota(b)-\iota(b)\{e_n\})\|={{0,}}
\eneq
and $\{e_n\}$  is said to be $T$-tracial approximate identity  for $B$ (relative to $S$), if, for any $b\in B$
{{with $\|b\|_{_{2, S}}<\infty,$}}
\beq
\lim_{n\to\infty} \|b-be_n\|_{_{2, S}}
=\lim_{n\to\infty}\|b-e_nb\|_{_{2, S}}=0.
\eneq
We do not require that $\{e_n\}$ is increasing.
\end{df}

\begin{prop}\label{PapproxeT}
Let $A$ be a {{\CA,}} {{$a\in (A\otimes {\cal K})_+$ with $\|a\|_{_{2, S}}<\infty$}} and $S\subset {\wtd{QT}}(A)\setminus \{0\}$ be a compact subset.

(1) Then $\Omega^N(a)|_S=0$ if  and only 
${\rm Her}(a)$ has a (not necessarily increasing)  approximate identity  $\{e_n\}$ such that 
$\lim_{n\to\infty}\omega_n(e_n)|{{_S}}=0.$ 

(2) 
Moreover, $\Omega^T(a)|_S=0$ (or $\Omega^T_C(a)|_S=0$) if and only if
${\rm Her}(a)$  admits a $T$-tracial (or tracial)  approximate identity (relative to $S$) $\{e_n\}$ with 
$\lim_{n\to\infty}\omega(e_n)|_S=0.$

\end{prop}

\begin{proof}
For (1),  let us assume 
that $\{e_n\}$ is a (not necessarily increasing) approximate identity 
for $\Her(a)$ such that $\lim_{n\to\infty}\omega(e_n)=0.$
Then
\beq
\lim_{n\to\infty}\|a- e_n^{1/2}ae_n^{1/2}\|=0.
\eneq
By Lemma \ref{Leae},
$
\omega(e_n^{1/2}ae_n^{1/2})|_S\le \omega(e_n)|_S\to 0.
$
Thus $\Omega^N(a)|_S=0.$

Conversely,
suppose that $a\in (A\otimes {\cal K})_+^{\bf 1}$ 
and $\{b_n\}\in l^\infty({\rm Her}(a))_+$
such that
\beq
\lim_{n\to\infty}\|a-b_n\|=0\andeqn \lim_{n\to\infty}\omega(b_n)|_S=0.
\eneq
Note that $\lim_{n\to\infty}\|b_n\|=\|a\|\le 1.$
Let $g\in C([0,\infty))_+^{\bf 1}$ such that
$g(t)=t$ if $t\in [0, 1]$ and $g(t)=1$ if $t>1.$
Then, for any $n\in \N,$ $g(b_n)\sim b_n.$
 We also have $\lim_{n\to\infty}\|g(a)-g(b_n)\|=0.$ But $g(a)=a.$ 
Then, replacing $b_n$ by $g(b_n),$  we may assume that 
 that $\|b_n\|\le 1.$ 
For each $k\in \N,$ choose $
{{n_k}}$ such that 
\beq
\|b_{n_k}^{1/k}-a^{1/k}\|<1/k,\,\,k\in \N.
\eneq
Put $e_k=b_{n_k}^{1/k},\,k\in \N.$ 
Then, for any $x\in {\rm Her}(a),$
\beq
\|x-xe_k\|\le \|x-xa^{1/k}\|+\|x\|\|a^{1/k}-e_k\|\le \|x-xa^{1/k}\|+{\|x\|\over{k}}.
\eneq
This shows that $\{e_k\}$ is a (not necessarily increasing) approximate identity for ${\rm Her}(a).$
Since $e_k=b_{n_k}^{1/k},$  {{we have that $\omega(e_k)=\omega(b_{n_k})$ and}}
\beq\nonumber
\lim_{k\to\infty}\omega(e_k)|_S=\lim_{k\to\infty}\omega(b_{n_k})|_S=0.
\eneq

 For (2), let us prove one case.

Fix $a\in {\rm Ped}(A\otimes {\cal K})_+.$ \Wlog, we may assume 
that $0\le a\le 1.$
Suppose that $\{e_n\}$ is a {{T-tracial}} approximate identity of ${\rm Her}(a)$  relative to $S.$
Then
\beq\nonumber
\lim_{n\to\infty}\| a-e_na\|_{_{2, S}}&=&\lim_{n\to\infty}\| a-ae_n\|_{_{2, S}}=0\andeqn\\\nonumber
\lim_{n\to \infty}\| a-e_nae_n\|^{2/3}_{_{2, S}} &\le& \lim_{n\to\infty}(\|a-e_na\|^{2/3}_{_{2, S}}+\|e_na-e_nae_n\|^{2/3}_{_{2, S}})\\\nonumber
\hspace{-0.3in}&\le&\lim_{n\to\infty}\|e_n(a-ae_n)\|_{_{2,S}}^{2/3}\le \lim_{n\to\infty}(\|e_n^2\|^{{{1/3}}}\| a-ae_n\|_{_{2, S}}^{2/3})=0.
\eneq
By Lemma \ref{Leae}, $\omega(e_nae_n)|_S\le \omega(e_n^2)|_S=\omega(e_n)|_S\to 0.$
{{Thus $\Omega^T(a)|_S=0.$}}

Conversely, suppose that $\Omega^T(a)|_S=0.$  Then, by Proposition \ref{Omega-1},
there exists $\{b_n\}\subset {{{\rm Ped}({\rm Her}(a))}}_+^{\bf 1}$ such that
\beq
\lim_{n\to\infty}\|a-b_n\|_{_{2, S}}=0\andeqn \lim_{n\to\infty} \omega(b_n)|_S=0.
\eneq
Let $B={\rm Her}(a)$ 
and  $\Pi_S: l^\infty(B)\to l^\infty(B)/I_{_{S,B}}$
be the quotient map (see Definition \ref{defOs2}  for $I_{_{S,B}}$).
Then 
\beq
\Pi_S(\iota(a))=\Pi_S(\{b_n\}). 
\eneq
For each $k\in \N,$ we have
\beq
\Pi_S({{\iota(a)^{1/k}}})=\Pi_S(\{b_n\}^{1/k}).
\eneq
It follows that, for each $k\in \N,$ there exists $n(k)\in \N$ such that
\beq
\|a^{1/k}-b_{n(k)}^{1/k}\|_{_{2, S}}<1/2k.
\eneq
Choose $e_k=b_{n(k)}^{1/k},$ $k\in \N.$ 
Then, for any $c\in {\rm Her}(a),$ 
\beq
\|c-ce_k\|^{2/3}_{{_{2, S}}}\le \|c-ca^{1/k}\|^{2/3}_{_{2, S}}+\|c(a^{1/k}-b_{n(k)}^{1/k})\|^{2/3}_{_{2, S}}\to 0,\,\,\, {\rm as}\,\,k\to\infty.
\eneq
Since $b_{n(k)}\sim e_k,$
\beq
\lim_{k\to\infty}\omega(e_k)|_S=0.
\eneq
{{The proposition then follows.}}
\end{proof}

\begin{prop}\label{PN=T}
Let $A$ be a $\sigma$-unital \CA, 
$S\subset\wtd{QT}(A)$ be a compact subset, 
and let $a\in {{{\rm Ped}(A\otimes{\cal K})_+.}}$
Then $\Omega^T(a)|_S=0$
if and only if $\Omega^N(a)|_S=0.$
\end{prop}

\begin{proof}

For the ``if'' part, let us assume 
$ \Omega^N(a)|_S=0.$
By {{the}} definition there is  
$\{b_n\}\in l^\infty(\Her(a))_+$
such that $ \omega(b_n)|_S<1/n$ 
and $\|a-b_n\|< 1/n.$ 
Let $b_n'=\frac{\|a\|b_n}{\|b_n\|+1/n}.$
Then $\|b_n'\|\leq \|a\|,$
\beq
&&{{\lim_{n\to\infty}\omega(b_n')|_S=
\lim_{n\to\infty}\omega(b_n)|_S=0,
\lim_{n\to\infty}\|b_n-b_n'\|=0\andeqn}}\\
&&0\le \Omega^T(a)|_S\le 
\|\Pi_S(\iota(a)-\{b_n'\})\|
\le\limsup_{n\to\infty}\|a-b_n'\|
{{\le}}\limsup_{n\to\infty}\|a-b_n\|=0.
\eneq

For the ``only if'' part, let us 
assume that $\Omega^T(a)|_S=0.$ 
Then, by Proposition \ref{PapproxeT}, there are $e_n\in 
{{\Her(a)}}_+^{\bf 1}$ 
such that
\beq
\lim_{n\to\infty} \omega(e_n)|_S=0\andeqn \lim_{n\to \infty}\|a-a^{1/2}e_na^{1/2}\|_{_{2, S}}=0.
\eneq
{{
It follows that $\{b_n\}=\{a-a^{1/2}e_na^{1/2}\}\in {{(I_{_{S}})_+}}$  (see \ref{D2norm} for 
the definition of $I_{_{S}}$).
By (4) of 
{{Proposition \ref{Omega-1}}}, there exists $\{c_n\}\in (I_{_{S}})_{{+}}$ such 
that
\beq
\lim_{n\to\infty}\sup\{d_\tau(c_n):\tau\in S\}=0\andeqn 
\lim_{n\to\infty}\|b_n-c_n\|=0.
\eneq
Put $d_n=a^{1/2}e_na^{1/2}+c_n, $ $n\in \N.$ Then $d_n\ge 0.$ 
Put ${\bar{d_n}}={\|a\| d_n\over{\|d_n\|+1/n}},$ $n\in \N.$ Then $\|{\bar d}_n\|\le \|a\|$ for all $n\in \N.$
Since $\lim_{n\to\infty}\|a-d_n\|=0,$  we have $\lim_{n\to\infty}\|d_n\|=\|a\|.$
It follows that
\beq\label{F111-1}
\lim_{n\to\infty}\|a-{\bar d}_n\|=0.
\eneq }}

{{On the other hand,  by Proposition \ref{Pbfos} (1) {{(see also (2) of Proposition \ref{Pboundedness})}}
for all $\tau\in S$ 
\beq\label{F111-3}
\omega(d_n)(\tau)|_S
\leq
\omega(a^{1/2}e_na^{1/2})(\tau)|_S
+\overline{d_\tau}(c_n)|_S
\le \omega(a^{1/2}e_na^{1/2}
)|_S+\sup\{d_\tau(c_n): \tau\in S\}
\eneq
{{for all $n\in \N.$}}
By Lemma \ref{Leae},
$\lim_{n\to\infty}\omega(a^{1/2}e_na^{1/2})
=\lim_{n\to\infty}\omega(e_n)=0.$
By 
the fact that $\bar d_n\sim d_n$ and 
Proposition \ref{Pomega-1},
we have
\beq
\lim_{n\to\infty}\omega({\bar d}_n)
&=&
\lim_{n\to\infty}\omega(d_n)
\overset{\eqref{F109-2}}{=}
\lim_{n\to\infty}\sup_{\tau\in S}\{
\omega(d_n)(\tau)|_S\}\nonumber
\\&\overset{\eqref{F111-3}}{\leq}& 
\lim_{n\to\infty}\omega(a^{1/2}e_na^{1/2}
)|_S+
\lim_{n\to\infty}\sup\{d_\tau(c_n): \tau\in S\}=0.
\label{F111-2}
\eneq
Then \eqref{F111-1} and \eqref{F111-2}
show that $\Omega^N(a)|_S=0.$}}

\end{proof}

Let us present some  examples of \CA s which have norm {{approximate}} oscillation  zero. 
\begin{prop}\label{Prr0}
Let $A$ be a \CA\, of real rank zero.
Then $A$ has norm approximate oscillation zero.
\end{prop}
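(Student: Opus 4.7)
The plan is to reduce the statement to Proposition \ref{Papproxe}, which says that $A$ has norm approximate oscillation zero precisely when, for every $a\in {\rm Ped}(A\otimes {\cal K})_+,$ the hereditary subalgebra $\Her(a)$ admits an approximate identity $\{e_n\}$ with $\lim_n \omega(e_n)|_S=0.$ So I only need to produce such an approximate identity in each $\Her(a).$

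The key input is that real rank zero is inherited by $A\otimes {\cal K}$ (a result of Brown--Pedersen) and by hereditary \SCA s, so $\Her(a)\subset A\otimes {\cal K}$ is a \CA\ of real rank zero. A standard consequence of real rank zero (again Brown--Pedersen) is that $\Her(a)$ has an approximate identity $\{p_n\}$ consisting of projections. This is the only nontrivial structural ingredient.

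It then remains to observe that $\omega(p)|_S=0$ for every projection $p\in {\rm Ped}(A\otimes {\cal K}).$ Indeed, for any $\varepsilon\in (0,1)$ we have $f_\varepsilon(p)=p,$ so
\beq
d_\tau(p)=\lim_{\varepsilon\to 0}\tau(f_\varepsilon(p))=\tau(p)\tforal \tau\in S.
\eneq
Hence $\widehat{[p]}$ coincides on $S$ with the continuous affine function $\hat{p},$ and directly from Definition \ref{DefOS1},
\beq
\omega_d(p)(\tau)=\lim_{n\to\infty}\sup\{d_t(p)-t(f_{1/n}(p)):t\in O_d(\tau)\}=0,
\eneq
for every $\tau\in S$ and every $d>0,$ so $\omega(p)(\tau)=0$ and therefore $\omega(p)|_S=0.$

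Applying this to each $p_n$ gives $\omega(p_n)|_S=0$ for every $n,$ in particular $\lim_n \omega(p_n)|_S=0.$ By Proposition \ref{Papproxe}, $\Omega^N(a)|_S=0.$ Since $a\in {\rm Ped}(A\otimes {\cal K})_+$ was arbitrary, $A$ has norm approximate oscillation zero. There is no real obstacle here; the only thing to be careful about is citing the correct (standard) facts from the real rank zero literature to obtain a projection-valued approximate identity in $\Her(a).$
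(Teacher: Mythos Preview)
Your proof is correct and follows the same approach as the paper: both reduce to Proposition \ref{Papproxe} by using that a real rank zero hereditary \SCA\ has an approximate identity of projections, and then note that $\omega(p)|_S=0$ for any projection $p$ since $\widehat{[p]}=\hat p$ is continuous. Your version is slightly more careful in explicitly invoking Brown--Pedersen to pass real rank zero to $A\otimes{\cal K}$ and to hereditary \SCA s, which the paper leaves implicit.
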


\begin{proof}
Let $a\in {\rm Ped}(A)$ with  $0\le a \le 1.$
Put $B={\rm Her}(a).$ Then $B$ has an approximate identity $\{e_n\}$ consisting of projections.
Note  that, since $e_n$ is a projection, $\widehat{[ e_n]}=\widehat{e_n}$ is continuous on $S.$
The proposition follows from 
{{Proposition}} {{\ref{PapproxeT}}}
\end{proof}

Let  $T_b=\{s\in {\wtd{QT}}(A): s(b)=1\}$ for some  nonzero 
$b\in ({\rm Ped}(A\otimes {\cal K}))_+.$ It is a compact convex subset and $T_b$ is a basis 
for the cone ${{\wtd {QT}}}(A)$ if $A$ is simple.  The proof of the following 
is taken from  Lemma 4.8 of \cite{Lin96pacific}  (see also Remark 4.7 of \cite{Lin96pacific}).

\begin{thm}\label{Tcounableos}
Let $A$ be a  \CA\, with  countable $\partial_e(T_b)$ (for some $b\in {\rm Ped}(A)_+\setminus \{0\}$),
where $\partial_e(T_b)$ is the  set of extremal points  of $T_b.$ Then 
\beq
\Omega^{N}(a)=0\tforal a\in {\rm Ped}(A\otimes {\cal K})_+.
\eneq
In particular, $A$ has norm approximate oscillation zero.
\end{thm}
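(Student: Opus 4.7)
The plan is to reduce, via Proposition \ref{Papproxe}, to constructing for each $a \in {\rm Ped}(A \otimes {\cal K})_+$ (which we may take with $\|a\| \le 1$) an approximate identity $\{e_n\} \subset \Her(a)_+^{\bf 1}$ satisfying $\lim_n \omega(e_n)|_{T_b} = 0$. By Proposition \ref{PMfinite}, it suffices to work with $S = T_b$ throughout. Enumerate $\partial_e(T_b) = \{\tau_k\}_{k \ge 1}$, the countability of this set being the crucial input.

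For the construction I would take $e_n$ in $\Her(a)_+^{\bf 1}$ close in norm to $a$ (for instance $f_{\delta_n}(a)$ with $\delta_n \to 0$), and seek a companion $c_n \in \Her(e_n)_+^{\bf 1}$ --- built from functional calculus in $a$, essentially as a power or polynomial of $e_n$ --- such that
\[
d_{\tau_k}(e_n) - \tau_k(c_n) \;<\; \ep_n \quad \text{for every } k \ge 1,
\]
with $\ep_n \to 0$. The basic ingredient is the pointwise monotone convergence $\tau_k(e_n^{1/m}) \nearrow d_{\tau_k}(e_n)$ at each $\tau_k$, together with the uniform bound $\sup_\tau d_\tau(e_n) \le \sup_\tau d_\tau(a) < \infty$ coming from $a \in {\rm Ped}(A \otimes {\cal K})$ and Proposition \ref{Pboundedness}. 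A careful inductive choice of $\delta_n$ and $c_n$ --- coordinating the spectral resolution of $a$ with the enumeration of extreme traces --- achieves the uniform estimate in $k$; countability of the index set is essential to carry out this diagonal procedure.

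The main obstacle is to transfer the pointwise control at extreme points to a uniform estimate on all of $T_b$. Here I would use that $T_b$, being a basis of the cone $\wtd{QT}(A)$, is a Choquet simplex, so every $\tau \in T_b$ admits a maximal representing probability measure $\mu_\tau$. Since $\partial_e(T_b)$ is countable, each singleton $\{\tau_k\}$ is a Borel subset of $T_b$, and $\mu_\tau$ is pseudo-supported on $\partial_e(T_b) = \bigcup_k \{\tau_k\}$, hence is necessarily discrete: $\mu_\tau = \sum_k \lambda_k^\tau \delta_{\tau_k}$ with $\sum_k \lambda_k^\tau = 1$. The function $\tau \mapsto d_\tau(e_n) - \tau(c_n)$ is bounded LSC affine (difference of bounded LSC affine and continuous affine), and bounded LSC affine functions on a Choquet simplex satisfy the barycentric formula against the maximal measure (by Mokobodzki's theorem together with monotone convergence through the continuous affine approximations $\tau(e_n^{1/\ell}) \nearrow d_\tau(e_n)$). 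Hence
\[
\sup_{\tau \in T_b}\bigl(d_\tau(e_n) - \tau(c_n)\bigr) \;=\; \sup_{k \ge 1}\bigl(d_{\tau_k}(e_n) - \tau_k(c_n)\bigr) \;<\; \ep_n,
\]
giving $\omega(e_n)|_{T_b} < \ep_n \to 0$. Proposition \ref{Papproxe} then yields $\Omega^N(a) = 0$, and since $a$ was arbitrary in ${\rm Ped}(A \otimes {\cal K})_+$, $A$ has norm approximate oscillation zero.
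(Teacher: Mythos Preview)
Your outline has a real gap at the step you label ``a careful inductive choice of $\delta_n$ and $c_n$ \ldots\ achieves the uniform estimate in $k$''. A diagonal procedure over the countable family $\{\tau_k\}$ can at best deliver, at stage $n$, control over \emph{finitely many} of the $\tau_k$. For a fixed $e_n=f_{\delta_n}(a)$ and $c_n=e_n^{1/m}$, the convergence $\tau_k(e_n^{1/m})\nearrow d_{\tau_k}(e_n)$ is only pointwise in $k$; nothing prevents the tail $\sup_{k>N}\bigl(d_{\tau_k}(e_n)-\tau_k(c_n)\bigr)$ from staying bounded below, no matter how $m$ is chosen. The uniform bound $\sup_\tau d_\tau(a)<\infty$ that you cite bounds the \emph{range} of these quantities, not the \emph{rate} at which they shrink, so it does not rescue the argument. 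Indeed, asking for $\sup_k\bigl(d_{\tau_k}(e_n)-\tau_k(c_n)\bigr)<\ep_n$ with $c_n\in\Her(e_n)_+^{\bf 1}$ is, after your Choquet step, exactly asking that $\omega(e_n)<\ep_n$ --- so the argument is circular unless $e_n$ is chosen with some a priori control on its oscillation.

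The paper's proof sidesteps this uniformity problem by a different mechanism: it does not try to make $\omega(e_n)$ small, it makes it \emph{zero}. Each extreme $\tau_k$ induces a finite Borel measure $\mu_k$ on ${\rm sp}(a)$; each $\mu_k$ has at most countably many atoms, so the union of all atoms over all $k$ is still countable. Hence in any interval near $0$ one can pick $\delta$ which is an atom for \emph{no} $\mu_k$, and then (via Krein--Milman and the semicontinuity of $\tau\mapsto\mu_\tau(\{\delta\})$) for no $\mu_\tau$, $\tau\in T_b$, either. At such a $\delta$ the function $\tau\mapsto d_\tau(f_\delta(a))=\mu_\tau((\delta,1])$ coincides with $\mu_\tau([\delta,1])$, so it is simultaneously lower and upper semicontinuous, hence continuous --- giving $\omega(f_\delta(a))=0$ exactly. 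Taking $d_n=af_{\delta_n}(a)$ with such $\delta_n\searrow 0$ then yields $\|a-d_n\|\to 0$ and $\omega(d_n)=0$, and Proposition~\ref{Papproxe} finishes. Countability of $\partial_e(T_b)$ enters precisely in the atom-counting, and your Choquet machinery becomes unnecessary once continuity of $\widehat{[f_\delta(a)]}$ is in hand.
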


\begin{proof}
We may assume that $a\in {\rm Ped}(A\otimes {\cal K})_+\setminus \{0\}$ and 
$0\le a\le 1.$ 
If $0\in \overline{{{\R_+}}\setminus {\rm sp}(a)},$  then $\Omega^N(a)=0.$ 
To see this, let $s_n\in \R_+\setminus {\rm sp}(a)$ such that 
$s_n\searrow 0.$ 
Then the characteristic  functions $\chi_{[s_n, 1]}$ is continuous on ${\rm sp}(a).$
Therefore $p_n=\chi_{[s_n, 1]}(a)\in C^*(a)\subset {\rm Her}(a)$ is a projection.   Note 
that $p_n\le p_{n+1}$ for all $n\in \N.$
Then 
\beq
\|a-p_na\|{{\le}}s_n\to 0.
\eneq
In other words  $\{p_n\}$  is an approximate identity for $\Her(a).$ 
Moreover  $\omega(p_n)=0.$  So this case follows from 
{{Proposition \ref{PapproxeT}.}}

Therefore, \wilog, we may assume that  $[0,r]\subset {\rm sp}(a)$ for some 
$0<r<1.$ 
Let $r/2>\eta>0.$ 

Note that, since $a\in {\rm Ped}(A\otimes {\cal K})_+,$ 
$\sup\{d_\tau(a): \tau\in T_b\}=M<\infty$ (see \ref{Pboundedness}).
For each $\tau\in \partial_e(T_b),$ 
$\tau$ induces a Borel measure $\mu_\tau$ on ${\rm sp}(a)$ which is  bounded by $M.$ 

We claim that there is $s\in ( r-\eta, r]$ such that
\beq
\sup\{\mu_\tau(\{s\}):\tau\in T_b\}=0.
\eneq
To see this,  write $\partial_e(T_b)=\{\tau_n\}_{n\in \N'},$ where 
$\N'$ is a subset of $\N.$  For each $k\in \N',$ and $n\in \N,$ define 
\beq
S_{k,n}=\{ s\in (r-\eta, r]:  \mu_{\tau_k}(\{s\})\ge 1/n\}.
\eneq
Since $\mu_{\tau_k}({\rm sp}(a))\le M,$ $S_{k,n}$ must be finite.
It follows that 
$S_k=\cup_{n=1}^\infty S_{k,n}$ is countable. 
Thus $S=\cup_{k\in \N'}S_k$ is countable. 
Therefore there must be $s\in (r-\eta, r]$ such that  $s\not\in S.$ In other words, there must be 
$s\in (r-\eta,r]$ such that 
\beq
\mu_\tau(\{s\})=0\rforal \tau\in \partial_e(T_b).
\eneq
Since $T_b$ is compact, by  the Krein-Milman Theorem, this implies that
\beq
\mu_\tau(\{s\})=0\rforal \tau\in T_b.
\eneq
This proves the claim.

By the claim, for each integer $n\in \N,$ 
there is $\dt>0$ such that $1/2^{n+1}<\dt<1/2^n$ 
and 
\beq\label{Tcountable-2}
\mu_\tau(\{\dt\})=0\rforal \tau\in T_b.
\eneq

Note that 
\beq\label{Tcountable-2+}
d_\tau((a-\dt)_+)=\mu_\tau((\dt, 1]\cap {\rm sp}(a))\rforal \tau\in T_b
\eneq
is a lower semi-continuous function. 
By Portmanteau Theorem, the function $h: T_b\to \R_+$  given by
\beq
h(\tau):=\mu_\tau([\dt, 1]\cap {\rm sp}(a))
\eneq
is an upper semi-continuous {{function.}}\footnote{
This can be directly obtained as follows:
Let $f^{(n)}\in C_0((0,\|a\|{{]}})_+^{\bf 1}$ be such that
$f^{(n)}(t)=1$ for $t\in [{{\dt}} 
,\|a\|],$ $f^{(n)}=0$
if $t\in [0, {{\dt}} 
-\dt/2^n]$ and linear in $[{{\dt}} 
-\dt/2^n, {{\dt}}
].$
Then $\tau(f^{(n)}(a))\searrow h(\tau).$}

By \eqref{Tcountable-2},
\beq
h(\tau)=d_\tau({{(a-\dt)_+}})\rforal \tau\in T_b.
\eneq
It follows that $d_\tau({{(a-\dt)_+}})$ is continuous.

To prove the lemma, let $\ep>0.$ Choose $n\in \N$ such that $1/2^n<\ep.$ 
Choose $1/2^{n+1}<\dt_n<1/2^n$  {{such that $\mu_\tau(\{\dt_n\})=0$
for all $\tau\in S$}} as above.
Put $d_n={{(a-\dt_n)_+.}}$
Then 
\beq
\|a-d_n\|<\ep.
\eneq
Since $d_\tau(d_n)=d_\tau({{(a-\dt_n)_+}}),$  we have that
$\omega({{d_n}})(\tau)|_{T_b}=0$ for all $\tau\in T_b.$
The lemma follows.
\end{proof}

The following is a restatement of Lemma 7.2 of \cite{eglnp} 
with the same proof (and with some necessary modification and correcting a typo). 

\begin{thm}\label{Talsr1}
Let $A$ be a $\sigma$-unital simple \CA\, 
which has strict comparison and  almost stable rank one.
Suppose that the canonical map $\Gamma: {\rm Cu}(A)\to {\rm LAff}_+({\wtd{QT}}(A))$ is surjective
(see \ref{DGamma}). 
Then $A$ has norm approximate oscillation zero. 
\end{thm}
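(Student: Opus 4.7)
By Proposition \ref{Papproxe}, it suffices to show that, for every $a \in {\rm Ped}(A \otimes \mathcal{K})_+$ with $0 \le a \le 1$, there is a (not necessarily increasing) approximate identity $\{e_n\} \subset \Her(a)_+^{\bf 1}$ with $\lim_n \omega(e_n)|_\Qw = 0$. The plan is to construct each $e_n$ so that $\omega(e_n)|_\Qw < 1/n$ and $\|e_n f_{1/n}(a) - f_{1/n}(a)\| < 1/n$; combined with the elementary estimate $\|a - af_{1/n}(a)\| \le 1/n$, a short triangle-inequality argument then gives $\|e_n a - a\| \le 3/n$, which forces $\{e_n\}$ to be an approximate identity of $\Her(a) = \overline{a(A \otimes \mathcal{K})a}$.

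Fix $n$. First invoke the surjectivity of $\Gamma$ to find $c_n \in (A \otimes \mathcal{K})_+^{\bf 1}$ whose rank function $d_\tau(c_n)$ equals $\tau(g_n(a))$ for a suitable $g_n \in C_0((0,1])_+$, chosen so that this continuous affine function on $\Qw$ strictly majorizes $d_\tau(f_{1/n}(a))$ and is strictly majorized by $d_\tau(a)$ (the routine strict-inequality issue is handled by passing to $(f_{1/n}(a)-\epsilon_n)_+$ if necessary). Continuity forces $\omega(c_n)|_\Qw = 0$. By strict comparison, $c_n \lesssim a$, so \cite[Proposition 2.4]{Ror92UHF2} yields $\eta_n > 0$ and $y_n \in A \otimes \mathcal{K}$ with $y_n^* y_n = (c_n - \eta_n)_+$ and $e_n^{(0)} := y_n y_n^* \in \Her(a)$. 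Choosing $\eta_n$ small enough, Lemma \ref{Los-dt} gives $\omega((c_n - \eta_n)_+)|_\Qw < 1/n$, and since $e_n^{(0)} \sim (c_n - \eta_n)_+$, Proposition \ref{Pomega-1} yields $\omega(e_n^{(0)})|_\Qw < 1/n$.

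The decisive step is to upgrade the rank-level relation $f_{1/n}(a) \lesssim e_n^{(0)}$ (immediate from strict comparison, since $d_\tau(f_{1/n}(a)) < d_\tau(c_n) \approx d_\tau(e_n^{(0)})$) into the operator-norm estimate $\|e_n f_{1/n}(a) - f_{1/n}(a)\| < 1/n$ for a suitable Cuntz-equivalent $e_n \sim e_n^{(0)}$ inside $\Her(a)$. This is where the almost-stable-rank-one hypothesis is essential: using the density of invertible elements of $\widetilde{\Her(a)}$ together with the technique developed by Robert in \cite{Rlz}, one can perturb $e_n^{(0)}$ by an approximate-unitary conjugation within $\Her(a)$ to obtain $e_n \in \Her(a)_+^{\bf 1}$ whose spatial realization covers $f_{1/n}(a)$ up to norm error $1/n$, while $e_n \sim e_n^{(0)}$ preserves the oscillation by Proposition \ref{Pomega-1}. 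This is the main obstacle: strict comparison together with surjectivity of $\Gamma$ alone yields only $L^2$-type approximation, as witnessed by Proposition \ref{Pfirstosc} controlling $\Omega^T(a)$, and the leap to operator-norm approximation requires precisely the spatial flexibility that ALSR1 supplies. Given such $\{e_n\}$, Proposition \ref{Papproxe} concludes $\Omega^N(a) = 0$.
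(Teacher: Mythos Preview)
Your outline is correct and matches the paper's strategy: reduce via Proposition~\ref{Papproxe} to building, for each $\ep>0$, an element $e\in\Her(a)_+^{\bf 1}$ that dominates $f_\ep(a)$ (in norm) and has small oscillation, by first realizing a continuous rank function inside $\Her(a)$ via surjectivity of $\Gamma$ and then using almost stable rank one to adjust spatially.

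Where your sketch is vague is precisely the ``decisive step,'' and the paper's execution there is sharper and worth noting. Rather than an unspecified ``approximate-unitary conjugation'' giving $\|e_nf_{1/n}(a)-f_{1/n}(a)\|<1/n$, the paper invokes Lemma~3.2 of \cite{eglnp} (the concrete consequence of almost stable rank one) twice: once to place $c_0:=xx^*$ with $x^*x=c$ inside $\Her(f_\eta(a))$, and once to obtain an honest unitary $u$ in the unitization of $\Her(f_\eta(a))$ with $u^*f_{\ep/8}(f_{\ep/2}(a))u\in\Her(f_{1/m}(c_0))$. Setting $c_1=uc_0u^*$ and choosing $g\in C_0((0,1])$ with $g>0$ and $gf_{1/2m}=f_{1/2m}$, the element $e=g(c_1)$ then satisfies the exact operator inequality $f_\ep(a)\le f_{\ep/8}(f_{\ep/2}(a))\le e$ and $[e]=[c]$, so $\omega(e)=0$ exactly. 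Thus the paper produces an approximate identity with $\omega(e_n)=0$ rather than merely $\omega(e_n)<1/n$, and an exact domination rather than a norm-$\epsilon$ one; your approximate version would also work, but carrying it out rigorously amounts to reproving the same lemma.
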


\begin{proof}
Let  $e_A\in {\rm Ped}(A)_+\setminus \{0\}$ and $A_1=\Her(e_A).$
Then ${\rm Ped}(A_1)=A_1.$ Since $A$ is $\sigma$-unital, $A\otimes {\cal K}\cong A_1\otimes 
{\cal K}$ by Brown's stable isomorphism theorem (\cite{Br}).
Therefore it suffices to show that $A_1$ has norm approximate oscillation zero.
To simplify notation, we may assume that $A=A_1$ (and $A={\rm Ped}(A)$).

Let $a\in {\rm Ped}(A\otimes {\cal K})_+^{\bf 1}.$
It follows from the proof of Lemma 7.2 of \cite{eglnp} that, since $\Her(a)$ also has strict comparison,
almost stable rank one and $\Gamma$ is surjective,  
$\Her(a)$ has an approximate identity  consisting of elements  $\{e_n\}$ such 
that $\widehat{e_n}$ is continuous on $\Qw.$ Then the theorem  follows from   Proposition 
{{\ref{PapproxeT}.}}
Since we do not assume that $QT(A)=T(A),$ to be complete, 
let us repeat some of the argument in the proof of Lemma 7.2 of \cite{eglnp}
which will be, again, used in the proof of \ref{LL-N1}. 

By Proposition {{\ref{PapproxeT},}}
it  suffices to show that, for any $\ep\in (0, 1/2),$ 
there is $e\in \Her(a)_+^{\bf 1}$ 
such that $f_\ep(a)\le e$ and $\widehat{[e]}$ is continuous. 
By the first part of the proof of Theorem \ref{Tcounableos}, we may assume that 
 $[0,\ep_0)\subset {\rm sp}(a)$ for some $\ep_0\in (0, \ep).$ 

 Note that, \wilog, we may assume that 
\beq\label{Talsr1-1}
d_\tau(f_{{{\ep/2}} 
}(a))<\tau(f_{\dt_1}(a))<
d_\tau(f_{\eta_1}(a))<\tau(f_{\dt_2}(a))<d_\tau(f_{\eta}(a))\rforal \tau\in \Qw,
\eneq
where $\ep/4>\dt_1, \dt_1/2>\eta_1, \eta_1/2>\dt_2,$ and $\dt_2/2>\eta>0.$
Put $h_i(\tau)=\tau(f_{\dt_i}(a))\rforal \tau\in 
{{\Qw}}
,$ $i=1,2.$
Then $h_i\in \Aff_+(\Qw).$
Since $\Gamma$ is surjective, there is $c\in  (A\otimes  {\cal K})_+^{{{\bf 1}}}$ such 
that $d_\tau(c)=h_2(\tau)$ for all $\tau\in \Qw.$ 
Since $A$ has strict comparison,
{{\eqref{Talsr1-1} and the choice of $c$ 
implies $c\lesssim f_{\eta}(a).$}} 
{{Since $A$ has}} almost stable rank one, by Lemma 3.2 of \cite{eglnp}
and 
\eqref{Talsr1-1}, 
there is $x\in A\otimes {\cal K}$ such 
that
\beq
x^*x=c\andeqn xx^*\in {\rm Her}(f_{\eta}(a)).
\eneq
Put $c_0=xx^*.$ Then $0\le c_0\le 1.$ Note that 
$d_\tau(c_0)=d_\tau(c)$ for all $\tau\in \Qw.$
Since $h_1$ is 
continuous, 
{{$h_1(\tau)<h_2(\tau)=
d_\tau(c)=d_\tau(c_0)=\lim_{n\to\infty}\tau(f_{1/n}(c_0))$
for all $\tau\in\Qw,$
and $\Qw$ is compact,}}
there is an integer $m>2$ such that
\beq
h_1(\tau)<\tau(f_{1/m}(c_0))\rforal \tau\in \Qw.
\eneq
By \eqref{Talsr1-1} and Lemma 3.2 of \cite{eglnp}, there is a unitary 
$u$ in the unitization of $\Her(f_{\eta}(a))$ such that
\beq
u^*f_{\ep/8}(f_{{{\ep/2}
}}(a))u\in \Her(f_{1/m}(c_0)).
\eneq
Set $c_1=uc_0u^*.$ Then 
\beq\label{Talsr1-2}
f_{\ep/8}(f_{{\ep/2}}(a))\in \Her(f_{1/m}(c_1))\subset \Her(c_1).
\eneq
There is $g\in  C_0((0,1])$ such that $0\le g\le 1,$ $g(t)>0$ for all $t\in(0,1],$ and 
\beq
gf_{1/2m}=f_{1/2m}.
\eneq
Put $e=g(c_1).$ Then $[e]=[ c_1]=[ c_0] = [ c].$
In particular, $d_\tau(e)$ is continuous on $\Qw.$ 
But we also have, by \eqref{Talsr1-2},
\beq
f_\ep(a)\le f_{\ep/8}(f_{{\ep/2}}(a))\le e. 
\eneq
This completes the proof.

\end{proof}

\section{\CA\, $l^\infty(A)/I_{_{\Qw}}$}

\begin{df}\label{Ddppro}
{{Let $A$ be a compact \CA\, and 
let}} $p\in  l^\infty(A)/I_{_{\Qw}}$ (or in $l^\infty(A)/N_{cu}(A)$)  be a projection  and 
$\{e_n\}\in l^\infty(A)_+^{\bf 1}$ such that $\Pi(\{e_n\})=p$
(recall that $\Pi: l^\infty(A)\to l^\infty(A)/I_{_{\Qw}}$ is  the quotient map). 
The sequence $\{e_n\}$ is said to be a permanent projection lifting, 
if for any sequence of  positive integers $\{m(n)\},$ 
\beq
\Pi(\{e_n^{1/m(n)}\})=p\,\,\, (or \,\, \Pi_{cu}(\{e_n^{1/m(n)}\})=p).
\eneq
\end{df}

\begin{prop}\label{Dpproj}
Let $A$ be a compact  \CA\, 
with ${\wtd{QT}}(A)\setminus \{0\}\not=\emptyset$ {{and $\{e_n\}\in l^\infty(A)_+^{\bf 1}.$}}

(1) Let $p=\Pi(\{e_n\})$ (or $p=\Pi_{cu}(\{e_n\})$)  be a projection.
Then $\{f_{\dt}(e_n)\}$ is a permanent projection lifting of $p$ for any $0<\dt<1/2$ (for both cases)
and  
\beq
\lim_{n\to\infty}\sup\{\tau(e_n-f_\dt(e_n)e_n): \tau\in \Qw\}=0\,\,\,
({\rm or}\,\, {{\{}}e_n- e_n^{1/2}f_\dt(e_n)e_n^{1/2}{{\}}}\in N_{cu}).
\eneq

(2)  If $\{e_n\}$ is a permanent projection lifting, then $\lim_{n\to\infty}\omega(e_n)=0.$ Moreover, 
an element $\{e_n\}$ is a permanent projection lifting (from $\l^\infty(A)/I_{_{\Qw}}$) if and only if 
$$\lim_{n\to\infty} \sup\{d_\tau(e_n)-\tau(e_n^2):\tau\in \Qw\}=0.$$

(3) If $\{e_n\}\in l^\infty(A)_+^{\bf 1}$ and $\lim_{n\to\infty}\omega(e_n)=0$ 
{{then for some $l(k)\in \N,$ 
$p=\Pi(\{e_k^{{{1/l(k)}}}\})$ 
is a projection,
and  $\{e_k^{{{1/{l(k)}}}}\}$ is a permanent projection lifting of $p.$}}

(4)  Suppose that ${{p=}}\Pi_{{{cu}}}(\{e_n\})$ is a projection 
for some $\{e_n\}\in l^\infty(A)_+^{\bf 1}.$ Then $\{e_n\}$ is a permanent projection lifting (from $\l^\infty(A)/N_{cu}$) if 
$g_\dt(e_n)\cto 0$ for some $\dt\in (0,1/4).$ 

(5)  If $\{e_n\}$ is a permanent projection lifting of  $p \in l^\infty(A)/I_{_{\Qw}},$ 
then $$l^\infty (\{\Her(e_n)\})/I_{_{\Qw}}=p(l^\infty(A)/I_{_{\Qw}})p$$ (see \ref{DNcu} for
$l^\infty(\{\Her(e_n)\})$).

(6) If $A$ is algebraically simple and $QT(A)\not=\emptyset$ and $e\in A_+^{\bf 1}$ is a strictly positive element such that $\widehat{{{[e]}}}$ is continuous 
on $\Qw,$   then $l^\infty(A)/I_{_{\Qw}}$ is unital.

(7)  A $\sigma$-unital simple {{\CA\,}} $A$ 
has  continuous scale if and only if 
$l^\infty(A)/N_{cu}$ is unital.

\end{prop}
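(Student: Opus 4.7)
The plan is to handle parts (1)--(4) as a tightly linked cluster about projection liftings, and then derive parts (5)--(7) as applications.

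For (1), the relation $p^2 = p$ for $p = \Pi(\{e_n\})$ translates to $\sup_{\tau \in \Qw}\tau((e_n - e_n^2)^2) \to 0$. A functional-calculus estimate of the form $0 \le t - f_\dt(t)t \le C_\dt(t - t^2)$ on $[0,1]$ then delivers the second displayed formula in (1). For the permanent-lifting property, fix a sequence $m(n) \in \N$ and split $f_\dt(e_n)^{1/m(n)} - f_\dt(e_n)$ via spectral projection onto $[\dt/2,1]$: on this set the difference is bounded uniformly in $m(n)$ while being proportional to $1 - f_\dt(e_n)$, and on its complement it is absorbed into the tracial size of $e_n - f_\dt(e_n)$, which is already under control. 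The $N_{cu}$ case is entirely parallel, replacing tracial smallness by Cuntz comparison.

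For (2), suppose $\{e_n\}$ is a permanent projection lifting. Then for every fixed $m$, $e_n^{1/m} - e_n \in I_{_{\Qw}}$, so $\sup_\tau \tau(e_n^{1/m} - e_n) \to 0$; taking $m \to \infty$ (justified by the uniform boundedness of $\Qw$ from Proposition \ref{Pboundedness}) yields $\sup_\tau(d_\tau(e_n) - \tau(e_n)) \to 0$, which combined with $\tau(e_n - e_n^2) \to 0$ forces both $\omega(e_n) \to 0$ and the displayed characterization. The converse direction reverses the estimates: assuming $\sup_\tau(d_\tau(e_n) - \tau(e_n^2)) \to 0$, the inequality $\tau((e_n^{1/m(n)} - e_n)^2) \le d_\tau(e_n) - \tau(e_n^2)$ (using $0 \le e_n \le e_n^{1/m(n)}$ up to the support projection) recovers the permanent-lifting condition for every sequence $m(n)$. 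Part (3) follows by applying Proposition \ref{Ppartomega} to choose $l(k) \to \infty$ slowly enough that $\sup_\tau(d_\tau(e_k) - \tau(e_k^{1/l(k)})) \to 0$, so the characterization of (2) applies. Part (4) is the $N_{cu}$ analogue: if $g_\dt(e_n) \cto 0$, then $e_n^{1/m(n)} - e_n$ is Cuntz-dominated by $g_{\dt/2}(e_n) \cto 0$ plus a term that vanishes after projecting onto the quotient.

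For (5), the inclusion $l^\infty(\{\Her(e_n)\})/I_{_{\Qw}} \subset p(l^\infty(A)/I_{_{\Qw}})p$ is immediate, while for the reverse, given $pxp$ with lift $\{x_n\}$, the element $\{f_\dt(e_n)^{1/2} x_n f_\dt(e_n)^{1/2}\}$ lies in $l^\infty(\{\Her(e_n)\})$ and represents $pxp$ modulo $I_{_{\Qw}}$ by the permanent-lifting estimate of (1). For (6), Dini's theorem applied to the continuous $\widehat e$ on the compact set $\Qw$ gives $\sup_\tau(d_\tau(e) - \tau(e^{1/n})) \to 0$, so by (2)--(3) a subsequence of $\{e^{1/n}\}$ is a permanent projection lifting; strict positivity of $e$ makes the resulting projection a unit for $l^\infty(A)/I_{_{\Qw}}$. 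Finally, (7) combines Proposition \ref{Porthogoinal}, which identifies continuous scale with $\Pi_{cu}(A)^\perp = \{0\}$, with the $N_{cu}$-analogue of the argument in (6) (using (4) in place of (2)--(3)): the existence of a permanent projection lifting of a unit in $l^\infty(A)/N_{cu}$ is equivalent to unitality, which is in turn equivalent to $\Pi_{cu}(A)^\perp = \{0\}$ by the orthogonal-complement characterization. The chief obstacle will be the ``moreover'' part of (2), since a single tracial estimate must simultaneously control all fractional powers $e_n^{1/m(n)}$ for arbitrary sequences $m(n)$; one cannot handle each $m$ separately and then diagonalize, and the uniform bounds from Proposition \ref{Pboundedness} together with continuity of $2$-quasitraces on the Pedersen ideal are essential.
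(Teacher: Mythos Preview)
Your plan is mostly sound, but there is a genuine gap in part (5), and the paper's argument for (1) is considerably simpler than yours.

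In (5) you have the two inclusions reversed. The direction $pBp \subset \Pi(l^\infty(\{\Her(e_n)\}))$ is the trivial one: any $pxp$ lifts to $\{e_n x_n e_n\}$, which already lies in $l^\infty(\{\Her(e_n)\})$, so your $f_\dt(e_n)^{1/2}$ maneuver is unnecessary there. The inclusion you call ``immediate,'' namely $\Pi(l^\infty(\{\Her(e_n)\})) \subset pBp$, is \emph{not} immediate: for $g_n \in \Her(e_n)_+^{\bf 1}$ there is no reason that $\{g_n - e_n g_n e_n\} \in I_{_{\Qw}}$. This is exactly where the permanent-lifting hypothesis enters in the paper: given $\ep>0$, choose $m(n)$ with $\|g_n e_n^{1/m(n)} - g_n\| < \ep$; since $\{e_n\}$ is a permanent projection lifting, $\Pi(\{e_n^{1/m(n)}\}) = p$, so $\|gp - g\| < \ep$ for all $\ep$ and hence $g \in pBp$. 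Your outline omits this step.

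For (1), your spectral-splitting argument is heavier than needed. The paper simply observes the functional-calculus inequality $f_\dt(t) \le f_\dt(t)^{1/m} \le f_{\dt/2}(t)$ on $[0,1]$, giving $b_n \le b_n^{1/m(n)} \le f_{\dt/2}(e_n)$ for $b_n = f_\dt(e_n)$; applying $\Pi$ and using $f_\dt(p) = p = f_{\dt/2}(p)$ yields the sandwich $p \le \Pi(\{b_n^{1/m(n)}\}) \le p$ at once. For (2), your strategy is essentially correct but confusingly stated: the passage from ``each fixed $m$'' to ``$m\to\infty$'' \emph{is} a diagonalization---if $\sup_\tau(d_\tau(e_n)-\tau(e_n))\not\to 0$, extract $m(n)$ with $\sup_\tau\tau(e_n^{1/m(n)}-e_n)>\sigma$ on a subsequence, contradict permanent lifting via Cauchy--Schwarz on the commutative subalgebra $C^*(e_n)$ together with the uniform bound $d_\tau(e_n)\le M_1$ from Proposition~\ref{Pboundedness}. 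The paper carries this out by invoking Proposition~\ref{Ppartomega} to manufacture the bad sequence $m(n)$ directly from $\omega(e_{l(k)})>\sigma$.
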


\begin{proof}

(1) Note that $\Pi(f_\dt(\{e_n\}))=f_\dt(\Pi(\{e_n\}))=p$ for any $0<\dt<1/2.$
Therefore $\Pi(f_{\dt/2}(\{e_n\}))=p.$ Put $b_n=f_\dt(e_n),$ $n\in \N.$ 
For any integers $\{m(n)\},$   we have 
\beq
b_n^{1/m(n)}\le f_{\dt/2}(e_n),\,\,n\in \N.
\eneq
It follows that
\beq
p=\Pi(\{f_\dt(e_n)\})\le \Pi(\{b_n^{1/m(n)}\})\le \Pi(\{f_{\dt/2}(e_n)\})=p.
\eneq
This proves the first part of (1) 
{{(the proof for $p=\Pi_{cu}(\{f_\dt(e_n)\})$ is similar).}}

Since   
$\Pi(\{f_{\dt}(e_n)\})=p$ (or $\Pi_{cu}(\{f_\dt(e_n)\})=p$),  we have
 \beq
 e_n-f_\dt(e_n)e_n\in I_{_{\Qw}}, \,\,\,  {\rm hence}\,\,(e_n-f_\dt(e_n)e_n)^{1/2}  \in I_{_{\Qw}}\\
 {\rm (or }\,\,
e_n-f_\dt(e_n)e_n,\,\,(e_n-f_\dt(e_n)e_n)^{1/2}\in N_{cu} 
{\rm )}.
\eneq
 {{Part}} (1) follows.

(2)   If 
$$\lim_{n\to\infty} \sup\{d_\tau(e_n)-\tau(e_n^2):\tau\in \Qw\}=0,$$
then, for any
$\{m(n)\}{{\ \subset \N}},$ 
\beq
\sup\{\tau(e_n^{1/m(n)})-\tau(e_n^2): \tau\in \Qw\}\le \sup\{d_\tau(e_n)-\tau(e_n^2):\tau\in \Qw\}\to 0.
\eneq
It follows that 
$\{e_n^{1/m(n)}-e_n^2\}\in I_{_{\Qw}}.$
Since $e_n^2\le e_n$ for all $n\in \N,$ this also implies that $\{e_n-e_n^2\}\in I_{_{\Qw}}.$ 
Hence $\Pi(\{e_n\})$ is a projection and $\{e_n\}$ is a permanent projection lifting.

Now suppose that $\{e_n\}$ is a  permanent projection lifting of $p=\Pi(\{e_n\}).$ 
Let us show first 
that $\lim_{n\to\infty}\omega(e_n)=0.$
Otherwise, there exists a subsequence $\{l(k)\}$ such 
that $\omega(e_{l(k)})>\sigma$ for some $\sigma>0.$
Fix any $\dt\in (0,1/4).$ By Proposition \ref{Ppartomega},  for each of these $l(k),$ there are $m(l(k))$ such 
that
\beq
\sup\{\tau(e_{l(k)}^{1/m(l(k))})-\tau(f_{\dt}(e_{l(k)})): \tau\in \Qw\}>\omega(e_{l(k)})-\sigma/4>\sigma/2.
\eneq
Choose a sequence $m(n)$ of integers which extends $m(l(k)).$ 
Then
\beq
\limsup_n\|(e_n^{1/m(n)}-f_{\dt}(e_n))^{1/2}\|_{_{2, \Qw}}\ge \sigma/2.
\eneq
Therefore $\Pi(\{e_n^{1/m(n)}\})\not=\Pi(f_\dt(\{e_n\}))=p.$ A contradiction.
Hence
$\lim_{n\to\infty}\omega(e_n)=0.$

Therefore there exists a sequence $\{m(n)\}$ such that
\beq
\sup\{d_\tau(e_n)-\tau(e_n^{1/m(n)}): \tau\in \Qw\}<1/n,\,\,n\in \N.
\eneq
Then, since $\{(e_n^{1/m(n)}-e_n^2)^{1/2}\}\in I_{_{\Qw}}$
 (for any $\{m(n)\}$), we also have that
\beq\nonumber
&&\hspace{-0.5in}\sup\{d_\tau(e_n)-\tau(e_n^2): \tau\in \Qw\}\\\nonumber
&&\le 
\sup\{d_\tau(e_n)-\tau(e_n^{1/m(n)}): \tau\in \Qw\}+\sup\{\tau(e_n^{1/m(n)}-e_n^2):\tau\in \Qw\}\\
&&<1/n+\|(e_n^{1/m(n)}-e_n^2)^{1/2}\|_{_{2, \Qw}}\,\,\to 0 \,\,\,\, {\rm as\,\,{{n}}\to\infty}.
\eneq

(3) In this case, {{since $\lim_{n\to\infty}\omega(e_n)=0,$}} there are $l({{n}})\in \N$ such that
\beq
\lim_{n\to\infty}\sup\{d_\tau(e_n)-\tau(e_n^{1/l({{n}})}): \tau\in \Qw\}=0.
\eneq
It follows that  
\beq\label{Ppproj-n1}
\{e_{{n}}^{1/m({{n}})}-e_{{n}}^{1/l({{n}})}\}\in I_{_{\Qw}}
\eneq for any integers $m(n)\ge l({{n}}).$ 
Since 
\beq
\|e_{{n}}^{1/2{{l(n)}}}
-(e_{{n}}^{1/2{{l(n)}}})^2\|^2{{_{_{2, \Qw}}}}\le \sup\{d_\tau(e_{{n}})-\tau(e_{{n}}^{l({{n}})})
: \tau\in \Qw\}\to {{0,}}
\eneq
as ${{n}}\to\infty,$ 
$\Pi(\{e_n^{1/2{{l}}(n)}\})=p$ is a projection.
By \eqref{Ppproj-n1},
 {{for}} any  integers ${{m(n)}}\ge l({{n}}),$
\beq
\Pi(\{e_{{n}}^{1/{{m(n)}}}\})
=\Pi(\{ e_{{n}}^{1/{{l(n)}}}\})
{{=(\Pi(\{ e_n^{1/2l(n)}\}))^2}}
=p.
\eneq
It follows that $\{e_n^{{{1/l(n)}}}\}$ is a permanent projection lifting of $p.$

(4)
Suppose that $g_\dt(e_n)\cto 0$ for some $0<\dt<1/4.$ 
We have, {{for any $m(n)\in \N,$}}
\beq
e_n^{1/m(n)}-f_{\dt/2}(e_n)e_n^{1/m(n)}\lesssim g_\dt(e_n)\rforal n\in \N.
\eneq
It follows that $\{e_n^{1/m(n)}-f_{\dt/2}(e_n)e{{_n^{1/m(n)}}}\}\in N_{cu}.$ One then 
checks that 
\beq
{{\hspace{-0.1in}}}
p\le \Pi_{cu}(\{e_n^{1/{{m(n)}}}\})=\Pi_{cu}(\{f_{\dt/2}(e_n)e{{_n^{1/m(n)}}}\})\le \Pi_{cu}(
\{f_{\dt/2}(e_n)\})
{{=f_{\dt/2}(\Pi_{cu}(
\{e_n \}))}}=p.
\eneq
{{Thus (4) follows.}}

For (5),  {{let $B=l^\infty(A)/I_{_{\Qw}}.$}}
It is clear that $pBp\subset \Pi(l^\infty(
{{\{}}{\rm Her}(e_n){{\}}})).$
Suppose that 
${{g\in}}\Pi(l^\infty(
{{\{}}{\rm Her}(e_n){{\}}}){{)_+^{\bf 1}}}.$ 
Then we may write $g=\Pi(\{g_n\})$ such that $g_n\in ({\rm Her}(e_n))^{\bf 1}_+,$ {{$n\in \N.$}}
For any $\ep>0,$ there {{exists}} $m(n)\in \N$ such that
\beq
\|{{e_n^{1/m(n)}}}g_ne_n^{1/m(n)}-g_n\|<\ep\rforal n\in \N.
\eneq
Thus
\beq
\|\Pi(\{e_n^{1/m(n)}\})g\Pi(\{e_n^{1/m(n)}\})-g\|<\ep.
\eneq
However,  since $\{e_n\}$ is a permanent projection lifting of $p,$
$\Pi(\{e_n^{1/m(n)}\})=p.$ 
Thus
\beq
\|{{p}}gp-g\|<\ep.
\eneq
It follows  $g\in pBp.$ This shows  that $C=pBp=\Pi(l^\infty({{\{}}{\rm Her}(e_n){{\}}})).$ 

(6) In this case, $\omega(e)=0.$ 
Therefore, by (3), $\{e^{{{1/l(n)}}}\}$ is a permanent 
projection lifting for $p=\Pi(\{e^{{{1/l(n)}}}\})$  (for some $l({{n}})\in \N$).

For any $\{x_n\}\in l^\infty(A),$ 
there is a sequence $\{m(n)\}$ of integers such that
\beq
\|x_ne^{1/m(n)}-x_n\|<1/n\andeqn \|e^{1/m(n)}x_n-x_n\|<1/n,\,\,n\in \N.
\eneq
Hence $p\{x_n\}=\{x_n\}p=\{x_n\}.$ So $p$ is the unit of $l^\infty(A)/I_{_{\Qw}}.$

For (7), 
suppose that $A$ has continuous scale.  
{{Let $e\in A_+^{\bf 1}$ be a strictly positive element.}}
By (3) of Proposition \ref{Pbfos}, $\omega^c(e)=0.$
Then, for any $\ep>0$ and $n\in \N,$  there exists an integer $l(n)\in \N$
such that 
\beq
{{f_\ep(e^{1/(m(n)}-e^{1/l(n)})\lesssim g_{1/n}(e)\,\,\,{\rm   for\,\, any}\,\,\, m(n)>l(n).}}
\eneq
Since $g_{1/n}(e)\cto 0$ (see \ref{DefOs2}),  we conclude that 
$\{e^{1/(m(n)}-e^{1/l(n)}\},\, \{ e^{1/2l(n)}-e^{1/l(n)}\}\in N_{cu}.$
It follows that $p=\Pi(\{e^{1/l(n)}\})$ is a projection and $\{e^{1/l(n)}\}$ is a permanent projection lifting.

Let $\{b_n\}\in l^\infty(A).$ 
Then, for each $n\in \N,$  there is $m(n)\in \N$  with $m(n)\ge l(n)$ such that
$\|e^{1/m(n)}b_n-b_n\|<1/n.$ Recall that $p=\Pi(\{e^{1/m(n)}\}).$
Thus $p\Pi(\{b_n\})=\Pi(\{b_n\}).$ It follows  $l^\infty(A)/{{N_{cu}}}$
is unital.

Conversely,  let $p\in l^\infty(A)/{{N_{cu}}}$
be the unit. Let $\{e_n\}\in l^\infty(A)_+^{\bf 1}$
such that $\Pi(\{e_n\})=p.$  We claim that $\Pi(A)^\perp=\{0\}.$ 
Otherwise, for each $n,$ there exists $b_n\in A_+$ with $\|b_n\|=1$
such that $\|e_nb_n\|<1/n.$ Then 
$p\Pi(\{b_n\})=0.$ Impossible. Thus $\Pi(A)^\perp=\{0\}.$ By  Proposition \ref{Porthogoinal},
$A$ has continuous scale.
\end{proof}

\begin{lem}\label{LPPP}
Let $A$ be a    compact 
\CA\ with 
$QT(A)\not=\emptyset.$ 
If $A$ has  $T$-tracial approximate oscillation zero, then, for 
any $x\in (l^\infty(A)/I_{_{\Qw}})_+,$ there is a projection 
$p\in {{l^\infty(A)/I_{_{\Qw}}}}$ such that $px=x=xp.$
\end{lem}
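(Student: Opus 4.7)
The plan is to construct, via the $T$-tracial approximate oscillation hypothesis, a sequence of contractions in $l^\infty(A)_+$ whose image under the quotient map $\Pi$ is a projection $p$ acting as a local unit for $x.$ First, lift $x$ to $\{x_n\} \in l^\infty(A)_+$ with $\|x_n\|\le \|x\|$ for each $n$ (positive liftings of positive elements always exist under a quotient by a closed two-sided ideal). Since $A$ is compact, $A={\rm Ped}(A),$ so each $x_n\in {\rm Ped}(A)_+.$ By Proposition \ref{PapproxeT}, each $\Her(x_n)$ admits a $T$-tracial approximate identity $\{e_k^{(n)}\}_{k\in\N}$ with $\omega(e_k^{(n)})|_{\Qw}\to 0$ as $k\to\infty.$ For each $n,$ choose $k_n$ so that $e_n:=e_{k_n}^{(n)}\in \Her(x_n)_+^{\bf 1}$ satisfies
\[
\omega(e_n)|_{\Qw}<1/n,\qquad \|x_n-x_ne_n\|_{_{2,\Qw}}<1/n,\qquad \|x_n-e_nx_n\|_{_{2,\Qw}}<1/n.
\]

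Since $\lim_n\omega(e_n)|_{\Qw}=0,$ Proposition \ref{Dpproj}(3) yields integers $l(n)$ for which $p:=\Pi(\{e_n^{1/l(n)}\})$ is a projection in $l^\infty(A)/I_{_{\Qw}}$ and $\{e_n^{1/l(n)}\}$ is a permanent projection lifting of $p.$ It remains to verify $px=xp=x,$ i.e.\ that $\{x_n-x_ne_n^{1/l(n)}\}$ and $\{x_n-e_n^{1/l(n)}x_n\}$ lie in $I_{_{\Qw}}.$ The key ingredient is the scalar inequality $(1-t^{1/l})^2\le (1-t)^2$ for $t\in[0,1]$ and $l\ge 1,$ which is immediate from $0\le 1-t^{1/l}\le 1-t.$ Applying the functional calculus inside the commutative \SCA\, $C^*(e_n)$ gives
\[
(1-e_n^{1/l(n)})^2\le (1-e_n)^2,
\]
and compressing both sides by $x_n\in A_+$ produces $x_n(1-e_n^{1/l(n)})^2x_n\le x_n(1-e_n)^2x_n$ in $A.$ By monotonicity of 2-quasitraces on $A_+,$ for every $\tau\in\Qw$ we obtain
\[
\|x_n-x_ne_n^{1/l(n)}\|_{_{2,\tau}}^2=\tau\bigl(x_n(1-e_n^{1/l(n)})^2x_n\bigr)\le \tau\bigl(x_n(1-e_n)^2x_n\bigr)=\|x_n-x_ne_n\|_{_{2,\tau}}^2.
\]
Taking the supremum over $\tau\in\Qw$ yields $\|x_n-x_ne_n^{1/l(n)}\|_{_{2,\Qw}}<1/n,$ hence $\{x_n-x_ne_n^{1/l(n)}\}\in I_{_{\Qw}}$ and $xp=x.$ The symmetric estimate (using $\tau(a^*a)=\tau(aa^*)$) gives $px=x.$

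The main technical point is ensuring that the refinement $e_n\mapsto e_n^{1/l(n)},$ which is needed to obtain a genuine projection in $l^\infty(A)/I_{_{\Qw}}$ via Proposition \ref{Dpproj}, does not destroy the tracial approximation $x_ne_n\approx x_n$ in 2-seminorm. This is precisely what the functional-calculus inequality $(1-t^{1/l})^2\le (1-t)^2$ combined with 2-quasitrace monotonicity delivers, and it is the only nontrivial step in the argument.
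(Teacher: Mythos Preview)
Your argument is correct, but it takes a somewhat different path than the paper's. The paper does not keep the original lift $\{y_n\}$ and build a tracial approximate identity for each $\Her(y_n)$; instead it replaces $\{y_n\}$ by a new lift $\{d_n\}$ of $x$ with $d_n\in\Her(y_n)_+^{\bf 1}$ and small oscillation (choosing $\dt_n<1/4n$ with $d_\tau(d_n)-\tau(f_{\dt_n}(d_n))<1/4n$), and then simply sets $e_n=f_{\dt_n/2}(d_n)$. Two things then come essentially for free: $\|e_nd_n-d_n\|\to 0$ in \emph{norm} (since $\dt_n\to 0$), so $\Pi(e)x=\Pi(e)\Pi(d)=\Pi(d)=x$ is immediate, and a one-line computation $\tau(e_n-e_n^2)\le d_\tau(d_n)-\tau(f_{\dt_n}(d_n))<1/n$ shows $\Pi(e)$ is a projection directly, without invoking Proposition~\ref{Dpproj}. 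Your route instead leans on Propositions~\ref{PapproxeT} and~\ref{Dpproj}(3), which forces you to pass from $e_n$ to $e_n^{1/l(n)}$, and you then salvage the 2-norm estimate via the scalar inequality $(1-t^{1/l})^2\le(1-t)^2$ and monotonicity of 2-quasitraces. Both arguments are valid; the paper's is shorter and self-contained, while yours illustrates how the result can be assembled from the earlier structural lemmas once the functional-calculus inequality is observed.
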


\begin{proof}
Let  
$B=l^\infty(A)/I_{_{\Qw}}$ and $\Pi: l^\infty(A)\to B$ be the quotient map.
Let $x\in B_+.$  \Wlog, we may assume that $0\le x\le 1.$ 

Let $y=\{y_n\}\in l^\infty(A)$ with $0\le y\le 1$ such that $\Pi(y)=x.$ 
Since $A$ has T-tracial approximate oscillation zero,
there are $d_n\in  \Her(y_n)_+^{\bf 1}$ and $\dt_n
{{\in(0,1/4n)}}
$  such that 
\beq
&& \|y_n-d_n\|_{_{2, \Qw}}<1/4n,
\andeqn\\ 
&&d_\tau(d_n)-\tau(f_{\dt_n}(d_n))<1/4n\rforal \tau\in \Qw \andeqn n\in \N.
\eneq
Define $d=\{d_n\}\in l^\infty(A).$ Then $\Pi(d)
=\Pi({{y}}
)=x.$
{{Put}} $e_n=f_{\dt_n/2}(d_n),\,\, n\in \N$ and $e=\{e_n\}.$   Then $e\in {{l^\infty(A{)_+^{\bf 1}}.}}$
Moreover
\beq\label{LPPP-1}
\lim_{n\to\infty}\|e_nd_n-d_n\|=0.
\eneq
It follows that
\beq
{{\Pi(e)x=}}
\Pi(e)\Pi(d)=\Pi(d)=x{{.}}
\eneq
It remains to show that $p:=\Pi(e)$ is a  projection. 
To do this, we compute that
\beq
\tau(e_n-e_n^2)\le \tau(e_n-f_{\dt_n}(d_n))<d_\tau(d_n)-\tau(f_{\dt_n}(d_n))<1/n\rforal n\in \N.
\eneq
It follows that
\beq
\|e_n-e_n^2\|_{{_{2, \Qw}}}<1/\sqrt{n}\to 0.
\eneq
Thus $p=\Pi(e)=\Pi(e)^2,$ or $p\in B$ is a projection.  
\end{proof}

\begin{thm}\label{PLrr0}
Let $A$ be a    compact 
\CA\ with  
non-empty $QT(A).$  
If $A$ has T-tracial approximate oscillation zero, then
 $l^\infty(A)/I_{\Qw}(A)$   has real rank zero.
\end{thm}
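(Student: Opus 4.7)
The plan is to verify the Brown--Pedersen characterization: $B := l^\infty(A)/I_{\Qw}$ has real rank zero if and only if every hereditary \SCA\, of $B$ admits an approximate identity of projections. Since $\overline{xBx} \subseteq C$ whenever $x \in C_+$ and $C \subseteq B$ is hereditary, it will suffice to show that for any $x \in B_+^{\bf 1}$ and $\ep > 0$, there is a projection $p \in \overline{xBx}$ with $\|(1-p)x\| < \ep$. After the usual reduction via positive/negative parts, positive contractions are enough to treat.

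The key idea will be to apply Lemma \ref{LPPP} not to $x$ itself but to the cut-down $y := (x - \ep/2)_+ \in \overline{xBx}$, whose spectrum is confined away from the interval $(0,\ep/2)$ in a way that forces the resulting covering projection to sit inside $\overline{xBx}$. Concretely, I would fix a lift $\{x_n\} \subset A_+^{\bf 1}$ with $\Pi(\{x_n\}) = x$, set $y_n := (x_n - \ep/2)_+ \in \Her(x_n)$ so that $\{y_n\}$ lifts $y$, and then appeal to Lemma \ref{LPPP} (which invokes the T-tracial approximate oscillation zero hypothesis, inherited by the $\Her(y_n)$). Inspecting its proof, the covering projection takes the form $p = \Pi(\{e_n\})$ with $e_n = f_{\dt_n/2}(d_n)$, $d_n \in \Her(y_n)$, $\omega(d_n) \to 0$, and $py = yp = y$.

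The crucial point will be to verify that $p \in \overline{xBx}$. Note that $f_{\ep/4}(t) = 1$ on $[\ep/4,\infty)$, so $f_{\ep/4}(x_n) \cdot (x_n - \ep/2)_+ = (x_n - \ep/2)_+$, which shows that $f_{\ep/4}(x_n)$ acts as a two-sided unit on $\Her(y_n)$. Hence $f_{\ep/4}(x_n) d_n = d_n = d_n f_{\ep/4}(x_n)$, and since $e_n \in \Her(d_n)$ also $f_{\ep/4}(x_n) e_n = e_n = e_n f_{\ep/4}(x_n)$. Passing to the quotient yields $p = f_{\ep/4}(x) \cdot p \cdot f_{\ep/4}(x)$. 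Writing $f_{\ep/4}(x) = x \cdot g(x)$ with $g(t) := f_{\ep/4}(t)/t$ (and $g(0) := 0$) a continuous bounded function, I then obtain $p = x\bigl(g(x)\, p\, g(x)\bigr) x \in xBx \subseteq \overline{xBx}$.

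The required approximation $\|(1-p)x\| < \ep$ will follow at once from $(1-p)y = 0$ together with $\|x - y\| \le \ep/2$. The substantive work is already carried out inside Lemma \ref{LPPP} by way of the T-tracial approximate oscillation zero hypothesis; the only new ingredient here is the support-localization trick of cutting $x$ down to $(x - \ep/2)_+$ before producing the covering projection, so no serious additional obstacle arises.
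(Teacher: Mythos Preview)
Your argument is correct. Both your approach and the paper's rest on the same engine---Lemma~\ref{LPPP}, which manufactures a covering projection from the T-tracial approximate oscillation zero hypothesis---but you package it differently.

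The paper proceeds in two stages: first it shows that every corner $pBp$ (for $p\in B$ a projection) has real rank zero by using the orthogonal-separation characterization: given $a,b\in (pBp)_+$ with $ab=0$, it lifts them to orthogonal sequences $a_n,b_n\in\Her(e_n)$, applies the oscillation hypothesis to $a_n$ to build $d_n\in\Her(a_n)$ and $g_n=f_{\dt_n/2}(d_n)$, and checks that $g=\Pi(\{g_n\})$ is a projection with $ga=a$, $gb=0$. It then bootstraps to all of $B$ by invoking Lemma~\ref{LPPP} to place any self-adjoint element inside some $pBp$, where it can be approximated by an element of finite spectrum. Your route is more direct: you go straight for the approximate-identity-of-projections characterization, and the cut-down $y=(x-\ep/2)_+$ is exactly what forces the projection produced by (the proof of) Lemma~\ref{LPPP} to land in $\overline{xBx}$, since $f_{\ep/4}(x_n)$ acts as a unit on $\Her(y_n)$. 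This avoids the two-step corner-then-global structure of the paper's proof.

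Two small remarks. First, you are really using the \emph{proof} of Lemma~\ref{LPPP} (namely, that one may choose the lift of $y$ freely and that the resulting $e_n$ lie in $\Her(y_n)$), not merely its statement; this is fine since the construction is explicit, but it would be cleaner to restate the needed refinement. Second, the sentence about ``the usual reduction via positive/negative parts'' is superfluous and slightly confusing---the reduction you actually need (and state correctly just before) is that producing a projection $p\in\overline{xBx}$ with $\|(1-p)x\|<\ep$ for each $x\in B_+^{\bf 1}$ yields approximate identities of projections in every hereditary \SCA, which is the Brown--Pedersen criterion.
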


\begin{proof}
Let 
$B=l^\infty(A)/I_{_{\Qw}}(A)$ and $\Pi: l^\infty(A)\to B$ be the quotient map.

We claim that, if $p\in B$ is a non-zero projection, then 
$C:=p{{\wtd B}}p=pBp$ has real rank zero.

Let $\{e_n\}\in  l^\infty(A)_+^{\bf 1}$ such that $\Pi(\{e_n\})=p.$ 
Upon replacing $e_n$ by 
$f_{1/4}(e_n),$   by Lemma \ref{Dpproj}, we may assume 
that $\{e_n\}$ is a permanent projection lifting of $p.$
By (5) of 
{{Proposition}}
\ref{Dpproj}, $C=pBp=\Pi(l^\infty(
{{\{}}{\rm Her}(e_n)
{{\}}})).$

Let $a, b\in C_+$ be such that $ab=0.$ 
We may assume that $\|a\|, \|b\|\le 1.$ 

 Then,  by  Proposition 10.1.10 of \cite{Lor97}, for example,  we may assume that 
$a=\Pi(\{a_n\})$ and $b=\Pi(\{b_n\}),$ where $a_n, b_n\in {\rm Her}(e_n)_+$  and $\{a_n\},\,\{b_n\}\in l^\infty({{\{}}{\rm Her}(e_n){{\}}})$ such that 
$a_nb_n=b_na_n=0$ for all $n\in \N.$
Since $A$ has T-tracial approximate oscillation zero, there are $d_n\in {\rm Her}(a_n)_+^{\bf 1}$  and $\dt_n
{{\in(0,1/4n)}}
$  such that 
\beq
&&\|a_n-d_n\|_{{{_{2, \Qw}}}}<1/4n
\andeqn\\ 
&&|d_\tau(d_n)-\tau(f_{\dt_n}(d_n))|<1/4n\rforal \tau\in \Qw\andeqn n\in \N.
\eneq
Define $d=\{d_n\}\in l^\infty({{\{}}{\rm Her}(a_n){{\}}}).$ Then $a=\Pi(\{a_n\})=\Pi(d).$
{{Put}} 
\beq
{{g_n=f_{\dt_n/2}(d_n),\,\, n\in \N\andeqn g':=\{g_n\}\in l^\infty({{\{}}{\rm Her}(a_n){{\}}})\subset l^\infty({{\{}}{\rm Her}(e_n){{\}}})\subset l^\infty(A).}}
\eneq
Put ${{g=\Pi(g')}}.$ 
Recall that $l^\infty({{\{}}{\rm Her}(e_n){{\}}})=pBp.$ Thus $g\in pBp_+.$   Since $d_n\in  {\rm Her}(a_n),$  we have $g_nb_n=b_ng_n=0.$
In other words,
\beq\label{RR0-1}
gb=0.
\eneq
Note that $f_{\dt_n}(d_n)g_n=f_{\dt_n}(d_n)$ for all $n\in \N.$ 
It follows that
\beq\label{PLrr0-10}
g_n^2\ge f_{\dt_n}(d_n)\rforal n\in \N.
\eneq
We compute that
\beq
\tau(g_n-g_n^2)\le \tau(g_n-f_{\dt_n}(d_n))<d_\tau(d_n)-\tau(f_{\dt_n}(d_n))<1/n\rforal n\in \N.
\eneq
It follows that
\beq
\|g_n-g_n^2\|_{{{_{2, \Qw}}}}<1/\sqrt{n}\to 0.
\eneq
Thus ${{g=g^2}},$ or $g\in pBp$ is a projection.
Recall that 
\beq
\lim_{n\to\infty}\|{{g_n}}d_n-d_n\|=\lim_{n\to\infty}\|f_{\dt_n/2}(d_n)d_n-d_n\|=0.
\eneq
In  other words,
\beq
{{g}}a=g\Pi(d)=\Pi(d)=a.
\eneq
This and  \eqref{RR0-1}
imply  that $pBp$ has real rank zero and the claim is proved.

To show that $B$ has real rank zero, 
let $x\in B_{s.a.}$ and let $\ep>0.$ 
Put $z=x^2.$
Then, by Lemma \ref{LPPP}, there is a  projection 
$p\in B$ such that $pz=z=zp.$ 
Hence $x\in pBp.$
By the claim that we  have just shown, $pBp$ has real rank zero.
Then there is $y\in (pBp)_{s.a.}$  with {{finite spectrum}} 
such that $\|x-y\|<\ep.$   By Theorem 2.9 of \cite{BP},
$B$ has real rank zero.
\end{proof}

If $A$ is unital, then $l^\infty(A)$ is the multiplier algebra of $c_0(A).$
Thus $l^\infty(A)/c_0(A)$ is a corona algebra. Therefore $l^\infty(A)/I_{_{\Qw}}$ is 
a SSAW*-algebra (see Proposition 3 of \cite{Psw} and  section 3 of \cite{Pthree}).  The above theorem also implies 
that, for non-unital case, we also have the next corollary.  (One may also compare the  \ref{Csaw}
and \ref{Cpolar}
(at least in untal case) with those of Lemma 1.8,  Theorem 1.9 and Corollary 1.10
of \cite{LinPlondonIII}.)

\begin{cor}\label{Csaw}
Let $A$ be a $\sigma$-unital compact \CA\, with $QT(A)\not=\emptyset.$
Suppose that $A$ {{has}} T-tracial approximate oscillation zero. 
Then $l^\infty(A)/I_{_{\Qw}}$ is a SSAW*-algebra with real rank zero. 
\end{cor}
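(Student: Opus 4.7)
The plan is to verify the two halves of the conclusion separately: the real rank zero property, and the SSAW* property. The first half requires no new work since it is precisely Theorem \ref{PLrr0}. The substance lies in establishing the SSAW* property, which I would do by first isolating the single-pair SAW* statement implicitly contained in the proof of Theorem \ref{PLrr0}, and then promoting it to countable orthogonal families by the standard summation technique.

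Set $B := l^\infty(A)/I_{_{\Qw}}$ with quotient map $\Pi : l^\infty(A) \to B$. The SSAW* property requires: for any two countable families $\{x_n\}, \{y_m\} \subset B_+$ with $x_n y_m = 0$ for all $n, m$, there exists a projection $p \in B$ with $p x_n = x_n$ and $p y_m = 0$ for all $n, m$. The first step, the single-pair case, asserts that for any $a, b \in B_+$ with $ab = 0$ there is a projection $p \in B$ such that $pa = a$ and $pb = 0$. This is exactly what the proof of Theorem \ref{PLrr0} constructs: lift $a, b$ to $\{a_n\}, \{b_n\} \in l^\infty(A)$ with $a_n b_n = b_n a_n = 0$ via Proposition 10.1.10 of \cite{Lor97}; invoke the T-tracial approximate oscillation zero hypothesis to produce $d_n \in \Her(a_n)_+^{\bf 1}$ with $\|a_n - d_n\|_{_{2, \Qw}} < 1/n$ and small oscillation; then the cut-off $g_n := f_{\delta_n/2}(d_n)$ for a suitably chosen $\delta_n$ yields a sequence $\{g_n\}$ whose image $g := \Pi(\{g_n\})$ is a projection satisfying $g a = a$ and $g b = 0$. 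I would extract this as a standalone lemma.

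For the second step, given the countable families, I would form the positive elements
\[
a := \sum_{n=1}^\infty 2^{-n}\,\frac{x_n}{1+\|x_n\|} \quad \text{and} \quad b := \sum_{m=1}^\infty 2^{-m}\,\frac{y_m}{1+\|y_m\|}
\]
in $B_+$, which satisfy $ab = 0$ by the orthogonality hypothesis. Applying the single-pair result gives a projection $p \in B$ with $pa = a$ and $pb = 0$. Since $p$ is a projection, $pa = a$ forces $a \in pBp$, hence $\overline{aBa} \subseteq pBp$; as each $x_n \in \overline{aBa}$, we obtain $p x_n = x_n$. Symmetrically, $p y_m = 0$. This verifies the SSAW* property and completes the plan.

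The main obstacle, I expect, is in the single-pair step rather than in the summation step: one needs to verify that the projection $g$ produced genuinely satisfies $ga = a$ in $B$, not merely $ga \approx a$. This absorption is witnessed by the two key estimates inside the proof of Theorem \ref{PLrr0}, namely the oscillation control $g_n^2 \ge f_{\delta_n}(d_n)$ (which forces $g$ to be a genuine projection) and the norm approximation $\|f_{\delta_n/2}(d_n) d_n - d_n\| \to 0$ (which forces $ga = \Pi(\{g_n d_n\}) = \Pi(\{d_n\}) = a$ in $B$, using $\|a_n - d_n\|_{_{2, \Qw}} \to 0$). Packaging these estimates into the clean statement $ga = a$ is the main technical point, but the arguments are already present in the excerpt and need only be reorganized.
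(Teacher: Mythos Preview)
Your extraction of the single-pair SAW* separation from the proof of Theorem~\ref{PLrr0} is correct and matches the paper's argument: given $a,b\in B_+$ with $ab=0$, one lifts orthogonally, applies T-tracial approximate oscillation zero to the lift of $a$, and the cut-off $f_{\delta_n/2}(d_n)$ produces a projection $g$ with $ga=a$ and $gb=0$. This is precisely what the paper invokes as ``the first part of the proof of Theorem~\ref{PLrr0}.''

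The divergence is in what ``SSAW*'' means. You read it as the countable-family separation property, and your summation argument is valid; but note that this property is already an automatic consequence of SAW* for any SAW*-algebra (your own argument shows this: one never uses anything beyond the single-pair case plus hereditary absorption). So if that were the definition, the extra ``S'' would be redundant. The paper's usage, following Pedersen (\cite{Psw}, \cite{Pthree}), is that $B$ is SSAW* when $M_n(B)$ is SAW* for every $n$. The paper handles this in one line: since $M_n(A)$ also has T-tracial approximate oscillation zero (Definition~\ref{DTos-2} is stable under passing to matrices), and $M_n(l^\infty(A)/I_{_{\Qw}})\cong l^\infty(M_n(A))/I_{_{\overline{QT(M_n(A))}^w}}$, the single-pair SAW* argument applied to $M_n(A)$ gives SAW* for $M_n(B)$.

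So your proposal proves the right core lemma but promotes it in the wrong direction. The fix is minor: replace the countable-summation step with the observation that the hypotheses pass to $M_n(A)$, so the SAW* conclusion passes to $M_n(B)$.
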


\begin{proof}
This is contained in the proof of Lemma \ref{PLrr0}.
Since $M_n(A)$ also has T-tracial approximate oscillation zero,
it suffices to show that 
$B:=l^\infty(A)/I_{\Qw}$ is a SAW*-algebra.
Consider elements $a, b\in B_+$ such that $ab=ba=0.$
By  Lemma \ref{PLrr0},  there is a projection $p\in B$ such
that 
$$
{{p(a+b)=(a+b)p=a+b.}}
$$  
Then the first part of the proof of 
Theorem \ref{PLrr0}
provides a projection ${{g}}\in B$ such that ${{g}}a=a$ and ${{g}}b=0.$ Consequently 
$B$ is a SAW*-algebra of real rank zero.  

\end{proof}

\begin{thm}\label{Tasr1}
Let $A$ be an algebraically simple \CA\, 
with $QT(A)\not=\emptyset.$ 
Suppose that $A$ has strict comparison and  T-tracial approximate oscillation zero. Then $l^\infty(A)/I_{\Qw}$ has stable rank one.
\end{thm}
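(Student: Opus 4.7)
The plan is to exploit the structural properties of $B:=l^\infty(A)/I_{_{\Qw}}$ that have already been established. By Corollary \ref{Csaw}, $B$ is a SAW*-algebra of real rank zero; by Theorem \ref{P1025-0}, $B$ has cancellation of projections; by Proposition \ref{P1025-3}, every projection in $B$ is finite; and by Lemma \ref{LPPP}, every positive element of $B$ admits a projection ``local unit'' in $B.$ Given $x\in B$ and $\ep>0,$ the goal is to produce $y\in GL(\wtd B)$ with $\|x-y\|\le 2\ep.$ Apply Lemma \ref{LPPP} to $xx^*+x^*x\in B_+$ to obtain a projection $p\in B$ with $p(xx^*+x^*x)=(xx^*+x^*x)p=xx^*+x^*x.$ Since $xx^*,x^*x\le xx^*+x^*x,$ positivity forces $(1-p)xx^*(1-p)=0=(1-p)x^*x(1-p),$ hence $px=x=xp,$ so $x\in pBp.$ The corner $pBp$ inherits from $B$ real rank zero, cancellation of projections, and finiteness of all projections.

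Using real rank zero in $pBp,$ I would approximate $x^*x$ by a positive element with finite spectrum and extract, for $\delta>0,$ a projection $e_\delta\in pBp$ (an approximate spectral projection of $x^*x$ for $[\delta,\infty)$) such that $x':=xe_\delta\in pBp$ satisfies $\|x-x'\|<\ep/2$ and $0$ is isolated in ${\rm sp}(|x'|)\cup\{0\}.$ For such an $x',$ the polar decomposition $x'=v|x'|$ lies entirely in $B$: set $v:=x'\cdot g(|x'|),$ where $g$ is the continuous function that inverts $t\mapsto t$ on ${\rm sp}(|x'|)\setminus\{0\}$ and satisfies $g(0)=0.$ Then $e:=v^*v\le p$ and $q:=vv^*\le p$ are projections in $pBp$ with $e\sim q$ via $v.$ Applying Theorem \ref{P1025-0} with $r=p$ gives $p-e\sim p-q$ in $B;$ choose $w\in pBp$ with $w^*w=p-e$ and $ww^*=p-q.$ Then $u:=v+w$ satisfies $u^*u=uu^*=p,$ i.e., $u$ is a ``unitary'' in the unital corner $pBp.$ Since $we=w(w^*w)e=w(p-e)e=0$ and $|x'|=e|x'|,$ one has $w|x'|=0,$ so $u|x'|=v|x'|+w|x'|=x'.$

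Finally, set $h:=|x'|+\ep p\in pBp,$ which is invertible in $pBp,$ and form $uh\in pBp,$ again invertible in $pBp$ (with inverse $h^{-1}u^*$). Observe that $\|x'-uh\|=\|u(|x'|-h)\|=\ep\|u\|\le\ep.$ The element
\[
y:=uh+\ep(1-p)\in\wtd B
\]
is invertible in $\wtd B$ with inverse $h^{-1}u^*+\ep^{-1}(1-p),$ as the relations $u(1-p)=0=(1-p)u^*$ and $h\in pBp$ make the product collapse to $uu^*+(1-p)=p+(1-p)=1$ (and similarly on the other side). Because $x-uh\in pBp$ and $\ep(1-p)$ lie in orthogonal corners of $\wtd B,$
\[
\|x-y\|\le\|x-x'\|+\max\{\|x'-uh\|,\,\ep\}\le\ep/2+\ep\le 2\ep,
\]
so $x\in\overline{GL(\wtd B)}$ and $B$ has stable rank one. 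The main obstacle is the polar-decomposition step: producing $x'\in pBp$ close to $x$ with $0$ isolated in ${\rm sp}(|x'|)\cup\{0\}$ while keeping $x'$ in $pBp.$ Real rank zero provides the needed approximate spectral projection $e_\delta$ via a finite-spectrum approximant of $x^*x$ inside $pBp,$ arranged so that $\|x(1-e_\delta)\|\to 0$ as $\delta\to 0$ and $e_\delta x^*x e_\delta$ is bounded below by $(\delta-\text{error})e_\delta$ on its support. The subsequent upgrade of the partial isometry $v$ to a unitary $u$ in $pBp$ is the other technical heart of the argument, but it is delivered cleanly by the cancellation of projections in $B$ (Theorem \ref{P1025-0}).
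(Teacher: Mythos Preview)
Your argument is correct and follows the same architecture as the paper: reduce to a corner $pBp$ via Lemma \ref{LPPP}, show that corner has stable rank one, then pad by $\ep(1-p)$ (or $\lambda(1-p)$) to lift invertibles to $\wtd B$. The difference is only in how stable rank one of $pBp$ is obtained. The paper observes that $pBp$ has real rank zero (Theorem \ref{PLrr0}) and cancellation of projections (Theorem \ref{P1025-0}), and then simply \emph{cites} Proposition III.2.4 of \cite{BH} together with Theorem 2.6 of \cite{BP} to conclude stable rank one. You instead reprove that implication by hand: approximate $x$ by a well-supported element $x'$ (using real rank zero), polar-decompose $x'=v|x'|$ inside $pBp$, invoke cancellation to complement $v$ to a unitary $u$, and perturb $|x'|$ to an invertible. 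This is exactly the classical proof of the cited Brown--Pedersen implication, so what you have written is a faithful unwinding of the paper's citation rather than a genuinely different route. Your listed ingredients Proposition \ref{P1025-3} and Corollary \ref{Csaw} are not actually used in your argument (finiteness is already consumed inside the proof of Theorem \ref{P1025-0}); the paper likewise does not invoke them here.

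Two small points. First, the polar-decomposition step you flag as ``the main obstacle'' is a standard fact (well-supported elements are norm-dense in any real rank zero $C^*$-algebra), so no further work is needed beyond a citation. Second, you only treat $x\in B$, whereas stable rank one requires $\wtd B=\overline{GL(\wtd B)}$. The paper handles a general $x=\lambda+y\in\wtd B$ by choosing $p$ for $y$, approximating $\lambda p+y\in pBp$ by an invertible $z_1$, and setting $z_2=z_1+\lambda(1-p)$ (or $z_1+\ep(1-p)$ when $\lambda=0$). Your argument extends verbatim with this one-line modification.
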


\begin{proof}
Put $B=l^\infty(A)/I_{{{_{\Qw}}}}.$ 
We first show that $pBp$ has stable rank one if $p$ is a non-zero projection. 
By Theorem  \ref{PLrr0}, $pBp$ has real rank zero. 
Let $q, f\in pBp$ be projections such that $q\sim f.$ Then, by Theorem \ref{P1025-0}, 
$p-q\sim p-f.$  By Proposition III 2.4 of \cite{BH} and  Theorem 2.6 of \cite{BP},  $pBp$ has stable rank one.

To show $B$ has stable rank one, let $x\in {{\wtd B}}$ and $\ep>0.$ 
Write $x=\lambda+y,$ where $\lambda\in \C$ and $y\in  B.$ 
By Lemma \ref{LPPP}, there is a projection $p\in B$ such that
$p(y^*y+yy^*)=(y^*y+yy^*).$ It follows that $y\in pBp.$
From what has been shown,  $pBp$ has stable rank one. Choose $z_1\in GL(pBp)$
such that
\beq
\|\lambda p+y-z_1\|<\ep.
\eneq
Define $z_2=z_1+\lambda(1-p),$ if $\lambda\not=0,$ and $z_2=z_1+\ep(1-p),$  if $\lambda=0.$ 
Then $z_2\in GL({{\wtd B}})$ and 
\beq\nonumber
\|x-z_2\|<\ep.
\eneq
\end{proof}

\begin{cor}\label{Cpolar}
Let $A$ be an algebraically simple \CA\, 
with $QT(A)\not=\emptyset.$ 
Suppose that $A$ has strict comparison and  T-tracial approximate oscillation zero. Then $l^\infty(A)/I_{_{\Qw}}$ has 
unitary polar decomposition.
\end{cor}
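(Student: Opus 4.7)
The plan is to invoke the stable rank one property of $B:=l^\infty(A)/I_{_{\Qw}}$ established in Theorem \ref{Tasr1}, after reducing to a unital corner containing the given element via Lemma \ref{LPPP}. In a unital C*-algebra with stable rank one, every element is known to admit a unitary polar decomposition, and this will deliver the result.

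Concretely, given $x\in B$, I would first apply Lemma \ref{LPPP} to the positive element $z:=x^*x+xx^*\in B_+$ to produce a projection $p\in B$ with $pz=zp=z.$ Since $(1-p)z(1-p)=0$ and both $(1-p)x^*x(1-p)$ and $(1-p)xx^*(1-p)$ are positive with zero sum, each must vanish, which forces $\|x(1-p)\|^2=\|(1-p)x^*x(1-p)\|=0$ and similarly $\|(1-p)x\|^2=\|(1-p)xx^*(1-p)\|=0.$ Thus $x(1-p)=(1-p)x=0,$ and hence $x=pxp$ lies in the unital corner $pBp.$

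Next, I would observe that $pBp$ inherits stable rank one from $B$ (Theorem \ref{Tasr1}, together with the fact that hereditary \SCA s of stable rank one \CA s have stable rank one, see Theorem 3.3 of \cite{Rff}). I would then apply the classical fact that in a unital stable rank one \CA\ every element admits a unitary polar decomposition: for $x\in pBp$ there is a unitary $u\in U(pBp)$ with $x=u|x|.$ Setting $\tilde u:=u+(1-p)\in U(\wtd B),$ one obtains $x=\tilde u|x|,$ which is the desired unitary polar decomposition in $\wtd B.$

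The main obstacle is producing the genuine unitary $u\in U(pBp)$ implementing the polar decomposition, rather than merely an approximation $u_n|x|\to x$ from invertibles $y_n=u_n|y_n|\in GL(pBp).$ This convergence/extraction step is classical in the presence of stable rank one, but for a self-contained argument one may combine it with the additional structure already established for $B$ in the preceding results of this section, namely real rank zero (Theorem \ref{PLrr0}) and cancellation of projections (Theorem \ref{P1025-0}): approximating the support projection of $|x|$ in $(pBp)^{**}$ by projections in $pBp$ (via real rank zero) and using cancellation to extend the partial isometry from the polar decomposition in the bidual to a unitary in $pBp.$ This mirrors the template of Corollary 1.10 of \cite{LinPlondonIII}, as remarked by the authors.
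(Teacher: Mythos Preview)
Your argument is correct and complete, but it takes a different route from the paper's one-line proof. The paper simply cites Corollary~\ref{Csaw} (which shows $B=l^\infty(A)/I_{_{\Qw}}$ is an SSAW*-algebra of real rank zero) together with Theorem~3.5 of \cite{Pthree}, where Pedersen establishes unitary polar decomposition directly from the SAW* structure. Your approach instead leverages Theorem~\ref{Tasr1} (stable rank one of $B$), reduces to a unital corner $pBp$ via Lemma~\ref{LPPP}, and then invokes the classical fact that unital \CA s of stable rank one admit unitary polar decomposition. The two arguments use disjoint external inputs: the paper relies on the SAW*/corona machinery of \cite{Pthree}, while you rely on the stable-rank-one polar decomposition result (essentially \cite{Ppolar}, Theorem~5, or its standard reformulation). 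Your route is arguably more self-contained relative to the paper's own development, since Theorem~\ref{Tasr1} already does the heavy lifting and the remaining step is textbook; the paper's route is terser but imports a less commonly cited result. One minor remark: you do not need to pass stable rank one from $B$ down to $pBp$ via heredity---the proof of Theorem~\ref{Tasr1} in fact establishes stable rank one of $pBp$ first and deduces that of $B$ afterwards, so you could cite that intermediate step directly.
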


\begin{proof}
This follows from Corollary \ref{Csaw}  above  and Theorem  3.5 of \cite{Pthree}.
\end{proof}

\section{Range of dimension functions}

In this section we will show that if $A$ {{has}} T-tracial approximate oscillation zero, 
then the image of $\Gamma$ is ``dense", and if, in addition, $A$ has strict comparison, 
then $\Gamma$ is surjective.

\begin{lem}\label{Ldt}
Let $0\le a\le 1,$ $b, c\in A^{\bf 1}$
be such that
\beq
a\le (b+c)^*(b+c) 
\eneq
(or $a\le b+c,$ $b, c\in A_+^{\bf 1}$).
Then, for any $\dt\in (0, 1/2),$ 
\beq
[f_\dt(a)]\le [f_{\dt/4}(b^*b)]+[f_{\dt/4}(c^*c)]
\eneq
(or $[f_\dt(a)]\le [f_{\dt/4}(b)]+[f_{\dt/4}(c)]$).
\end{lem}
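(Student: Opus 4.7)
My plan is to prove the second alternative ($a\le b+c$ with $b,c\in A_+^{\bf 1}$) first and then reduce the first to it via the parallelogram identity $(b+c)^*(b+c) + (b-c)^*(b-c) = 2(b^*b+c^*c)$, which gives $a \le 2(b^*b+c^*c)$ and hence $a/2 \le b^*b + c^*c$.

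For the second alternative, I will rely on three standard ingredients. First, the functional-calculus equivalence $f_\epsilon(z) \sim (z-\epsilon/2)_+$ in the Cuntz semigroup, since both elements generate the same hereditary \SCA. Second, the transfer of the operator inequality $a\le b+c$ into a Cuntz inequality $f_\delta(a) \lesssim f_\delta(b+c)$: one uses Douglas's range-inclusion theorem to produce a contraction $v \in A^{**}$ with $a^{1/2}=(b+c)^{1/2}v$, followed by the Cuntz equivalence $x^*x \sim xx^*$ to shift back into $A$. Third, the Kirchberg--R\o rdam splitting inequality
\[
(x+y-\epsilon)_+ \lesssim (x-\epsilon/2)_+ \oplus (y-\epsilon/2)_+ \qquad (x,y\in A_+,\ \epsilon>0),
\]
which follows from the column factorization $X=(x^{1/2},y^{1/2})^T$ (giving $X^*X=x+y$ and $XX^* \le 2(x\oplus y)$) combined with a standard R\o rdam approximation argument. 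Chaining these three ingredients produces
\[
f_\delta(a) \sim (a-\delta/2)_+ \lesssim (b+c-\delta/2)_+ \lesssim (b-\delta/4)_+ \oplus (c-\delta/4)_+ \sim f_{\delta/2}(b) \oplus f_{\delta/2}(c),
\]
and this is in turn dominated by $f_{\delta/4}(b) \oplus f_{\delta/4}(c)$ because $f_{\delta/2} \le f_{\delta/4}$ pointwise in functional calculus.

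For the first alternative, I apply the slightly sharper form of the second alternative obtained above (which ends at $f_{\delta/2}(b')\oplus f_{\delta/2}(c')$ before the final pointwise weakening) to $a' := a/2$, $b' := b^*b$, $c' := c^*c$, with cutoff $\delta' := \delta/2$. Since $f_{\delta/2}(a/2) = f_\delta(a)$, this yields exactly $[f_\delta(a)] \le [f_{\delta/4}(b^*b)] + [f_{\delta/4}(c^*c)]$.

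The principal technical subtlety is that $(\cdot - \gamma)_+$ is not operator monotone on $A_+$, so the intermediate step ``$a\le h$ implies $(a-\gamma)_+ \lesssim (h-\gamma)_+$'' cannot be read off from functional calculus and requires a genuine argument; it is handled via the Douglas factorization $a=h^{1/2}vv^*h^{1/2}$ (with $\|v\|\le 1$) together with the standard trick $h^{1/2}vv^*h^{1/2} \le h$ (a consequence of $vv^*\le 1$) to convert the operator inequality into the desired Cuntz inequality with the correct cutoff. Tracking the factor-of-$2$ loss at each truncation step must be done precisely, but once the chain is set up as above the constants work out to give the claimed $\delta/4$ on the right-hand side.
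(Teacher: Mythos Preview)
Your approach is correct and uses the same three ingredients as the paper's proof: the monotonicity $x\le y \Rightarrow (x-\gamma)_+ \lesssim (y-\gamma)_+$ (which the paper simply cites as Lemma~1.7 of \cite{Phi} rather than arguing via Douglas), the column-matrix factorization giving $X^*X = x+y$ and $XX^* \le 2(x\oplus y)$, and the equivalence $f_\epsilon \sim (\cdot-\epsilon/2)_+$. The only difference is organizational: the paper runs the two cases as parallel arguments (using $z=(b,c)^T$ directly for the first), whereas you prove the positive case first and reduce the general case to it via the rescaling identity $f_{\delta/2}(a/2)=f_\delta(a)$.
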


\begin{proof}
Note that
\beq
(b+c)^*(b+c)\le 2(b^*b+c^*c).
\eneq
Let $0<\eta<1/4.$ 
Then, by 
{{Lemma 1.7 of \cite{Phi}}},
\beq\label{Ldt-n1}
((b+c)^*(b+c)-\dt/2)_+
\lesssim
(2(b^*b+c^*c)-\dt/2)_+\lesssim 
f_{\dt/2}(b^*b+c^*c)
\eneq
(recall \ref{Nfg}).
Put 
$
z=\begin{pmatrix} b & 0\\
                               c & 0\end{pmatrix}.$
       Then
      $$ {{z^*z=\diag(b^*b+c^*c, 0)\andeqn zz^*\le 2\diag(bb^*,cc^*).}}$$ 
Hence   {{(see Lemma 1.7 of \cite{Phi}, for example, for the first $``\lesssim"$ sign below),}} 
\beq  
f_{\dt/2}(b^*b+c^*c) &\sim& f_{\dt/2}(zz^*)\sim  (zz^*-\dt/4)_+\\
  & \lesssim &\diag(f_{\dt/4}(bb^*), f_{\dt/4}(cc^*))
  \sim \diag(f_{\dt/4}(b^*b), f_{\dt/4}(c^*c)).
\eneq

We then have (see also \eqref{Ldt-n1})
\beq
f_\dt(a)\sim (a-\dt/2)_+\lesssim ((b+c)^*(b+c)-\dt/2)_+
\lesssim 
\diag(f_{\dt/4}(b^*b), f_{\dt/4}(c^*c)).
\eneq

For the case that $a\le b+c,$    as computed above, 
there is $z\in  M_2(A)$ such that
\beq
z^*z=\diag(b+c,0) \andeqn zz^*
\le 2\diag(b, c).
\eneq
One then sees the proof of the second part is exactly the same as that of the first part.
\end{proof}

\begin{lem}\label{Leqv-2}
Let $A$ be a non-elementary simple \CA\, with  
${\rm Ped}(A)=A$ and 
with  $QT(A)\not=\emptyset.$
Let $\{e_n\}, \{b_n\}\in l^\infty({\rm Ped}(A){{_+^{\bf 1}}}).$ 
{{Recall that $\Pi: l^\infty(A)\to l^\infty(A)/I_{_{\Qw}}$ is the quotient map.}}

(1) Suppose 
that $\Pi(\{e_n\})\le \Pi(\{b_n\})$ (or $\Pi_{cu}(\{e_n\})\le \Pi_{cu}(\{b_n\})$). Then, 
any integer $m\in \N$ and $\ep>0,$ ({{any  $d\in 
{\rm Ped}(A)_+^{\bf 1}\setminus \{0\}$),}}
there exists {{$k_0\in\N$}} such that, for all $k\ge k_0,$
\beq
[f_\ep(e_k)]\le  [b_k]+[d_k],
\eneq
where 
$d_k\in   A_+$ and
$\sup\{d_\tau(d_k):\tau\in \Qw\}<1/m$ (or $d_k\lesssim d$ and $\sup\{d_\tau(d_k):\tau\in \Qw\}<1/m$).

(2) Suppose that  $p=\Pi(\{e_n\})$ is a  projection  (or $p=\Pi_{cu}(\{e_n\})$ is a projection
and $\omega^c(e_n)\to 0$), 
$\{e_n\}$ is a permanent projection lifting  of $p$
and 
\beq
p\le \Pi(\{b_n\})\,\,\,\, {\rm{(or}}\,\, p\le \Pi_{cu}(\{b_n\}){\rm )}.
\eneq
Then,
any integer $m\in \N,$  and $\ep\in (0,1)$ (and any nonzero $d\in {\rm Ped}(A)_+$),
there exists $k_0\ge \N$ such that, for all $k\ge k_0,$
\beq
[e_k]\le  [b_k]+[d_k],
\eneq
where $0\le d_k\le 1,$  $d_k\in (A\otimes {\cal K})_+$ and 
\beq
d_\tau(e_k)<d_\tau(b_k)+d_\tau(d_k)+\ep\tforal \tau\in \Qw,
\eneq
where $\sup\{d_\tau(d_k):\tau\in \Qw\}<1/m,$
(or, 
$d_k\lesssim d$ and $\sup\{d_\tau(d_k):\tau\in \Qw\}<1/m$). 
\end{lem}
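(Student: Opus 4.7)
\textbf{Plan for the proof of Lemma \ref{Leqv-2}.}

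The overall strategy is, in both parts, to convert the subordination in the quotient into an honest approximate inequality $e_n\le b_n+c_n$ in $A$, where $c_n$ lies in the appropriate kernel ideal ($I_{\Qw}$ or $N_{cu}$), and then to apply Lemma \ref{Ldt} to extract Cuntz subequivalence with a small error term whose traces we can control. For part (1), set $c_n:=(e_n-b_n)_+$. Since $\Pi(\{e_n\})\le \Pi(\{b_n\})$ (respectively $\Pi_{cu}(\{e_n\})\le \Pi_{cu}(\{b_n\})$), we have $(\Pi(e_n)-\Pi(b_n))_+=0$, so $\{c_n\}\in I_{\Qw}$ (resp.\ $N_{cu}$). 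By construction $e_n\le b_n+c_n$, and the second form of Lemma \ref{Ldt} gives
$$[f_\ep(e_n)]\le [f_{\ep/4}(b_n)]+[f_{\ep/4}(c_n)]\le [b_n]+[d_n],\qquad d_n:=f_{\ep/4}(c_n).$$
Step~2 is to bound $d_\tau(d_n)$. In the $I_{\Qw}$-case the inequality $f_{\ep/4}(t)\le (8t/\ep)^2$ together with the elementary fact $d_\tau(f_{\ep/4}(c_n))\le \tau(\chi_{(\ep/8,1]}(c_n))\le (8/\ep)^2\tau(c_n^2)$ gives $\sup_\tau d_\tau(d_n)\to 0$ uniformly, hence $<1/m$ eventually. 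In the $N_{cu}$-case, $\{c_n\}\in N_{cu}$ gives $f_{\ep/4}(c_n)\cto 0$, so $d_n\lesssim d$ eventually; combining with the first paragraph of \ref{Dcnull-1} (choose inside $\Her(d)$ a positive element of small dimension function) we also get $\sup_\tau d_\tau(d_n)<1/m$ eventually.

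For part (2), we first replace $e_n$ by its sharper cut-offs. Since $\{e_n\}$ is a permanent projection lifting of a projection, Proposition \ref{Dpproj}\,(2) gives $\lim_n\omega(e_n)=0$, so one may pick $\eta_n\searrow 0$ with
$$\sup_{\tau\in\Qw}\bigl(d_\tau(e_n)-\tau(f_{\eta_n}(e_n))\bigr)<1/n.$$
As $\{e_n\}$ is a permanent projection lifting, $\Pi(\{f_{\eta_n/4}(e_n)\})=\Pi(\{e_n\})=p\le \Pi(\{b_n\})$, and we can apply part (1) to the pair $(\{f_{\eta_n/4}(e_n)\},\{b_n\})$ with a small cut-off $\ep'$ to obtain
$$[f_{\eta_n/2}(e_n)]\le [f_{\ep'}(f_{\eta_n/4}(e_n))]\le [b_n]+[d_n'],$$
with $\sup_\tau d_\tau(d_n')<\ep/2$ (and $<1/(2m)$) for $n$ large. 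The $\Pi_{cu}$-version is handled identically, again using the permanent-lifting status.

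The second ingredient of part (2) is to pass from $[f_{\eta_n/2}(e_n)]$ to $[e_n]$. Writing $e_n=f_{\eta_n}(e_n)\cdot e_n+(1-f_{\eta_n}(e_n))\cdot e_n$ as a sum of two commuting positive elements and applying the second form of Lemma \ref{Ldt} (with arbitrarily small cut-off and using $[e_n]=\sup_\sigma [f_\sigma(e_n)]$), one gets
$$[e_n]\le [f_{\eta_n/2}(e_n)]+[g_n],\qquad g_n:=(1-f_{\eta_n}(e_n))e_n,$$
where $g_n$ is a function of $e_n$ supported in $(0,\eta_n)$. Combining with the previous step,
$$[e_n]\le [b_n]+[d_n'\oplus g_n].$$
Setting $d_n:=d_n'\oplus g_n$ (realized as a diagonal element in $A\otimes\mathcal K$), the control $d_\tau(g_n)\le d_\tau(e_n)-\tau(f_{\eta_n}(e_n))<1/n$ and the analogous bound on $d_\tau(d_n')$ yield $\sup_\tau d_\tau(d_n)<1/m$ eventually, and the strict inequality $d_\tau(e_n)<d_\tau(b_n)+d_\tau(d_n)+\ep$ follows from the Cuntz subequivalence (with $\ep$ giving slack). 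For the $\Pi_{cu}$-version, since $\omega^c(e_n)\to 0$ we have $g_{\dt_n}(e_n)\cto 0$ for some $\dt_n$; choosing $\eta_n$ so that $g_n\lesssim g_{\dt_n}(e_n)$, we get $g_n\cto 0$, so eventually $d_n\lesssim d$ together with $\sup_\tau d_\tau(d_n)<1/m$ as before.

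The main obstacle will be the delicate control of the ``small piece'' $g_n$ in part (2), because we need simultaneously that $d_n=d_n'\oplus g_n$ be Cuntz subordinate to a preassigned $d$ (in the $N_{cu}$ case) \emph{and} that $\sup_\tau d_\tau(d_n)<1/m$. The permanent-projection-lifting hypothesis (through $\omega(e_n)\to 0$ or $\omega^c(e_n)\to 0$) is exactly what is needed to make both controls available on the same piece, and it is the bridge between the asymptotic approximation $\Pi(\{e_n\})=$ projection and the honest Cuntz subequivalence $[e_n]\le [b_n]+[d_n]$.
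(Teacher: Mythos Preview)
Your part (1) is correct and essentially coincides with the paper's argument: setting $c_n=(e_n-b_n)_+$ is just an explicit instance of the paper's ``easy claim'' (lift $\Pi(e_n)\le\Pi(b_n)$ to $e_n\le b_n+j_n$ with $j_n$ in the kernel ideal), and then both proofs invoke Lemma \ref{Ldt} and bound $d_\tau(f_{\ep/4}(c_n))$ exactly as you do.

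In part (2), however, there is a genuine error. You assert $d_\tau(g_n)\le d_\tau(e_n)-\tau(f_{\eta_n}(e_n))$, but this inequality goes the \emph{wrong} way. Writing $\mu_\tau$ for the spectral measure of $e_n$, your $g_n=(1-f_{\eta_n}(e_n))e_n$ has support exactly $(0,\eta_n)$, so $d_\tau(g_n)=\mu_\tau((0,\eta_n))$; on the other hand $d_\tau(e_n)-\tau(f_{\eta_n}(e_n))=\int(1-f_{\eta_n})\,d\mu_\tau$ lies between $\mu_\tau((0,\eta_n/2])$ and $\mu_\tau((0,\eta_n))$. So your bound is backwards. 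The correct estimate is $d_\tau(g_n)=\mu_\tau((0,\eta_n))\le d_\tau(e_n)-\tau(f_{2\eta_n}(e_n))$, which means you must choose $\eta_n$ so that $\sup_\tau\bigl(d_\tau(e_n)-\tau(f_{2\eta_n}(e_n))\bigr)<1/n$, not with $f_{\eta_n}$. This is still possible from $\omega(e_n)\to 0$, so the approach can be salvaged, but as written the key control on the ``small piece'' fails.

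Beyond this, your route for part (2) is unnecessarily indirect. You pass through part (1) applied to the auxiliary sequence $\{f_{\eta_n/4}(e_n)\}$, which forces you to verify $\Pi(\{f_{\eta_n/4}(e_n)\})=p$ (true, but it needs an argument since $\eta_n$ varies with $n$). The paper avoids all of this: it fixes a single $\eta_0>0$ small enough that $\sup_\tau d_\tau(g_{\eta_0}(e_n))<\ep/2m$ for large $n$ (this uses the permanent-lifting characterization $\sup_\tau(d_\tau(e_n)-\tau(e_n^2))\to 0$ from Proposition \ref{Dpproj}(2)), splits $[e_n]\le[g_{\eta_0}(e_n)]+[f_{\eta_0/2}(e_n)]$, and applies Lemma \ref{Ldt} directly to $e_n\le b_n+h_n$ with the \emph{fixed} cut-off $\eta_0/2$ to get $[f_{\eta_0/2}(e_n)]\le[f_{\eta_0/8}(b_n)]+[f_{\eta_0/8}(h_n)]$. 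Then $d_n=g_{\eta_0}(e_n)\oplus f_{\eta_0/8}(h_n)$. Using a fixed $\eta_0$ is what makes the argument clean: there is no need to re-invoke part (1) or to track a moving cut-off.
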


\begin{proof}
We will use the following easy claim:  If $B$ is a \CA\, and $I\subset B$ is a (closed two-sided) ideal, 
and, if $x, y\in B_+$ such that $\pi(x)\le \pi(y),$ then there exists
 $j\in I_+$ such that $x\le y+j.$  
 In fact, there is $z\in A_+$  such that $(y-x)-z\in I.$ Put $c=-y+x+z\in I.$ 
  Then $${{x=y-z+c\le y+c\le y+c_+.}}$$ Choose $j=c_+\in I_+.$  This proves the claim.

{{Since $A$ is a non-elementary simple \CA, one may}} choose $d_0\in {\rm Her}(d)_+^{\bf 1}
{{\setminus \{0\}}}$ such that
$2(m+1)[d_0]\le [d].$
By  the easy claim {{above,}} 
there  is
$\{h_n\}\in ( I_{_{\Qw}})_+^{\bf 1}$ (or $\{h_n\}\in N_{cu}(A)_+^{\bf 1}$) such that,
in all {{cases,}}
\beq
b_n+h_n\ge e_n\rforal n\in \N.
\eneq
To show (1), we apply
Lemma \ref{Ldt} to obtain
\beq
f_\ep(e_n)\lesssim \diag(f_{\ep/8}(b_n), f_{\ep/8}(h_n)).
\eneq
Since $h_n$ is in $(I_{_{\Qw}})_+$ 
(or in $(N_{cu})_+$), 
there exists $k_0\in \N$ such 
that, for all $k\ge k_0,$
\beq
d_\tau(f_{\ep/8}(h_k))<1/m\rforal \tau\in \Qw
\eneq
(or $
f_{\ep/8}((h_k)_+)\lesssim d_0
$).
Therefore, with $d_k=f_{\ep/8}(h_k),$  for all $k\ge k_0,$ 
\beq
[f_\ep(e_k)]\le [b_k]+[d_k]\andeqn \sup\{d_\tau(d_k): \tau\in \Qw\}<1/m.
\eneq
Part (1) follows.

For (2), {{we keep the same $d_k$ and $h_k$ as described above.}}
Since now $\{e_n\}$ is a permanent projection lifting, 
we may assume that  $\lim_{n\to\infty}\omega(e_n)=0,$ 
by (2)   of Proposition 
\ref{Dpproj}.  
Thus we  may assume that  there exists $\eta_0>0$ and $n_1\in \N$  such that
\beq\label{17-1}
\sup\{d_\tau(g_\eta(e_n)):\tau\in \Qw
\}\le \sup\{d_\tau(g_{\eta_0}(e_n)): \tau\in  \Qw
\}<\ep/2m
\eneq
{{(see \ref{Nfg})}} for all 
$n\ge n_0\andeqn 0<\eta\le \eta_0,$
(or  we assume that $\omega^c(e_n)\to 0,$ and then
\beq
g_\eta(e_n)\lesssim g_{\eta_0}(e_n)\lesssim d_0\rforal n\ge n_0\andeqn 0<\eta\le \eta_0.
\eneq
 for all 
$n\ge n_0\andeqn 0<\eta\le \eta_0$).

There exists $n_2\in \N$ such 
that {{(recall that $h_n\in (I_{_{\Qw}})^{\bf 1}_+$) (or $h_n\in (N_{cu})^{\bf 1}_+$)}}
\beq\label{17-2}
d_\tau(f_{\eta_0/8}((h_n)) )<\ep/2m\rforal \tau\in \Qw\andeqn n\ge n_2
\eneq
(or 
$
f_{\eta_0/8}((h_n))\lesssim d_0
$
for $n\ge n_2$).
We have, by Lemma \ref{Ldt},
\beq
f_{\eta_0/2}(e_n)\lesssim \diag(f_{\eta_0/8}(b_n), f_{\eta_0/8}((h_n))).
\eneq
Put $k_0=\max\{n_1, n_2\}.$ 
Then, if $n\ge k_0,$ 
we have
\beq\label{Leqv-2-e-1}
[e_n] &\le& [g_{\eta_0}(e_n)]+[f_{\eta_0/2}(e_n)]\le [g_{\eta_0}(e_n)]+[f_{\eta_0/8}(b_n)]+[f_{\eta_0/8}((h_n))]\\
\label{Leqv-2-e-2}
&\le & [b_n] +[d_n],
\eneq
where $d_n=\diag(g_{\eta_0}(e_n), f_{\eta_0/8}((h_n))),$ $n\in \N.$
Recall (by \eqref{17-1}  and \eqref{17-2}) that $d_\tau(d_n)<1/m$ for all $\tau\in \Qw
$
(or, in the second case,
$[d_k]\le 2[d_0]\le [d]$).
Part (2)  then follows.
\end{proof}

\begin{lem}\label{Losfull}
Let $A$ be a non-elementary   algebraically simple \CA\,
with $QT(A)\not=\emptyset.$

(1) Suppose that $A$ has tracial approximate oscillation zero. 
Then, for any $a\in A_+^{\bf 1}\setminus \{0\},$ 
there exists a sequence $0\le e_{n}\le 1$ in ${\rm Her}(a)$ such 
that  $\Pi_{cu}(\{e_{n}\})$ is full in $l^\infty(A)/N_{cu}(A)$ and
\beq
\lim_{n\to\infty}\omega(e_{n})=0.
\eneq

(2) Suppose that $A$ has $T$-tracial approximate oscillation zero.
Then, for any $a\in A_+^{\bf 1}\setminus \{0\},$ 
there exists a sequence $0\le e_{n}\le 1$ in ${\rm Her}(a)$ such 
that $\Pi_{cu}(\{e_{n}\})$ is full in $l^\infty(A)/N_{cu}(A)$ and
\beq
\lim_{n\to\infty}\omega(e_{n})=0.
\eneq
%
\end{lem}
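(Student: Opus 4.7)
The plan is to construct $\{e_n\}$ using the hypothesis of (T-)tracial approximate oscillation zero via the existence of suitable approximate identities, and then to verify fullness of $\Pi_{cu}(\{e_n\})$ in $B:=l^\infty(A)/N_{cu}(A)$ from the approximate-identity property combined with simplicity of $A$. The arguments for parts (1) and (2) are essentially parallel.

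First, by Proposition \ref{Pher} the hereditary \SCA\, $\Her(a)$ also has (T-)tracial approximate oscillation zero. Applying Proposition \ref{PapproxeT} to $\Her(a)$ produces a (T-)tracial approximate identity $\{e_n\}\subset\Her(a)_+^{\bf 1}$ with $\lim_{n\to\infty}\omega(e_n)=0$, which is the sequence required by the lemma. It remains to verify fullness of $\Pi_{cu}(\{e_n\})$ in $B$.

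Let $J:=\langle\Pi_{cu}(\{e_n\})\rangle$ denote the closed two-sided ideal of $B$ generated by $\Pi_{cu}(\{e_n\})$. In case (1) the tracial approximate identity property yields $\iota(c)-\iota(c)\{e_n\}\in N_{cu}$ for each $c\in\Her(a)$, so $\Pi_{cu}(\iota(c))=\Pi_{cu}(\iota(c)\{e_n\})\in J$. In case (2) the T-tracial approximate identity property gives the analogous statement in $I_{_{\Qw}}$; to pass to $N_{cu}$, one uses Proposition \ref{Dpproj}(3) together with $\lim_{n}\omega(e_n)=0$ to arrange (after raising $\{e_n\}$ to suitable powers and passing to a subsequence) that $\{e_n\}$ is a permanent projection lifting with $\omega^c(e_n)\to 0$, which promotes the $I_{_{\Qw}}$ control to $N_{cu}$ control via Proposition \ref{Dpproj}(4). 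Hence in both cases $\Pi_{cu}(\iota(\Her(a)))\subset J$.  Since $A$ is simple and $A={\rm Ped}(A)$, the algebraic two-sided ideal of $A$ generated by $a$ equals $A$, so $\Pi_{cu}(\iota(A))\subset J$.

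The main obstacle is extending $\Pi_{cu}(\iota(A))\subset J$ to all of $B$. Given $\{b_n\}\in l^\infty(A)_+^{\bf 1}$, one needs finitely many $\{x_{n,i}\}_{i=1}^N\subset l^\infty(A)$ such that $\Pi_{cu}(\{b_n\})-\sum_{i=1}^N\Pi_{cu}(\{x_{n,i}^* e_n x_{n,i}\})=0$. The key difficulty is a uniform (in $n$) Cuntz-dominance $[b_n]\le N[e_n]$ in $\Cu(A)$ with a single $N$. Here one exploits $A={\rm Ped}(A)$ (which provides uniform Cuntz-dominance of norm-bounded families of positive elements by any fixed full positive element, applied here to $a$ and then transported to $e_n$ via the approximate identity property) together with a R{\o}rdam-type cutting argument (Proposition 2.4 of \cite{Ror92UHF2}) to produce the $x_{n,i}$ with uniform norm bounds, while the residue is absorbed into $N_{cu}$ using that $\omega(e_n)\to 0$. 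This yields $\Pi_{cu}(\{b_n\})\in J$, hence $J=B$, completing the proof of fullness.
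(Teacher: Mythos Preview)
Your construction of $\{e_n\}$ is correct and matches the paper. The fullness argument, however, takes an unnecessary detour and the ``transport'' step does not work as written.

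You correctly obtain $\Pi_{cu}(\iota(a))\in J$ from the approximate identity property, and you correctly identify that $A={\rm Ped}(A)$ yields uniform Cuntz-dominance of norm-bounded families by the fixed element $a$. But this uniform dominance is \emph{precisely} the statement that $\iota(a)$ is full in $l^\infty(A)$ (this is Proposition~5.6 of \cite{eglnp}, which the paper cites). Since quotient maps send full elements to full elements, $\Pi_{cu}(\iota(a))$ is full in $B$, and you are finished the moment you know $\Pi_{cu}(\iota(a))\in J$. There is no need to ``transport'' anything to $e_n$.

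Your attempted transport in fact fails: you want $[b_n]\le N[e_n]$ uniformly in $n$, but you only have $[e_n]\le[a]$, not the reverse. The approximate identity property gives $\iota(a)-\iota(a^{1/2})\{e_n\}\iota(a^{1/2})\in N_{cu}$, which is an \emph{ideal} statement, not a Cuntz-semigroup inequality between $[a]$ and $[e_n]$; it does not let you replace $[a]$ by $[e_n]$ in a dominance estimate. So the argument as written has a genuine gap, even though the right ingredient (fullness of $\iota(a)$) is already in your hands.

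For part~(2), your detour through $\omega^c$ and Proposition~\ref{Dpproj}(4) is also unnecessary. The same one-line argument works with $\Pi$ in place of $\Pi_{cu}$: the T-tracial approximate identity gives $\Pi(\iota(a))=\Pi(\iota(a^{1/2})\{e_n\}\iota(a^{1/2}))$, and fullness of $\iota(a)$ in $l^\infty(A)$ descends. (Indeed, the only use of part~(2) in the paper, in Lemma~\ref{LsmallMn}, is with $\Pi$ rather than $\Pi_{cu}$.)
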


\begin{proof}
(1) Since $A$ has tracial approximate oscillation zero,  by Proposition \ref{PapproxeT},  there exists a tracial  approximate identity
 $\{e_n\}$ for ${\rm Her}(a)$ (with $\|e_n\|\le 1$) such that
 $\lim_{n\to\infty}\omega(e_n)=0.$
 Note 
 that
 \beq
 \Pi_{cu}(\iota(a))=\Pi_{cu}(\iota(a^{1/2})\{e_n\}\iota(a^{1/2})).
 \eneq
 Since $A={\rm Ped}(A),$ by Proposition 5.6 of \cite{eglnp}, 
 $\iota(a)$ is full in $l^\infty(A).$ Hence  
 $\Pi_{cu}(\iota(a))$ is full in $l^\infty(A)/N_{cu}(A),$ and  so is $\Pi(\{e_n\}).$ 

The proof of (2) is exactly the same. We omit it.
\end{proof}

\begin{lem}\label{LsmallMn}
Let $A$ be a  non-elementary  algebraically simple \CA\, 
with $QT(A)\not=\emptyset.$
Suppose that 
$A$  has  T- tracial  approximate oscillation zero.
Then, for any $n\in \N,$  there is a full projection $p\in l^\infty(A)/I_{_{\Qw}}$ 
with $p=\Pi(\{e_j\})$ for some $e_j\in A_+^{\bf 1}$ {{($j\in \N$)}}
such that 
\beq
\sup\{d_\tau(e_j):\tau\in \Qw\}<1/(n+1)\tforal j\in \N,
\eneq
and there is a sequence of mutually orthogonal full projections   $p_1,p_2,...p_k,...$
in $l^\infty(A)/I_{_{\Qw}}$ such that
$pp_j=0,$ $j\in \N$ and 
\beq
 2^{2k}[p_k]\le [p], \,\,k\in \N.
\eneq
Moreover, for each $k\in \N,$ there are mutually orthogonal and 
mutually equivalent full projections 
$p_{k,1},p_{k,2},...,p_{k,2^{k+1}}$ in $p_k(l^\infty(A)/I_{\Qw})p_k.$

\end{lem}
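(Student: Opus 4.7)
The plan is to build $p$ and $p_1, p_2, \ldots$ as permanent projection liftings (Proposition \ref{Dpproj}(3)) of sequences supplied by Lemma \ref{Losfull}(2), where the sequences live in pairwise orthogonal positive elements of $A$ obtained via iterated matrix embeddings.

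First I would produce an orthogonal family $\alpha_0, \beta_1, \beta_2, \ldots$ in $A_+^{\bf 1}$ with $\sup\{d_\tau(\alpha_0):\tau\in\Qw\} < 1/(n+1)$ and $2^{2k}[\beta_k] \le [\alpha_0]$ in $\Cu(A)$ for each $k \ge 1$. By Proposition \ref{Pboundedness}(4), $M := \sup\{d_\tau(\bar a):\tau\in\Qw\}<\infty$ for a strictly positive $\bar a$; by \ref{Dcnull-1} one obtains $a \in A_+^{\bf 1}$ with $\lceil (n+1)M \rceil[a] \le [\bar a]$, so $d_\tau(a) < 1/(n+1)$. Since $A$ is non-elementary simple, $M_N \otimes C_0((0,1])$ embeds into every nonzero hereditary \SCA\ of $A$ for every $N$ (e.g., Proposition 4.10 of \cite{KR00}). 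Embed $M_2 \otimes C_0((0,1]) \hookrightarrow \Her(a)$ to extract orthogonal Cuntz-equivalent $\alpha_0 \perp \alpha_1$ in $\Her(a)$; inductively, having produced pairwise orthogonal $\beta_1,\ldots,\beta_{k-1}$ and a residue $\gamma_{k-1}\in\Her(\alpha_1)$ orthogonal to them with $[\gamma_{k-1}]=[\beta_{k-1}]$ (initially $\gamma_0:=\alpha_1$), embed $M_{2^{2k}+1}\otimes C_0((0,1])\hookrightarrow\Her(\gamma_{k-1})$ to extract orthogonal Cuntz-equivalent $\beta_k, \gamma_k^{(1)}, \ldots, \gamma_k^{(2^{2k})}$ and set $\gamma_k:=\gamma_k^{(2^{2k})}$. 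Then $(2^{2k}+1)[\beta_k] \le [\gamma_{k-1}] = [\beta_{k-1}] \le \cdots \le [\alpha_0]$, so $2^{2k}[\beta_k] \le [\alpha_0]$ in $\Cu(A)$, while pairwise orthogonality of $\alpha_0,\beta_1,\beta_2,\ldots$ is forced by the nested construction.

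Next I would apply Lemma \ref{Losfull}(2) inside $\Her(\alpha_0)$ and each $\Her(\beta_k)$ to obtain bounded sequences $\{e_m^{(0)}\}$ and $\{e_m^{(k)}\}$ with $\omega(\cdot)\to 0$ and $\Pi_{cu}$-full images in $l^\infty(A)/N_{cu}(A)$. Proposition \ref{Dpproj}(3) then produces exponents $l(m), l_k(m)$ so that $p := \Pi(\{(e_m^{(0)})^{1/l(m)}\})$ and $p_k := \Pi(\{(e_m^{(k)})^{1/l_k(m)}\})$ are permanent projection liftings in $B := l^\infty(A)/I_{\Qw}$. The bound $\sup_{\tau\in\Qw} d_\tau(\cdot) < 1/(n+1)$ on the representatives of $p$ follows since they lie in $\Her(\alpha_0)\subseteq\Her(a)$; fullness of $p$ and of the $p_k$ descends from $l^\infty(A)/N_{cu}(A)$ to $B$ via the canonical surjection $l^\infty(A)/N_{cu}(A)\twoheadrightarrow B$; and the orthogonalities $pp_k=0$ and $p_jp_k=0$ ($j\ne k$) come from those of the underlying elements in $A$. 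For the Cuntz inequality $2^{2k}[p_k]\le [p]$ in $\Cu(B)$: the $A$-level inequality $2^{2k}[\beta_k]\le[\alpha_0]$ lifts to constant sequences and then to $B$, yielding $2^{2k}[\Pi(\iota(\beta_k))] \le [\Pi(\iota(\alpha_0))]$; the T-tracial approximate identity property of $\{e_m^{(0)}\}$ for $\Her(\alpha_0)$ (Proposition \ref{PapproxeT}) forces $\Pi(\iota(\alpha_0)) = \Pi(\iota(\alpha_0))\Pi(\{e_m^{(0)}\})$, whence $[\Pi(\iota(\alpha_0))]\le[p]$; and since the representatives of $p_k$ lie uniformly in $\Her(\beta_k)_+^{\bf 1}$ with $\beta_k$ fixed, a uniform application of \cite[Prop.~2.4]{Ror92UHF2} (where the norm of the witness depends only on the fixed $\beta_k$ and the chosen cut-off) produces a uniformly bounded sequence in $l^\infty(A)$ realizing $p_k \lesssim \Pi(\iota(\beta_k))$ in $B$. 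Chaining the three inequalities yields $2^{2k}[p_k] \le [p]$.

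For the ``moreover'' assertion, inside each $\Her(\beta_k)$ embed $M_{2^{k+1}}\otimes C_0((0,1])$ arranged so that the sum of the diagonal images is full (indeed strictly positive) in $\Her(\beta_k)$. Apply Lemma \ref{Losfull}(2) and Proposition \ref{Dpproj}(3) to each diagonal image $d_{k,j}$ to produce projections $p_{k,j}\in B$. They are pairwise orthogonal in $B$ (from the pairwise orthogonality of the $d_{k,j}$ in $A$); they lie in $p_k B p_k$ because the sum of the individual T-tracial approximate identities is a T-tracial approximate identity for $\Her(\beta_k)$, whose permanent projection lifting is precisely $p_k$; and their pairwise Cuntz equivalence in $A$ (from the matrix-unit structure) lifts to pairwise Cuntz equivalence in $B$, which the cancellation of projections (Theorem \ref{P1025-0}) and the stable rank one (Theorem \ref{Tasr1}) of $B$ upgrade to Murray--von Neumann equivalence. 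Fullness of each $p_{k,j}$ in $p_k B p_k$ follows from $\sum_j p_{k,j} = p_k$ together with this mutual equivalence. The main obstacle is establishing $2^{2k}[p_k] \le [p]$ in $\Cu(B)$ without strict comparison on $A$; this is overcome by making all required Cuntz comparisons structural through the iterative matrix embedding, so that their witnesses lift uniformly to $l^\infty(A)$ and survive passage to $B$.
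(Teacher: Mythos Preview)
Your overall strategy---build an orthogonal family in $A$ with the right Cuntz relations, then apply Lemma~\ref{Losfull} and Proposition~\ref{Dpproj}(3) to produce full projections in $B=l^\infty(A)/I_{\Qw}$---matches the paper's, and your argument for $2^{2k}[p_k]\le[p]$ via $[p_k]\le[\Pi(\iota(\beta_k))]$, $2^{2k}[\iota(\beta_k)]\le[\iota(\alpha_0)]$, and $[\Pi(\iota(\alpha_0))]\le[p]$ is essentially what the paper is (tersely) using as well.

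The genuine gap is in the ``moreover'' clause. You construct each $p_{k,j}$ by a \emph{separate} application of Lemma~\ref{Losfull} inside $\Her(d_{k,j})$, and then propose to upgrade their Cuntz equivalence in $B$ to Murray--von~Neumann equivalence by invoking Theorem~\ref{P1025-0} (cancellation) and Theorem~\ref{Tasr1} (stable rank one). Both of those theorems assume that $A$ has strict comparison, and Lemma~\ref{LsmallMn} does \emph{not} carry that hypothesis. So this step is unjustified. (The downstream Lemma~\ref{LsZhang-2} really does need Murray--von~Neumann equivalence of the $r_{n,j}'$ to transport decompositions, so Cuntz equivalence alone would not suffice.) A secondary consequence is that your fullness argument for the individual $p_{k,j}$, which you route through the claimed equivalence and $\sum_j p_{k,j}=p_k$, is left hanging.

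The paper sidesteps this entirely. It constructs only $p_{k,1}$ via Lemma~\ref{Losfull} in $\Her(y_{k,1})$, writes the matrix units $z_{k,j}$ in $A$ (with $z_{k,j}^*z_{k,j}=y_{k,1}$, $z_{k,j}z_{k,j}^*=y_{k,j}$) in polar form $z_{k,j}=u_{k,j}|z_{k,j}|$, and then \emph{defines} $v_{k,j}=\Pi(\{u_{k,j}e_{k,1,n}^{1/m(n,k)}\})$ and $p_{k,j}=v_{k,j}v_{k,j}^*$. Because the representatives of $p_{k,1}$ lie in $\Her(y_{k,1})=\Her(|z_{k,j}|)$, each $u_{k,j}e_{k,1,n}^{1/m(n,k)}$ is in $A$, and one checks $v_{k,j}^*v_{k,j}=p_{k,1}$ directly. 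Thus Murray--von~Neumann equivalence is built in, fullness of every $p_{k,j}$ follows from that of $p_{k,1}$, and $p_k$ is simply defined as $\sum_j p_{k,j}$. No comparison hypothesis on $A$ is needed. Your argument is easily repaired by adopting this transport-by-polar-decomposition construction in place of the independent applications of Lemma~\ref{Losfull}.
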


\begin{proof}
Fix $n\in \N.$ Since $A$ is a non-elementary   simple \CA,  we may choose 
two mutually orthogonal  
elements $a_1, a_2\in {\rm Ped}(A)_+^{\bf 1}\setminus \{0\}$
and $x\in A_+$ 
such that
\beq
x^*x=a_1,\,\, xx^*=a_2\andeqn \sup\{d_\tau(a_1):\tau\in \Qw\}<1/(n+1).
\eneq 
Find four  mutually orthogonal and mutually Cuntz equivalent  elements 
$a_{2,1}, ...,a_{2,4}\in {\rm Her}(a_2)_+^{\bf 1}\setminus \{0\}.$ 
By Lemma \ref{Losfull}, there exists a sequence $\{e_n'\}$ in 
${\rm Her}(a_1)^{\bf 1}$ 
such that $\Pi(\{e_n'\})$ is full and $\lim_{n\to\infty}\omega(e_n')=0.$
There exists, by (3) of Lemma \ref{Dpproj},  a sequence of integers $m(j)$ such 
that $p=\Pi(\{e_j')^{1/m(j)}\})$ is a full projection. 
Note that
\beq
\sup\{\tau(e_j): \tau\in\Qw\}<1/(n+1),
\eneq
where $e_j=(e_j')^{1/m(j)},$ $j\in \N.$

In $B={\rm Her}(a_{2,1}),$ one finds a sequence  {{of}}  mutually orthogonal non-zero positive elements $\{y_n\}$  
such that
\beq
2^{2k}[y_{k+1}]\le[y_k],\,\,\, k\in\N.
\eneq 
To see this, choose mutually orthogonal nonzero elements
$b_1, b_1'\in B_+^{\bf 1}$ and $c_1\in B$ such 
that $c_1^*c_1=b_1$ and $c_1c_1^*=b_1'.$
Choose $y_1=b_1.$ There are mutually orthogonal and mutually Cuntz equivalent  
nonzero elements $b_{2,i}\in \Her(b_1')$  ($1\le i\le 2^2$).
Choose $y_2=b_{2,1}.$ 
We then proceed to divide $b_{2,4}.$ A standard induction argument 
produces the desired sequence $\{y_k\}.$

For each $k,$ there are $2^{k+1}$ mutually orthogonal non-zero elements $y_{k,1},y_{k,2},...,y_{k,2^{k+1}}$
in $\Her(y_k)$ 
and   elements
 $z_{k,1},z_{k,2},...,z_{k,2^{k+1}}$ in $\Her(y_k)$ such 
that
\beq\label{LsmallMn-1}
z_{k,j}^*z_{k,j}=y_{k,1}\andeqn z_{k, j}z_{k,j}^*=y_{k,j},\,\,\, 1\le j\le 2^{k+1}.
\eneq

Applying Lemma \ref{Losfull}, one obtains, for each $k\in \N,$
a sequence $\{e_{k,1, n}\}\subset  {\rm Her}(y_{k,1})_+^{\bf 1}$ 
such that  $\Pi(\{e_{k,1,n}\}_{n\in \N})$ is full in  
$l^\infty(A)/I_{_{\Qw}}$ and $\lim_{n\to\infty}\omega(e_{k,1,n})=0.$ 

Since $\lim_{n\to\infty}\omega(e_{k,1, n})=0,$   by (3) of Lemma \ref{Dpproj},
there is also, for each $k,$ a sequence $m(n,k)\in \N$ such that
\beq
p_{k,1}:=\Pi(\{e_{k,1,n}^{1/m(n,k)}\}_{n\in \N})
\eneq
is a full projection in $l^\infty(A)/I_{S,0}.$ 
Write $z_{k,j}=u_{k,j}|z_{k,j}|$ as polar decomposition of $z_{k,j}$ in $A^{**},$
($1\le j\le 2^{k+1}$). 
Put 
\beq
{{v_{k,j}=\Pi(\{u_{k,j}e_{k,1,n}^{1/m(n,k)}\})\andeqn p_{k,j}=v_{k,j}v_{k,j}^*,\,\,1\le j\le 2^{k+1}.}}
\eneq
Then $v_{k,j}^*v_{k,j}=p_{k,1}$ (see  \eqref{LsmallMn-1}). 
Thus we obtains mutually orthogonal and mutually equivalent 
full projections $p_{k,j},$ $j=1,2,...,2^{k+1}.$ 
By the construction, we also have $p_{k,j}p_{k',j'}=0,$ if $k\not=k',$ as well as 
$pp_{k,j}=0$ for all $k, j\in N.$ 
Put $p_k:=\sum_{j=1}^{2^{k+1}}p_{k,j}.$
Note also  
\beq\nonumber
2^{2k}[p_k]\le [p].
\eneq
\end{proof}

The following two lemmas are variations of S. Zhang's halving projection lemma.
We need some modification as we do not assume the \CA\, is simple.

\begin{lem}[S. Zhang, Theorem I.(i) of \cite{ZhMjot}]
\label{LsZhang}
Let $A$ be a \CA\, of real rank zero and $r$  a  full projection  of $A.$
Suppose that
 $p\in A$ is a non-zero projection such that $[p]\not\le  [r].$
Then, 
there are mutually orthogonal projections $p_1,p_2, p_3\in A$ such that
\beq
p=p_1+p_2+p_3,\,\, p_1\sim p_2,  \,\,
\andeqn p_3\lesssim r.
\eneq
\end{lem}

\begin{proof}
We begin with the following claim  which  is 
extracted from the proof of 
 \cite[Theorem I.(i)]{ZhMjot}. 

{\bf Claim:} Let $C$ be a \CA\ and let $v_1,...,v_{2^m}\in C$ be partial isometries 
such that $v_iv_i^*\perp v_jv_j^*$ ($i\neq j$)
and $v_i^*v_i\ge v_{i+1}^*v_{i+1}$ ($1\le i\le 2^m-1$).
Then there is a partial isometry $v\in C$ such that 
$v^*v\perp vv^*$ and 
$0\le \sum_{i=1}^{2^m} v_iv_i^*-(v^*v+vv^*)\le  v_1v_1^*.$ 

{\bf Proof of the claim:} We use induction on $m.$ 
When $m=1,$ let $v:=v_1v_2^*.$ Then $v^*v\perp vv^*$
because $v_1v_1^*\perp v_2v_2^*,$ and  $v^*v=v_2v_1^*v_1v_2^*=v_2v_2^*$
 because $v_1^*v_1\ge v_2^*v_2.$ 
Note that 
$vv^*=v_1v_2^*v_2v_1^*.$ 
Thus 
\beq
v_1v_1^*+v_2v_2^*-(v^*v+vv^*)=v_1v_1^*-
v_1v_2^*v_2v_1^*=v_1(1-v_2^*v_2)v_1^*
\le v_1v_1^*. 
\eneq
The last equation above also shows that 
$v_1v_1^*+v_2v_2^*-(v^*v+vv^*)$ is positive. Hence the claim holds 
for $m=1.$

Assume that the claim holds for $m\ge 1.$
Let $v_1,...,v_{2^{m+1}}$ be partial isometries 
such that $v_iv_i^*\perp v_jv_j^*$ ($i\neq j$)
and $v_i^*v_i\ge v_{i+1}^*v_{i+1}$ ($1\le i\le 2^{m+1}-1$).
Define $e_i:=v_i^*v_i$  and 
$w_i=v_i(e_i-e_{2^{m+1}-i+1}),$
$i=1,...,2^m.$
For $i=1,...,2^m-1,$  we have
\beq
w_i^*w_i&=&
(e_i-e_{2^{m+1}-i+1})e_i
(e_i-e_{2^{m+1}-i+1})
\\
&=&
e_i-e_{2^{m+1}-i+1}
\ge 
e_{i+1}-e_{2^{m+1}-i}
=w_{i+1}^*w_{i+1}.
\eneq
Note that  the above
also shows that $w_i^*w_i$ are projections for all $i.$ 
Since $w_iw_i^*\le v_iv_i^*,$  we have that $w_iw_i^*$ are mutually orthogonal.

Consider $w_i,$ $1\le  i\le 2^m.$
By induction, there is a partial isometry $w\in C$ such that 
$w^*w\perp ww^*$ and 
\beq
0\le \sum_{i=1}^{2^m} w_iw_i^*-(w^*w+ww^*)
\le  w_1w_1^*\ (\le v_1v_1^*).
\label{0116-5}
\eneq
Hence $ww=0.$ 
Note that
\beq\label{0116-4}
&&w^*w+ww^*
\le 
\sum_{i=1}^{2^m} w_iw_i^*
=\sum_{i=1}^{2^m}  
v_i
(e_i-e_{2^{m+1}-i+1})v_i^*
\\\nonumber
&&\hspace{0.3in}=
\sum_{i=1}^{2^m}  
v_iv_i^*-v_ie_{2^{m+1}-i+1}v_i^*
\in
\left(\sum_{i=1}^{2^m}  
v_ie_{2^{m+1}-i+1}v_i^*\right)^\perp\cap 
\left(\sum_{i=1}^{2^m}v_{2^{m+1}-i+1}v_{2^{m+1}-i+1}^*
\right)^\perp,
\eneq
where $b^\perp=\{a\in A: ab=ba=0\}.$
Recall that $v_{2^{m+1}-i+1}^*v_{2^{m+1}-i+1}=e_{2^{m+1}-i+1}$ ($1\le i\le 2^m$) and 
$v_i^*v_i\ge v_{i+1}^*v_{i+1}$ ($1\le i\le 2^{m+1}-1$).
Hence 
\beq
\sum_{i=1}^{2^m}  
v_ie_{2^{m+1}-i+1}v_i^*=\sum_{i=1}^{2^m}v_iv_{2^{m+1}-i+1}^*( v_iv_{2^{m+1}-i+1}^*)^*\andeqn\\
\sum_{i=1}^{2^m}v_{2^{m+1}-i+1}v_{2^{m+1}-i+1}^*=\sum_{i=1}^{2^m}( v_iv_{2^{m+1}-i+1}^*)^*v_iv_{2^{m+1}-i+1}^*.
\eneq
Thus
\beq\nonumber
&&\hspace{0.2in}ww^*+w^*w\in \left(
\sum_{i=1}^{2^m}v_iv_{2^{m+1}-i+1}^*( v_iv_{2^{m+1}-i+1}^*)^*\right)^\perp
\cap
\left(\sum_{i=1}^{2^m}( v_iv_{2^{m+1}-i+1}^*)^*v_iv_{2^{m+1}-i+1}^*
\right)^\perp.
\eneq
%
%
It follows that, for $1\le i\le 2^m,$
\beq
w\perp v_iv_{2^{m+1}-i+1}^*.
\label{1206-3}
\eneq
Define 
$v:=w+\sum_{i=1}^{2^m}v_iv_{2^{m+1}-i+1}^*.$
%
By \eqref{1206-3} and the fact that 
$w^2=0$ together with 
$v_iv_i^*\perp v_jv_j^*$ ($i\neq j$), we compute that 
\beq
\hspace{-0.4in}v^2&=&(w+\sum_{i=1}^{2^m}v_iv_{2^{m+1}-i+1}^*)(w+\sum_{i=1}^{2^m}v_iv_{2^{m+1}-i+1}^*)=0,\\
\hspace{-0.4in}v^*v&=&w^*w+
\sum_{i=1}^{2^m}v_{2^{m+1}-i+1}v_i^*v_iv_{2^{m+1}-i+1}^*
=w^*w+
\sum_{i=1}^{2^m}v_{2^{m+1}-i+1}v_{2^{m+1}-i+1}^*,\andeqn\\
%
\hspace{-0.4in}vv^*&=&ww^*+
\sum_{i=1}^{2^m}v_iv_{2^{m+1}-i+1}^*v_{2^{m+1}-i+1}v_i^*
=ww^*+
\sum_{i=1}^{2^m}v_ie_{2^{m+1}-i+1}v_i^*.
\eneq
Then 
\beq
\sum_{i=1}^{2^{m+1}}v_iv_i^*-(v^*v+vv^*)
&=&
\sum_{i=1}^{2^{m}}v_iv_i^*-\left(w^*w+ww^*+
\sum_{i=1}^{2^m}v_ie_{2^{m+1}-i+1}v_i^*\right)
\\
&=&
\sum_{i=1}^{2^{m}}
\left(v_iv_i^*-
v_ie_{2^{m+1}-i+1}v_i^*\right)
-\left(w^*w+ww^*\right)
\\
&\overset{\eqref{0116-4}}{=}&
\sum_{i=1}^{2^m} w_iw_i^*-
\left(w^*w+ww^*\right)
\\
&\overset{\eqref{0116-5}}{\le}&
w_1w_1^*\le v_1v_1^*.
\eneq
By induction, the claim holds for all $m\in\N.$

For the proof of the lemma,
applying Lemma 1.1 of \cite{ZhIII}, we obtain partial isometries 
$v_1, v_2,...,v_n\in A$ such that
\beq
&&r\ge v_1^*v_1\ge v_2^*v_2\ge \cdots v_n^*v_n\andeqn
\label{1106-6}
\\
&&p=v_1v_1^*\oplus v_2v_2^* \oplus \cdots   \oplus v_nv_n^*.
\label{1106-7}
\eneq
Since $[p]\not\le  [r],$ $n\ge 2.$ By adding $0$ if necessary, 
we may assume that $n=2^m$ for some $m\in\N.$
Then by \eqref{1106-6}, \eqref{1106-7}, and the claim, 
there is a partial isometry $v\in A$ such that 
$v^*v\perp vv^*,$ $0\le p-(v^*v+vv^*)\le v_1v_1^*\lesssim r.$
Then the lemma holds (by choosing $p_1=vv^*$ and $p_2=v^*v$).
\end{proof}

\begin{lem}[S. Zhang]\label{LsZhang-2}
Let $A$ be a \CA\, of real rank zero and $r$ be a  full projection  of $A$
such that $B=(1-r)A(1-r)$ contains, for each $k\in \N,$  
a sequence of mutually orthogonal 
full projections $\{r_{n,j}': 1\le j\le 2^{n+1}, n\in \N\}$ 
such that $2^{k+n}[r_n']\le [r],$ 
{{where}}
$r_n'=\sum_{j=1}^{2^{n+1}} r_{n,j}',$ 
and $r_{n,1}', r_{n,2}',...,r_{n,2^{n+1}}'$ 
are mutually equivalent ($n\in \N$).
%
Suppose that
 $p\in A$ is a non-zero projection such that $[p]\not\le  [r] .$
Then,  for any $m\in \N,$    there are mutually orthogonal projections $p_{_{1}},p_{_{2}}, ..., p_{_{2^m}}, p_{_{2^m+1}}\in A$ such that
\beq
p=\sum_{j=1}^{2^m} p_j+p_{_{2^m+1}},\,\, p_{_{j}}\sim p_{_{1}}, 1\le j\le 2^m, \andeqn p_{_{2^m+1}}\lesssim r+r',
\eneq
where 
$r'$ is a finite sum of $r_{n,j}' s$ and  
$2[r']<r.$
\end{lem}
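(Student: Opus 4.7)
The plan is to prove Lemma \ref{LsZhang-2} by induction on $m$. For the base case $m=1$, we apply Lemma \ref{LsZhang} with the internal parameter $k \geq 2$, so that the resulting $r'$ (a finite sum of $r_{n,j}'$'s) satisfies $2^k[r'] < [r]$, which yields $2[r'] < r$ as required.

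For the inductive step from $m$ to $m+1$, assume the decomposition at level $m$: $p = \sum_{j=1}^{2^m} p_j + p_{2^m+1}$ with $p_j \sim p_1$ mutually orthogonal and $p_{2^m+1} \lesssim r + r^{(m)}$ where $r^{(m)}$ is a finite sum of $r_{n,j}'$'s and $2[r^{(m)}] < [r]$. I apply Lemma \ref{LsZhang} to halve $p_1$, but with the ambient ``large'' projection replaced by $\tilde r = r_{N,1}'$ for a sufficiently large $N$. This substitution is justified because $r_{N,1}'$ is full in $A$, and the sub-family $\{r_{n,j}' : n > N\}$ sits inside $(1 - r_{N,1}')A(1 - r_{N,1}')$ and provides the required hierarchy of mutually orthogonal full projections with exponentially decaying Cuntz size. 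This yields $p_1 = a_1 + a_2 + c$ with $a_1 \sim a_2$ and $c \lesssim r_{N,1}' + s$, where $s$ can be made arbitrarily small (a finite sum of $r_{n,j}'$'s with $n > N$). Using partial isometries $v_j \in A$ (with $v_1 = p_1$) that implement the equivalences $p_j \sim p_1$, I transfer the halving: $p_j = v_j a_1 v_j^* + v_j a_2 v_j^* + v_j c v_j^*$. The $2^{m+1}$ pieces $v_j a_i v_j^*$ ($1 \leq j \leq 2^m$, $i=1,2$) are mutually orthogonal and mutually equivalent to $a_1$, and the new remainder is $\sum_{j=1}^{2^m} v_j c v_j^* + p_{2^m+1} \lesssim 2^m(r_{N,1}' + s) + r + r^{(m)}$.

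The principal obstacle is bookkeeping so that the bound $2[r^{(m+1)}] < [r]$ is preserved through the induction. The crucial observation is that no new ``$r$'' contribution is introduced at the inductive step --- the sole $r$-term comes from the original $p_{2^m+1}$, and all newly generated contributions are finite sums of $r_{n,j}'$'s. By choosing the parameters at step $i$ so that $2^i[r_{N_i,1}']$ and $2^i[s_i]$ each decay like $[r]/2^{c\cdot i}$ for a sufficiently large constant $c$ (which is possible since $[r_n'] \leq [r]/2^{k+n}$, with $k$ itself freely chosen large), the total accumulated contribution across all $m$ steps is dominated by a convergent geometric series strictly less than $[r]/4$. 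Combined with $r^{(m)}$, this keeps $2[r^{(m+1)}] < [r]$ as required, and the resulting $r^{(m+1)}$ is still a finite sum of $r_{n,j}'$'s.

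A secondary subtlety concerns degenerate cases. If $p_1 = 0$, the conclusion at level $m+1$ is trivially inherited (all $p_j$'s taken to be zero, with $p_{2^{m+1}+1} = p_{2^m+1}$). When $p_1 \neq 0$, choosing $N_i$ large enough ensures $[p_1] \not\leq [r_{N_i,1}']$ so that the hypothesis of Lemma \ref{LsZhang} is met --- this relies on the fact that the hierarchy $\{[r_{n,j}']\}$ descends arbitrarily low in Cuntz comparison against any nonzero projection, which in the non-simple setting of Lemma \ref{LsZhang-2} may require invoking an additional cancellation argument from the real-rank-zero structure of $A$ to rule out pathological incomparabilities in $\operatorname{Cu}(A)$.
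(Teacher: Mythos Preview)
Your proposal is essentially the same as the paper's proof: induction on $m$, with the inductive step carried out by applying Lemma~\ref{LsZhang} to $p_1$ with a small full projection $r_{K,1}'$ (the paper takes $K>2m$), then transporting the resulting halving to each $p_j$ via the equivalences $p_j\sim p_1$ and absorbing the $2^m$ remainder pieces together with $p_{2^m+1}$ into a new $r'$. Your bookkeeping is a bit more explicit than the paper's (you track the secondary term $s$ from Lemma~\ref{LsZhang} and set up a geometric-series estimate, whereas the paper simply writes $p_{1,3}\lesssim r_{K,1}'$ and checks $2[r''+\sum_j r'_{K,j}]\le [r]$ directly), but the structure is identical.

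One remark on your last paragraph: the point $[p_1]\not\le [r_{K,1}']$ is asserted without proof in the paper as well, so your worry is not a defect of your argument relative to theirs. Real rank zero alone will not give you this; in the paper's sole application (Lemma~\ref{Ldvi-n1}) it is the ambient trace-type data on $l^\infty(A)/I_{_{\overline{QT(A)}^w}}$ that makes $[r_{K,1}']$ genuinely small compared with any nonzero projection, so no separate cancellation argument is needed there.
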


\begin{proof}
We use the induction on $m.$
If $m=1,$ the lemma follows from Lemma \ref{LsZhang}.

Suppose that the lemma holds for $m{{\geq 1}}.$ 
Then
 there are mutually orthogonal projections $p_{_{1}},p_{_{2}}, ..., p_{_{2^m}}, p_{_{2^m+1}}\in A$ such that
\beq
p=\sum_{j=1}^{2^m} p_j+p_{_{2^m+1}},\,\, p_j\sim p_1, 1\le j\le 2^m, \andeqn p_{2^m+1}\lesssim r+r'',
\eneq
where 
$r''$ is a finite sum of $r_{n,i}'s$ and  
$2[r'']<[r].$
Choose $r_{K,1}'$ among $\{r_{n,j}'\}$ but not  those which have been used 
for  the sum of $ r''.$ 
We choose $K$ such that $K>2m.$ 
Note that 
$$
{{2{{[r'']}}
+2^{{{K}}+1}[r_{K,1}']<[r].}}
$$
We also note that $[p_1]\not\le  [r_{K,1}'].$ {{(Otherwise, 
$[p]\le 2^m[r_{K,1}']\le 2^K[r_{K,1}]\le [r],$ a contradiction.)}}
Applying Lemma \ref{LsZhang} to $p_1$ (as $p$) and the full projection $r_{K,1}'.$
Then 
we may write 
\beq
p_1=p_{1,1}+p_{1,2}+p_{1,3}, \,\,p_{1,1}\sim p_{1,2}\andeqn p_{1,3}\lesssim r_{K,1}'.
\eneq
Since $p_j\sim p_1,$ we also have  {{mutually orthogonal projections $p_{j,1}, p_{j,2},p_{j,3}$
such that}}
\beq
p_j=p_{j,1}+p_{j,2}+p_{j,3},\,\, p_{j,2}\sim p_{j,1}{{\sim p_{1,1}}}\andeqn p_{j,3}\lesssim r_{K,1}'.
\eneq
Note that 
\beq
p_{j,i}\sim p_{j',i'}, \,\,i,i'=1,2, \,\,j=1,2,....,2^m.
\eneq
Put $s:=\sum_{j=1}^{{{2^m}}}p_{_{j,3}}.$  Then
\beq
&&s\lesssim r+r''+\sum_{j=1}^{2^{{m}}} r_{K,j}', \,\,2[r''+\sum_{j=1}^{2^{{m}}}r'_{K,j}]\le [r]  \andeqn 
p=\sum_{j=1}^{2^m} (p_{j,1}+p_{j,2}) +s.
\eneq
This completes the induction.
\end{proof}

 Recall that, if $C_n\subset A$ ($n\in \N$), then $l^\infty(\{C_n\})=\{\{c_n\}\in l^\infty(A): c_n\in C_n\}.$

\begin{lem}\label{Ldvi-n1}
Let $A$ be a  non-elementary, $\sigma$-unital and  algebraically simple \CA\, 
with $QT(A)\not=\emptyset$ and with $T$-tracial {{approximate}} oscillation zero. 
Let $\{e_k\}$ be a sequence in $A_+^{\bf 1}\setminus \{0\}$   such  that   $\{e_k\}\not\in I_{_{\Qw}},$
and  $\{e_k\}$ is a permanent projection lifting of  the projection $p=\Pi(\{e_k\}),$ and 
 let $n>1$ be an integer and $\ep>0.$ 

Then, there are mutually 
orthogonal and mutually (Cuntz) equivalent elements 
$${{\{f_{k,1}\},\{f_{k,2}\},...,\{f_{k,n}\}\in 
 l^\infty (\{\Her(e_k)\})
\,\,\,{\text{such\,\, that}}}}$$
\beq\label{726}
\lim_{k\to\infty}\omega(f_{k,j})=0,\,\,1\le j\le n,\,\, and,
\eneq
for $k\ge k_0$ (for some $k_0$),
\beq\label{763}
\sup\{|d_\tau(e_k)-nd_\tau(f_{k,1})|:\tau\in \Qw\}<\ep.
\eneq 
\end{lem}

\begin{proof}
Let $m\ge 1$ be an integer {{and let $\ep>0.$}}    Put $B=l^\infty(A)/I_{_{\Qw}}.$ 
Since $A$ has T-tracial oscillation zero, by 
Theorem \ref{PLrr0},  $B$ has real rank zero. 
Since
$\{e_k\}\not\in I_{_{\Qw}},$
\beq
\sigma_0:=\limsup_k(\sup\{\tau(e_k): \tau\in \Qw\})>0.
\eneq
If $r=\Pi(\{r_n\})$  (is full) 
 and $r_n\in A_+^{\bf 1},$ and 
\beq
\sup\{\tau(r_n): \tau\in \Qw\}<\sigma_0/4\rforal n\in \N,
\eneq
then $[\Pi(\{e_k\})]\not\le [r].$ 
It  then follows from Lemma \ref{LsmallMn}  and Lemma \ref{LsZhang-2} that, there 
are $2^{m}+1$ mutually orthogonal projections 
$p_1, p_2,...,p_{2^m}, s\in B$ such that {{$s$  is full, $s=\Pi(\{s_n\}),$
where $s_n\in A_+^{\bf 1},$ }} 
\beq
&&p=\sum_{i=1}^{2^m}p_i+s,\,\, p_1\sim p_j, 1\le j\le 2^m\andeqn\\
&&\sup\{d_\tau(s_n):  \tau\in \Qw\}<\ep/2.
\eneq
Recall that $\{e_{{k}}\}$ is  a permanent projection lifting of $p.$
Then, by (5) of {{Proposition}} 
\ref{Dpproj},  $pBp=
%
 l^\infty(\{\Her(e_{{k}})\})/I_{_{\Qw}} 
.$
Define a \hm\, $\phi: C_0((0,1])\otimes M_{2^m}\to pBp$ such that 
\beq
\phi(\imath\otimes e_{i,i})=p_i,\,\,i=1,2,...,2^m.
\eneq
Since $C_0((0,1])\otimes M_{2^m}$ is semiprojective, 
there is a \hm\, 
\beq
{{\psi: C_0((0,1])\otimes M_{2^m}\to 
 l^\infty(\{\Her(e_n)\}) 
\,\,\,{\rm 
such\,\,\, that}}}\,\,\,
\Pi(\psi(e_{i,i}))=p_i,\,\,i=1,2,...,2^m.
\eneq
Write $\psi=\{\Psi_k\},$ where $\Psi_k: C_0((0,1])\otimes M_{2^m}\to {\rm Her}(e_{{k}})$
is a \hm. Put 
$$
{{g_{k,i}={{f_{1/4}}}(\Psi_k(e_{i,i})){{\in \Her(e_k)}},\,\,\, i=1,2,...,2^m,\,\, k\in \N.}}
$$ 
{{Then $\{g_{k,1}\},\{g_{k,2}\},...,\{g_{k,2^m}\}$  are mutually orthogonal and mutually 
equivalent.}}
{{Note that $\Pi(g_{k,i})\}=p_i$ and $\{g_{k,i}\}$ is a permanent projection lifting
of $p_i,$ and by (2) of Lemma \ref{Dpproj}, $\lim_{k\to \infty}\omega(g_{k,i})=0.$}}

Then, by Lemma \ref{Leqv-2},  {{there is $k_1\in \N,$ such that,}} for all $k\ge k_1,$ 
\beq\label{77-n1}
d_\tau(e_k)&<&
d_\tau(\sum_{j=1}^{2^m}g_{k,j})+d_\tau({{s_k}})+\ep/2\le 2^md_\tau(g_{k,1})+\ep
\rforal \tau\in \Qw.
\eneq
Also, 
 by Lemma \ref{Leqv-2}, 
we may  assume that, for all $k\ge k_1,$ 
\beq\label{77-n2}
2^md_\tau(g_{k,1})
\le d_\tau(\sum_{j=1}^{2^m}g_{k,j}+{{s_k}})\le d_\tau(e_k)+\ep\rforal \tau\in \Qw.
\eneq
{{We choose a large $m$ such that
$2^m=ln+m_0,$ where $l\in \N$ and $m_0\in {{\N\cup\{0\}}}$ 
such that $m_0/2^m<\ep/4.$ 
Note that, since $\{g_{k,1}\}, \{g_{k,2}\},...,\{g_{k,2^m}\}$ are mutually orthogonal, 
for any  sum  $f_{k,j}$ of  some $l$ many $\{g_{k, i}\}'s,$ 
$\lim_{k\to\infty} \omega(f_{k,j})=0.$  
For each $1\le j\le n,$ {{by \eqref{77-n1} and \eqref{77-n2}, and}}
by choosing  $f_{k,j}$ to be a sum of $l$ many (different) $g_{k,i}'s,$ we see 
\eqref{726} and \eqref{763} hold.}}
%

\end{proof}

\begin{cor}\label{Ldvi-n5}
Let $A$ be a non-elementary, $\sigma$-unital and  algebraically  simple \CA\, with
$QT(A)\not=\emptyset$ and T-tracial approximate oscillation zero. 
Let $e_k\in A_+^{\bf 1}\setminus \{0\}$ ($k\in  \N$)  be such  that   $\{e_k\}\not\in I_{\Qw}$ 
and $\{e_k\}$ is a permanent projection lifting of $p=\Pi(\{e_k
\}),$
and let $n>1$ be an integer and $\ep>0.$ 

 Then,  there is $k_0\in \N$ such that, for any $k\ge k_0$ and for any $1\le i\le n,$ there exists ${{h}}_{k,i}\in {{\Her(e_k)_+}}$
 such that
 \beq
 \sup\{| {i\over{n}}d_\tau(e_k)-d_\tau({{h_{k,i}}})|:\tau\in \Qw\}<\ep.
 \eneq

\end{cor}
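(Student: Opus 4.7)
The plan is to apply Lemma \ref{Ldvi-n1} directly and then take partial sums. Given $n>1$ and $\ep>0,$ I would invoke Lemma \ref{Ldvi-n1} with the same integer $n$ and the same $\ep$ to obtain an integer $k_0\in \N$ and, for each $k\geq k_0,$ mutually orthogonal and mutually Cuntz equivalent elements $f_{k,1},f_{k,2},\ldots,f_{k,n}\in \Her(e_k)$ satisfying $\lim_{k\to\infty}\omega(f_{k,j})=0$ for $1\le j\le n,$ together with the estimate
\[
\sup\{|d_\tau(e_k)-n\,d_\tau(f_{k,1})|:\tau\in \Qw\}<\ep \qquad (k\ge k_0).
\]

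Next, for each $k\ge k_0$ and each $1\le i\le n,$ I would set
\[
g_{k,i}:=\sum_{j=1}^{i}f_{k,j}\in \Her(e_k)_+\subset A_+.
\]
Since the $f_{k,j}$ are pairwise orthogonal and pairwise Cuntz equivalent, $d_\tau(g_{k,i})=\sum_{j=1}^{i}d_\tau(f_{k,j})=i\,d_\tau(f_{k,1})$ for every $\tau\in \Qw.$ Then for every $\tau\in \Qw,$
\[
\Bigl|\tfrac{i}{n}d_\tau(e_k)-d_\tau(g_{k,i})\Bigr|=\tfrac{i}{n}\bigl|d_\tau(e_k)-n\,d_\tau(f_{k,1})\bigr|\le \bigl|d_\tau(e_k)-n\,d_\tau(f_{k,1})\bigr|,
\]
and taking the supremum over $\tau\in \Qw$ yields the desired bound $\sup_{\tau\in \Qw}|\tfrac{i}{n}d_\tau(e_k)-d_\tau(g_{k,i})|<\ep.$

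There is essentially no obstacle here beyond bookkeeping: the entire content of the corollary is packaged in Lemma \ref{Ldvi-n1}, and the only step beyond that lemma is the elementary observation that orthogonal Cuntz equivalent summands have additive dimension functions, which lets a single ``division into $n$ equal pieces'' be upgraded to realizing all multiples $i/n$ simultaneously.
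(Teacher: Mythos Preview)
Your proof is correct and essentially identical to the paper's. The paper phrases the intermediate step as $\sup\{|\tfrac{1}{n}d_\tau(e_k)-d_\tau(f_{k,1})|:\tau\in\Qw\}<\ep/n$ (the statement of Lemma \ref{Ldvi-n1} divided by $n$) and then multiplies by $i\le n$, while you keep the $n$-fold form and multiply by $i/n\le 1$; the arithmetic is the same, and the choice $g_{k,i}=\sum_{j=1}^i f_{k,j}$ matches exactly.
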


\begin{proof}
By the proof of Lemma \ref{Ldvi-n1}, for $k\ge k_0,$
 \beq
 \sup\{|{1\over{n}}d_\tau(e_k)-d_\tau(f_{k,1})|: \tau\in \Qw\}<\ep/n.
 \eneq
So, for any $1\le i\le n,$  choose ${{h_{k,i}}}=\sum_{j=1}^i f_{k,j}.$ 
Then 
\beq\nonumber
\sup\{| {i\over{n}}d_\tau(e_k)-d_\tau({{h_{k,i}}})|:\tau\in \Qw\}<\ep.
\eneq
\end{proof}

\begin{df}\label{RRtau}
Let $A$ be a \CA\, with ${\wtd{QT}}(A)\setminus \{0\}\not=\emptyset.$
Define 
\beq
R_{\tau, f}(A)=\{\wh{a}: a\in {\rm Ped}(A\otimes {\cal K})_+\}\subset \Aff_+({\wtd{QT}}(A)).
\eneq
\end{df}

\begin{lem}\label{Tdense}
Let $A$ be a non-elementary,  $\sigma$-unital  and  simple \CA\, with ${\wtd{QT}}(A)\setminus \{0\}\not=\emptyset.$
Suppose that $A$ has $T$-tracial {{approximate}} oscillation zero.
Then the image of the canonical map $\Gamma$ (see \ref{DGamma}) is dense in 
$R_{\tau, f}(A).$ 

\end{lem}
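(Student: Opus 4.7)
The plan is to approximate $\hat{a}(\tau) = \tau(a)$ by a Riemann sum of rank functions $d_\tau((a-k/N)_+)$, then realize each Riemann term as a rank function $d_\tau(c_k)$ by combining $T$-tracial oscillation zero with the division lemma (Corollary \ref{Ldvi-n5}). First I would reduce to the algebraically simple case by passing to $A_1 := \Her(e)$ for any $e \in {\rm Ped}(A)_+\setminus\{0\}$ (see \ref{Ralgebraicsimpl}), so $\Qw$ is compact (Proposition \ref{Pcompact}). Fix $a \in {\rm Ped}(A_1\otimes\mathcal{K})_+^{\bf 1}$ and $\ep > 0$. Since $\tau$ is linear on the abelian subalgebra $C^*(a)$, the layer-cake identity $\tau(a) = \int_0^{\|a\|} d_\tau((a-\lambda)_+)\, d\lambda$ holds; choosing $N \in \N$ with $\sup_{\tau \in \Qw} d_\tau(a)/N < \ep/3$ (finite by Proposition \ref{Pboundedness}), monotonicity of $\lambda \mapsto d_\tau((a-\lambda)_+)$ yields
\[
\sup_{\tau \in \Qw}|\tau(a) - R_N(\tau)| < \ep/3,\qquad R_N(\tau) := \tfrac{1}{N}\sum_{k=0}^{N-1} d_\tau((a-k/N)_+).
\]

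The main step is to construct, for each $k$, an element $c_k \in (A_1 \otimes \mathcal{K})_+$ with $d_\tau(c_k)$ close to $(1/N)d_\tau((a-k/N)_+)$ at a prescribed $\tau_0 \in \Qw$. Choose $\delta_k > 0$ small so that $\tau_0(f_{\delta_k}((a-k/N)_+))$ is within $\ep/(8N)$ of $d_{\tau_0}((a-k/N)_+)$, set $x_k := f_{\delta_k}((a-k/N)_+) \in {\rm Ped}$, and apply $T$-tracial oscillation zero (Proposition \ref{Omega-1}) to $x_k$ to obtain $\{e_j^{(k)}\}\subset \Her(x_k)_+^{\bf 1}$ with $\omega(e_j^{(k)}) \to 0$ and $\|e_j^{(k)} - x_k\|_{_{2, \Qw}} \to 0$. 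By (3) of Proposition \ref{Dpproj}, after raising to suitable powers $\{(e_j^{(k)})^{1/m(j,k)}\}$ becomes a permanent projection lifting of a projection $p_k \in l^\infty(A_1)/I_{_{\Qw}}$. Corollary \ref{Ldvi-n5} with $n = N$ then supplies, for $j \geq j_0$, elements $g_j^{(k)} \in (A_1)_+$ satisfying $\sup_{\tau \in \Qw}|d_\tau(g_j^{(k)}) - (1/N)d_\tau(e_j^{(k)})| < \ep/(4N)$. Take $c_k := g_j^{(k)}$ and $b := c_0 \oplus c_1 \oplus \cdots \oplus c_{N-1} \in A_1 \otimes \mathcal{K}$, so that $d_\tau(b) = \sum_k d_\tau(c_k)$.

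The hard part is controlling $d_\tau(e_j^{(k)})$: since $e_j^{(k)} \in \Her(x_k)$ one has $d_\tau(e_j^{(k)}) \le d_\tau(x_k)$, while 2-norm convergence together with $\tau(e_j^{(k)}) \le d_\tau(e_j^{(k)})$ (for $0 \le e_j^{(k)} \le 1$) gives $\liminf_j d_\tau(e_j^{(k)}) \ge \tau(x_k)$ at each fixed $\tau$. As $\delta_k \downarrow 0$, both endpoints $\tau(x_k)$ and $d_\tau(x_k)$ increase to $d_\tau((a-k/N)_+)$ pointwise in $\tau$, so $d_{\tau_0}(b)$ can be made within $\ep$ of $\tau_0(a)$; this establishes density of $\Gamma(\Cu(A))$ in $R_{\tau, f}(A)$ in the pointwise topology at $\tau_0$. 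Since $\delta_k = \delta_k(\tau_0)$ depends on $\tau_0$, lifting this to the uniform-on-compact-sets topology on $R_{\tau, f}(A) \subset \Aff_+(\wtd{QT}(A))$ requires an additional finite open-cover argument exploiting compactness of $\Qw$, lower semicontinuity of $d_\tau((a-k/N)_+)$, and the $\ep$-slack built into the Riemann sum, so that a common $\delta$ works simultaneously on each cover element.
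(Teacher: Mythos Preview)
Your approach is genuinely different from the paper's, and while it can be completed, the uniformity step you defer to a ``finite open-cover argument'' is precisely where the content lies, and your sketch is too thin to count as a proof.

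The paper does not use a layer-cake decomposition. Instead it invokes Theorem~\ref{PLrr0}: the quotient $B=l^\infty(A_1)/I_{_{\Qwy}}$ has real rank zero, so $\Pi(\iota(a))$ is approximated \emph{in norm} by a finite sum $\sum_{i=1}^m\lambda_i p_i$ of mutually orthogonal projections with $\lambda_i\in\Q\cap(0,1)$. Lifting each $p_i$ to a permanent projection lifting $\{e_{n,i}\}$ (Proposition~\ref{Dpproj}) gives $d_\tau(e_{n,i})-\tau(e_{n,i})\to 0$ uniformly, and Corollary~\ref{Ldvi-n5} produces $f_{n,i}$ with $d_\tau(f_{n,i})\approx\lambda_i\,d_\tau(e_{n,i})$ uniformly. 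Norm smallness of $a-\sum_i\lambda_i e_{n,i}+c_n$ (with $\{c_n\}\in I_{_{\Qwy}}$) immediately yields $|\tau(a)-\sum_i\lambda_i\tau(e_{n,i})|<\ep/4$ uniformly in $\tau$, so no cover argument is needed at all.

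Your route can be salvaged, but two points deserve care. First, the cover step is really a Dini-type argument: the upper bound $(1/N)\sum_k d_\tau(e_j^{(k)})\le R_N(\tau)\le\tau(a)+M/N$ is automatic and uniform, while for the lower bound the function $g_\delta(\tau):=(1/N)\sum_k\tau(f_\delta((a-k/N)_+))$ is continuous in $\tau$ and increases as $\delta\downarrow 0$ to $R_N(\tau)\ge\hat a(\tau)$ pointwise; since $\hat a$ is continuous and $\Qw$ compact, a single $\delta$ gives $g_\delta>\hat a-\ep/2$ everywhere. (Your mention of lower semicontinuity of $d_\tau((a-k/N)_+)$ points in the wrong direction; it is the continuity of $\hat a$ that makes the compactness argument work.) Second, the implication $\|e_j^{(k)}-x_k\|_{_{2,\Qw}}\to 0\Rightarrow\sup_\tau|\tau(e_j^{(k)})-\tau(x_k)|\to 0$ is routine for traces via Cauchy--Schwarz (with $\sup_\tau d_\tau(x_k)<\infty$) but needs justification for genuine $2$-quasitraces, since $\tau$ need not be additive on non-commuting self-adjoints. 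Finally, since Corollary~\ref{Ldvi-n5} already rests on Theorem~\ref{PLrr0}, your route does not avoid the real-rank-zero machinery---it merely uses it less directly and then has to recover uniformity by hand.
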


\begin{proof}
Fix {{a nonzero}}  element $0\le e\le 1$ in ${\rm Ped}(A)_+.$ 
Let $A_1=\Her(e).$ Then $A_1={\rm Ped}(A_1).$ 
By Brown's stable isomorphism theorem  (\cite{Br}), $A_1\otimes {\cal K}\cong A\otimes {\cal K}.$ 
It suffices to show that the image of the map $\Gamma_1: \Cu(A_1)=\Cu(A)\to {\rm LAff}_+(\Qwy)$
is dense in 
$${{R_{\tau, f}(A_1)=\{\hat{a}: a\in {\rm Ped}
(A_1\otimes {\cal K})_+\}\subset \Aff_+(\Qwy)}}\hspace{0.4in}{\rm (see\,\, \ref{DGamma}).}$$
%


Fix $a\in {\rm Ped}(A_1\otimes {\cal K})_+.$ 
Let $\ep>0.$ It suffices to show that there is $f\in \Cu(A_1)$ such that
\beq\label{Tdense-0}
\sup\{|\tau(a)-\widehat{f}(\tau)|:\tau\in \Qwy\}<\ep.
\eneq
\Wlog, we may assume that $0\le a\le 1.$
Since $a\in {\rm Ped}(A_1\otimes {\cal K})_+,$  
there exists $r\ge 1$  such that $r[f_{\dt}(e)]\ge [a]$
for some $\dt\in (0,1/4).$ Therefore we may assume 
that $a\in M_r(A_1)$ for some integer $r\ge 1.$

Put $B:=l^\infty(A_1)/I_{_{\Qwy}}.$ 
Then, by Theorem \ref{PLrr0}, 
$B$ has real rank zero. 

Therefore, for any $\ep>0,$ 
there are mutually orthogonal projections $p_1, p_2,...,p_m\in M_r(B)$ and 
$\lambda_1, \lambda_2,...,\lambda_m\in (0, 1)$ such that
\beq\label{Tdense-1}
\|\Pi(\iota(a))-\sum_{i=1}^m\lambda_i p_i\|<\ep/16.
\eneq
We may assume that $\lambda_i\in (0,1)\cap \Q.$
There are  mutually orthogonal elements $\{e_{n,i}\}\in l^\infty( M_r(A_1) )$ ($i=1,2,...,m$)
such that $\Pi(\{e_{n,i}\})=p_i,$ $i=1,2,...,m.$  
By Proposition 
\ref{Dpproj}, we may assume that 
$\{e_{n,i}\}$ is a permanent projection lifting.
By (2) of Proposition 
\ref{Dpproj},  $\lim_{n\to\infty}\omega(e_{n,i})=0.$ 
\Wlog, we may assume that, for all $n\in \N,$
\beq\label{Tdense-2}
d_\tau(e_{n,i})-\tau(e_{n,i})<\ep/16(m+1)^{{2}}\rforal \tau\in {{\Qwy,}}\,\,i=1,2,...,m.
\eneq

Applying Corollary 
\ref{Ldvi-n5}, \wilog, we may also assume that, 
there are permanent projection liftings $\{f_{n,i}\}$ such that
\beq\label{Tdense-4}
\sup\{|\lambda_id_\tau(e_{n,i})-d_\tau(f_{n,i})|:\tau\in {{\Qwy}}\}<\ep/16(m+1)^{{2}},\,\, i=1,2,...,m.
\eneq

By \eqref{Tdense-1},  there exists $\{c_n\}\in I_{_{\Qwy}}$  and $n_1\in \N$ such that, for all $n\ge n_1,$ 
\beq\label{Tdense-5}
\|a-\sum_{i=1}^m \lambda_ie_{i,n}+c_n\|<\ep/8.
\eneq
%
Then, for $n\ge n_1,$
$${{\sup\{|\tau(a-\sum_{i=1}^m \lambda_ie_{n,i}+c_n)|:\tau\in \Qwy\}<\ep/8.}}$$
Since $\{c_n\} \in I_{_{\Qwy}},$ we have $\{|c_n|^{1/2}\}\in I_{{_{\Qwy}}}.$ 
It follows that
$$
\lim_{n\to \infty} 
\sup\{|\tau(c_n)|:\tau\in {{\Qwy}}\}=0.$$
Therefore there exists $n_2\ge n_1$ such that
\beq\label{Tdense-8}
|\tau(a-\sum_{i=1}^m \lambda_ie_{n,i})|<|\tau(c_n)|+\ep/8<\ep/4\rforal \tau\in \Qwy\,\,{\rm and} \rforal n\ge n_2.
\eneq
Thus, by \eqref{Tdense-2} and \eqref{Tdense-4},
for $n\ge n_2,$ 
\beq
|\tau(a)-\sum_{i=1}^m d_\tau(f_{n,i})|<\ep/2\rforal \tau\in \Qwy.
\eneq
Put $e=\diag(f_{n,1}, f_{n,2},...,f_{n,m}).$  Then 
\beq 
|\tau(a)-d_\tau(e)|<\ep\rforal \tau\in \Qwy.
\eneq
This completes the proof.
\end{proof}

\begin{thm}\label{TOstosurj-1}
Let $A$ be a non-elementary and  $\sigma$-unital simple \CA\, with $\wtd{QT}(A)\setminus \{0\}\not=\emptyset$
and  strict comparison. 
Suppose that $A$  has T-tracial approximate oscillation zero.
Then $\Gamma$ is surjective (see \ref{DGamma}).
\end{thm}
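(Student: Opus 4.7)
The plan is to realize every $f\in{\rm LAff}_+(\wtd{QT}(A))$ as $\Gamma([c])$ for some $c\in(A\otimes\mathcal{K})_+$, by constructing an increasing sequence $[b_n]$ in $\Cu(A)$ whose rank functions rise to $f$ and then using the $\omega$-completeness of $\Cu(A)$ to take the supremum. First I would carry out the standard reduction of \ref{Ralgebraicsimpl}: by Brown's stable isomorphism theorem, replacing $A$ by $\Her(e)$ for some nonzero $e\in{\rm Ped}(A)_+$, we may assume $A$ is algebraically simple; then $\R_+\cdot\Qw=\wtd{QT}(A),$ and it suffices to show that every $f\in{\rm LAff}_+(\Qw)$ lies in the image of the induced map $\Gamma_1$.

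Given such an $f,$ write $f=\sup_n g_n$ with $g_n\in\Aff_+(\Qw)$ and $g_n\nearrow f.$ Since $\Qw$ is compact and $0\notin\Qw$ by Proposition~\ref{Pcompact}, each difference $g_{n+1}-g_n$ is strictly positive on $\Qw$ and attains a positive minimum $\delta_n>0.$ The key step is to use Lemma~\ref{Tdense} to approximate each $g_n$ in the sup-norm by an element $d_\tau(b_n)$ of the image of $\Gamma,$ with error less than $\delta_n/4.$ The resulting gap estimate guarantees $d_\tau(b_n)<d_\tau(b_{n+1})$ strictly on $\Qw,$ and strict comparison then yields $b_n\lesssim b_{n+1}$ in the Cuntz order. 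Setting $[c]:=\sup_n[b_n]\in\Cu(A)$ (which exists by $\omega$-completeness of $\Cu(A)$) and using that $\Gamma$ preserves countable suprema of increasing sequences, we obtain $\Gamma([c])=\sup_n d_\tau(b_n)=\sup_n g_n=f$ on $\Qw,$ and hence on all of $\wtd{QT}(A)$ by $\R_+$-homogeneity.

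The hard part is the approximation step, since Lemma~\ref{Tdense} only provides density of the image of $\Gamma$ in $R_{\tau,f}(A)=\{\hat a:a\in{\rm Ped}(A\otimes\mathcal K)_+\},$ whereas the decomposition $f=\sup g_n$ typically places the $g_n$ only in the a priori larger class $\Aff_+(\Qw).$ To close this gap, I would refine the decomposition so that each $g_n$ lies in $R_{\tau,f}(A)$: since $A$ is algebraically simple, for a strictly positive $e_0\in A_+$ the continuous affine functions $\widehat{f_\epsilon(e_0)},$ their positive rational combinations, and matrix amplifications $\widehat{\sum\lambda_i a_i}$ with $a_i\in M_m(A)_+$ and $\lambda_i\in\Q_+$ form a rich subfamily of $R_{\tau,f}(A).$ Combined with the halving lemma (Lemma~\ref{Ldvi-n1}) and Corollary~\ref{Ldvi-n5}, which produce full projections in $l^\infty(A)/I_{_{\Qw}}$ of prescribed rank up to $\epsilon,$ one can interpolate: given the already-built $b_n,$ choose $g_n'\in R_{\tau,f}(A)$ with $d_\tau(b_n)<g_n'<g_{n+1},$ and then apply Lemma~\ref{Tdense} to $g_n'$ to obtain $b_{n+1}$ satisfying the desired gap. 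Iterating this refined procedure yields the chain, and taking the supremum completes the proof.
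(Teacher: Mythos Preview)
Your overall architecture is correct and is the same as the paper's: approximate with Lemma~\ref{Tdense}, use strict comparison to make the approximants increasing in $\Cu(A)$, and take the supremum. You also correctly isolate the real difficulty, namely that Lemma~\ref{Tdense} only gives density of $\Gamma(\Cu(A))$ in $R_{\tau,f}(A)$, not in $\Aff_+(\Qw)$. However, your proposed fix does not close that gap. Lemma~\ref{Ldvi-n1} and Corollary~\ref{Ldvi-n5} produce elements of $\Cu(A)$ whose \emph{rank functions} approximate a prescribed multiple of a given rank, which is the same kind of output Lemma~\ref{Tdense} already gives; they do not manufacture continuous affine functions $g_n'\in R_{\tau,f}(A)$ squeezed strictly between $d_\tau(b_n)$ and a given $g_{n+1}\in\Aff_+(\Qw)$. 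That interpolation is essentially the density statement you are trying to prove. And even granting it, nothing in the iteration forces $g_n'$ to climb to $f$: you only ask $g_n'<g_{n+1}$, so the sequence could stall.

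The paper sidesteps this entirely. It first treats only targets of the form $\hat b$ with $b\in {\rm Ped}(A\otimes\mathcal K)_+$: then the approximants $\widehat{(1-\tfrac{1}{n+1})b}$ are in $R_{\tau,f}(A)$ for free, and Lemma~\ref{Tdense} applies directly. The upper bound on the supremum is obtained not via ``$\Gamma$ preserves suprema and the errors vanish,'' but by a \emph{second} application of Lemma~\ref{Tdense} to $(1+\ep/2)b\in{\rm Ped}$, producing $c_\ep$ with $d_\tau(f_n)<\tau(b)<\Gamma(c_\ep)(\tau)$ for all $n$, whence $[f]\le [c_\ep]$ by strict comparison and $\widehat f\le \hat b+\ep$. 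Only after securing $\hat b\in\Gamma(\Cu(A))$ for all $b\in (A\otimes\mathcal K)_+$ does the paper invoke Theorem~5.7 of \cite{ERS} to pass to arbitrary $f\in{\rm LAff}_+(\Qw)$. If you want to run your direct argument, the missing ingredient is exactly that ERS theorem (every $f\in{\rm LAff}_+$ is a sup of an increasing sequence from $R_{\tau,f}(A)$), cited at the start rather than the end; the halving machinery is not a substitute for it.
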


\begin{proof}

We keep the same setting as  in the proof of Theorem \ref{Tdense}.

Let $b\in {\rm Ped}(A_1\otimes {\cal K})_+$ with $0\le b\le 1.$ 
Choose 
 $b_n=(1-1/(n+1))b.$ 
Then  $h_n:=b_{n+1}-b_n\in {\rm Ped}(A_1\otimes {\cal K})_+\setminus \{0\}.$
Put
\beq
\sigma_n:=\inf\{\tau(h_n): \tau\in \Qwy\}>0.
\eneq
Applying Theorem \ref{Tdense}, for each $n\in \N,$ we obtain  $f_n\in {\rm Ped}(A_1\otimes {\cal K})_+$ such that
\beq
\eta_n:=\sup\{|\tau(b_n)-d_\tau(f_n)|: \tau\in \Qwy\}<{\min\{\sigma_j: 1\le j\le n+1\}\over{2^{n+2}}}.
\eneq
In particular,   for all $n\in \N,$ 
\beq
\label{TOstosurj-1-e1}
d_\tau(f_n)<\eta_n+{{\tau}}(b_n)<\tau(b) \rforal \tau\in {{\Qwy.}}
\eneq

Then, for all $\tau\in \Qwy$   (note that $b_nb_{n+1}=b_{n+1}b_n$), 
\beq
d_\tau(f_{n+1})-d_\tau(f_n)&>&(\tau(b_{n+1}) -\eta_{n+1})-(\tau(b_n)+\eta_n)\\
&=& {{\tau}}(h_{n})-\eta_{n+1}-\eta_n>
\tau(h_{n})-\sigma_n
/2>0.
\eneq
 It follows from the strict comparison  that $[f_n]$ is an increasing sequence in $\Cu(A).$
 Let $f$ be the supremum of $\{[f_n]\}$ in $\Cu(A).$
 {{We also have,}} for all $\tau\in \Qwy,$
 \beq
 d_\tau(f_{n+1})-{{\tau}}(b_n) &>&{{\tau}}(b_{n+1})-\eta_{n+1}-{{\tau}}(b_n)\\
&\ge
 &{{\tau}}(h_n)-\sigma_{n}/2^{n+1}>0.
 \eneq
It follows that $\widehat{f}(\tau)\ge \tau(b_n)$ for all $\tau\in \Qwy$ {{and}} for each $n\in \N.$ 
Hence 
$$
{{\widehat{f}(\tau)\ge \tau(b)\rforal \tau\in \Qwy.}}
$$
Let $\ep>0.$ By Theorem \ref{Tdense}, 
there is $c_\ep\in \Cu( A_1 )$ such that
\beq
\sup\{|\tau(b+(\ep/2)b )-
 \Gamma_1 (c_\ep)(\tau)|:\tau\in \Qwy\}<
 \ep\sigma_1/8. 
\eneq
Then, for all $n\in \N$ (see also \eqref{TOstosurj-1-e1}),
\beq
 \Gamma_1 (c_\ep)(\tau)>\tau(b+ 
 \ep/2  b)-\ep\sigma_1/8
>\tau(b)>d_\tau(f_n)\rforal \tau\in \Qwy.
\eneq
Hence $[f]\le [c_\ep].$
It follows that
\beq
\widehat{f}(\tau)\le \widehat{c_\ep}(\tau)<\tau(b)+\ep\rforal \tau\in {{\Qwy.}}
\eneq
Let $\ep\to 0.$ We conclude that $\widehat{f}(\tau)=\tau(b)$ for all $\tau\in \Qwy.$

So far we {{have shown}} that, for any $b\in {\rm Ped}(A\otimes {\cal K})_+,$ 
there is $f\in \Cu(A)$ such that $\widehat{f}(\tau)=\tau(b)$ for all $\tau\in \Qw.$ 
Note that, for any $a\in  (A\otimes {\cal K})_+,$  $(a-\|a\|/n)_+\in {\rm Ped}(A\otimes {\cal K}).$
Thus, there are $f_n\in \Cu(A)$ such that $\widehat{f_n}(\tau)=\tau((a-\|a\|/n)_+)$ for all $\tau\in \Qw.$
Since $(a-\|a\|/n)_+\nearrow a,$ we conclude, using the similar argument used above, 
that there is $f\in \Cu(A)$ such that $\widehat{f}(\tau)=\tau(a)$ for all $\tau\in \Qw.$ 
Applying Theorem 5.7 of \cite{ERS} and repeating the argument above, we conclude that
$\Gamma$ is surjective.
\end{proof}

\section{Tracially matricial  property}

\begin{df}\label{DTM}

Let $A$ be a
\CA\,   and $S\subset \wtd{QT}(A)\setminus 
 \{0\}$ be a non-empty compact subset.
\CA\, $A$ is said to have property  (TM) relative to $S,$ 
if for any $a\in  {\rm Ped}(A\otimes {\cal K})_+,$  any $\ep>0,$
any $n\in\N,$
there is a c.p.c.~order zero map 
$\phi: M_n\to \Her(a)$
such that $\|a-\phi(1_n)a\|_{_{2,S}}<\ep.$

A $\sigma$-unital simple \CA\, $A$  with $\wtd{QT}(A)\setminus \{0\}\not=\emptyset$
is said to have  property 
(TM), if for some $e\in {\rm Ped}(A)_+^{\bf 1}\setminus \{0\},$ $\Her(e)$ has property (TM) relative to $\Qw.$

From the definition, it is clear that, if $A$ is a $\sigma$-unital simple \CA\, which has property (TM),
then $A\otimes {\cal K}$ has property (TM), and 
by Brown's stable isomorphism theorem \cite{Br}, 
every $\sigma$-unital hereditary  \SCA\, has property (TM).  Since we only 
need to consider $\Her(a)$ for each $a\in {\rm Ped}(A\otimes {\cal K})_+^{\bf 1},$ it  follows 
that every  hereditary \SCA\, has property (TM).

In the absence of strict comparison, one may also define the following:

A \CA\ $A$ is said to have property (CM), if, for any $n\in \N$  and any $a\in {\rm Ped}(A\otimes {\cal K})_+^{\bf 1},$  there is  
 a c.p.c.~order zero map $\phi: M_n\to l^\infty(C)/N_{cu}(C)$ such that
 \beq
 \iota(b)\phi(1_n)=\iota(b)\rforal b\in C,
 \eneq
where $C=\overline{a(A\otimes {\cal K})a}.$

\end{df}
\begin{rem}\label{R82}
In {{definition}} \ref{DTM}, let $\psi: C_0((0,1])\otimes M_n\to \Her(a)$ 
be the \hm\, {{induced}} by $\phi,$ i.e.,  $\psi(\iota\otimes e_{i,j})=\phi(e_{i,j})$ ($1\le i, j\le n$). 
Then $\Her(a)$ has some ``tracially  large"  matricial structure (see Proposition \ref{p1028-1}).

{{Let $x\in {\rm Ped}(A\otimes {\cal K}).$  Then $a_0=x^*x+xx^*\in {\rm Ped}(A\otimes {\cal K})_+.$
Note $x\in \Her(a_0).$ 
{{Let ${\cal F\subset} \Her(a_0)^{\bf 1}$ 
be a finite set.}}
For any $1>\ep>0,$ 
there  is $a\in \Her(a_0)_+^{\bf 1}$ 
such that
\beq
\|ay-y\|< (\ep/4)^3 \andeqn \|ya-y\|<(\ep/4)^3\rforal y\in {\cal F}.
\eneq
If $A$ has property (TM)  relative to $S$ such that $\|\tau\|\le 1$ for all $\tau\in S,$ 
  then,  there is a c.p.c.~order zero map
$\phi: M_n\to \Her(a_0)$ such that
$
\|a-\phi(1_n)a\|_{_{2,  S} }<(\ep/2)^3.
$
Then, for all $y\in {\cal F},$
\beq
\|y-\phi(1_n)y\|_{_{2, S}}^{2/3}&\le& \|y-ya\|^{2/3}_{{_{2, S}}}+\|ya-ya\phi(1_n)\|_{_{2, S}}^{2/3}\\
&<&(\ep/2)^2+\|y\|\|a-a\phi(1_n)\|_{_{2, S}}^{2/3}<\ep.
\eneq
Similarly
\beq
\|y-\phi(1_n)y\|_{_{2, S}}^{2/3}<\ep.
\eneq}}

\end{rem}

The following fact is well known. 
For completeness, we include a proof here. 
\begin{prop}
\label{p1028-1}
Let $A$ be a \CA, $n\in\N,$
and
$\phi:M_n\to A$ be a c.p.c.~order zero map. 
Then $\Her(\phi(1_n))\cong \Her(\phi(e_{1,1}))\otimes M_n.$
\end{prop}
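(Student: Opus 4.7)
The plan is to invoke the Winter--Zacharias structure theorem for c.p.c.~order zero maps (\cite{WZ09OZ}): the map $\phi$ extends uniquely to a $\ast$-homomorphism $\psi:C_0((0,1])\otimes M_n\to A$ with $\psi(\iota\otimes x)=\phi(x)$ for $x\in M_n$. Writing $h:=\phi(1_n)$, $h_i:=\phi(e_{i,i})$, $e:=h_1=\phi(e_{1,1})$, the elements $w_{i,j}:=\psi(\iota^{1/2}\otimes e_{i,j})\in A$ satisfy
$$w_{i,j}^*w_{i,j}=h_j,\qquad w_{i,j}w_{i,j}^*=h_i,\qquad w_{i,j}w_{j,k}=\phi(e_{i,k}),$$
and all of them lie in $\Her(h)$ because $\iota\otimes 1_n$ is strictly positive in $C_0((0,1],M_n)$, so $\psi(C_0((0,1])\otimes M_n)\subseteq\overline{hAh}=\Her(h)$. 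Polar decomposition in $A^{**}$ gives partial isometries $v_{i,j}$ with $v_{i,j}^*v_{i,j}=s(h_j)$, $v_{i,j}v_{i,j}^*=s(h_i)$, $v_{i,j}v_{j,k}=v_{i,k}$, $v_{i,j}^*=v_{j,i}$, and the intertwining identities $w_{i,j}=v_{i,j}h_j^{1/2}=h_i^{1/2}v_{i,j}$.

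Next I would define $\Phi:\Her(e)\otimes M_n\to\Her(h)$ on simple tensors by
$$\Phi(b\otimes e_{i,j}):=v_{i,1}\,b\,v_{1,j}.$$
Although $v_{i,1}$ and $v_{1,j}$ individually only lie in $A^{**}$, the product does lie in $\Her(h)$: by Cohen factorization write $b=e^{1/2}ce^{1/2}$ with $c\in A$; then the intertwining gives
$$v_{i,1}bv_{1,j}=(v_{i,1}e^{1/2})\,c\,(e^{1/2}v_{1,j})=w_{i,1}cw_{1,j}\in\Her(h).$$
Multiplicativity on elementary tensors is immediate from $v_{1,j}v_{k,1}=\delta_{j,k}s(e)$ together with $s(e)b=b=bs(e)$ for $b\in\Her(e)$, and the $\ast$-compatibility follows from $v_{i,1}^*=v_{1,i}$. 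Hence $\Phi$ extends to a $\ast$-homomorphism from the (unique) $C^*$-tensor product $\Her(e)\otimes M_n$ into $\Her(h)$.

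For injectivity, the restriction to the corner $\Her(e)\otimes e_{1,1}$ is the identity, since $\Phi(b\otimes e_{1,1})=v_{1,1}bv_{1,1}=s(e)b\,s(e)=b$; because $M_n$ is simple, every closed two-sided ideal of $\Her(e)\otimes M_n$ has the form $I\otimes M_n$, so faithfulness on one corner forces $\ker\Phi=0$. For surjectivity, the order-zero orthogonality $h_ih_j=0$ ($i\ne j$) gives $hxh=\sum_{i,j}h_ixh_j$ for every $x\in A$, and the identity $h_i=v_{i,1}ev_{1,i}$ yields
$$h_ixh_j=v_{i,1}\big(e\,v_{1,i}xv_{j,1}\,e\big)v_{1,j}=v_{i,1}\,c_{i,j}\,v_{1,j}=\Phi(c_{i,j}\otimes e_{i,j}),$$
where $c_{i,j}=e^{1/2}(w_{1,i}xw_{j,1})e^{1/2}$ lies in $\Her(e)$ by the general identity $\Her(e)\cdot A\cdot\Her(e)\subseteq\Her(e)$ for hereditary subalgebras. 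Thus $hAh\subseteq\operatorname{im}\Phi$, and taking closures gives surjectivity, completing the isomorphism.

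The main technical obstacle is bookkeeping between $A$ and $A^{**}$: the partial isometries $v_{i,j}$ realizing the matrix units only exist in the enveloping von Neumann algebra, and one has to systematically absorb ``halves'' of the $h_i$'s into Cohen-type factorizations to bring all computations back into norm-continuous elements of $\Her(h)\subseteq A$. The polar-decomposition intertwining $v_{i,j}h_j^{1/2}=w_{i,j}=h_i^{1/2}v_{i,j}$ is the pivot that makes every seemingly $A^{**}$-valued expression collapse to a product of $w_{i,j}$'s and elements of $A$.
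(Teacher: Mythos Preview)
Your proof is correct and follows essentially the same route as the paper: both pass to the induced $*$-homomorphism $\psi:C_0((0,1])\otimes M_n\to A$, take polar decompositions in $A^{**}$ to obtain a system of matrix units $\{v_{i,j}\}$ (the paper writes $u_{i,j}$, obtained from $\psi(\iota\otimes e_{i,j})$ rather than your $\psi(\iota^{1/2}\otimes e_{i,j})$, but these yield the same partial isometries), define $\Phi(b\otimes e_{i,j})=v_{i,1}bv_{1,j}$, and check injectivity via the corner and surjectivity by decomposing $\phi(1_n)x\phi(1_n)$. Your Cohen-factorization argument for why $v_{i,1}bv_{1,j}\in A$ is a bit more explicit than the paper's one-line remark, but otherwise the arguments coincide.
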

\begin{proof}

Let $\psi: C_0((0,1])\otimes M_n\to A$ be the \hm\, defined 
by $\psi(\imath\otimes e_{ij})=\phi(e_{i,j}),$ where $\imath$ is the identity function on $(0,1],$ $i,j\le n.$
In particular, $\psi(\imath\otimes 1_n)=\phi(1_n).$
Write $\psi(\imath\otimes e_{i,j})=u_{i,j}r_{{j}}$ as a polar decomposition of $\phi(\imath\otimes e_{i,j})$
in $A^{**}.$ {{Hence}} $r_{{j}}=|{{\psi}}(\imath\otimes e_{i,j})|$ and $u_{i,j}$ is a partial isometry in 
$A^{**}.$  
Note that $au_{i,j}b\in A$ for all $a\in p_iA^{**}p_i\cap A$ and $b\in p_jA^{**}p_j\cap A,$
where $p_i$ is the open projection of $r_i,$ $i,j=1,2,...,n.$ 
Since $\psi$ is a \hm, we compute that $p_i=u_{i,i},$ $i=1,2,...,n,$ and 
$\{u_{i,j}\}_{1\le i,j\le n}$ forms  a system of matrix units for $M_n.$ 

 Define $\Phi: M_n({\rm Her}({{\phi}}(e_{1,1})))\cong {\rm Her}(\phi(e_{1,1}))\otimes M_n\to {\rm Her}(\phi(1_n))$ by
 defining $\Phi(a\otimes e_{i,j})=u_{i,1}au_{1,j}$ for all $a\in {\rm Her}({{\phi}}(e_{1,1})),$ $i,j=1,2,...,n.$ 
 Then $\Phi$ is a \hm. Since $\Phi$ is the identity map on ${\rm Her}{{(\phi(e_{1,1}))}}$ and $M_n$  is simple,
 {{the map}} $\Phi$ is injective.   Put $B=\Phi(M_n({\rm Her}({{\phi}}(e_{1,1})))).$ To see $\Phi$ is surjective, let $x\in A,$ 
 then, for any $i,j=1,2,...,n,$
$b_{i,j}=u_{1,i}r_ixr_ju_{j,1}\in {\rm Her}(\phi(e_{1,1}){{)}}.$ Therefore
 \beq
 \phi(1_n)x\phi(1_n)&=&\sum_{{1\le i, j\le n}} u_{i,i}r_ixr_ju_{j,j}\\
 &=&\sum_{1\le i,j\le n} u_{i,1}(u_{1,i}xr_ju_{j,1})u_{1,j}=\sum_{1\le i,j\le n}u_{i,1}b_{i{{,}}j}u_{1,j}\in B.
 \eneq
It follows that  ${\rm Her}(\phi(1_n))=B.$  The lemma follows.
\end{proof}

\begin{lem}\label{LTVD-2}
Let $A$ be a simple \CA\, with ${\wtd{QT}}(A)\setminus \{0\}\not=\emptyset.$
Suppose that  $A={\rm Ped}(A),$  $A$ has strict comparison and  $\Gamma$ is surjective.
Suppose that $b\in {\rm Ped}(A\otimes {\cal K})_+^{\bf 1}.$
Then, for any $\ep>0$ and any integer $n\ge 1,$ there is a c.p.c. order zero map $\phi: 
M_n\to {\rm Her}(b)$ such that
\beq
\|b-\phi(1_n)b\|_{_{2, \Qw}}<\|b\| \sqrt{\omega(b)+\ep}.
\eneq
Moreover, if $QT(A)=T(A),$   {{then}}
\beq\label{LTVD-2-e}
\|b-\phi(1_n)b\|_{_{2, \Tw}}<{{\min\{\|b\|, \|b\|_{{_{2, \Tw}}}\}}}\sqrt{\omega(b)+\ep}.
\eneq
\end{lem}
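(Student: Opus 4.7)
The plan is to build $\phi$ by first producing, inside $A \otimes \mathcal{K}$, a positive element whose rank is $1/n$ times the rank of $b$, then to transfer $n$ mutually orthogonal copies of (a cut-down of) it into $\Her(b)$ by means of a Cuntz subequivalence guaranteed by surjectivity of $\Gamma$ and strict comparison. First I would apply the surjectivity of $\Gamma$: since the function $\tau \mapsto (1/n) d_\tau(b)$ lies in ${\rm LAff}_+(\Qw)$, choose $b_1 \in (A\otimes \mathcal{K})_+^{\bf 1}$ with $d_\tau(b_1) = (1/n) d_\tau(b)$ for every $\tau \in \Qw$. Next, given $\ep > 0$, use Lemma \ref{Los-dt} to pick $\dt \in (0,1/2)$ so small that $\omega(f_\dt(b_1))|_{\Qw} < \omega(b_1) + \ep/2n = (1/n)\omega(b) + \ep/2n$, and so that, after setting $d_1 := \diag(\overbrace{b_1, \ldots, b_1}^n) \in M_n(A\otimes \mathcal{K})$, one still has good control of $f_\dt(d_1)$ on the quasitraces.

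The next step is to pass from $d_1$ into $\Her(b)$. For each $\tau \in \Qw$, we have $d_\tau(f_\dt(d_1)) = n \cdot d_\tau(f_\dt(b_1)) \le n \cdot d_\tau(b_1) = d_\tau(b)$, and in fact the inequality is strict once we pass to $f_{2\dt}(d_1)$ in place of $f_\dt(d_1)$. By strict comparison and Proposition 2.4 of \cite{Ror92UHF2}, there exists $x \in A \otimes \mathcal{K}$ with $x^*x = f_{2\dt}(d_1)$ and $xx^* \in \Her(b)$. Polar-decomposing $x$ in $(A\otimes \mathcal{K})^{**}$ yields an isomorphism $\Her(x^*x) \cong \Her(xx^*) \subset \Her(b)$. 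Composing this with the canonical c.p.c.\ order zero map $M_n \to M_n(\Her(b_1))$ sending $e_{i,j} \mapsto f_{2\dt}(b_1) \otimes e_{i,j}$ produces the desired c.p.c.\ order zero map $\phi : M_n \to \Her(b)$, whose image of $1_n$ is (Cuntz equivalent to) $xx^* \in \Her(b)$ with $d_\tau(\phi(1_n)) = n \cdot d_\tau(f_{2\dt}(b_1)) = d_\tau(f_{2\dt}(d_1))$ for every $\tau \in \Qw$.

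The main estimate is then the following Cauchy--Schwarz computation at a 2-quasitrace $\tau \in \Qw$. Writing $p$ for the open projection of $b$ in $A^{**}$ and $q$ for the open projection of $\phi(1_n)$, one has $q \le p$ and
\begin{eqnarray*}
\tau\bigl((b - \phi(1_n)b)^*(b - \phi(1_n)b)\bigr)
&=& \tau\bigl(b(p - q)b\bigr) \\
&\le& \|b\|^2 \cdot \tau(p - q) \\
&=& \|b\|^2 \bigl(d_\tau(b) - d_\tau(\phi(1_n))\bigr) \\
&=& \|b\|^2 \bigl(d_\tau(b) - \tau(f_{2\dt}(d_1))\bigr).
\end{eqnarray*}
Here in the first line I use that, on ${\rm Her}(b)$, multiplication by $b$ from each side is bounded by $\|b\|^2$ times multiplication by $p$, a standard fact that for 2-quasitraces follows by restricting to the commutative sub-$C^*$-algebra generated by $b$ and $q$ and applying Lemma \ref{P1025-2}. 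Since $\tau(f_{2\dt}(d_1)) = n \cdot \tau(f_{2\dt}(b_1))$ and, by the choice of $\dt$, $d_\tau(b) - n\tau(f_{2\dt}(b_1)) \le \omega(b) + \ep$ for every $\tau \in \Qw$, we obtain $\|b - \phi(1_n)b\|_{_{2,\Qw}} \le \|b\| \sqrt{\omega(b) + \ep}$.

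The main obstacle will be the Cauchy--Schwarz type step, because the 2-quasitraces involved need not be linear, so one cannot directly invoke the bound $\tau(bxb) \le \|b\|^2 \tau(x)$ globally. I plan to circumvent this by restricting to the commutative $C^*$-algebra generated by $b$ and $\phi(1_n)$ (on which $\tau$ is a genuine trace by \cite[Corollary II.2.5]{BH}), much in the spirit of Lemma \ref{P1025-2} and as done in the proof of Proposition \ref{Pfirstosc}. For the ``moreover'' part under the hypothesis $QT(A) = T(A)$, each $\tau$ is linear throughout, so the same calculation applies verbatim but one bounds $\tau(b(p-q)b)$ by $\tau(b^2)(d_\tau(b) - \tau(f_{2\dt}(d_1)))$ instead, which on taking supremum yields the sharper estimate \eqref{LTVD-2-e}.
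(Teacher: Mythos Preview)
Your overall strategy matches the paper's exactly: use surjectivity of $\Gamma$ to get $b_1$ with $d_\tau(b_1)=(1/n)d_\tau(b)$, form $d_1=\diag(b_1,\dots,b_1)$, cut down and move $f_\dt(d_1)$ into $\Her(b)$ via strict comparison, then estimate. But the core computation as written has two real problems. First, the displayed identity $\tau\bigl((b-\phi(1_n)b)^*(b-\phi(1_n)b)\bigr)=\tau(b(p-q)b)$ is false: one has $(b-\phi(1_n)b)^*(b-\phi(1_n)b)=b(p-\phi(1_n))^2b$, and $\phi(1_n)$ is not equal to its open projection $q$. Passing to $q$ would lead you to $d_\tau(b)-d_\tau(\phi(1_n))$, which is \emph{not} the quantity controlled by $\omega(b)$; what you actually need (and what you silently switch to in the last line, after the incorrect ``$=\|b\|^2(d_\tau(b)-d_\tau(\phi(1_n)))$'') is $d_\tau(b)-\tau(\phi(1_n))=d_\tau(b)-\tau(f_{2\dt}(d_1))$. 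The paper keeps $p-\phi(1_n)$ throughout: since $0\le p-\phi(1_n)\le 1$ one has $\tau((p-\phi(1_n))^2)\le\tau(p-\phi(1_n))=d_\tau(b)-\tau(f_\dt(d_1))$, and then $\|b-\phi(1_n)b\|_{_{2,\Qw}}\le\|b\|\,\|p-\phi(1_n)\|_{_{2,\Qw}}$ using the inequality $\|xy\|_{_{2,T}}\le\|x\|_{_{2,T}}\|y\|$ from \ref{D2norm}.

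Second, your plan to justify the Cauchy--Schwarz step by ``restricting to the commutative $C^*$-algebra generated by $b$ and $\phi(1_n)$'' does not work: there is no reason whatsoever for $b$ and $\phi(1_n)$ to commute. The paper avoids this entirely via the factorization above, which only needs that $p$ and $\phi(1_n)$ commute (they do, since $\phi(1_n)\in\Her(b)$), together with the extension of each $\tau$ to $\widetilde{\Her(b)}$ as in II.2.5 of \cite{BH}. Finally, Lemma \ref{Los-dt} is the wrong tool for choosing $\dt$: that lemma bounds $\omega(f_\dt(b_1))$, whereas what you need is simply $\sup_\tau\bigl(d_\tau(b_1)-\tau(f_{2\dt}(b_1))\bigr)<\omega(b_1)+\ep/4n$, which is immediate from the definition of $\omega(b_1)$.
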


\begin{proof}
Fix $\ep\in (0,1/2)$ and $n\in \N.$ Since $\Gamma: {\rm Cu}(A)\to {\rm LAff}_+(\Qw)$ is surjective,
$\Gamma|_{{\rm Cu}(A)_+}$ is also surjective (see \eqref{DGamma}).
Note that $(1/n)\wh{[b]}\in {\rm LAff}_+(\Qw).$
We may choose $b_1\in  (A\otimes {\cal K})_+^{\bf 1}$ 
such that $d_\tau(b_1)=(1/n)d_\tau(b)$ for all $\tau\in \Qw$ and $b_1$ is not Cuntz equivalent 
to a projection.
We compute that $\omega(b_1)=(1/n)\omega(b)$ (see \eqref{Eosc-1226}).
Choose $\dt>0$ such that
\beq
\sup\{d_\tau(b_1)-\tau(f_{2\dt}(b_1)):\tau\in 
{{\Qw}}\}<\omega(b_1)+\ep/4n.
\eneq
Put $b_2=f_\dt(b_1)$ and 
%
$
d_1=\diag(\overbrace{b_1, b_1,...,b_1}^n)\in A\otimes {\cal K}.
$
Then 
\beq
f_\dt(d_1)=\diag(\overbrace{{{b_2, b_2,....,b_2}}}^n)\in A\otimes {\cal K}.
\eneq
Since $b_1$ is not Cuntz equivalent to a  projection,  
{{for any}} $0<\eta<\dt/2,$ 
$d_\tau(f_\eta(d_1))<d_\tau(b)$  for all $\tau\in \Qw.$
Since $A$ has strict comparison, 
by 
\cite[Proposition 2.4 (iv)]{Ror92UHF2}, there is $x\in A\otimes {\cal K}$ such that
\beq
x^*x=f_\dt(d_1)\andeqn xx^*\in {\rm Her}(b).
\eneq
Then one obtains an isomorphism 
$$
{{\psi: {\rm Her}(x^*x)\to {\rm Her}(xx^*)\subset {\rm Her}(b) \,\,\,
{\rm{such\,\, that}} \,\,\, \psi(f(x^*x))=f(xx^*)\rforal f\in C_0((0, \|x\|^2]).}}
$$ 
It induces a \hm\,  $\phi_c:  C_0({\rm sp}(f_\dt({{b_1}})))\otimes M_n\to {\rm Her}(b)$
such that $\phi_c(\imath\otimes 1_n)=xx^*,$   where $\imath\in C_0(f_\dt({{b_1}}))$ is the identity function on ${\rm sp}({{f_{\dt}(b_1)}}).$
Define 
a c.p.c.~order zero map
$\phi: M_n\to \Her(b)$ by $\phi(e_{i,j})=\phi_c(\imath\otimes {{e_{i,j}}})$ ($1\le i,j\le 1$).

Let $p$ be the open projection in $A^{**}$ corresponding to $b$ which  may be identified 
{{with}}  the identity of $\widetilde{\Her(b)}.$ 
We extend each $\tau\in   \Qw$ to a 2-quasitrace on $\widetilde{\Her(b)}$ (see II.2.5 of \cite{BH})
such that $\tau(p)=\|\tau|_{\Her(b)}\|=d_\tau(b).$
Then, we have, for all $\tau\in \Qw,$ 
\beq
\tau((p-\phi(1_n))^2)\le \tau(p-\phi(1_n))=\tau(p)-\tau(\phi(1_n))\\\label{814}
=d_\tau(b)-\tau(f_\dt(d_1))<\omega(b)+\ep.
\eneq
It follows that
\beq
\|b-\phi(1_n)b\|_{_{2, \Qw}}\le \|b\|\|p-\phi(1_n)\|_{_{2, \Qw}}< \|b\|\sqrt{\omega(b)+\ep}.
\eneq
In the case that $QT(A)=T(A),$ one 
can also apply  Cauchy--Bunyakovsky--Schwarz  inequality  {{(and \eqref{814}) to}} 
obtain \eqref{LTVD-2-e}.
\end{proof}

\begin{thm}\label{TVD-1}
Let $A$ be a {{$\sigma$-unital}} simple {{non-elementary}} \CA\, with 
$QT(A)\setminus \{0\}\not=\emptyset.$
Suppose that $A$ has strict comparison
and  T-tracial approximate oscillation zero. 
Then $A$ has property (TM).
\end{thm}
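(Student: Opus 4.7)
The plan is to combine the two main tools already at our disposal: the surjectivity of $\Gamma$ (which gives us, via Lemma \ref{LTVD-2}, c.p.c.~order zero maps whose $1_n$--image dominates a given positive element modulo tracial oscillation), together with T--tracial approximate oscillation zero (which lets us replace any positive $a$ by an element with arbitrarily small tracial oscillation, without moving $a$ much in $\|\cdot\|_{_{2,\Qw}}$). First, I would reduce to the algebraically simple case: pick a strictly positive $e\in {\rm Ped}(A\otimes{\cal K})_+$ of the hereditary \SCA\ containing the given $a,$ invoke Brown's stable isomorphism theorem, and note that property (TM) passes between $A$ and hereditary \SCA s, so it suffices to verify the defining condition for $a\in{\rm Ped}(A\otimes {\cal K})_+^{\bf 1}$ after passing to an algebraically simple hereditary \SCA.

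Next, by Theorem \ref{TOstosurj-1}, the canonical map $\Gamma$ is surjective, so Lemma \ref{LTVD-2} is applicable to any $b\in{\rm Ped}(A\otimes{\cal K})_+^{\bf 1}.$ Given $a\in {\rm Ped}(A\otimes{\cal K})_+^{\bf 1},$ $\varepsilon>0,$ and $n\in\N,$ I would use T--tracial approximate oscillation zero (via Proposition \ref{Omega-1}) to choose $b\in \Her(a)_+^{\bf 1}$ with
\[
\|a-b\|_{_{2,\Qw}}<(\varepsilon/3)^{3/2}\andeqn \omega(b)|_{\Qw}<(\varepsilon/3)^{2}.
\]
Applying Lemma \ref{LTVD-2} to $b$ with tolerance $(\varepsilon/3)^2,$ I obtain a c.p.c.~order zero map $\phi:M_n\to \Her(b)\subset \Her(a)$ with
\[
\|b-\phi(1_n)b\|_{_{2,\Qw}}\le \|b\|\sqrt{\omega(b)+(\varepsilon/3)^2}<\varepsilon/3 \cdot \sqrt{2}.
\]

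Finally, I would assemble the estimate via the $(2/3)$--power triangle inequality from \eqref{F-0114-1}, writing
\[
a-\phi(1_n)a=(a-b)+(b-\phi(1_n)b)+\phi(1_n)(b-a),
\]
and using $\|\phi(1_n)\|\le 1$ together with $\|xy\|_{_{2,\Qw}}\le \|x\|\|y\|_{_{2,\Qw}}$ from Definition \ref{D2norm} to bound the third term by $\|a-b\|_{_{2,\Qw}}.$ The three terms, raised to the $2/3$ power, sum to something majorized by $3\cdot(\varepsilon/3)=\varepsilon$ after adjusting constants, yielding $\|a-\phi(1_n)a\|_{_{2,\Qw}}<\varepsilon.$ Since $\phi$ maps into $\Her(b)\subset \Her(a),$ this is exactly property (TM).

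The only subtle point is bookkeeping with the exotic $2/3$--quasi--triangle inequality in place of the usual one; but since all three contributions can be made arbitrarily small by choosing $b$ close enough to $a$ and $\omega(b)$ small enough, this is only a matter of adjusting the chosen tolerances. All genuinely new content has been delegated to Theorem \ref{TOstosurj-1} (which converts T--tracial oscillation zero into surjectivity of $\Gamma$) and Lemma \ref{LTVD-2} (which uses surjectivity of $\Gamma$ and strict comparison to build the c.p.c.~order zero map), so no further structural argument is required.
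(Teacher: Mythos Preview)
Your proposal is correct and follows essentially the same approach as the paper's proof: reduce to the algebraically simple case, invoke Theorem~\ref{TOstosurj-1} to get surjectivity of $\Gamma$, use T-tracial approximate oscillation zero to replace $a$ by some $b\in\Her(a)_+^{\bf 1}$ with small $\omega(b)$ and $\|a-b\|_{_{2,\Qw}}$ small, apply Lemma~\ref{LTVD-2} to $b$, and assemble via the $2/3$--quasi--triangle inequality applied to the decomposition $a-\phi(1_n)a=(a-b)+(b-\phi(1_n)b)+\phi(1_n)(b-a)$. The paper phrases the approximation step as choosing a sequence $\{c_k\}$ with $\Pi(\iota(a))=\Pi(\{c_k\})$ and then selecting one index, but this is exactly your use of Proposition~\ref{Omega-1}; the constants differ only in bookkeeping, as you note.
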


\begin{proof}
{{Choose $e\in {\rm Ped}(A)_+\setminus \{0\}$ and define 
$A_1=\Her(e).$   By Brown's stable isomorphism theorem (\cite{Br}), $A_1\otimes {\cal K}\cong A\otimes {\cal K}.$
To show that $A$ has property (TM), it suffices to show that $A_1$ has property (TM).
To simplify notation, \wilog, we may assume that $A={\rm Ped}(A).$}}

Fix $a\in {\rm Ped}(A\otimes {\cal K})$ such that $0\le a\le 1.$
{{Since  $A$ has T-tracial approximate oscillation zero,}}  $\Omega^T(a)=0.$   {{It follows that}} there is 
a sequence $c_k\in {\rm Her}(a)$ with $0\le c_k\le 1$ such 
that 
\beq
\Pi(\iota(a))=\Pi(c)\andeqn \lim_{k\to\infty}\omega(c_k)=0,
\eneq
where $c=\{c_k\}$ and $\Pi: l^\infty(A)\to l^\infty(A)/I_{_{\Qw}}$ is the quotient map.
{{By Theorem \ref{TOstosurj-1}, $\Gamma$ is surjective.  Then, applying}} 
 Lemma  \ref{LTVD-2},  we obtain a c.p.c.~order  zero
 map  $\phi_k: M_n\to {\rm Her}(
 c_k
 )\subset {\rm Her}(a)$ 
such that
\beq
\|c_k-\phi_k(1_n)c_k\|_{_{2, \Qw}}\le \sqrt{\omega(c_k)+1/k^2},\,\,k=1,2,....
\eneq
Fix $1>\ep>0.$  Choose $k_0\ge 1$ such that 
\beq
\sqrt{\omega(c_k)+1/k^2}<{{(\ep/3)^3}}.
\eneq
Since $\Pi(\iota(a))=\Pi(c),$   there exists $k_1\ge k_0$ such that
\beq
\|a-c_k\|_{{_{2, \Qw}}}<(\ep/3)^3\rforal k\ge k_1.
\eneq
Choose $b=c_{k_1+1}.$     Then, {{for $k\ge k_1,$}}
\beq\nonumber
\|a-\phi_k(1_n)a\|_{_{2, \Qw}}^{2/3} &\le& \|a-b\|_{_{2, \Qw}}^{2/3}
+\|b-\phi_k(1_n)b\|_{_{2, \Qw}}^{2/3}\\\nonumber
&&+\|\phi_k(1_n)(b-a)\|_{_{2, \Qw}}^{2/3}\\\nonumber
&<& \ep/3+\ep/3+\ep/3<\ep.
\eneq
%
%
\end{proof}

\begin{lem}\label{LTM-1}
Let $A$ be a $\sigma$-unital  algebraically  simple \CA\,
with $QT(A)\not=\emptyset.$
Suppose that $A$ has property (TM) and $a\in {\rm Ped}(A\otimes {\cal K})_+^{\bf 1}\setminus \{0\}.$
Then, for any integer $n, r\in \N,$ any $k\in \N$ {{($k\ge 2$)}} and $\ep>0,$  there exist an integer $m_k\geq r\in \N$ and 
mutually orthogonal elements $b_{k,1}, b_{k,2},...,b_{k,n}\in \Her(a)_+^{\bf 1}$ such that
$[b_{k,i}]=[b_{k,1}],$ $i=1,2,...,n,$ 
\beq\label{LTM-1-e1}
d_\tau(f_{1/k}(a^{1/m_k})){{<}} nd_\tau(f_{1/k}(b_{k,1}))+\ep\tforal \tau\in \Qw
\eneq
and, an integer $ l(k) \in \N$ such that
\beq\label{LTM-1e-2}
nd_\tau(f_{1/k}(b_{k,1}))\le d_\tau
(f_{1/l(k)}(a))\tforal \tau\in \Qw.
\eneq
\end{lem}

\begin{proof}
Fix $n\in \N.$  
Since $A$ has property (TM), for each $m\in \N,$ there exists a c.p.c.~order zero map
$ \phi_m : M_n\to {\rm Her}(a)$ such that 
\beq
\|a^{1/m}-a^{1/m}\phi_m(1_n)\|_{_{2, \Qw}}<1/2^{m}.
\eneq

Define  $c:=\Pi(\{a^{1/m}\}{{_{m\in\mathbb{N}}}})$ and $\phi: M_n\to l^\infty(A)/I_{_{\Qw}}$ 
such that $\phi(f)=\Pi(\{\phi_m(f)\})$ for all $f\in M_n.$
Then
\beq
c=c\phi(1_n)=\phi(1_n)c=c.
\eneq
It follows that $c=cf_{1/2}(\phi(1_n))=f_{1/2}(\phi(1_n))c=c.$ 
Thus $c\le f_{1/2}(\phi(1_n)).$  

Let $\ep>0.$ 
By (1) of Lemma \ref{Leqv-2},  for each {{integer $k\ge 2,$}} there exists $m_k{{\geq r}}\in \N$ such that, for all $m\ge m_k,$  
\beq
[f_{1/k}(a^{1/m})]&\le &  [f_{1/2}(\phi_m(1_n))]+[d_m]\\
&\le &  [f_{1/k}(\phi_m(e_{1,1}))+f_{1/k}(\phi_m(e_{2,2}))+\cdots f_{1/k}(\phi_m(e_{n,n}))]+[d_m],
\eneq
where  $\sup\{d_\tau(d_m): \tau\in \Qw\}<\ep/2.$
Put $b_{k,i}=\phi_{m_k}(e_{i,i})$ $1\le i\le n$ and $k\in \N.$
{{Then \eqref{LTM-1-e1} holds.}}
On the other hand, since $\phi_{m_k}(1_n)\in \Her(a),$ 
for each $k,$  there is $l(k)\in \N$ such that
\beq
\|f_{1/l(k)}(a)\phi_{m_k}(1_n)f_{1/l(k)}(a)-{{\phi_{m_k}}}(1_n)\|<1/4k.
\eneq
It follows that {{(see Proposition 2.2 of \cite{Ror92UHF2})}}, for $k\ge 2,$
\beq
 f_{1/k}(\phi_{{m_k}}(1_n))\lesssim f_{{1/l(k)}}(a) \andeqn n[f_{1/k}(b_{k,1})]=[ f_{1/k}(\phi_{{m_k}}(1_n))].
\eneq
{{Then \eqref{LTM-1e-2} holds.}}
\end{proof}

\begin{thm}\label{PTMtosurj}
Let $A$ be a $\sigma$-unital simple \CA\, 
with $QT(A)\not=\{0\}.$
If $A$ has strict comparison and property (TM), 
then $\Gamma$ is surjective (see \ref{DGamma}).
\end{thm}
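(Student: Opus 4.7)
My plan is to show that property (TM) together with strict comparison forces $A$ to have $T$-tracial approximate oscillation zero, whereupon Theorem \ref{TOstosurj-1} immediately gives the surjectivity of $\Gamma$. As a preliminary step, use Brown's stable isomorphism theorem to reduce to the algebraically simple case by replacing $A$ by $\Her(e)$ for some $e\in{\rm Ped}(A)_+\setminus\{0\}$; this preserves $\Cu(A)$, $\wtd{QT}(A)$, strict comparison, and property (TM), and it makes $\Qw$ compact and $\R_+\cdot\Qw=\wtd{QT}(A)$.

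Fix $a\in{\rm Ped}(A\otimes\mathcal{K})_+^{\bf 1}$. By Proposition \ref{Omega-1}, to prove $\Omega^T(a)=0$ it suffices to exhibit, for each $\epsilon>0$, an element $b\in\Her(a)_+^{\bf 1}$ with $\|a-b\|_{_{2,\Qw}}<\epsilon$ and $\omega(b)|_{\Qw}<\epsilon$. Given a large $n$, apply Lemma \ref{LTM-1}: for each $k\geq 2$ one obtains $m_k$ and mutually orthogonal, mutually Cuntz-equivalent $b_{k,1},\ldots,b_{k,n}\in\Her(a)_+^{\bf 1}$, packaged as $\phi_{m_k}(1_n)=\sum_i b_{k,i}$ from a c.p.c.\ order zero map $\phi_{m_k}\colon M_n\to\Her(a)$ with $\|a-\phi_{m_k}(1_n)a\|_{_{2,\Qw}}$ arbitrarily small. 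Set $b_{k}:=\phi_{m_k}(1_n)^{1/2}\,a\,\phi_{m_k}(1_n)^{1/2}\in\Her(\phi_{m_k}(1_n))$; a standard Cauchy--Schwarz type computation using the tracial estimate on $\|a-\phi_{m_k}(1_n)a\|_{_{2,\Qw}}$ controls $\|a-b_k\|_{_{2,\Qw}}$.

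For the oscillation bound, Lemma \ref{Leae} gives $\omega(b_k)|_{\Qw}=\omega(\phi_{m_k}(1_n))|_{\Qw}$, so it remains to estimate $\omega(\phi_{m_k}(1_n))$. The point is that the inequalities supplied by Lemma \ref{LTM-1}, namely
\[
d_\tau(f_{1/k}(a^{1/m_k}))\le n d_\tau(f_{1/k}(b_{k,1}))+\epsilon
\quad\text{and}\quad
n d_\tau(f_{1/k}(b_{k,1}))\le d_\tau(f_{1/l(k)}(a))
\]
together with strict comparison imply that $\phi_{m_k}(1_n)$ is, up to $\epsilon$ in rank on $\Qw$, the rank $d_\tau(a^{1/m_k})$ function; as $k,m_k\to\infty$ the approximation error and the oscillation contribution from truncation both become small, forcing $\omega(\phi_{m_k}(1_n))|_{\Qw}\to 0$. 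Combining the two estimates yields the desired $b=b_k$ and hence $\Omega^T(a)=0$.

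\textbf{Main obstacle.} The delicate point is the oscillation estimate. While the matricial structure of the order zero map (Proposition \ref{p1028-1}) naturally divides $\phi_{m_k}(1_n)$ into $n$ equivalent pieces, a naive bound gives $\omega(\phi_{m_k}(1_n))=n\cdot\omega(\phi_{m_k}(e_{1,1}))$, which does \emph{not} shrink with $n$. The hard work is therefore to use Lemma \ref{LTM-1}'s sharp pointwise approximation of $d_\tau(f_{1/k}(a^{1/m_k}))$ by $n d_\tau(f_{1/k}(b_{k,1}))$, together with strict comparison, to conclude that the tracial function $\tau\mapsto d_\tau(\phi_{m_k}(1_n))$ differs from the lower-semicontinuous $\tau\mapsto d_\tau(a^{1/m_k})$ by an error that is uniformly small on the compact set $\Qw$; continuity of the latter (as $m_k$ varies) then forces small oscillation. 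Once T-tracial approximate oscillation zero is in hand, Theorem \ref{TOstosurj-1} finishes the proof.
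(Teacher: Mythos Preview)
Your approach has a genuine gap at exactly the point you flag as the ``main obstacle,'' and your proposed resolution does not work. You want to bound $\omega(\phi_{m_k}(1_n))|_{\Qw}$ by showing that $\tau\mapsto d_\tau(\phi_{m_k}(1_n))$ is uniformly close on $\Qw$ to $\tau\mapsto d_\tau(a^{1/m_k})$. But $d_\tau(a^{1/m_k})=d_\tau(a)$ for every $m_k$, and this function is \emph{not} known to be continuous; indeed $\omega(a)$ may be positive, which is precisely the situation you must handle. Being uniformly close to a lower-semicontinuous function with oscillation $\omega(a)$ forces oscillation roughly $\omega(a)$, not small oscillation. The inequalities from Lemma \ref{LTM-1} sandwich $n\,d_\tau(f_{1/k}(b_{k,1}))$ between $d_\tau(f_{1/k}(a^{1/m_k}))-\epsilon$ and $d_\tau(f_{1/l(k)}(a))$, and both ends tend to $d_\tau(a)$ as $k\to\infty$; nothing here produces continuity.

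There is also a structural circularity with the paper. The implication ``strict comparison $+$ (TM) $\Rightarrow$ $T$-tracial approximate oscillation zero'' is Theorem \ref{TTM}, whose proof \emph{begins} by invoking Theorem \ref{PTMtosurj} to obtain surjectivity of $\Gamma$, and then uses Lemma \ref{Lorthogso} (via Lemma \ref{LL-N1}), which needs $\Gamma$ surjective to manufacture elements with prescribed continuous dimension function and hence small oscillation. So you cannot bootstrap through \ref{TOstosurj-1} without an independent oscillation argument, and you do not have one.

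The paper's proof of Theorem \ref{PTMtosurj} avoids oscillation entirely: it uses Lemma \ref{LTM-1} in an inductive construction to build, for each $a\in{\rm Ped}(A\otimes\mathcal{K})_+^{\bf 1}$ and each $n$, an element $c$ with $n\widehat{[c]}\le\widehat{[a]}\le(n+1)\widehat{[c]}$ (taking a supremum in $\Cu(A)$ of an increasing sequence, using strict comparison at each step). This is Robert's Property~D, and surjectivity of $\Gamma$ then follows from \cite[Corollary 5.8]{ERS} as in \cite[Proposition 6.21]{Rl}. The key difference is that the paper establishes an \emph{ordered} divisibility statement in $\Cu(A)$ rather than a tracial oscillation bound.
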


\begin{proof}
It suffices to prove the proposition for the case that $A={\rm Ped}(A).$

We claim that, for any $a\in {\rm Ped}(A\otimes {\cal K})_+^{\bf 1}\setminus \{0\}$   and  any integer $n\in \N,$ 
there exists $b\in (A\otimes {\cal K})_+$
such that
\beq
n\widehat{[b]}\le \widehat{[a]}\le (n+1)\widehat{[b]}.
\eneq

Case (1): $0$ is an isolated point of ${\rm sp}(a).$ 
In this case we may assume that $a=p$ for some projection $p.$
Choose 
\beq
\eta:=({1\over{(n+1)^2}})\inf\{\tau(p): \tau\in \Qw\}>0.
\eneq
Note that $f_{1/k}(p)=p$ for all $k>1.$
Applying Lemma \ref{LTM-1}, we obtain $b\in \Her(a)_+$ such that
\beq
nd_\tau(b)\le d_\tau(p)<nd_\tau(b)+\eta\rforal \tau\in \Qw.
\eneq
Then we compute that
\beq
n\widehat{[b]}\le \widehat{[a]}\le (n+1)\widehat{[b]}.
\eneq

Case (2): $0$ is not an isolated point of ${\rm sp}(a).$ 
We will use Lemma \ref{LTM-1} for an induction argument.
Put
\beq
\sigma_0:=\inf\{\tau(a): \tau\in \Qw\}>0.
\eneq
Fix $n\in N.$
Since $0$ is a not an isolated point of ${\rm sp}(a),$
for each integer $k,$ there is a smallest integer ${{J}}(k)>k$
such that
\beq
f_{1/J(k)}(a)-f_{1/k}(a)\not=0.
\eneq
Define, for each $k,$ 
\beq\label{PTMtosurj-e1}
\sigma_k&:=&\inf\{d_\tau(f_{1/J(k)}(a))-d_\tau(f_{1/k}(a)): \tau\in \Qw\}>0\andeqn\\
\eta_k &:=&\min\{\sigma_j: {{0}}\le j\le k+1\}/2^{k+1}(n+1).
\eneq

Applying Lemma \ref{LTM-1}, 
there are mutually orthogonal  elements $b_{1,1}, b_{1,2},...,b_{1,n}\in \Her(a)_+^{\bf 1}$ such that
$[b_{1,i}]=[b_{1,1}],$ $i=1,2,...,n,$ and, for some  $m_1\in \N,$ 
\beq
{{d_\tau}}
(f_{1/2}(a^{1/m_1})) {{<}} nd_\tau(f_{1/2}(b_{1,1}))+ \eta_1\tforal \tau\in \Qw,
\eneq
and, an integer $l(1)\in \N$ such that
\beq\label{41}
nd_\tau(f_{1/2}(b_{1,1}))\le 
{{d_\tau}}(f_{1/l(1)}(a))\tforal \tau\in \Qw.
\eneq
Put $c_1:=f_{1/2}(b_{1,1}).$
Then, for all $\tau\in \Qw,$ 
\beq
\widehat{[f_{1/2}(a^{1/m_1})]}(\tau)\le n\widehat{[c_1]}(\tau)+ \eta_1\andeqn n\widehat{[c_1]}(\tau)\le \widehat{[f_{1/l(1)}(a)]}(\tau).
\eneq

Choose $k_2>l(1)$ such that  $k_2\ge J(l(1)).$ Applying Lemma \ref{LTM-1},
we obtain  $m_2\ge m_1$ and mutually orthogonal $b_{2,1},b_{2,2},...,b_{2,n}\in \Her(a)_+^{\bf 1}$ 
with $b_{2,j}\sim b_{2,1}$ ($1\le j\le n$) and $l(k_2)\in \N$
such that
\beq\label{1223-8-1-1}
&&d_\tau(f_{1/k_2}(a^{1/m_2}){{)}}<nd_\tau(f_{1/k_2}(b_{2,1}))+\eta_{l(1)} \andeqn\\\label{1223-8-1}
&&nd_\tau(b_{2,1})\le d_\tau(f_{1/l(2)}(a))\rforal \tau\in \Qw.
\eneq

Put $c_2=f_{1/k_2}(b_{2,1}).$  Then, for all $\tau\in \Qw,$ 
\beq
[f_{1/k_2}(a^{1/m_2}){{\widehat{]}}}(\tau)\le n\widehat{[c_2]}(\tau)+ \eta_{l(1)}\andeqn n\widehat{[c_2]}(\tau)\le 
[f_{1/l(2)}(a){{\widehat{]}}}(\tau).
\eneq
We compute that,  for all $\tau\in \Qw,$  by  \eqref{1223-8-1-1}  and \eqref{41}), 
{{\eqref{PTMtosurj-e1}
(recall that $k_2>J(l(1))$),}}
\beq
nd_\tau(f_{1/k_2}(b_{2,1}))&>&d_\tau(f_{1/k_2}(a^{1/m_2}))-\eta_{l(1)}\\
&=&d_\tau(f_{1/l(1)}(a)) +(d_\tau(f_{1/k_2}(a^{1/m_2}) )-d_\tau(f_{1/l(1)}(a)))-\eta_{l(1)}\\
&\ge & nd_\tau(f_{1/2}(b_{1,1}))+(d_\tau(f_{1/k_2}(a^{{1/m_2}}) )-d_\tau(f_{1/l(1)}(a)))-\eta_{l(1)}
\\
&>&nd_\tau(f_{1/2}(b_{1,1}))+\sigma_{l(1)}-\eta_{l(1)}>nd_\tau(f_{1/2}(b_{1,1})).
\eneq
Since $A$ has strict comparison,  we obtain
\beq
 [c_1]\le [c_2] .
\eneq

Suppose that we have constructed 
integers $k_i, m_i, l(i)\in \N$ and $b_{i,1}\in \Her(a)_+^{\bf 1},$  $1\le i\le I$ 
such that, for all $2\le i\le I$ and  $\tau\in \Qw$ {{(with $l(0)=1$ and $\eta_{l(0)}=\eta_1$),}}
\beq
&&k_i>J(l(i-1))>l(i-1),\\
&&
[f_{1/k_i}(a){{\widehat{]}}}(\tau)<n
[f_{1/k_i}(b_{i,1}){{\widehat{]}}}(\tau)+\eta_{l(i-1)}\andeqn\\
&&n
[f_{1/k_i}(b_{i,1}){{\widehat{]}}}(\tau)\le 
[f_{1/l(i)}(a){\widehat{]}}(\tau),
\eneq
and verified that $
[f_{1/k_i}(b_{i,1}) ] 
\le 
[f_{1/k_{i+1}}(b_{i+1,1})],
$ $1\le i\le I-1.$

Define $c_i=f_{1/k_i}(b_{i,1}),$ $1\le i\le I.$ 
By applying 
Lemma \ref{LTM-1}, there is $k_{I+1}>J(l(i))>l(i),$ $l(I+1)\ge k_{I+1},$ 
 $m_{I+1}\ge m_I,$   and $b_{I+1,1}\in \Her(a)_+^{\bf 1}$  such that,
 for all $\tau\in\Qw,$
 \beq
 &&
 [f_{1/k_{I+1}}(a^{1/m(I+1)}){\widehat{]}}(\tau)<n
 [f_{1/k_{I+1}}(b_{I+1,1}){\widehat{]}}(\tau)+\eta_{l(I)}\andeqn\\
&&n
[f_{1/k_{I+1}}(b_{I+1,1}{\widehat{]}}(\tau)\le 
[ f_{1/l(I+1)}(a){\widehat{]}}(\tau).
 \eneq
Then, for all $\tau\in\Qw,$ 
\beq\nonumber
nd_\tau(f_{1/k_{I+1}}(b_{I+1,1}))&>&d_\tau(f_{1/k_{I+1}}(a^{1/m_{I+1}}))-\eta_{l(I)}\\\nonumber
&=&d_\tau(f_{1/l(I)}(a)) 
+(d_\tau(f_{1/k_{I+1}}(a^{1/m_{I+1}}))-d_\tau(f_{1/l(I)}(a)))-\eta_{l(I)}\\\nonumber
&\ge& nd_\tau(f_{1/k_I}(b_{I,1}))
+(d_\tau(f_{1/k_{I+1}}(a^{1/m_{I+1}}))-d_\tau(f_{1/l(I)}(a)))-\eta_{l(I)}
\\
&>&nd_\tau(f_{1/k_I}(b_{I,1}))+\sigma_{l(I)}-\eta_{l(I)}>nd_\tau(f_{1/k_I}(b_{I,1})).
\eneq
Put $c_{I+1}=f_{1/k_{I+1}}(b_{I+1,1}).$ Then, by  
the strict comparison,  estimates above imply that
\beq
[c_I]\le [c_{I+1}].
\eneq
Thus, by induction, we obtain an increasing sequence $c_i\in \Her(a)_+^{\bf 1}$ 
such that, for all $\tau\in \Qw,$ 
\beq\label{TTM-18}
&&
[f_{1/k_i}(a){\widehat{]}}(\tau)<n\widehat{[c_i]}(\tau)+\eta_{l(i-1)}\andeqn\\\label{TTM-19}
&&n\widehat{[c_i]}(\tau)\le 
[ f_{1/l(i)}(a){\widehat{]}}(\tau), \,\,\, i
\geq 2.
\eneq
Let $c\in (A\otimes {\cal K})_+$ be such that $[c]$ is the supremum of $\{[c_i]\}.$ 
Then, by \eqref{TTM-18}, for all $i,$ 
\beq
d_\tau(f_{1/k_i}(a)) {{<}} n\widehat{[c]}(\tau)+\eta_{l{{(i-1)}}}\le n\widehat{[c]}(\tau) +\sigma_0/2^{i+1}(n+1)
 \rforal \tau\in \Qw.
\eneq
 Thus ({{recall that}} $A$ is simple),  for all sufficiently large $i,$
\beq
d_\tau(f_{1/k_i}(a)) {{<}}  (n+1)\widehat{[c]}(\tau)
 \rforal \tau\in \Qw.
\eneq 
It follows that {{(let $i\to\infty$)}}
\beq\label{TTM-9}
d_\tau(a)\le (n+1)\widehat{[c]}(\tau)\rforal \tau\in \Qw.
\eneq
On the other hand, by \eqref{TTM-19}
\beq
n\widehat{[c_i]}\le \widehat{[a]}\rforal i\in \N.
\eneq

For any $\ep>0,$ choose a non-zero element $e\in A_+$ such 
that $d_\tau(e)<\ep/2$ for all $\tau\in \Qw.$
Then, by  the strict comparison, 
\beq
n[c_i]\le [a]+[e]\rforal i\in \N.
\eneq
If follows that
$n[c]\le [a]+[e].$
Hence 
\beq
n\widehat{[c]}<d_\tau(a)+\ep\rforal \tau\in 
\Qw.
\eneq
Let $\ep\to 0.$ We also obtain
\beq\label{TTM-10}
n\widehat{[c]}\le \widehat{[a]}\rforal \tau\in \Qw.
\eneq
Combining \eqref{TTM-10} and \eqref{TTM-9}, the claim also holds for Case (2).

We now show that the proved claim implies that $\Cu(A)$ has the property 
D stated in the proof of Proposition 6.21 of \cite{Rl}.
Let $x'\ll x,$ where $x=[a]$ for some $a\in ( A\otimes {\cal K} )_+ .$ 
Since 
$$
{{x=\sup\{[(a-\|a\|/k)_+]: k\in \N\},}}
$$  {{then}}
$$
{{ x' \le [(a-\|a\|/k)_+]\,\,\,{\rm for}\,\,\,k\in  \N.}}
$$

Note $(a-\|a\|/k)_+\in {\rm Ped}(A\otimes {\cal K})_+.$ 
Then the claim implies  that there is $y\in \Cu(A)$ such 
that
\beq
\widehat{x'}\le [(a-\|a\|/k)_+{\widehat{]}}\le (n+1) \widehat{y}\andeqn   n\widehat{y}\le [(a-\|a\|/k)_+{\widehat{]}}\le [a]=x.
\eneq
Therefore, as  observed  by L. Robert (see Property D in the proof  of Proposition 6.21 
of \cite{Rl}{{),}}
 following Corollary 5.8  of \cite{ERS}, $\Gamma$ is surjective.
\end{proof}

\begin{lem}\label{Linvertible}
Let $A$ be a \CA\, and $a, b\in A_+^{\bf 1}.$ 
Suppose that there is $x\in A$ such that
\beq
x^*x=a\andeqn xx^*\in {\rm Her}(b).
\eneq
Then,  for any $\ep>0,$ there exists a unitary $U\in M_2
({{\wtd A}})$ such that
\beq
U^*\diag(f_\ep(a), 0)U\in {\rm Her}{{(\diag(b,0)).}}
\eneq
\end{lem}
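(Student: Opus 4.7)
The plan is to produce a genuine element $s \in A$ that plays the role of a partial isometry implementing the Cuntz equivalence of $f_\ep(a)$ with an element of $\Her(b),$ and then to assemble $s$ into a $2\times 2$ unitary via the standard trick.

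First I would define $s := x\, g(a) \in A,$ where $g \in C_0((0,1])$ is chosen so that $t^{1/2}g(t) = h(t)$ for a continuous $h$ satisfying $h(t)=1$ on $[\ep/2, 1]$ and $h(t) = 0$ on $[0, \ep/4]$ (explicitly, $g(t) = h(t)/t^{1/2},$ extended continuously by $0$ at $t=0$). Writing $x = v a^{1/2}$ for the polar decomposition in $A^{**},$ one has $s = v h(a).$ The choice of $h$ guarantees two identities I will use repeatedly: $s^*s = h(a)^2$ together with $h(a)f_\ep(a) = f_\ep(a),$ so $s^*s$ acts as a local unit for $f_\ep(a);$ and $ss^* = v h(a)^2 v^* = h(xx^*)^2 \in \Her(xx^*) \subset \Her(b),$ using the standard identity $v f(a) v^* = f(xx^*)$ for $f \in C_0((0,\|a\|]).$

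Next I would set
\[
V := \begin{pmatrix} s & (1 - ss^*)^{1/2} \\ (1 - s^*s)^{1/2} & -s^* \end{pmatrix} \in M_2(\wtd A).
\]
A direct verification using the intertwining identity $s^* f(ss^*) = f(s^*s) s^*$ (valid for every continuous $f$ vanishing at $0$) shows that $V$ is unitary. I would then compute $V \diag(f_\ep(a), 0) V^*.$ The decisive observation is that $(1 - h(a)^2) f_\ep(a) = 0,$ since $h \equiv 1$ on the support of $f_\ep;$ because both factors are positive and commute, this forces $(1 - s^*s)^{1/2} f_\ep(a)^{1/2} = 0$ as well. Consequently, the off-diagonal and the $(2,2)$-entries of $V \diag(f_\ep(a),0)V^*$ vanish, while the $(1,1)$-entry becomes $s f_\ep(a) s^* = v h(a) f_\ep(a) h(a) v^* = v f_\ep(a) v^* = f_\ep(xx^*),$ which lies in $\Her(xx^*) \subset \Her(b).$ Thus $V \diag(f_\ep(a), 0) V^* = \diag(f_\ep(xx^*), 0) \in \Her(\bar b),$ and setting $U := V^*$ delivers the required unitary.

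The main obstacle will be the care needed in two places: first, in arranging $s = xg(a)$ to lie genuinely in $A$ (rather than merely in $A^{**}$) while still yielding the precise local-unit relation $h(a) f_\ep(a) = f_\ep(a);$ and second, in upgrading $(1 - h(a)^2) f_\ep(a) = 0$ from a product identity to the statement that $(1 - s^*s)^{1/2} f_\ep(a)^{1/2}$ vanishes, which is what actually forces the $(2,2)$-entry and the off-diagonal entries to be zero. Both amount to routine continuous functional calculus, but each must be set up correctly for the identity $V\diag(f_\ep(a),0)V^* = \diag(f_\ep(xx^*),0)$ to hold exactly rather than only approximately.
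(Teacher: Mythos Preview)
Your proof is correct and is genuinely more elementary than the paper's. The paper argues indirectly: it first shows that $\diag(y,0)$ lies in the closure of $GL(M_2(\wtd A))$ for every $y\in A$ (by the nilpotent trick), then invokes R{\o}rdam's Proposition~2.4 to write $f_{\ep/2}(a)=rf_\dt(b)r^*$, sets $Z=\diag(rf_\dt(b)^{1/2},0)$, and applies Pedersen's polar decomposition theorem (Theorem~5 of \cite{Ppolar}) to obtain a unitary $U\in M_2(\wtd A)$ with $U^*f_{1/2}(ZZ^*)U=f_{1/2}(Z^*Z)$; the inclusion $U^*\diag(f_\ep(a),0)U\in\Her(\bar b)$ then follows from $f_\ep(\bar a)\le f_{1/2}(ZZ^*)$ and $Z^*Z\in\Her(\bar b)$.

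Your route avoids both external black boxes by writing down the Halmos-type $2\times2$ unitary dilation of the contraction $s=xg(a)$ explicitly. The functional-calculus choice of $h$ with $h\equiv1$ on $[\ep/2,1]$ makes $(1-s^*s)^{1/2}f_\ep(a)=0$ exact (not merely approximate), so the conjugation identity $V\diag(f_\ep(a),0)V^*=\diag(f_\ep(xx^*),0)$ holds on the nose. This is cleaner and entirely self-contained; the paper's approach, by contrast, trades the explicit matrix computation for a shorter appeal to Pedersen's theorem, and in principle works in the slightly more general setting where one only knows $a\lesssim b$ rather than having a specific $x$ with $x^*x=a$.
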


\begin{proof}
First we claim that, for any $y\in A,$ ${\rm dist}(\diag(y, 0), GL(M_2({{\wtd A}})))=0.$
To see this, let $\ep>0.$ Choose $V=\begin{pmatrix} 0& 1\\
                          1 & 0\end{pmatrix}$ which is a unitary in $M_2(\C).$
                          Then 

$$
Y:=\begin{pmatrix} 0& 1\\
                          1 & 0\end{pmatrix} \begin{pmatrix} y & 0\\
                                                                                 0 & 0\end{pmatrix}=\begin{pmatrix} 0 & 0\\
                                                                                                                                          y & 0\end{pmatrix}
    $$                                                                                                                                      
which is a nilpotent. Therefore $Y+\ep\cdot 1_2\in 
GL(M_2({{\wtd A}})).$ 
Then 
$$
{{\diag(y,0)\approx_\ep V^*(Y+\ep)\in 
GL(M_2({{\wtd A}})).}}
$$ 
This proves the claim.

To prove the lemma, we will combine the claim with an argument of R\o rdam.
By  Proposition 2.4 of \cite{Ror92UHF2}, for any $\ep>0,$  there exist $\dt>0$ and $r\in A$ such that
\beq
f_{\ep/2}(a)=rf_\dt(b)r^*.
\eneq
Put $z=rf_\dt(b)^{1/2}$ and $Z=\diag(z, 0).$
By the claim and  Theorem 5 of \cite{Ppolar},  there is a unitary  $U\in M_2({{\wtd A}})$
such that
\beq
U^*f_{1/2}(ZZ^*)U=U^*f_{1/2}(\diag(zz^*, 0))U=\diag(f_{1/2}(z^*z),0)=f_{1/2}(Z^*Z).
\eneq
Note that $Z^*Z\in {\rm Her}({\bar b}),$ where ${\bar b}=\diag(b,0).$ 
Moreover (with ${\bar a}=\diag(a, 0)$)
\beq\nonumber
U^*f_\ep({\bar a})U\le U^*f_{1/2}(f_{\ep/2}({\bar a}){{)U}}=U^*f_{1/2}(ZZ^*)U=f_{1/2}(Z^*Z).
\eneq
\end{proof}

\begin{lem}\label{LL-N1}
Let $A$ be an  algebraically simple \CA\, 
which has strict comparison.
Suppose that  {{$QT(A)\not=\emptyset$ and}} the canonical map $\Gamma$ is surjective.

Suppose that $a, a'\in {\rm Ped}(A)^{\bf 1}\setminus \{0\}$
with $a\in \Her(a').$  Then,   there exists $1/2>\ep_0$ satisfying the following:
for any $0<\eta<\ep<\ep_0,$ any $\sigma>0,$  there  exist  $c\in {\rm Her}(f_{\eta}(a))_+^{\bf 1}$ 
with $\|c\|\le \|a\|$ and 
unitary $U\in M_2({\rm Her}(a')^\sim)$ 
such that  (with $b=U^*\diag(c, 0)U$)
\beq
\hspace{-1.3in}&&(1)\hspace{0.2in}  \diag(f_{\ep}(a), 0)\le b,\\
\hspace{-1.3in}&&(2) \hspace{0.2in}d_\tau(f_{\ep}(a))\le d_\tau(b)\le d_\tau(f_{\eta}(a))\tforal \tau\in \Qw, 
\eneq
and, for some $1>\dt>0,$ 
\beq
\hspace{-1.4in}(3)&&\,\,\, |d_\tau(b)-\tau(f_\dt(b))|<\sigma \tforal \tau\in  \Qw. 
\eneq
Moreover, 
\beq
\hspace{-2.4in} (4) &&\hspace{0.2in}U^*\diag(g_{\eta/2}(a), a')U\in B,
\eneq
where $B:={\rm Her}(b)^\perp\cap {\rm Her}(\diag(a,a'))$ and 
$g_\eta(t)\in C_0((0, 1])$   is defined  as in \ref{Nfg}.

Consequently, if $e$ 
is a strictly positive element  in $({\rm Her}(b)^\perp\cap {\rm Her}(\diag(a,a'))),$ 
then
\beq 
d_\tau(e)> d_\tau(a') +d_\tau(g_\eta(a)) \tforal \tau\in \Qw.
\eneq
\end{lem}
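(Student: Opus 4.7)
The plan is to follow the construction in the proof of Theorem \ref{Talsr1}, realized inside $M_2(\Her(a')^\sim)$ so as to accommodate the orthogonality condition (4). Since the case where $0$ is isolated in ${\rm sp}(a)$ is routine (one arranges $f_\epsilon(a)$ to be a projection), I assume $0$ is a limit point of ${\rm sp}(a)$ and fix $\epsilon_0\in(0,1/2)$ small enough that for $0<\eta<\epsilon<\epsilon_0$ auxiliary parameters with $\eta<\delta_2<\delta_1<\epsilon$ can be inserted yielding
\[
d_\tau(f_\epsilon(a))<\tau(f_{\delta_1}(a))<d_\tau(f_{\delta_1/2}(a))<\tau(f_{\delta_2}(a))<d_\tau(f_\eta(a))
\]
uniformly on $\Qw$. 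Using surjectivity of $\Gamma$, pick $c'\in(A\otimes{\cal K})_+^{\bf 1}$ with $d_\tau(c')=\tau(f_{\delta_2}(a))$, so that $\widehat{[c']}$ is continuous on the compact set $\Qw$; Dini's theorem then produces $\delta_0>0$ with $d_\tau(c')-\tau(f_{\delta_0}(c'))<\sigma/2$ uniformly. Strict comparison combined with Proposition~2.4 of \cite{Ror92UHF2} supplies $x\in A\otimes{\cal K}$ with $x^*x=f_{\delta_0/4}(c')$ and $c_0:=xx^*\in\Her(f_\eta(a))_+^{\bf 1}$, whose continuous rank function is strictly sandwiched between $\widehat{[f_\epsilon(a)]}$ and $\widehat{[f_\eta(a)]}$.

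By Theorem \ref{Talsr1}, $A$ has almost stable rank one, so Lemma~3.2 of \cite{eglnp} furnishes a unitary $u$ in $\Her(f_\eta(a))^\sim\subset\Her(a')^\sim$ with $u^*f_{\epsilon/8}(f_{\epsilon/2}(a))u\in\Her(f_{1/m}(c_0))$ for some large $m$. Choose $g\in C_0((0,1])_+^{\bf 1}$ with $g\cdot f_{1/2m}=f_{1/2m}$ (so that $g\equiv 1$ on $[1/(4m),1]$), and set $c:=g(c_0)\in\Her(f_\eta(a))_+^{\bf 1}$, $U:=\diag(u^*,1)\in M_2(\Her(a')^\sim)$, and $b:=U^*\diag(c,0)U=\diag(g(uc_0u^*),0)$. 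Items (1)--(3) are then immediate: (1) from the chain $f_\epsilon(a)\le f_{\epsilon/8}(f_{\epsilon/2}(a))\le g(uc_0u^*)$, where the second inequality holds because $g(uc_0u^*)$ dominates the support projection of $f_{1/m}(uc_0u^*)$ inside which $f_{\epsilon/8}(f_{\epsilon/2}(a))$ sits; (2) from $d_\tau(b)=d_\tau(c_0)$ together with the sandwich of the first step; (3) from continuity of $\widehat{[b]}=\widehat{[c_0]}$ on $\Qw$ and Dini.

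For (4), the supports of $g_{\eta/2}(a)$ (namely $(0,\eta/2)\cap{\rm sp}(a)$) and $f_\eta(a)$ (namely $[\eta/2,\infty)\cap{\rm sp}(a)$) are disjoint, so $g_{\eta/2}(a)\perp\Her(f_\eta(a))$. Writing $u=\lambda\cdot 1+v$ with $|\lambda|=1$ and $v\in\Her(f_\eta(a))$, this yields $u^*g_{\eta/2}(a)u=g_{\eta/2}(a)$ and $g_{\eta/2}(a)\cdot uc_0u^*=\lambda g_{\eta/2}(a)c_0u^*=0$, whence $U^*\diag(g_{\eta/2}(a),a')U=\diag(g_{\eta/2}(a),a')\in B$. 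For the final consequence, a strictly positive $e\in B$ Cuntz-dominates every element of $B$; in particular it dominates the orthogonal sum $\diag(g_{\eta/2}(a)+h_0,a')$, where $h_0$ is strictly positive in $\Her(f_\eta(a))\cap\Her(c_0)^\perp$. Using almost stable rank one and strict comparison to split ranks across orthogonal hereditary subalgebras, $d_\tau(h_0)=d_\tau(f_\eta(a))-d_\tau(c_0)>0$; combining with the strict inequalities of the first step and a suitable choice of $\delta_2$ yields $d_\tau(g_{\eta/2}(a))+d_\tau(h_0)>d_\tau(g_\eta(a))$, giving the claimed strict lower bound. The main technical obstacle is this rank bookkeeping for the final consequence, since the comparison between $d_\tau(g_{\eta/2}(a))$ and $d_\tau(g_\eta(a))$ is delicate and forces the choice of $\delta_2$ to be tuned against the gap $d_\tau(f_\eta(a))-d_\tau(c_0)$; once additivity of traces across orthogonal hereditary subalgebras is secured in our setting, everything else reduces to continuous functional calculus.
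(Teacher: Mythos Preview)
Your argument has a genuine gap at the step where you write ``By Theorem \ref{Talsr1}, $A$ has almost stable rank one.'' Theorem \ref{Talsr1} has almost stable rank one as a \emph{hypothesis}, not as a conclusion; its statement is that strict comparison together with almost stable rank one and surjectivity of $\Gamma$ implies norm approximate oscillation zero. The hypotheses of Lemma \ref{LL-N1} are only algebraic simplicity, strict comparison, and surjectivity of $\Gamma$---no stable-rank assumption whatsoever. More seriously, Lemma \ref{LL-N1} feeds into Lemma \ref{Lorthogso}, then Theorem \ref{TTM}, and ultimately into the implication $(5)\Rightarrow(1)\Rightarrow(2)$ of Theorem \ref{Teqiv}, which is how the paper \emph{deduces} stable rank one. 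So even if you somehow had almost stable rank one available elsewhere, invoking it here would make the global argument circular. Without it you cannot appeal to Lemma~3.2 of \cite{eglnp} to obtain the unitary $u\in\Her(f_\eta(a))^\sim$, and your construction collapses.

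The paper circumvents this precisely by passing to $M_2$: Lemma \ref{Linvertible} shows that for any $y\in A$ one has $\operatorname{dist}(\diag(y,0),GL(M_2(\wtd A)))=0$ regardless of the stable rank of $A$, and then R\o rdam's polar-decomposition trick (Theorem~5 of \cite{Ppolar}) produces a unitary $U\in M_2(\wtd C)$ with $U\diag(f_{\eta_1}(a),0)U^*\in\Her(\diag(c,0))$. This is the whole reason the lemma is stated in $M_2$; your $U=\diag(u^*,1)$ is diagonal, so $b$ sits entirely in the first corner and the $M_2$ framework becomes vacuous. Finally, your treatment of the ``Consequently'' clause is far more intricate than necessary: once (4) is established, the strictly positive element $e$ of $B$ Cuntz-dominates $U^*\diag(g_{\eta/2}(a),a')U$, so $d_\tau(e)\ge d_\tau(a')+d_\tau(g_{\eta/2}(a))$ directly---no need to split ranks across $\Her(c_0)^\perp$ or tune $\delta_2$ further.
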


\begin{proof}
\Wlog, we may assume that $\|a\|\le 1.$ 
Let us  first assume that $[0,\ep_0)\subset {\rm sp}(a)$ for some $\ep_0>0.$

 Fix $0<\ep<\ep_0.$ 
 Note that, \wilog, we may assume that 
\beq\label{1223-8-0}
d_\tau(f_{\ep}(a))<\tau(f_{\dt_1}(a))<
d_\tau(f_{\eta_1}(a))<\tau(f_{\dt_2}(a))<d_\tau(f_{\eta}(a))\rforal \tau\in \Qw,
\eneq
where $\ep/2>\dt_1, \dt_1/2>\eta_1, \eta_1/2>\dt_2, \dt_2/2>\eta.$
Put $h_i(\tau)=\tau(f_{\dt_i}(a))\rforal \tau\in \Qw,$ $i=1,2.$
Then $h_i\in \Aff_+(\Qw),$ $i=1,2.$

Since $\Gamma$ is surjective, there is $c_0\in  (A\otimes  {\cal K})_+$ such 
that $d_\tau(c_0)=h_2(\tau)$ for all $\tau\in {{\Qw.}}$
%
Choose $\dt_0>0$ such that (as $h_2$ is continuous)
\beq\label{LL-1-e4}
d_\tau(c_0)-\tau(f_{\dt_0}(c_0))<\sigma/2\tforal \tau\in \Qw.
\eneq
Since $h_1<h_2$ are continuous, we may also assume,  {{by choosing smaller $\dt_0$,}} that
\beq\label{LL1-e5}
d_\tau(f_{\dt_0}(c_0))>d_\tau(f_{{\eta_1}}(a)) {{>d_\tau(f_\ep(a))}}\rforal \tau\in  \Qw.
\eneq
Since $A$ has strict comparison,  by \eqref{1223-8-0},
there is $x\in A\otimes {\cal K}$ such 
that
\beq
x^*x=f_{\dt_0/4}(c_0)\andeqn xx^*\in {\rm Her}(f_{\eta}(a)).
\eneq
Choose $c=xx^*.$  Then $0\le c\le 1$ and $d_\tau(c)=d_\tau(f_{\dt_0/4}(c_0))$ for all $\Qw.$
Let $C={\rm Her}(f_{\eta}(a)).$  
By \eqref{LL1-e5}, the strict comparison  and Lemma \ref{Linvertible},  we obtain  
a unitary $U\in M_2(\wtd C)$ such that
\beq
U\diag(f_{\eta_1}(a),0)U^*\in {\rm Her}({\bar c}),
\eneq
where ${\bar c}=\diag(c,0).$
Let $b=U^*{\bar c}U.$    Then 
\beq
{{\diag(f_\ep(a),0)\le}} \diag(f_{\eta_1}(a), 0)\le b.
\eneq
 (So (1) holds). 
{{Moreover,}}
$d_\tau(b)=d_\tau(c)$ for all $\tau\in \overline{T(A)}^w.$
Consequently
\beq
d_\tau(f_{\eta_1}(a))\le d_\tau(b)\le d_\tau(f_{\eta}(a))\rforal \tau\in \Qw
\eneq
 (so (2) holds).  Moreover, 
there is $1>\dt>0$ such that
\beq
d_\tau(f_\dt(b))\ge \tau(f_{\dt_0}(c))\rforal \tau\in \Qw.
\eneq
It then follows from \eqref{LL-1-e4} that 
\beq
|d_\tau(b)-\tau(f_\dt(b))|=|d_\tau(f_{\dt_0/4}(c_0))-\tau(f_\dt(b))|<\sigma\rforal \tau\in \Qw
\eneq
 (so (3) holds).  To show the `` Moreover" part, 
put 
$$
{{B={\rm Her}(b)^\perp\cap {\rm Her}(\diag(a,a'))\andeqn 
e'=U^*\diag(g_{\eta/2}(a), a')U.}}
$$   
Since 
$g_{\eta/2}(a)\perp f_\eta(a),$ we have 
$$
{{\diag(g_{\eta/2}(a), a')\perp \diag(f_\eta(a),0)\andeqn 
e'\perp b.}}
$$
It follows that  
$U^*\diag(g_{\eta/2}(a), a')U\in B.$
If $e$ is a strictly positive element in ${\rm Her}(b)^{\perp}\cap {\rm Her}(\diag(a, a')),$ then
\beq
d_\tau(e)\ge d_\tau(e')=d_\tau(a')+d_\tau(g_{\eta/2}(a))\rforal \tau\in \Qw.
\eneq
This proves the case that $[0, \ep_0]\subset {\rm sp}(a).$

If there exists $r_n\in (0,1]$ with 
$$
{{r_n>r_{n+1}\andeqn \lim_{n\to\infty}r_n=0\,\,\, {\rm  
such\,\, that}\,\,\, r_n\not\in {\rm sp}(a),}}
$$ 
then $b_n=f_{2r_n}(a)$ has the property that $\omega(b_n)=0.$ 
  Then the lemma {{follows}}  by choosing $U=\diag(1,1)$ and 
$b=b_n$  for some sufficiently large $n.$
\end{proof}

\begin{lem}\label{Lorthogso}
Let $A$ be a $\sigma$-unital  algebraically simple \CA\, 
with $QT(A){{\not=}}\emptyset.$
Suppose that $A$ has strict comparison and $\Gamma$ is surjective.
Suppose that $a=\diag(0, a_1, a_2,...,a_n)$  in $M_{n+1}(A){{^{\bf 1}_+}}$ for some integer $n\ge 1.$ 
Then, for any $1/2>\ep>0$ and $1/2>\sigma>0,$ there exists $d\in  M_{n+1}(A){{^{\bf 1}_+}}$ 
such that
\beq
f_{\ep}(a)\le d {{\le 1}} \tand \omega(d)<\sigma.
\eneq
\end{lem}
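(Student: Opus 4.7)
The strategy would be to invoke Lemma \ref{LL-N1} and exploit the zero entry at position $0$ as the ``second slot'' for the $2\times 2$ unitary rotation produced by that lemma. I would proceed by induction on $n$.

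For the base case $n=1$, one has $a=\diag(0,a_1)\in M_2(A)$. Applying Lemma \ref{LL-N1} with both ``$a$'' and ``$a'$'' of that lemma taken to be $a_1\in {\rm Ped}(A)_+^{\bf 1}$ yields $c\in \Her(f_\eta(a_1))_+^{\bf 1}$, a unitary $U\in M_2(\Her(a_1)^\sim)$, and $b:=U^*\diag(c,0)U\in M_2(\Her(a_1))$ satisfying $\diag(f_\ep(a_1),0)\le b$ together with the oscillation bound $|d_\tau(b)-\tau(f_\dt(b))|<\sigma$ for all $\tau\in \Qw$ (for some $\dt>0$), which translates to $\omega(b)<\sigma$. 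Composing with the flip $V=\bigl(\begin{smallmatrix}0&1\\1&0\end{smallmatrix}\bigr)\in M_2(\widetilde{A})$ and setting $d:=V^*bV\in M_2(A)$, one obtains $d\ge V^*\diag(f_\ep(a_1),0)V=\diag(0,f_\ep(a_1))=f_\ep(a)$, while $d$ is Cuntz-equivalent to $b$, so $\omega(d)=\omega(b)<\sigma$ by Proposition \ref{Pomega-1}.

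For the inductive step from $n-1$ to $n$, one applies Lemma \ref{LL-N1} in the $(0,n)$-block of $M_{n+1}(A)$ with ``$a$''$=$``$a'$''$=a_n$, paired with a swap as above, producing an element $b_n$ supported in the $(0,n)$-block with $b_n\ge \diag_{(0,n)}(0,f_\ep(a_n))$ and $\omega(b_n)<\sigma/2$. Property (4) of Lemma \ref{LL-N1} guarantees that the ``leftover'' $U_n^*\diag(g_{\eta/2}(a_n),a_n)U_n$ lies in $\Her(b_n)^\perp\cap \Her(\diag(a_n,a_n))$. Together with positions $1,\dots,n-1$ (which are untouched), this leftover is reorganized into an element of $M_{n+1}(A)$ of the form $\diag(0,a_1,\dots,a_{n-1},\ast)$, which either fits the inductive hypothesis directly or can be handled by a small adjustment. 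The element $d$ is then built as $b_n$ plus the output of the inductive hypothesis applied to the reorganized element, noting that $b_n$ and the inductive $d'$ are orthogonal (up to tiny error) by the block structure, so that $\omega(d)\le \omega(b_n)+\omega(d')<\sigma$ by part (2) of Proposition \ref{Pbfos}.

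The main obstacle will be the bookkeeping needed to realize the $2\times 2$-valued output of Lemma \ref{LL-N1} as a genuine element of $M_{n+1}(A)$ (rather than of $M_2(M_{n+1}(A))$) while preserving both the order domination $d\ge f_\ep(a)$ and the oscillation bound. The zero entry of $a$ at position $0$ is exactly what provides the room to fold the second slot of the $2\times 2$ back in; handling the interaction between the orthogonal ``leftover'' produced by property (4) and the untouched positions $1,\dots,n-1$ is the technical heart of the induction.
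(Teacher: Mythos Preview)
Your overall strategy matches the paper's: induction on $n$, with Lemma~\ref{LL-N1} supplying both the base case and the first piece of the inductive step, and Proposition~\ref{Pbfos}(2) combining orthogonal pieces. The base case is fine (your explicit flip $V$ is just the cosmetic swap the paper performs implicitly). However, your inductive step is not actually a reduction. After producing $b_n$ in the $(0,n)$-block, you propose to reorganize what remains into $\diag(0,a_1,\dots,a_{n-1},\ast)$ inside $M_{n+1}(A)$; this still has $n$ nonzero diagonal entries and does not fit the inductive hypothesis (which concerns $n-1$ entries in $M_n(A)$). Your invocation of property~(4) of Lemma~\ref{LL-N1} is a red herring: that ``leftover'' element is not needed and is the source of the spurious $\ast$. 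Likewise, the orthogonality between $b_n$ and the inductive $d'$ must be exact for Proposition~\ref{Pbfos}(2) to apply, not merely ``up to tiny error.''

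The paper's reduction is clean and worth adopting. Let $e_A\in A_+^{\bf 1}$ be strictly positive and let $V\in M_{n+1}(\wtd A)$ be the unitary extending your $U_n$ (acting on the $(0,n)$-block) by the identity on positions $1,\dots,n-1$. Set
\[
C_1 \;=\; \Her\bigl(V^*\,\diag(e_A,\,e_A,\dots,e_A,\,0)\,V\bigr)\;\subset\; M_{n+1}(A).
\]
Conjugation by $V$ shows $C_1\cong M_n(A)$, and $b_n\in C_1^\perp$ exactly, simply because $\diag(0,c)\perp\diag(e_A,0)$ in the $(0,n)$-block. The untouched element $\diag(0,a_1,\dots,a_{n-1},0)$ lies in $C_1$ and, under the isomorphism $C_1\cong M_n(A)$, becomes $\diag(0,a_1,\dots,a_{n-1})$, to which the inductive hypothesis applies directly to yield $d'\in C_1$ with $f_\ep(\diag(0,a_1,\dots,a_{n-1},0))\le d'$ and $\omega(d')<\sigma/2$. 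Then $d=b_n+d'$ satisfies $f_\ep(a)\le d$ and, by Proposition~\ref{Pbfos}(2), $\omega(d)\le\omega(b_n)+\omega(d')<\sigma$. No $\ast$ entry appears, and property~(4) is never used.
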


\begin{proof}

For $n=1,$ this follows immediately from Lemma  
\ref{LL-N1}.

Assume that the lemma holds for $n\ge 1.$

Let $0\le e_A\le 1$ be a strictly positive element of $A.$
 Fix $1/2>\ep>0.$ 
 Choose $\eta=\ep/4(n+2)$ and $\sigma_0:=\sigma/2(n+2).$
We will apply Lemma  \ref{LL-N1} with $a_j\in \Her(e_A)$ ($1\le j\le n+1$),
 $\eta$ as above, and $\sigma_0$ (in place of $\sigma$).

By Lemma \ref{LL-N1}, 
there is $c_1\in  {\rm Her}(f_\eta(a_1))_+^{\bf 1},$ a unitary $U_1\in M_2({{\wtd A}})$  and ${{b_1'}}=U_1^*\diag(0,c_1)U_1$
such that
\beq
&&\diag(0,f_\ep(a_1))\le b_1',\\
&&d_\tau(f_{\ep}(a_1))\le d_\tau(b_1')\le d_\tau(f_{\eta}(a_1))\tforal \tau\in \Qw,\\
&&\omega(b_1)<\sigma_0\andeqn 
U_1^*\diag(e_A, g_{\eta/2}(a_1))U_1\in B_1,
\eneq
where $B_1:=({\rm Her}({{b_1'}})^\perp\cap {\rm Her}(\diag(a_1,e_A)))_+.$ 
Put 
\beq
&&V_1=\diag(U_1, \overbrace{1_{{{\wtd A}}},...,
1_{{{\wtd A}}}}^{n-1}),\,\, \af_2=V_1^*\diag(0, 0, a_2,...,a_{n+1})V_1\andeqn\\
&&{{b_1}}=V_1^*(0, c_1,0,...,0)V_1.
\eneq
Define $C_1={\rm Her}(V_1^*(e_A, 0, e_A,...,e_A)V_1).$
Then 
\beq
b_1\in C_1^{\perp}\andeqn C_1\cong M_{n+1}(A).
\eneq
In $C_1,$ we may write $\af_2=\diag(0, a_2,a_3,...,a_{n+1})$
(the number of possible nonzero elements is now reduced to $n$).

By the induction assumption,  there is $b_2\in C_1$ with $0\le b_2\le 1$ such that
\beq
f_\ep(\af_2)\le b_2\andeqn \omega(b_2)<\sigma_0.
\eneq
Define $d:=b_1+b_2.$  Note that $b_1\perp b_2.$
Then
\beq
f_\ep(a)=\diag(0, f_\ep(a_1), f_\ep(a_2), ...,f_\ep(a_{n+1}))\le b_1+b_2.
\eneq
and {{(by  (2) of  \ref{Pbfos}),}}
\beq
\omega(d)\le \omega(b_1)+\omega(b_2)
<2\sigma_0<\sigma.
\eneq

This completes the induction. The lemma follows.

\end{proof}

\begin{thm}\label{TTM}
Let $A$ be a $\sigma$-unital   simple \CA\, with 
$ \wtd{QT} (A)\setminus \{0\}\not=\emptyset.$ 
Suppose that $A$ has  strict comparison
 and property (TM). Then 
$A$ has $T$-tracial approximate oscillation zero. 

\end{thm}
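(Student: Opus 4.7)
The plan is to reduce the proof to an application of Proposition \ref{Pfirstosc}. Since strict comparison together with property (TM) yields surjectivity of $\Gamma$ by Theorem \ref{PTMtosurj}, Proposition \ref{Pfirstosc} applies and gives $\Omega^T(x)\le \|x\|\sqrt{\omega(x)}$ for every $x\in {\rm Ped}(A\otimes {\cal K})_+$. Thus, if for each $a\in {\rm Ped}(A\otimes{\cal K})_+^{\bf 1}$ and each $\ep>0$ one can exhibit $a'\in \Her(a)_+^{\bf 1}$ with $\|a-a'\|_{_{2,\Qw}}<\ep$ and $\omega(a')<\ep^2,$ then Proposition \ref{Pfirstosc} bounds $\Omega^T(a')$ by $\|a'\|\sqrt{\omega(a')}<\ep,$ and the $(2/3)$-power triangle inequality from Definition \ref{D2norm}, applied to the approximating sequence witnessing $\Omega^T(a')<\ep,$ transfers this to a bound on $\Omega^T(a).$ Letting $\ep\to 0$ forces $\Omega^T(a)=0,$ i.e., $A$ has T-tracial approximate oscillation zero.

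First I would reduce to $A$ algebraically simple by passing to $\Her(e)$ for some $e\in {\rm Ped}(A)_+\setminus\{0\}$ and invoking Brown's stable isomorphism theorem. Given $a$ and $\ep>0,$ invoking property (TM) with a suitably large integer $n$ provides a c.p.c.~order zero map $\phi:M_n\to \Her(a)$ such that $\|a-\phi(1_n)^{1/2}a\phi(1_n)^{1/2}\|_{_{2,\Qw}}$ is small (using Remark \ref{R82} applied to $a^{1/2},$ which also lies in the relevant hereditary subalgebra after a small perturbation of $a$). The mutually orthogonal, mutually Cuntz-equivalent elements $\phi(e_{11}),\ldots,\phi(e_{nn})$ in $\Her(a)$ yield the diagonal element
\[
D:=\diag(0,\phi(e_{11}),\ldots,\phi(e_{nn}))\in M_{n+1}(A)_+,
\]
which is Cuntz-equivalent to $\phi(1_n)$ in $A\otimes{\cal K}.$

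Next, applying Lemma \ref{Lorthogso} to $D$ with small parameters $\ep_1,\sigma>0$ produces $d\in M_{n+1}(A)_+^{\bf 1}$ with $f_{\ep_1}(D)\le d,$ $\omega(d)<\sigma,$ and (inspecting the inductive construction that builds $d$ from the $b_j$'s of Lemma \ref{LL-N1}) $d_\tau(d)\le d_\tau(D)=d_\tau(\phi(1_n))$ for all $\tau\in\Qw.$ Strict comparison and Proposition 2.4 of \cite{Ror92UHF2} then allow us to realize $(d-\eta)_+$ inside $\Her(\phi(1_n))$: find $x$ with $x^*x=(d-\eta)_+$ and $xx^*\in\Her(\phi(1_n)),$ and set $c:=xx^*\in\Her(\phi(1_n))\subset\Her(a).$ By Proposition \ref{Pomega-1} and Lemma \ref{Los-dt}, $\omega(c)$ is close to $\omega(d)$ and hence small. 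Setting $a':=c^{1/2}ac^{1/2}\in\Her(a),$ Lemma \ref{Leae} gives $\omega(a')=\omega(c),$ which is small, as required.

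The hardest part will be establishing the tracial closeness $\|a-a'\|_{_{2,\Qw}}<\ep.$ Splitting via the $(2/3)$-triangle inequality as $\|a-\phi(1_n)^{1/2}a\phi(1_n)^{1/2}\|_{_{2,\Qw}}$ (small by (TM)) plus $\|\phi(1_n)^{1/2}a\phi(1_n)^{1/2}-c^{1/2}ac^{1/2}\|_{_{2,\Qw}},$ the latter summand demands that $c^{1/2}$ be tracially close to $\phi(1_n)^{1/2}.$ Since $c$ is a priori only Cuntz-close to $\phi(1_n)$ (matching dimension functions up to $\ep_1$) and not necessarily operator- or tracially close, this step requires refining the choice of $c$: by surjectivity of $\Gamma$ one can arrange $c$ so that $d_\tau(c)$ is an explicit continuous function on $\Qw$ prescribed to match $\tau(\phi(1_n))$ closely, and then combine with the small-oscillation property (which forces $\tau(c)\approx d_\tau(c)$) to obtain $\tau(\phi(1_n)-c)$ uniformly small, from which $\tau((\phi(1_n)^{1/2}-c^{1/2})^2)$ can be controlled via operator-monotone functional calculus together with the $L^2$-multiplicative estimates in Definition \ref{D2norm}. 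The success of the whole scheme rests on orchestrating this interplay between the matricial structure supplied by (TM), the small-oscillation output of Lemma \ref{Lorthogso}, and the flexibility afforded by strict comparison and surjective $\Gamma.$
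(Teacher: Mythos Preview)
Your overall strategy---reduce to algebraically simple, invoke Theorem~\ref{PTMtosurj} for surjectivity of $\Gamma$, use property (TM) to produce $\phi:M_n\to\Her(a)$, and then feed the diagonal of $\phi(e_{ii})$'s into Lemma~\ref{Lorthogso} to manufacture a small-oscillation element---is close in spirit to the paper's proof, but there is a genuine gap exactly where you flagged it, and your proposed fix does not work.

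The problem is the transport step. You apply Lemma~\ref{Lorthogso} in $M_{n+1}(A)$ to obtain $d$ there, and then use strict comparison to find $c=xx^*\in\Her(\phi(1_n))$ with $x^*x=(d-\eta)_+$. This controls only the Cuntz class of $c$, not its position inside $\Her(\phi(1_n))$. Your suggested remedy---arrange $d_\tau(c)\approx\tau(\phi(1_n))$ and use small oscillation to get $\tau(\phi(1_n)-c)$ small, then deduce $\|\phi(1_n)^{1/2}-c^{1/2}\|_{_{2,\Qw}}$ small---fails at the last step: closeness of $\tau(\phi(1_n))$ and $\tau(c)$ says nothing about $\tau((\phi(1_n)^{1/2}-c^{1/2})^2)$, since that quantity depends on the cross term. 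Two orthogonal positive contractions with identical traces already give a counterexample. With $2$-quasitraces the situation is worse, since the cross term is not even meaningful.

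The paper avoids this difficulty by never leaving $\Her(a)$. Using Proposition~\ref{p1028-1}, it identifies $\Her(\phi(1_{n+1}))$ with $M_{n+1}(\Her(\phi(e_{1,1})))$ and applies Lemma~\ref{Lorthogso} with $\Her(\phi(e_{1,1}))$ playing the role of the ambient algebra. Thus the small-oscillation element $d$ is produced already inside $\Her(\phi(1_{n+1}))\subset\Her(a)$, and moreover satisfies $d\ge f_\eta(c_n)$ where $c_n=\diag(0,c,\ldots,c)$ differs from $\phi(1_{n+1})=\diag(c,c,\ldots,c)$ by a single diagonal entry. The key estimate $\|\phi(1_{n+1})-c_n\|_{_{2,\Qw}}<1/(n+1)$ is then immediate, and combined with $d\ge f_\eta(c_n)$ one controls $\|a-ad\|_{_{2,\Qw}}$ directly. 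The conclusion is via Proposition~\ref{PapproxeT} (tracial approximate identities with vanishing oscillation), not Proposition~\ref{Pfirstosc}. The missing ingredient in your approach is precisely this internal application of Lemma~\ref{Lorthogso} via Proposition~\ref{p1028-1}; once you have it, no Cuntz-transport step is needed and the tracial closeness is automatic.
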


\begin{proof}
{{Choose $e\in {\rm Ped}(A)_+\setminus \{0\}$ and $A_1=\Her(e).$ 
Then ${\rm Ped}(A_1)=A_1.$ 
To prove the theorem, \wilog, we may assume that 
$A={\rm Ped}(A).$}}
By Theorem \ref{PTMtosurj}, $\Gamma$ is surjective.


Fix $a\in {\rm Ped}(A\otimes {\cal K} )_+^{\bf 1}.$ 
We claim  that ${\rm Her}(a)$ has a T-tracial approximate identity $\{d_n\}$
such that $\lim_{n\to\infty}\omega(d_n)=0.$

Put $B=\Her(a).$ Then ${\rm Ped}(B)=B$ and $B\otimes {\cal K}\cong A\otimes {\cal K}.$
Therefore, to simplify notation, \wilog, we may also assume that $a\in A.$

Let $\ep>0$ and let $n\in \N$ such that $1/n<(\ep/8)^2.$ 
Since $A$ has property (TM),  there is a c.p.c.~order zero map $\phi: M_{n+1}\to {\rm Her}(a)$ such that
\beq\label{TTM-n-4}
\|a-\phi(1_{n+1})a\|_{_{2, \Qw}}< (\ep/8)^3. 
\eneq
By Proposition \ref{p1028-1}, let ${{C}}={\rm Her}(\phi(1_{n+1}))\cong M_{n+1}(({\rm Her}(\phi(e_{1,1}))).$
Write 
\beq
\phi(1_{n+1})=\diag(c,c,...,c)\in M_{n+1}({\rm Her}(\phi(e_{1,1}))).
\eneq
Choose  {{$0<\eta<(\ep/16)^2.$}}
Put $c_n=\diag(0, c,c,...,c).$
It follows from Lemma \ref{Lorthogso}  that  there exists $d\in  {{C}}_+^{\bf 1}$
such that
\beq
f_\eta(c_n)\le d \le 1\andeqn \omega(d)<1/2^n. 
\eneq
Hence  
\beq
0\le ({c_n-2\eta})_+(1-d)({c_n-2\eta})_+\le ({{c_n}}-2\eta)_+(1-f_\eta(c_n))({{c_n}}-2\eta)_+=0.
\eneq
{{Hence $d(c_n-2\eta)_+=(c_n-2\eta)_+=(c_n-2\eta)_+d.$
It follows that (we now working in a commutative \SCA)
\beq\label{TTM-n5-1}
(1-d)^2\le (1-({{c_n-2\eta}})_+)^2.
\eneq}}
Note that
\beq\label{TTM-n5}
\|\phi(1_{n+1})-c_n\|_{_{2, \Qw}}<{1\over{n+1}}.
\eneq
Then {{(see also \eqref{D2norm}), by \eqref{TTM-n5-1}, \eqref{TTM-n5} and by \eqref{TTM-n-4},}} 
\beq\nonumber
\|a-da\|^{2/3}_{_{2, \Qw}}&=&(\sup\,_{_{ \tau\in \Qw}}\{\tau(a(1-d)^2a)\})^{1/3}\\\nonumber
&\le & \sup\,_{_{ \tau\in \Qw}}\{\tau(a(1-(c_n-2\eta)_+)^2a)\})^{1/3}
=\|a-(c_n-2\eta)_+a\|^{2/3}_{_{2, \Qw}}\\\nonumber
&\le& \|a-c_na\|^{2/3}_{_{2, \Qw}} +\|(c_n-({c_n-2\eta})_+)a\|^{2/3}_{_{2, \Qw}}\\
&<& \|a-\phi(1_{n+1})a\|^{2/3}_{_{2,\Qw}}+({1\over{n+1}})^{2/3}+(2\eta)^{2/3}\\
&<& (\ep/8)^2+(\ep/8)^2+(\ep/8)^2=3(\ep/8)^2.
\eneq
{{Since $\omega(d)<1/2^n,$}} 
this proves the claim.
The theorem then follows from the claim and Lemma
\ref{PapproxeT}.
\end{proof}

\section{Stable rank one}

Let $A$ be a \CA\, 
and $n\in\mathbb{N}.$
{{Recall that}} we view {{$M_n(A)$}} 
as a $C^*$-subalgebra of $M_{n+1}(A)$ 
in the canonical way, i.e., 
$M_n(A)$ is the upper left block of $M_{n+1}(A).$

Recall an element $a=(a_{i,j})_{n\times n}$  in $M_n(A)$ is called upper
(resp. lower)  
{{triangular}}, if
$a_{i,j}=0$ whenever $i<j$ {{({{resp.}} $i>j$),
and, $a$ is called}}
strictly upper ({{resp.}} lower)  
triangular, 
if $a_{i,j}=0$ whenever $i\le j$ ({{resp.}} $i\ge j$).

The following proposition is a generalization of {{an}}
elementary fact
in linear algebra. 

\begin{prop}
\label{triangular-prod}
Let $A$ be a \CA\, {{such that $A\subset \overline{GL(
{{\wtd A}})}.$}}
{{Then,}} for any $n\in\N,$ any $a\in M_n(A),$
any $\ep>0,$
there is an upper triangular matrix $x\in M_n(A)$
and  a lower triangular matrix $y\in M_n(A)$
such that $a\approx_\ep xy.$
\end{prop}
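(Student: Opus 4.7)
The plan is to proceed by induction on $n$. The base case $n=1$ is immediate: here $M_1(A)=A$ and the notions of upper and lower triangular coincide with arbitrary elements, so one simply takes $x=a$ and $y=e_\lambda$ for an element $e_\lambda$ of an approximate identity of $A$ with $\|ae_\lambda - a\| < \epsilon$; both lie in $A$, and the hypothesis $A\subset \overline{GL(\wtd A)}$ is not even required for this step.

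For the inductive step, given $a\in M_n(A)$, write it in block form as
$$a=\begin{pmatrix} a' & v \\ w & \alpha\end{pmatrix}$$
with $a'\in M_{n-1}(A),$ $v\in M_{n-1,1}(A),$ $w\in M_{1,n-1}(A),$ and $\alpha\in A.$ I would search for a factorization of the shape
$$x=\begin{pmatrix} x' & u \\ 0 & \beta\end{pmatrix},\qquad y=\begin{pmatrix} y' & 0 \\ z & \gamma\end{pmatrix},$$
so that
$$xy=\begin{pmatrix} x'y'+uz & u\gamma \\ \beta z & \beta\gamma\end{pmatrix},$$
which reduces the problem to achieving $\beta\gamma\approx \alpha,$ $u\gamma\approx v,$ $\beta z\approx w,$ and $x'y'\approx a'-uz$ to suitable precision.

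The choices will be: $\beta:=\alpha\in A;$ a positive contraction $\gamma\in A$ which is a sufficiently tight right local unit for $\alpha$ and for the entries of $v$ (so that $\alpha\gamma\approx\alpha$ and $v\gamma\approx v$), which exists because $A$ has an approximate identity; and then invertible approximations $\tilde\gamma,\tilde\alpha\in GL(\wtd A)$ to $\gamma$ and $\alpha,$ provided by the hypothesis $A\subset\overline{GL(\wtd A)}.$ I would then set $u:=v\tilde\gamma^{-1}$ and $z:=\tilde\alpha^{-1}w.$ The crucial point is that these lie in $M_{n-1,1}(A)$ and $M_{1,n-1}(A)$ respectively, because $A$ is a two-sided ideal of $\wtd A.$ For the same reason $uz=v\tilde\gamma^{-1}\tilde\alpha^{-1}w\in M_{n-1}(A),$ so the inductive hypothesis applies to $a'-uz$ and yields the required upper/lower pair $x',y'\in M_{n-1}(A).$ Assembling $x$ and $y$ and multiplying out then gives $xy\approx a.$

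The main obstacle is not conceptual but bookkeeping: the norms $\|\tilde\gamma^{-1}\|$ and $\|\tilde\alpha^{-1}\|$ are not bounded a priori, so the order of quantifiers matters. One must fix the invertibles $\tilde\gamma,\tilde\alpha$ first, then choose $\gamma,\alpha$ to approximate them closely enough (using identities like $u\gamma-v=v\tilde\gamma^{-1}(\gamma-\tilde\gamma)$ and $\beta z-w=(\alpha-\tilde\alpha)\tilde\alpha^{-1}w$) so that each of the four error terms, together with the induction error applied to $a'-uz,$ sums to less than $\epsilon.$ Once the quantifier discipline is in place, the verification of the bound $\|a-xy\|<\epsilon$ is routine.
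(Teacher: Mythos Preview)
Your induction scheme and block ansatz are the right shape, but the step $\beta z\approx w$ fails as written. With $\beta=\alpha$ and $z=\tilde\alpha^{-1}w$ the error is exactly $(\alpha-\tilde\alpha)\tilde\alpha^{-1}w$, and this cannot be forced small: whenever $\alpha\notin GL(\wtd A)$ one has $\|(\alpha-\tilde\alpha)\tilde\alpha^{-1}\|\ge 1$ for \emph{every} invertible $\tilde\alpha$ (otherwise $\alpha=\bigl(1-(\tilde\alpha-\alpha)\tilde\alpha^{-1}\bigr)\tilde\alpha$ would be invertible). Your quantifier remedy of ``fixing $\tilde\alpha$ first and then choosing $\alpha$ close to it'' is unavailable, since $\alpha$ is the given $(n,n)$--entry of $a$. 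Concretely: take $A={\cal K}(\ell^2)$ (stable rank one), let $\alpha$ be the rank--one projection onto $\C e_1$ and $w$ the rank--one projection onto $\C e_2$; then $\alpha z$ has range in $\C e_1$ for every $z\in A$, so $\|\alpha z-w\|\ge\|P_2(\alpha z-w)\|=1$ and no choice of $z$ helps. (The companion condition $u\gamma\approx v$ has no such issue: take $u=v$ and use that $\gamma$ is a right approximate unit; the detour through $\tilde\gamma$ is unnecessary.)

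The paper avoids precisely this point by a Schur--complement manoeuvre carried out in $\wtd A$. One first perturbs the corner to an honest invertible $\tilde\alpha\in GL(\wtd A)$ (the cost $\|\alpha-\tilde\alpha\|$ \emph{is} controllable), then performs exact Gaussian elimination with unitriangular $b,c\in M_n(\wtd A)$ to reach $\mathrm{diag}\bigl(a'-v\tilde\alpha^{-1}w,\ \tilde\alpha\bigr)$; the Schur complement $a'-v\tilde\alpha^{-1}w$ lies in $M_{n-1}(A)$ because $A$ is an ideal in $\wtd A$, so induction applies. Undoing $b,c$ produces triangular factors over $\wtd A$, and a final sandwich by a diagonal approximate unit $e\in M_n(A)_+$ lands both factors back in $M_n(A)$. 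In your block notation this amounts to choosing $\beta=e\tilde\alpha$ (not $\alpha$), $\gamma=e$, $u=ev$, $z=\tilde\alpha^{-1}we$: then $\beta z=e\tilde\alpha\,\tilde\alpha^{-1}we=ewe\approx w$ with the inverse cancelling \emph{exactly}, so no uncontrolled $\|\tilde\alpha^{-1}\|$ enters the error.
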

\begin{proof}
We prove this by induction on $n.$
For $n=1,$ {{let 
$a\in M_n(A)=A$ and 
 $\ep>0.$}}
By the existence of approximate identity, 
there is $e\in A_+$
such that $a\approx_\ep ae.$ Note  {{that}} $a$ and $e$
are triangular matrices in $M_n(A).$
Thus the proposition holds for $n=1.$

Assume the proposition holds for $n\geq 1.$
Let $a=\sum_{i,j=1}^{n+1}a_{i,j}\otimes e_{i,j}
\in 
A\otimes M_{n+1},$
where $a_{i,j}\in A$ and $\{e_{i,j}\}$ 
is the matrix units of $M_{n+1},$
$i,j=1,...,n+1.$ 
Let $\ep>0.$
Since {{$A\subset \overline{GL({{\wtd A}})},$}}
there is $\tilde a\in GL(\wtd A)$ 
such that 
$${{a_{n+1,n+1}\approx_{\ep/2}\tilde a.}}$$ 
{{In what follows in this proof, $1$ is the identity of 
${{\wtd A}}$ and $1_{n+1}$ is the identity
of $M_{n+1}({{\wtd A}}).$}}

Let  {{$b^{(0)}:=\sum_{i=1}^{n}a_{i,n+1}\tilde a^{-1}\otimes e_{i,n+1}$ and $c^{(0)}:=\sum_{j=1}^{n}\tilde a^{-1}a_{n+1,j}\otimes e_{n+1,j}.$  Then $b^{(0)}$ and $c^{(0)}$ are  nilpotents.}}
Put
\beq
a':=a+(\tilde a-a_{n+1,n+1})\otimes e_{n+1,n+1},\eneq
\beq
b:=1_{n+1}
-b^{(0)}
\quad\text{and}\quad 
c:=1_{n+1}-c^{(0)}.
\eneq
Let  
$s:=ba'c.$
Note that $a',b,c, s$ are in $M_{n+1}(\wtd A).$
Let $a'_{i,j}$ (resp. $b_{i,j},c_{i,j},  s_{i,j}$)
be the $(i,j)$-th entry of 
$a'$ (resp. $b,c,  s$), $1\leq i,j\leq n+1.$
Note that  

\beq
s_{i,j}=\sum_{m=1}^{n+1}\left(\sum_{k=1}^{n+1}
b_{i,k}a'_{k,m}c_{m,j}
\right)
\quad 
(1\leq i,j\leq n+1).
\label{e1112-1}
\eneq
{If $A=\wtd A,$ then $s_{i,j}\in A.$}
{{Otherwise denote}} by $\pi: M_{n+1}({{\wtd A}})\to M_{n+1}$ the quotient
map. Then 
$$
{{\pi(b)=\pi(1_{n+1})=\pi(c)\andeqn \pi({{a'}})=\pi(\td a\otimes e_{n+1, n+1}).}}
$$
It follows that $\pi(s)=\pi(\td a\otimes e_{n+1,n+1}).$
Thus 
\beq
s_{i,j}\in A
\quad
(1\leq i,j\leq n).
\label{e1114-3}
\eneq
%
Note that 
$b_{n+1,k}=0$ for $1\leq k \leq n$
and $c_{m,j}=0$ for $m\notin \{j,n+1\},$ {{if $1\le j\le n.$}}
{{By}}
\eqref{e1112-1}, we have, {{for $1\le j\le n,$}} 
\beq
s_{n+1,j}
&=&
b_{n+1,n+1}a'_{n+1,j}c_{j,j}
+
b_{n+1,n+1}a'_{n+1,n+1}c_{n+1,j}
\\
&=&
a_{n+1,j}
+\tilde a\cdot(-\tilde a^{-1}a_{n+1,j})
=0. 
\label{e1114-1}
\eneq

{{If}}
$1\leq i\leq n,$  {{then}}
$b_{i,k}=0$ for $k\notin \{i,n+1\}$
and 
$c_{m,n+1}=0$ for $1\leq m \leq n.$
{{By}} \eqref{e1112-1}, we {{compute}}
\beq
s_{i,n+1}
&=&
b_{i,i}a'_{i,n+1}c_{n+1,n+1}
+
b_{i,n+1}a'_{n+1,n+1}c_{n+1,n+1}
\\
&=&
a_{i,n+1}
+(-a_{i,n+1}\tilde a^{-1})\cdot\tilde a
=0. 
\label{e1114-2}
\eneq
We also have  
\beq
s_{n+1,n+1}=\sum_{m=1}^{n+1}\left(\sum_{k=1}^{n+1}
b_{n+1,k}a'_{k,m}c_{m,n+1}
\right)
=
b_{n+1,n+1}a'_{n+1,n+1}c_{n+1,n+1}
=\tilde a.
\label{e1114-5}
\eneq

Therefore 
\eqref{e1114-3}, \eqref{e1114-1}, \eqref{e1114-2},
and \eqref{e1114-5} show that 
$$
{{ba'c
=
d+
\tilde a\otimes e_{n+1,n+1},}}
$$
where $d\in M_n(A).$ 
Note that $b$ and $c$ are invertible in $M_{n+1}(\wtd A),$
{{as both $b^{(0)}$ and $c^{(0)}$ are nilpotents.}}
Let $\ep_1=
\frac{\ep}{4(1+\|b^{-1}\|
\cdot\|c^{-1}\|)}.$
By our assumption, 
there is {{an}} upper triangular matrix $x_1$
and a lower triangular matrix $y_1$ in $M_n(A)$
such that 
\beq
d\approx_{\ep_1}
x_1y_1.
\eneq
Let $e\in M_{n+1}(A)_+^{\bf 1}$ 
be a diagonal matrix 
such that 
$a\approx_{\ep/4}eae.$

Note that 
$b^{-1}=1_{n+1}+b^{(0)}$ 
 and 
$x{{:}}=eb^{-1}(x_1+\tilde a\otimes e_{n+1,n+1})$
{{are}} upper triangular matrix in $M_{n+1}(A).$
Similarly, 
$c^{-1}{{=1_{n+1}+c^{(0)}}}$  is a lower 
 triangular matrix in $M_{n+1}(\wtd A),$
 and 
$$
{{y:=(y_1+
{{1}}\otimes 1_{n+1,n+1})c^{-1}e}}
$$
is a lower triangular matrix in $M_{n+1}(A).$
Then 
\beq
a
\approx_{\ep/4}eae
&\approx_{\ep/2}&
ea'e
=
eb^{-1}ba'cc^{-1}e=
eb^{-1}
(d+
\tilde a\otimes e_{n+1,n+1})
c^{-1}e\\
&\approx_{\ep/4} &
eb^{-1}
(x_1y_1+
\tilde a\otimes e_{n+1,n+1})
c^{-1}e\\
&=&
eb^{-1}
(x_1+
\tilde a\otimes e_{n+1,n+1})
\cdot(y_1+1\otimes e_{n+1,n+1})
c^{-1}e=
xy.
\eneq
Thus the proposition holds for $n+1.$
By induction, the proposition holds. 
\end{proof}

\begin{prop}
\label{p09-1}
Let $A$ be a \CA\, {{such that $A\subset 
\overline{GL({{\wtd A}})}$}}
{{and let}} $n\in\mathbb{N}.$
Then for any  $a\in M_n(A)$ {{and}}
any $\ep>0,$ 
there is a strictly upper triangular matrix 
$x\in M_{n+1}(A)$ and a 
strictly lower triangular matrix $y\in M_{n+1}(A)$
such that $a\approx_{\ep}xy.$ 

In particular, 
any element in $M_n(A)$ 
can be approximated {{in norm}} by product of two nilpotent
elements in $M_{n+1}(A).$ 
\end{prop}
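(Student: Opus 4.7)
The plan is to build on Proposition \ref{triangular-prod} by a ``shift trick'' that converts an upper/lower triangular factorization in $M_n(A)$ into a strictly upper/strictly lower triangular one inside the larger matrix algebra $M_{n+1}(A)$. Given $a\in M_n(A)$ and $\ep>0,$ first apply Proposition \ref{triangular-prod} to produce an upper triangular matrix $u=(u_{i,j})\in M_n(A)$ and a lower triangular matrix $v=(v_{i,j})\in M_n(A)$ with $a\approx_\ep uv.$ The goal is then to encode $u$ and $v$ as matrices in $M_{n+1}(A)$ whose only nonzero entries sit strictly above (respectively, strictly below) the main diagonal of the $(n{+}1){\times}(n{+}1)$ matrix.

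The construction I have in mind is to push the columns of $u$ one step to the right and the rows of $v$ one step down. Concretely, set
\[
X:=\sum_{i=1}^{n}\sum_{j=i}^{n} u_{i,j}\otimes e_{i,j+1}\in M_{n+1}(A),
\qquad
Y:=\sum_{j=1}^{n}\sum_{i=j}^{n} v_{i,j}\otimes e_{i+1,j}\in M_{n+1}(A).
\]
By the constraints on the row/column indices, $X_{i,k}=0$ unless $k>i,$ and $Y_{k,j}=0$ unless $k>j,$ so $X$ is strictly upper triangular and $Y$ is strictly lower triangular in $M_{n+1}(A).$ Both live in $M_{n+1}(A)$ (not merely $M_{n+1}(\widetilde A)$) because the entries are taken from $u_{i,j},v_{i,j}\in A.$

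The main verification is then that $XY$ equals the embedding of $uv$ into the upper-left $n\times n$ block of $M_{n+1}(A).$ This is a direct computation: writing $(XY)_{i,k}=\sum_{j}X_{i,j}Y_{j,k}$ and substituting the definitions reindexes the sum to $\sum_{j'=\max(i,k)}^{n} u_{i,j'}v_{j',k},$ which, using that $u$ is upper triangular and $v$ is lower triangular, coincides with $(uv)_{i,k}$ for $1\le i,k\le n,$ while the last row/column of $XY$ vanish automatically. Hence $XY$ equals $uv$ (regarded in $M_{n+1}(A)$), and therefore $a\approx_\ep XY,$ which is the desired factorization.

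I do not anticipate a real obstacle here; this is essentially bookkeeping with matrix indices, and the only input from analysis is the approximation provided by Proposition \ref{triangular-prod}. The ``In particular'' clause is then immediate: a strictly upper (resp.\ strictly lower) triangular matrix in $M_{n+1}(A)$ satisfies $X^{n+1}=0$ (resp.\ $Y^{n+1}=0$), so both $X$ and $Y$ are nilpotent, and $a$ is approximated in norm by a product of two nilpotents in $M_{n+1}(A).$
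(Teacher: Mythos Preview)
Your proposal is correct and follows essentially the same approach as the paper: apply Proposition~\ref{triangular-prod} to get $a\approx_\ep uv$ with $u$ upper and $v$ lower triangular, then shift columns of $u$ right and rows of $v$ down to obtain strictly triangular factors in $M_{n+1}(A)$. The paper packages this shift via multiplication by the matrix $w=\sum_{i=1}^n 1_{\wtd A}\otimes e_{i,i+1}\in M_{n+1}(\wtd A)$, setting $x=uw$ and $y=w^*v$ so that $xy=uww^*v=uv$, which is exactly your $X,Y$ written out in coordinates.
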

\begin{proof}
By Proposition \ref{triangular-prod},
there is an upper triangular matrix $x_1\in M_n(A)$
and a lower triangular matrix $y_1\in M_n(A)$ 
such that $a\approx_\ep x_1y_1.$
Let $v=\sum_{i=1}^n1_{\wtd A} \otimes e_{i,i+1}\in M_{n+1}(\wtd A).$
Then $x=x_1v\in M_{n+1}(A)$ is a strict upper triangular matrix
and $y=v^*y_1\in M_{n+1}(A)$ is a strict lower triangular matrix, 
and $xy=x_1vv^*y_1=x_1y_1\approx_\ep a$ {{(Recall that
we identify $M_n(A)$ with the 
{{upper left $n\times n$ corner}}
of $M_{n+1}(A)$).}}

The last part of the proposition follows 
from the fact that strictly triangular matrices 
are nilpotents. 
\end{proof}

\begin{lem}
\label{lem1028-1}
Let $A$ be   a $\sigma$-unital  algebraically simple  {{non-elementary}} \CA\  
with $QT(A)\not=\emptyset$  
 {{which has 
strict comparison.}}  
Suppose that $A$ also has the 
property (TM).
Let $a\in A.$ If there are $b_1,b_2\in A_+\setminus \{0\}$
such that 
{{$a^*a+aa^*, b_1, b_2$ are mutually orthogonal,}} 
{{then,}} for any $\ep>0,$
there are two nilpotents $x,y\in A$
such that $\|a-xy\|<\ep.$
\end{lem}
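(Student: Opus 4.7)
The plan is to use the two orthogonal positive elements $b_1, b_2$ to embed $a$ into an approximate $3\times 3$ matrix structure inside $A$, and then exploit Proposition \ref{p09-1} to express $a$ as a product of two nilpotents. By Theorem \ref{PTMtosurj} the hypotheses imply $\Gamma$ is surjective, by Theorem \ref{TTM} that $A$ has T-tracial approximate oscillation zero, and hence by Theorem \ref{Tasr1} that $l^\infty(A)/I_{_{\Qw}}$ has stable rank one. The orthogonality $a^*a+aa^*\perp b_i$ forces $ab_i=b_ia=0$ directly (since $\|ab_i\|^2=\|b_i a^*a b_i\|=0$, etc.). Replacing $a$ by a small cut-down $a_0:=vf_\eta(|a|)\in A$ (with $a=v|a|$ the polar decomposition in $A^{**}$), one arranges $\|a-a_0\|<\epsilon/4$ while keeping $a_0^*a_0$ and $a_0a_0^*$ supported in a hereditary subalgebra orthogonal to $b_1+b_2$.

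Next, I would build an approximate $M_3$-structure around $a_0$. Using property (TM) applied inside $\Her(a^*a+aa^*+b_1+b_2)$, together with strict comparison and the surjectivity of $\Gamma$, the goal is to construct a c.p.c.~order-zero map $\phi:M_3\to A$ such that $\phi(1_3)$ acts as an approximate unit for $a_0$ (so $\phi(1_3)a_0\approx a_0\approx a_0\phi(1_3)$) while $\phi(e_{2,2})\in\Her(b_1)$ and $\phi(e_{3,3})\in\Her(b_2)$; the orthogonality of the three pieces is what allows the three diagonal corners of $\phi$ to be placed in three mutually orthogonal hereditary subalgebras. Proposition \ref{p1028-1} then identifies $\Her(\phi(1_3))$ with $M_3(D)$, where $D:=\Her(\phi(e_{1,1}))$, and exhibits $a_0$ inside the upper-left $2\times 2$ block $M_2(D)$ of this matrix algebra.

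Finally, I would apply Proposition \ref{p09-1} to $a_0$ viewed as an element of $M_2(D)$, producing strict (off-diagonal) nilpotents $x,y\in M_3(D)\subset A$ with $\|a_0-xy\|<\epsilon/2$, whence $\|a-xy\|<\epsilon$. The main obstacle is verifying the hypothesis $D\subset\overline{GL(\wtd D)}$ that Proposition \ref{p09-1} requires. Since $D$ inherits property (TM) and strict comparison from $A$, the plan is to deduce the relevant almost stable rank one of $D$ (at least on the element needed) from the stable rank one of $l^\infty(A)/I_{_{\Qw}}$ via a bootstrap lifting argument, using the fact that invertible approximations in the sequence algebra can be realized inside $\wtd D$ precisely because the two additional matrix slots $\phi(e_{2,2})$ and $\phi(e_{3,3})$ provide enough orthogonal room to correct for non-invertibility. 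This is the crux of where both $b_1$ and $b_2$ are essential: one orthogonal element would only allow a $2\times 2$ embedding, forcing $a_0$ into $M_1(D)$ and leaving no slack for the invertibility correction required by Proposition \ref{p09-1}.
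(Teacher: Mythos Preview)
Your proposal has two genuine gaps, and the second one is the one you flag yourself.

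First, the $M_3$ map you describe cannot be built in general. An order-zero map $\phi:M_3\to A$ forces $\phi(e_{1,1})\sim\phi(e_{2,2})\sim\phi(e_{3,3})$ in the Cuntz semigroup. If $\phi(e_{1,1})$ is to act as an approximate unit for $a_0$ while $\phi(e_{2,2})\in\Her(b_1)$, you would need $d_\tau(a_0)\lesssim d_\tau(b_1)$ for all $\tau$, and nothing in the hypotheses gives you that --- $b_1,b_2$ may be tracially tiny compared with $a$. The paper fixes this by first using property (TM) to chop $\Her(a_0)$ into $n$ equal pieces via $\phi_m:M_n\to\Her(a_0)$, with $n$ chosen so that $1/n<\inf_\tau d_\tau(b_i)$; only then is a single diagonal corner small enough to be pushed into $\Her(b_2)$ by strict comparison, giving an extension $\wtd\phi:M_{n+1}\to B$.

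Second, and more fundamentally, your plan to verify $D\subset\overline{GL(\wtd D)}$ is circular: that is essentially the stable-rank-one statement the whole section is trying to establish. The paper avoids this by never applying Proposition~\ref{p09-1} inside $A$ at all. Instead it passes to $B:=l^\infty(A)/I_{_{\Qw}}$, where stable rank one is already proved (Theorem~\ref{Tasr1}), applies Proposition~\ref{p09-1} there to get nilpotents $x_1,y_1\in\Her_B(\wtd\phi(1_{n+1}))$ with $\bar\iota(a)\approx x_1y_1$, and then lifts the nilpotents back to $l^\infty(A)$ using the Olsen--Pedersen lifting theorem. The price is an error term $z\in I_{_{\Qw}}$ orthogonal to $\iota(b_1)$; since $z$ is tracially small, strict comparison lets you route it through $\Her(b_1)$ via a partial isometry $r$, and $x_{2,i}+z_ir^*$ and $y_{2,i}+r$ are again nilpotents. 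So in the paper $b_1$ and $b_2$ play asymmetric roles: $b_2$ supplies the extra matrix column needed for Proposition~\ref{p09-1}, while $b_1$ absorbs the tracial-ideal error coming from the quotient. Your $M_3$ picture conflates these two distinct functions.
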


\begin{proof}
Let  {{$B=l^\infty(A)/I_{_{\Qw}}.$}}
{{Recall that $\Pi:l^\infty(A)\to B$ is  the quotient map and 
$\iota:A\to l^\infty(A)$ is  the canonical embedding. }}
{{Denote}}  ${\bar \iota}:=\Pi\circ\iota.$
{{Fix}} $a\in A.$ 
\Wlog, we may assume $\|a\|\leq 1.$
{{Put $a_0=a^*a+aa^*.$}}
Assume that there {{are}} $b_1,b_2\in A_+$
such that $0=b_1b_2=ab_1=b_1a=ab_2=b_2a{{}}
.$ 
Let $\ep>0.$  Since $A$ is simple {{and non-elementary,}}  one 
can choose $n\in \N$  such that 
$$
{{1/n<\inf\{d_\tau(b_i):\tau\in \Qw\}}},\,\,\,
i=1,2.
$$ 
Since $A$ has property (TM), {{by Theorem \ref{TTM}, $A$ has T-tracial approximate oscillation zero.
Then, by Theorem \ref{Tasr1},  $B$ has stable rank one.}}
{{Also,}}  by the last part of Remark \ref{R82},
for each $m\in\N,$
there is a c.p.c.~order zero map 
$\phi_m: M_n\to \Her({{a_0}})$ such that 
\beq
\label{e1027-1}
\|a-\phi_m(1_n)a\|_{_{2,\Qw}}<1/m. 
\eneq
Let $\phi:M_n\to 
l^\infty(A)$ 
be the map induced by $\{\phi_m\}_{m\in\N}$ {{and}}
${\bar \phi}:=\Pi\circ \phi.$
Then \eqref{e1027-1}
shows that 
\beq
\label{e1028-2}
{{{\bar \phi}(1_n){\bar \iota}(a)={\bar \iota}(a).}}
\eneq
{{Denote by $\{e_{i,j}: 1\le i,j\le n\}$ a system of matrix units for $M_n$ 
and $\{e_{i,j}: 1\le {{i}},j\le n+1\}$ an expanded system of matrix units for $M_{n+1}.$
In particular we view $M_n$ generated by $
\{e_{i,j}: 1\le i,j\le 1\}$ as a \SCA\, of $M_{n+1}$ generated by $\{e_{i,j}:1\le i,j\le n+1\}.$}}

Since $A$ has strict comparison, and 
for all $m\in\N,$
\beq
\sup\{d_\tau(\phi_m(e_{1,1})):\tau\in \Qw\}
\leq
1/n
<\inf \{d_\tau(b_2):\tau\in \Qw\},
\eneq
we have $\phi_m(e_{1,1})\lesssim b_2$ for all $m\in\N.$
By \cite[Proposition 2.4 (iv)]{Ror92UHF2}, 
there are 
${v_m}\in A$ 
\beq
v_m^*v_m=(\phi_m(e_{1,1})-1/m)_+
\quad
\text{ and }
\quad
v_mv_m^*\in \Her_A(b_2)
\quad 
(m\in\N)
\label{e1027-2}
\eneq
(see \eqref{313}) and \eqref{e1025-2}).
Note that 
$${{
\|v_m\|^2=\|v_m^*v_m\|
=\|(\phi_m(e_{1,1})-1/m)_+\|\leq 1.}}
$$
Let $v=\{v_1,v_2,...\}\in l^\infty(A).$ 
Since 
$\|(\phi_m(e_{1,1})-1/m)_+-\phi_m(e_{1,1})\|\leq 1/m$ 
($m\in\N$), 
we have 
\beq
{{\Pi}}(v^*v)={\bar \phi}(e_{1,1}).
\label{e1027-3}
\eneq
The {{facts}} that $\phi_m(1_n)\in \Her({{a_0}}),$
$\Her({{a_0}})\bot \Her(b_2),$ and  
$v_mv_m^*\in \Her(b_2)$  show that 
$\phi_m(1_n)\bot v_mv_m^*$ for all $m\in\N.$
Hence 
\beq
\Pi(vv^*){\bar \phi}(1_n)=0. 
\label{e1027-4}
\eneq
%
Let $h: C_0((0,1])\otimes M_n\to B$ the \hm\, defined by 
$h(\imath\otimes e_{i,j})={\bar \phi}(e_{i,j})$ ($1\le i,j\le n$). Extend
${\wtd h}: C_0((0,1])\otimes M_{n+1}\to B$ by
$\wtd h(\imath\otimes e_{i,j})=h({{\imath\otimes e_{i,j}}})$ and 
$\wtd h(\imath\otimes e_{1, n+1})=v^*.$ By \eqref{e1027-3}  and \eqref{e1027-4},
$\wtd h$ is indeed a  \hm.   Define  $\wtd \phi(e_{i,j})=\wtd h({{\imath\otimes e_{i,j}}})$ for 
$1\le i,j\le n+1.$

{{
As we  view $M_n$ as  \SCA\, of $M_{n+1},$}}
  $\wtd \phi$ is an extension of ${\bar \phi},$ {{i.e., $\wtd \phi|_{M_n}={\bar \phi}.$}} 
By Proposition \ref{p1028-1},
\beq
\Her_B(\wtd\phi(1_{n+1}))\cong
\Her_B(\wtd \phi(e_{1,1}))\otimes M_{n+1}
= \Her_B({\bar \phi}(e_{1,1}))\otimes M_{n+1},
\eneq
and 
\beq
\Her_B({\bar \phi}(1_n))\cong 
\Her_B({\bar \phi}(e_{1,1}))\otimes M_n.
\eneq
{{Moreover, as $\{e_{i,j}: 1\le i,j\le n\}\subset \{e_{i,j}:1\le i,j\le n+1\},$ 
we also write 
$$\Her_B({\bar\phi}(1_{n}))
\subset 
\Her_B(\wtd\phi(1_{n+1})).
$$}}
Since $B$ has stable rank one,   by \cite[Corollary 3.6]{BP95}, 
$\Her_B(\wtd\phi(1_{n+1}))$ also has stable rank one.
{{Note, by \eqref{e1028-2}, ${\bar \iota}(a)\in 
\Her_B({\bar \phi}(1_n)) \cong 
\Her_B({\bar \phi}(e_{1,1}))\otimes M_{n},$}}
Then by Proposition \ref{p09-1},
there are nilpotents 
$$
{{x_1,y_1\in \Her_B(\wtd\phi(1_{n+1}))
{{\cong \Her_B({\bar \phi}(e_{1,1}))\otimes M_{n+1}}}\,\,\,
{\rm{such\,\,\, that}}\,\,\,\|{\bar \iota}(a)-x_1y_1\|<\ep/8.}}
$$
{{Recall that 
$\Pi(vv^*+\phi(1_n))=\wtd \phi(1_{n+1}).$}} 
Thus 
\beq
\Her_B(\wtd \phi(1_{n+1}))=\Pi(\Her_{l^\infty(A)}(vv^*+\phi(1_n))).
\label{e1028-4}
\eneq
Also note that $(vv^*+\phi(1_n))\bot \iota(b_1).$
Thus 
\beq
\Her_{l^\infty(A)}(vv^*+\phi(1_n))
\subset \{\iota(b_1)\}^\bot.
\label{e1028-5}
\eneq
By {{\eqref{e1028-4} 
and}} the fact that 
nilpotents can be lifted 
(see \cite[Theorem 6.7]{OP89nilpotent}),
there are nilpotents $x_2,y_2\in \Her_{l^\infty(A)}(vv^*+\phi(1_n))$
such that 
\beq
\Pi(x_2)=x_1, 
\quad
\Pi(y_2)=y_1.
\eneq
{{It follows from \eqref{e1028-5} that we also have}}
\beq
x_2\bot\iota(b_1),
\quad
\text{ and }
\quad
y_2\bot\iota(b_1).
\label{e-1028-6}
\eneq
Since  $\|{\bar \iota}(a)-x_1y_1\|<\ep/8, $  
there is $\bar z\in I_{\Qw}$
such that 
$$
{{\iota(a)-x_2y_2\approx_{\ep/8} {\bar z}.}}
$$
Note that $\iota(a)-x_2y_2\in \{\iota(b_1)\}^\bot.$
Hence there is $d\in 
\{\iota(b_1)\}^\bot$ such that 
$$
\iota(a)-x_2y_2
\approx_{\ep/8}
d(\iota(a)-x_2y_2)d\approx_{\ep/8}d\bar zd.
$$
Let 
\beq
z:=d\bar z d\in \{\iota(b_1)\}^\bot\cap {{I_{_{\Qw}}}}.
\label{e1028-8}
\eneq
Then 
$$
{{\|\iota(a)-(x_2y_2+z)\|<\ep/4.}}
$$ 
{{Choose}}  $\dt>0$ such that $\|zf_\dt(|z|)-z\|<\ep/8.$ 
Then 
\beq
\label{e-1028-3}
\|\iota(a)-(x_2y_2+zf_\dt(|z|))\|<\ep/2.
\eneq
Write $z=\{z_1,z_2,...\}$ with   $z_i\perp b_1$ 
$(i\in\N).$
Since $z\in {{I_{_{\Qw}}}},$ there is  $i\in\N$ 
such that 
(note, the first inequality of the following always holds): 
\beq
\sup_{\tau\in {{\Qw}}}\{d_\tau(f_{\dt/2}(|z_i|))\}
\leq \frac{{4}}{\dt}\sup_{\tau\in {{\Qw}}}\tau(|z_i|)
<{{\inf_{\tau\in \Qw}\{d_\tau(b_1)\}.}}
\eneq
Since $A$ has strict comparison, 
$f_{\dt/2}(|z_i|)\lesssim b_1.$ 
By \cite[Proposition 2.4 (iv)]{Ror92UHF2},
there is $r\in A$ such that 
\beq
r^*r=f_{\dt}(|z_i|)
\quad
\text{and} 
\quad
rr^*\in \Her(b_1).
\label{e1028-7}
\eneq

{{Write $x_2=\{x_{2, j}\}_{j\in \N}$ and $y_2=\{y_{2, j}\}_{j\in \N}.$}}
By \eqref{e-1028-6}, $x_{2,i}\bot b_1$
and $y_{2,i}\bot b_1.$ 
Together with \eqref{e1028-7}, 
we have  
\beq
x_{2,i}r
=
r^*x_{2,i}
=
y_{2,i}r
=
r^*y_{2,i}=0.
\label{e1028-9}
\eneq
{{Thus}}
\beq
x_{2,i}y_{2,i}+z_if_\dt(|z_i|)
=x_{2,i}y_{2,i}+z_ir^*r
=(x_{2,i}+z_ir^*)(y_{2,i}+r).
\label{e1028-10}
\eneq

{{Since $x_2$ and $y_2$ are nilpotents, so are 
$x_{2,i},y_{2,i},$ }} 
By \eqref{e1028-8} and 
\eqref{e1028-7}, $r^*z_i=0.$ 
Hence 
$$
{{(z_ir^*)^2=z_i r^*z_ir^*=0.}}
$$ 
By \eqref{e1028-7}, $r^2=0.$
By \eqref{e1028-9}, 
$$
{{(z_ir^*)x_{2,i}=0\andeqn y_{2,i}r=0.}}
$$
Let $\af_1:=x_{2,i},$
$\af_2:=z_ir^*,$
$\bt_1:=y_{2,i},$
and 
$\bt_2:=r.$
Then the last paragraph shows that 
$\af_1,\af_2,\bt_1,\bt_2$ are {{all}} nilpotents, 
and $\af_2\af_1=\bt_1\bt_2=0.$
Then it is standard to conclude 
that $x:=\af_1+\af_2$ and 
${y:=}\bt_1+\bt_2$
are nilpotents 
(see the proof of Claim 1 in the proof of
\cite[Lemma 5.6]{FL21-2}).

By \eqref{e-1028-3}  and \eqref{e1028-10}, 
\beq
a\approx_{\ep/2} x_{2,i}y_{2,i}+z_if_\dt(|z_i|)
=(x_{2,i}+z_ir^*)(y_{2,i}+r)
{{=xy}}.
\eneq
The lemma follows.
\end{proof}

\begin{thm}\label{T92}
Let $A$ be a 
$\sigma$-unital 
simple \CA\ 
with  ${\wtd{QT}}(A)\setminus \{0\}\not=\emptyset.$
If $A$  has strict comparison and has $T$-tracial approximate oscillation zero, 
then $A$ has stable rank one. 
\end{thm}
\begin{proof}
We may assume {{that}} $A$ is  {{non-elementary.}}
There are two cases.

Case 1. $A$ has a nonzero projection $p$. 

{{Set}} $A_1:=pAp.$  {{Then}} 
$A_1$ is unital, simple, 
has nonempty  ${{QT(A_1)}},$ {{and}}
has strict comparison {{as well as T-tracial approximate oscillation zero.}}
Hence 
{{$l^\infty(A_1)/I_{_{\overline{QT(A_1)}}}$}} has stable rank one
(see Theorem \ref{Tasr1}). 
Let $a\in A_1$ be a non-invertible element  {{and let}} $\ep>0.$
Since $A_1$ is simple and finite, 
by  \cite[Proposition 3.2, Lemma 3.5]{Ror91UHF1}, 
there is a unitary $u\in U(A_1),$
an element $\bar a\in A_1,$
and a positive element $b\in (A_1)_+\backslash\{0\}$
such that $\|a-\bar a\|<\ep/4$  
and $b(u\bar a)=(u\bar a)b=0.$
Note that $\Her(b)$ is also infinite dimensional.
{{Hence}} there are two {{nonzero}} orthogonal positive elements 
$b_1,b_2\in \Her(b).$

By Theorem \ref{TVD-1}, $A$ has property (TM).
We  {{then}}   apply Lemma \ref{lem1028-1}
to obtain two nilpotent elements 
$x,y$ such that 
$$
{{u\bar a\approx_{\ep/4}xy.}}
$$
Let $\dt>0$ {{be}}  such that 
$$
{{xy\approx_{\ep/4}(x+\dt)(y+\dt).}}
$$
Note that $x+\dt$ and $y+\dt$ are invertible 
since $x,y$ are nilpotents. 
Then {{that}} 
$$
{{a\approx_{\ep/4} u^*u\bar a\approx_{\ep/2} 
u^* (x+\dt)(y+\dt)}}
$$
shows {{that}} $a$ can be approximated by 
an invertible element 
$u^* (x+\dt)(y+\dt)$ up to {{the}} tolerance $\ep.$
Hence $A_1$ has stable rank one. 
It follows that $A$ also has stable rank one.


Case 2. $A$ has no nonzero projections. 
 {{By Theorem \ref{TOstosurj-1}, the canonical map $\Gamma$ 
 is surjective.  Choose $e\in (A\otimes {\cal K})_+$ with $0\le e\le 1$ 
 such that $\wh{[e]}$ is continuous on $\wtd{QT}(A).$ 
 By  Theorem \ref{Pcontscale-1}, $C=\Her(e)$ has continuous scale. 
 By Brown's stable isomorphism theorem (\cite{Br}), 
 $C\otimes {\cal K}\cong A\otimes {\cal K}.$ Therefore it suffices to show that $C$ has 
 stable rank one (see \cite[Theorem 6.4]{Rff}). Hence, \wilog, we may assume that $A$ has continuous scale (and $QT(A)\not=\emptyset$).}}

Let $A_1$ be a {{$\sigma$-unital}} hereditary $C^*$-subalgebra of $A.$
Let $a\in A_1^{\bf 1}$ and let $\ep>0.$
{{Let $a_1:=a^*a+aa^*.$}}
Then there is $\dt>0$ such that 
$$
{{\|a-f_{\dt}({{a_1}})af_\dt({{a_1}})\|<\ep/2.}}
$$ 
Let $\bar a:=f_{\dt}({{a_1}})af_\dt({{a_1}}).$
Since $A_1$ has no nonzero projections, 
{{we may assume that $[0, \dt]\subset {\rm sp}({{a_1}}).$}}
Let $g\in C_0((0,1])_+$ with ${\rm supp}(g)\subset 
[\dt/4,\dt/2],$
then 
$$
{{b{{:}}=g({{a_1}})\neq 0\andeqn b\bar a=\bar a b=0.}}
$$  

{{Let us}} consider  
$A_2=\Her_{A_1}(f_{\dt/8}({{a_1}})).$ 
Note that  $A_2$ is  simple, {{$A_2={\rm Ped}(A_2)$ and $QT(A_2)\not=\emptyset$}}
and has strict comparison.
{{Moreover, by Proposition \ref{Pher},  $A_2$ has T-tracial approximate oscillation zero.}} 
Hence, {{by Theorem \ref{Tasr1},}} 
$l^\infty(A_2)/I_{_{{\overline{QT(A_2)}^w}}}$ has stable rank one. 
Note that  $\bar a, b\in A_2.$
Note {{also}} that, {{since  $A$ is non-elementary,}}
$\Her_{A_2}(b)$ is infinite dimensional. It follows that  there are $b_1,b_2\in  {{\Her_{A_2}(b)_+\setminus \{0\}}}$
such that $b_1\bot b_2.$
{{Since $\bar a^*\bar a+\bar a\bar a^*, b_1, b_2$ are mutually orthogonal,}}
applying Lemma \ref{lem1028-1},
we get two nilpotents $x,y\in A_2\subset A_1$
such that $\|\bar a-xy\|<\ep/2.$ 
It follows that $\|a-xy\|<\ep.$

Therefore, for any {{$\sigma$-unital}}  hereditary 
$C^*$-subalgebra $A_1\subset A,$
any $a\in A_1,$ 
any $\ep>0,$
there are nilpotents $x,y\in A_1$
such that $\|a-xy\|<\ep.$
Together with the facts that $A$ 
is projectionless and {{assumed to have}}  continuous scale, 
{{applying \cite[Theorem 6.4]{FLL21}, we}} 
conclude that $A$ has stable rank one. 
\end{proof}

{\bf The proof of Theorem \ref{Teqiv}}

\begin{proof}

For (1) $\Rightarrow$ (2),  applying Theorem \ref{TOstosurj-1}, we know that $\Gamma$ is surjective. 
{{Then (2) follows from}} 
Theorem \ref{T92}.

{{Both (2) $\Rightarrow$ (3) and  (2) $\Rightarrow$ (4) are obvious.}}
{{That (3) $\Rightarrow$ (2) follows from \cite{APRT}.}}

For (4) $\Rightarrow$ (1), we apply  Theorem \ref{Talsr1}.
That (1) $\Leftrightarrow$ (5) follows from Theorem  
\ref{TVD-1} and Theorem \ref{TTM}.  This ends the proof of Theorem \ref{Teqiv}.
\end{proof}

\vspace{0.1in}

{{Note that the separability condition is only used in the  implication of (3)  $\Rightarrow$ (2).}}

We learned  that the following is also obtained by {{S. Geffen and W. Winter.}}

\begin{cor}\label{TRR0str1}
Let $A$ be a  {{$\sigma$-unital}}  stably finite simple \CA\, of real rank zero which has strict comparison.
Then $\Gamma$ is surjective and $A$ has stable rank one. 
\end{cor}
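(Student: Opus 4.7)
The plan is to deduce this corollary directly from Theorem \ref{Teqiv} by verifying its condition (1), namely that $A$ has tracial approximate oscillation zero. First I would dispose of the hypothesis in Theorem \ref{Teqiv} that $A$ admits a non-trivial 2-quasitrace: since $A$ is simple and has real rank zero, $A$ contains a nonzero projection $p,$ and the corner $pAp$ is a unital, separable, simple, stably finite \CA. By the Blackadar--Handelman--Haagerup theorem, $pAp$ admits a quasitrace, which extends canonically to a densely defined 2-quasitrace on $A\otimes\mathcal{K},$ so $\wtd{QT}(A)\setminus\{0\}\neq\emptyset.$ Brown's stable isomorphism theorem identifies $A\otimes\mathcal{K}$ with $\Her(p)\otimes\mathcal{K},$ and real rank zero, strict comparison, surjectivity of $\Gamma,$ and stable rank one all transfer between $A$ and its full hereditary \SCA s; so it suffices to work inside the unital (hence algebraically simple) corner $pAp,$ where $QT(pAp)\neq\emptyset.$

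The heart of the proposal is that real rank zero forces T-tracial approximate oscillation zero. By Proposition \ref{Prr0}, for every $a\in{\rm Ped}(A\otimes\mathcal{K})_+$ the hereditary \SCA\, $\Her(a)$ admits an approximate identity $\{e_n\}$ consisting of projections. Each $e_n\in{\rm Ped}(A\otimes\mathcal{K})$ is a projection, so $\widehat{[e_n]}=\hat{e_n}$ is continuous and affine on $\Qw,$ giving $\omega(e_n)|_{\Qw}=0.$ Setting $b_n:=e_n a e_n\in\Her(a)_+,$ Lemma \ref{Leae} yields $\omega(b_n)|_{\Qw}=\omega(e_n)|_{\Qw}=0,$ while the standard estimate $\|a-b_n\|\leq 2\|a-e_na\|$ shows $\|a-b_n\|\to 0$ in operator norm. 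Since $\Qw$ is compact with $0\notin\Qw$ by Proposition \ref{Pcompact}, Proposition \ref{Pboundedness} supplies $M:=\sup\{\|\tau\|:\tau\in\Qw\}<\infty,$ whence
\[
\|a-b_n\|_{_{2,\Qw}}\leq M^{1/2}\,\|a-b_n\|\longrightarrow 0.
\]
By Proposition \ref{Omega-1}(2), $\Omega^T(a)|_{\Qw}=0$ for every $a\in{\rm Ped}(A\otimes\mathcal{K})_+,$ which is exactly condition (1) of Theorem \ref{Teqiv}.

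With (1) in hand, the equivalence (1)$\Leftrightarrow$(2) of Theorem \ref{Teqiv} gives that $\Gamma$ is surjective and that $A$ has stable rank one, completing the argument. I expect the only real friction to be the bookkeeping in the reduction to $pAp$: one must confirm that real rank zero and strict comparison pass between $A\otimes\mathcal{K}$ and full hereditary \SCA s (both standard), and that a quasitrace on $pAp$ does extend to a densely defined 2-quasitrace on $A\otimes\mathcal{K}.$ Neither of these is a genuine obstacle. Once they are noted, the corollary is essentially a one-line consequence of Proposition \ref{Prr0}, Lemma \ref{Leae}, and Theorem \ref{Teqiv}.
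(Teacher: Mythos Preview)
Your proposal is correct and follows essentially the same approach as the paper: reduce to the unital corner $pAp$ to obtain a nontrivial 2-quasitrace, invoke Proposition \ref{Prr0} to get (norm, hence T-)tracial approximate oscillation zero from real rank zero, and then apply Theorem \ref{Teqiv}. The only difference is cosmetic---you unpack the content of Proposition \ref{Prr0} and Proposition \ref{Papproxe} by hand via Lemma \ref{Leae} and Proposition \ref{Omega-1}(2), whereas the paper simply cites Proposition \ref{Prr0} directly.
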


\begin{proof}
Let $p\in A$ be a non-zero projection and $B=pAp.$ It suffices to show the statement holds for $B.$
Note that $B$ is unital and stably finite.   By the paragraph right after the proof 
of Theorem 3.3 of \cite{BR}, $B$ has a 2-quasitrace (see also III. {{1.3}} of \cite{BH}).
By Proposition \ref{Prr0}, $A$ has tracial approximate oscillation zero.  
Then the  corollary follows from Theorem {{\ref{T92}.}}
\end{proof}


Let $A$ be a separable simple \CA\, with ${\wtd{QT}}(A)\setminus \{0\}\not=\emptyset.$
Let $e\in {\rm Ped}(A)_+\setminus \{0\}.$
{{Recall that}} $T_e=\{\tau\in {\wtd{QT}}(A): \tau(e)=1\}$
is a  compact convex set 
which is also a basis for the cone $\wtd{QT}(A).$ 

\begin{cor}\label{CCcountable}
Let $A$ be a  {{$\sigma$-unital}} 
simple \CA\, 
with $QT(A)\not=\emptyset$ which has strict comparison.
Suppose that, for some $e\in {\rm Ped}(A)_+^{\bf 1}\setminus \{0\}$ 
and $\partial_e(T_e)$ 
has countably many points. 
Then $\Gamma$ is surjective, $A$ has stable rank one,
property (TM) and  
T-tracial approximate oscillation zero.

\end{cor}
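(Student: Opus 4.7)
The plan is to deduce all four conclusions by invoking Theorem \ref{Tcounableos} to obtain norm approximate oscillation zero, upgrading this to $T$-tracial approximate oscillation zero, and then applying Theorem \ref{Teqiv}.

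First, I would reduce to the algebraically simple case by replacing $A$ with $\Her(e)$; this is permitted since $A\otimes{\cal K}\cong \Her(e)\otimes{\cal K}$ by Brown's stable isomorphism theorem and every conclusion in the statement is Morita invariant (see \ref{Ralgebraicsimpl}). After this reduction $A={\rm Ped}(A)$, and $T_e$ is a compact convex subset of $\wtd{QT}(A)\setminus\{0\}$ with $\R_+\cdot T_e=\wtd{QT}(A)$. Applying Theorem \ref{Tcounableos} with $b=e$ then yields $\Omega^N(a)|_{T_e}=0$ for every $a\in {\rm Ped}(A\otimes{\cal K})_+$: there exist $\{b_n\}\subset \Her(a)_+^{\bf 1}$ with $\|a-b_n\|\to 0$ and $\omega(b_n)|_{T_e}\to 0$.

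Next, I would upgrade this to $T$-tracial oscillation zero. By Proposition \ref{Pboundedness}(4), $M:=\sup\{d_\tau(a):\tau\in T_e\}<\infty$; since $\|\tau|_{\Her(a)}\|=d_\tau(a)$ for $\tau\in T_e,$ we obtain $\|x\|_{_{2,T_e}}\le M^{1/2}\|x\|$ for all $x\in \Her(a)$. Hence $\|a-b_n\|_{_{2,T_e}}\to 0$, and combined with $\omega(b_n)|_{T_e}\to 0$ and Proposition \ref{Omega-1}, this gives $\Omega^T(a)|_{T_e}=0$. Proposition \ref{PMfinite} then promotes this to $\Omega^T(a)|_{\Qw}=0$, and since $a$ was arbitrary, $A$ has $T$-tracial approximate oscillation zero.

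Finally, $A$ is separable, simple, has strict comparison and $QT(A)\neq\emptyset$, so Theorem \ref{Teqiv} applies; its condition (1) having just been verified, all the other equivalent conditions hold. In particular, $\Gamma$ is surjective, $A$ has stable rank one, and $A$ has property (TM), completing the proof. The argument is essentially a three-step chain of citations with no substantial obstacle; the only point requiring care is the reduction to the algebraically simple setting so that Proposition \ref{Pboundedness}(4) and Proposition \ref{PMfinite} apply uniformly, and that the countability of $\partial_e(T_e)$ is preserved under this reduction (which it is, since $T_e$ is unchanged).
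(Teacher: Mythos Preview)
Your proposal is correct and follows essentially the same route as the paper's proof, which simply invokes Theorem \ref{Tcounableos} to obtain norm approximate oscillation zero and then applies Theorem \ref{Teqiv}. You have spelled out in more detail the intermediate passage from $\Omega^N(a)=0$ to $\Omega^T(a)=0$ (via the bound $\|\cdot\|_{_{2,T_e}}\le M^{1/2}\|\cdot\|$ on $\Her(a)$) and the reduction to the algebraically simple case, both of which the paper leaves implicit; but the logical skeleton is the same.
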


\begin{proof}
It follows from Theorem \ref{Tcounableos} that $A$ has norm approximate oscillation zero.
Thus corollary follows from {{Theorem \ref{T92}}} immediately. 
\end{proof}

The following is perhaps known but we are not able to locate it in the literature.

\begin{prop}\label{Plocn}
Let $A$ be a separable \CA\, which has local finite nuclear dimension.
Then every hereditary \SCA\, $B\subset A$ also has local  finite nuclear dimension.
\end{prop}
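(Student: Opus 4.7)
The plan is to show that every finite subset of $B$ is approximately contained in a C*-subalgebra of $B$ of finite nuclear dimension. Given finite $\mathcal{F} \subset B$ and $\ep > 0$, I would first choose a positive contraction $e \in B$ satisfying $\|exe - x\| < \ep/4$ for every $x \in \mathcal{F}$, using that $B$ has its own approximate identity. Then apply the hypothesis that $A$ has local finite nuclear dimension to the enlarged finite set $\mathcal{F} \cup \{e\}$ with a small tolerance $\dt$ (to be specified), producing a C*-subalgebra $C \subset A$ with $\dimnuc(C) < \infty$ that approximately contains these elements. Pick $\tilde e \in C_+$ with $\|\tilde e - e\| < 2\dt$ and form the hereditary C*-subalgebra $C_0 := \overline{\tilde e C \tilde e}$ of $C$, which inherits finite nuclear dimension by the Winter--Zacharias result on passage of $\dimnuc$ to hereditary subalgebras.

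The next step is to transfer $C_0 \subset A$ into $B$. The idea is to use the partial isometry $v \in A^{**}$ arising from the polar decomposition of $e\tilde e$: when $\dt$ is small, $v^*v$ is close to the support projection of $\tilde e$ (which dominates $C_0$, since $\tilde e$ is strictly positive in $C_0$) while $vv^*$ is close to that of $e$. The map $\psi(c) := vcv^*$ then furnishes a *-isomorphism of $C_0$ onto a C*-subalgebra $D \subset \overline{e A^{**} e}$. Since $e \in B$ and $B$ is hereditary in $A$, one has $\overline{eAe} \subset B$, and careful bookkeeping shows that $D$ in fact lies in $\overline{eAe}$, so $D \subset B$ with $\dimnuc(D) = \dimnuc(C_0) < \infty$. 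For the approximation: given $x \in \mathcal{F}$, pick $\tilde x \in C$ with $\|\tilde x - x\| < \dt$; then $\tilde e\tilde x\tilde e \in C_0$ and $\psi(\tilde e\tilde x\tilde e) \approx e x e \approx x$ once $\dt$ is chosen small enough relative to $\ep$, giving $\mathcal{F} \subset_\ep D$.

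The main obstacle is making rigorous the construction of $\psi$: the partial isometry $v$ lives in $A^{**}$ rather than in $A$, so one must replace it by a truncation $v_\eta \in A$ obtained by cutting $v$ via $f_\eta$ applied to $|e\tilde e|$, and then verify that $v_\eta C_0 v_\eta^* \subset \overline{eAe} \subset B$ together with near-multiplicativity and near-isometry of the induced map on the relevant finite set $\tilde e\tilde{\mathcal{F}}\tilde e$. If the resulting map is only an approximate *-homomorphism, one promotes it to a genuine *-isomorphism onto a subalgebra of $B$ by an order-zero straightening argument (as in Winter--Zacharias), or alternatively by invoking a Kadison--Kastler/Christensen-type stability theorem for close C*-subalgebras in nuclear settings, which directly yields an isomorphic copy of $C_0$ sitting inside $B$ and inheriting its finite nuclear dimension.
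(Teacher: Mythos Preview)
Your overall strategy --- approximate $\mathcal{F}\cup\{e\}$ in some $C\subset A$ of finite nuclear dimension, cut $C$ down near $e$, and conjugate into $B$ --- is exactly the paper's. But you have misidentified the obstacle in the last step. The problem is not that $v\in A^{**}$: even in the paper's argument the partial isometry $w$ lives in $A^{**}$, yet $c\mapsto w^*cw$ is an \emph{exact} *-isomorphism from the relevant hereditary subalgebra into $\Her(e_B)\subset B$, with image in $A$ (not merely $A^{**}$). The real issue with your construction is that $v^*v$ is the support projection of $\tilde e e^2\tilde e$, which need not coincide with the support of $\tilde e$; so $v^*v$ need not dominate $C_0=\overline{\tilde e C\tilde e}$, and $\psi(c)=vcv^*$ need not be multiplicative on $C_0$. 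The remedy is to cut down further, replacing $C_0$ by $\overline{f_\eta(\tilde e)\,C\,f_\eta(\tilde e)}$ for some $\eta>0$: for $\dt$ small relative to $\eta$ a functional-calculus estimate shows $v^*v$ dominates the support of $f_\eta(\tilde e)$, and then conjugation by $v$ is a genuine *-isomorphism landing in $\overline{eAe}\subset B$.

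The paper does precisely this by invoking Lemma~3.3 of \cite{eglnp}: given $\|e_B-d\|<\dt$ (with $\dt$ chosen from that lemma for tolerance $\ep/4$), one obtains $w\in A^{**}$ with $ww^*f_{\ep/4}(d)=f_{\ep/4}(d)$, $w^*cw\in\Her(e_B)$, and $\|w^*cw-c\|<(\ep/4)\|c\|$ for all $c\in\overline{f_{\ep/4}(d)\,A\,f_{\ep/4}(d)}$. Then $C_1:=w^*\,\overline{f_{\ep/4}(d)\,C\,f_{\ep/4}(d)}\,w\subset B$ is isomorphic to a hereditary subalgebra of $C$ (hence has finite nuclear dimension by \cite{WZ10nuc}) and $\mathcal{F}\subset_\ep C_1$. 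Your proposed detour through truncation and perturbation theorems is unnecessary, and the tools you name do not fit: the truncated map $c\mapsto v_\eta c v_\eta^*$ is completely positive but not order zero, and Kadison--Kastler/Christensen-type results concern close subalgebras of a common ambient algebra rather than near-*-homomorphisms.
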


\begin{proof}
Let $B$ be a hereditary \SCA\, of $A$. Let $\ep>0$ and ${\cal F}\subset B$ be a 
finite subset.  To simplify notation, \wilog, we may assume that  ${\cal F}\subset B^{\bf 1}$ and 
there is $e_B\in B_+^{\bf 1}$ 
such that $e_Bx=xe_B=x$ for all $x\in {\cal F}.$ 

Choose $\dt>0$ as in Lemma 3.3 of \cite{eglnp} associated with $\ep/4$ (in place 
of $\ep$) and $\sigma=\ep/4.$  We may assume that $\dt<\ep/4.$

Since $A$ has local finite nuclear dimension, there is a \SCA\, $C\subset A$ with 
finite nuclear dimension, say $k$ ($k\in \N{{\cup\{0\}}}$), such that
\beq
x\in_{\dt/2} C\rforal x\in {\cal F}\cup\{e_B\}.
\eneq
Choose $d\in C_+^{\bf 1}$ such that $\|e_B-d\|<\dt.$
Then, by Lemma 3.3 of \cite{eglnp}, there is a partial isometry $w\in A^{**}$ such 
that
\beq
&&ww^*f_{\ep/4}(d)=f_{\ep/4}(d)ww^*=f_{\ep/4}(d),\,\, w^*cw\in \Her(e_B)\subset B\andeqn\\
&&\|w^*cw-c\|<(\ep/4)\|c\|\rforal c\in  \overline{f_{\ep/4}(d)Af_{\ep/4}(d)}.
\eneq
Set $C_1=w^*\overline{f_{\ep/4}(d)Cf_{\ep/4}(d)}w\subset B.$ 
By  Proposition 2.5  of {{\cite{WZ10nuc}}}, $\overline{f_{\ep/4}(d)Cf_{\ep/4}(d)}$ has nuclear dimension $k.$ 
Since $C_1\cong \overline{f_{\ep/4}(d)Cf_{\ep/4}(d)},$ $C_1$ has nuclear dimension $k.$
We then estimate that
\beq
x\in_{\ep} C_1\rforal x\in {\cal F}.
\eneq
Thus $B$ has local finite nuclear dimension.
\end{proof}

As in \cite{Th}, we have the following
(note that, by \cite{Haagtrace}, since $A$ is exact, $\wtd{T}(A)=\wtd{QT}(A)$):

\begin{cor}\label{Clocalnuc}
Let $A$ be a separable exact simple \CA\, with $ {\wtd{T}}(A) {{\setminus \{0\}}}\not=\emptyset.$
Suppose that $A$ has strict comparison, T-tracial approximate oscillation zero
and has local finite nuclear dimension. Then $A\otimes{\cal Z}\cong A.$
\end{cor}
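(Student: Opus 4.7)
The plan is to reduce this to H. Thiel's $\mathcal{Z}$-stability theorem mentioned in the paragraph preceding the corollary. First, by Theorem \ref{Teqiv}, the hypotheses that $A$ has strict comparison and T-tracial approximate oscillation zero already imply that $A$ has stable rank one and that the canonical map $\Gamma$ is surjective. Second, since $A$ is exact, Haagerup's theorem \cite{Haagtrace} gives $\wtd{QT}(A) = \wtd{T}(A)$, so all statements proved in the paper for $\overline{QT(A)}^w$ are available for $\overline{T(A)}^w$.

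Next I would pass to a suitable hereditary subalgebra in order to apply Thiel's result. Using surjectivity of $\Gamma$, I would pick $e \in {\rm Ped}(A)_+ \setminus \{0\}$ such that $\widehat{[e]}$ is continuous on $\overline{T(A)}^w$ (any strictly positive continuous affine function gives such an $e$). Set $B := {\rm Her}(e)$. Then Theorem \ref{Pcontscale-1} shows that $B$ has continuous scale. By Proposition \ref{Plocn}, $B$ inherits local finite nuclear dimension; $B$ also inherits separability, exactness, simplicity, strict comparison and stable rank one from $A$. Brown's stable isomorphism theorem \cite{Br} yields $A \otimes \mathcal{K} \cong B \otimes \mathcal{K}$, so $A$ is $\mathcal{Z}$-stable if and only if $B$ is.

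Now I would invoke the Thiel-type conclusion: a separable simple $C^*$-algebra with continuous scale, strict comparison, stable rank one and local finite nuclear dimension is $\mathcal{Z}$-stable. In the unital case this is exactly Theorem 8.11 of \cite{Th}. For the (possibly non-unital) algebra $B$ with continuous scale one has two options: either cut down to a corner $pBp$ if $B$ contains a nonzero projection, or appeal to the non-unital extension of Thiel's result that follows from the same Matui--Sato type argument carried out inside $l^\infty(B)/I_{_{\overline{T(B)}^w}}$, which by Theorem \ref{PLrr0} and Theorem \ref{Tasr1} already has real rank zero and stable rank one in our setting. Either way we conclude $B \otimes \mathcal{Z} \cong B$, whence $A \otimes \mathcal{Z} \cong A$.

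The main obstacle is the non-unital extension of Thiel's theorem: Thiel's statement is given for unital $C^*$-algebras, and a projectionless $A$ forces us to work with $B$ that has continuous scale but no unit. The fix is to observe that all ingredients needed in the proof of Theorem 8.11 of \cite{Th} (divisibility in the Cuntz semigroup, covering-number estimates, and the central-sequence Matui--Sato argument) are available in the non-unital continuous-scale setting, in particular thanks to the structure results on $l^\infty(B)/I_{_{\overline{T(B)}^w}}$ established in Section 6 of the present paper.
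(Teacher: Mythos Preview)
Your overall strategy of passing to a hereditary subalgebra and using local finite nuclear dimension there is right, but the endgame via Thiel's Theorem 8.11 creates an unnecessary obstacle that the paper sidesteps entirely. As you correctly identify, Thiel's result is stated for unital $C^*$-algebras, and your proposed workarounds (cutting by a projection or rerunning the Matui--Sato argument in $l^\infty(B)/I_{_{\overline{T(B)}^w}}$) are not fully justified---the first fails in the stably projectionless case, and the second amounts to reproving a non-unital version of Thiel's theorem, which is not in the paper.

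The paper takes a different and more direct route: it picks any $e\in{\rm Ped}(A)_+^{\bf 1}\setminus\{0\}$ (no continuous-scale requirement), sets $B=\Her(e)$ so that ${\rm Ped}(B)=B$, and then observes that $B$ inherits strict comparison, T-tracial approximate oscillation zero (Proposition~\ref{Pher}), and local finite nuclear dimension (Proposition~\ref{Plocn}). Theorem~\ref{TOstosurj-1} then gives surjectivity of $\Gamma$ for $B$, which immediately yields $m$-almost divisibility of $\Cu(B)$ (in fact with $m=0$). At this point the paper invokes Tikuisis's \cite[Theorem 8.5\,(iii)]{T-0-Z}, which is already formulated for non-unital algebras and gives $\mathcal{Z}$-stability directly from strict comparison, almost divisibility, and local finite nuclear dimension. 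Stable rank one is never used. So the paper avoids your main obstacle not by extending Thiel's theorem but by replacing it with a result that was designed for this setting.
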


\begin{proof}
Choose $e\in {\rm Ped}(A)_+^{\bf 1}\setminus \{0\}$ and 
$B=\Her(e),$  Then ${\rm Ped}(B)=B.$  It suffices to show ({{see}} Cor. 3.1 of \cite{TWst}) 
that $B$ is ${\cal Z}$-stable. Note that $B$ has strict comparison, and by  Proposition \ref{Plocn}, has local finite nuclear dimension.  Since $B$ also has T-tracial approximate oscillation zero
(see \ref{Pher}), by Theorem \ref{TOstosurj-1}, $\Gamma$ is surjective.
It follows that $B$ has $m$-almost divisibility 
for some $m$ (in fact $m$  can be zero).
By 
{{\cite[Theorem 8.5 (iii)]{T-0-Z}}}, $B$ is ${\cal Z}$-stable.
\end{proof}

\begin{rem}\label{Rweakloc}
At least in the unital case, the condition that $A$ has local finite nuclear dimension
in Corollary \ref{Clocalnuc} can be further weakened 
to that $A$ is amenable  and has weak  tracial finite nuclear dimension (see Definition 8.1 and Theorem 8.3 of \cite{Linmem14}).
\end{rem}

\begin{rem}\label{Rlast}
{{(1) Note that,  in Theorem \ref{Teqiv} and Corollary \ref{CCcountable},
we do not assume that $A$ is amenable or even exact.}}

{{(2) Usually, the condition that $A$ has strict comparison implies that $A$ has at least one 
{{densely defined}} non-zero 
2-quasitrace. However, one may insist that the condition that $A$ has strict comparison 
means that, if $A$ has no  non-zero 2-quasitraces, $A$ is purely infinite.  In that case   the assumption 
in 
Theorem \ref{Teqiv} (part of  the assumption of 
Corollary \ref{Clocalnuc}) may be replaced by that $A$ is finite and has strict comparison.}}

{{(3) On the other hand, if one assumes that $\Cu(A)$ is almost unperforated  and $A$ is not purely infinite, 
then, by \cite{Ror04JS}, $A$ has strict comparison (in the usual sense) (see also Remark 2.5 and Proposition 4.9
of \cite{FL21-2}).  Conversely, if $A$ has strict comparison (in usual sense),
$\Cu(A)$ is almost unperforated. Therefore, if one prefers not to mention 
2-quasitraces in  {{Theorem \ref{Teqiv},}} one could use the condition that $A$ is finite and $\Cu(A)$ is almost unperforated.}}

{{(4) If $A$ is a unital stably finite simple \CA, then,  
 by \cite[Theorem 6.1]{Ror92UHF2} {{(see also}} 
\cite[Corollary 4.7]{Cuntzdim} and 
\cite[Theorem II.2.2]{BH}), 
$A$ has at least one non-trivial 2-quasitrace. So, in the unital case, we may assume that $A$ is stably finite instead 
assume that $A$ has a non-trivial 2-quasitrace. This also works for the case that $A$ is not unital 
but $K_0(A)_+\not=\{0\}.$ However, when $A$ is stably projectionless, the situation is somewhat different.
Nevertheless, we may proceed this as follows:}}

{{We assume that $A$ is a separable simple \CA. 
Recall that an element $a\in {\rm Ped}(A)_+$ is infinite, 
if there are nonzero elements ${{b,\ }}c\in {\rm Ped}(A)_+$ such that $ bc=0, $ $b+c\lesssim c$ and $c\lesssim a.$ 
$A$ is said to be finite, if there are no infinite elements in ${\rm Ped}(A)_+.$  $A$ is said to be 
stably finite, if $M_n(A)$ is finite for each $n$ (see Definition 1.1 of \cite{LZ} and Definition 4.7 of \cite{FL21-2},  for example).}} 

{{Choose $e\in {\rm Ped}(A)_+\setminus\{0\}$ and consider $B=\Her(e).$
\Wlog, we may well assume that $A=B$ for convenience. 
Define $K_0^*(A)$ (using $W(A)$ not $\Cu(A)$) exactly the same way as in section  4 of 
 \cite{Cuntzdim}. Note that Lemma 4.1  of \cite{Cuntzdim} holds automatically  with the definition above. 
 The same definition of order there (before Proposition 4.2 of \cite{Cuntzdim}) also works in this case.
  In other words, 
 so defined $K_0^*(A)$ is  a (directed) ordered group and the stably finiteness 
 ensures that $K_0^*(A)$ is not zero.  Since $A$ is simple, Proposition 4.2 of \cite{Cuntzdim}
 still holds.  We now return to the  paragraph right after the proof Theorem 3.3 of \cite{BR}. 
 Note that $(K_0^*(A), K_0^*(A)_+, [e])$ is a scaled ordered group which has a state, and which 
 gives a dimension function. By II. 2.2 of \cite{BH}, the dimension function just mentioned gives a 2-quasitrace on $A.$
 Therefore, we may replace the condition that ${\wtd{QT}}(A)\setminus \{0\}\not=\emptyset$ by 
 the condition that $A$ is stably finite (recall that we assume that $A$ is simple) in Theorem \ref{Teqiv} (see also \cite{K97}
 for the case that $A$ is exact}}).
 

{{(5) One may notice   that the condition that $A$ has strict comparison and $\Gamma$ is surjective
implies that there is an isomorphism $\Gamma^\sim: \Cu(A)\to V(A)\sqcup ({\rm LAff}_+(\wtd{QT}(A))\setminus \{0\})$ 
(see also \ref{DGamma}).}}

\end{rem}

\bigskip

\textsc{Xuanlong Fu}

Department of Mathematics, 
University of Toronto, Toronto, Ontario, M5S 2E4, Canada

E-mail: xfu@math.toronto.edu


\quad

\textsc{Huaxin Lin}
 
Department of Mathematics, 
East China Normal University, 
Shanghai, China

and (current)

Department of Mathematics, University of Oregon, Eugene, OR 97403, USA

E-mail: hlin@uoregon.edu

\clearpage

\end{document}